\documentclass{article}

\usepackage[
    a4paper,
    bindingoffset=0.2in,
    left=1.25in,
    right=1.25in,
    top=1.25in,
    bottom=1.25in,
    footskip=.25in
]{geometry}

\usepackage{microtype}
\usepackage[parfill]{parskip}
\usepackage[shortlabels]{enumitem}
\usepackage{authblk}
\usepackage{fancyhdr}

\usepackage{graphicx}
\usepackage{booktabs}
\usepackage{placeins}

\usepackage{amsmath}
\usepackage{amsfonts}
\usepackage{amssymb}
\usepackage{amsthm}
\usepackage{mathbbol}
\usepackage{prodint}

\usepackage[round,authoryear]{natbib}
\usepackage{xcolor}
\usepackage{hyperref}

\hypersetup{
    colorlinks=true,
    linkcolor=red,
    citecolor=blue,
    urlcolor=blue
}

\DeclareSymbolFontAlphabet{\amsmathbb}{AMSb}

\newcommand{\floor}[1]{\left\lfloor #1 \right\rfloor}
\newcommand{\ceil}[1]{\left\lceil #1 \right\rceil}

\newcommand\independent{\protect\mathpalette{\protect\independenT}{\perp}}
\def\independenT#1#2{\mathrel{\rlap{$#1#2$}\mkern2mu{#1#2}}}

\newtheorem{theorem}{Theorem}[section]
\newtheorem{corollary}[theorem]{Corollary}
\newtheorem{lemma}[theorem]{Lemma}
\newtheorem{proposition}[theorem]{Proposition}
\newtheorem{condition}{Condition}

\theoremstyle{remark}
\newtheorem{remark}{Remark}[section]

\theoremstyle{definition}

\allowdisplaybreaks

\title{Censored Heteroscedastic Extremes}
\author[1]{Martin Bladt\thanks{%
Email: \href{mailto:martinbladt@math.ku.dk}{%
\text{martinbladt@math.ku.dk}}}}

\author[1]{Theodor Henningsen\thanks{%
Email: \href{mailto:th@math.ku.dk}{%
\text{th@math.ku.dk}}}}

\affil[1]{Department of Mathematical Sciences, University of Copenhagen}

\begin{document}
\maketitle

\begin{abstract}
    We study estimation of tail heterogeneity for non-identically distributed extreme observations subject to random right-censoring. In the uncensored setting, such heterogeneity is described by the event scedasis function, which measures the relative contribution of different design points to the upper tail. Under censoring, however, the observed tail heterogeneity is contaminated by the censoring scedasis functions, and applying uncensored techniques targets the wrong object. We propose a Beran-type estimator of the relative event scedasis, which is consistent under mild conditions. To obtain these results, survival analysis representations at an upper order statistics are extended to the non-identically distributed case; specifically, we develop conditional Nelson--Aalen and Beran theory on increasing intervals whose random endpoint is dominated, with probability tending to one, by a deterministic high local quantile. In particular, we derive a martingale array representation of the conditional Nelson--Aalen estimator with explicit error bounds depending only on the sample fraction and the bandwidth.
    Simulations demonstrate the finite-sample performance of the method, and an application to French property-casualty insurance claims illustrates how heterogeneous censoring can distort naive scedasis estimates.
\end{abstract}
\noindent\textbf{Keywords:}
extreme-value theory; random right-censoring; scedasis function; Beran estimator; Nelson--Aalen estimator; insurance claims.
\section{Introduction}

Extreme observations are often not identically distributed. In many applications, the contribution to the upper tail varies systematically over time, or according to another deterministic design variable. When the tail index is constant across the design space, this variation can be described by a scedasis function, which measures the relative frequency 
of extremes across design points. Estimating this function is useful for identifying regions that contribute disproportionately to extreme risk.

This paper studies scedasis estimation when the observations are subject to random right-censoring. The setting is motivated by insurance claims data, where the tail behavior of large claims may vary over accident time, while recent claims are often only partially developed. In such data, the observed payment is a right-censored version of the ultimate claim size. Since censoring is itself heterogeneous over accident time, then applying an uncensored scedasis estimator directly to the observed censored losses targets the wrong object. Indeed, under proportional event and censoring tails, the observed tail heterogeneity is governed by the product $c_{F}c_{G}$, where $c_{F}$ is the target event scedasis and $c_{G}$ is the censoring scedasis. Thus the observed extremes only identify $c_{F}c_{G}$ rather than $c_{F}$.
Similar issues can arise in other heavy-tailed settings, such as operational-risk or litigation losses, where ultimate losses may remain unresolved at the observation date. Capacity-constrained demand systems provide a related censoring analogue when demand is observed only up to a time-varying exogenous capacity.

Statistical inference for independent heterogeneous heavy-tailed variables includes the early work of~\cite{mejzler1956problem} and has since been developed in various directions. The paper of~\cite{davisonsmith1990models} considered a linear trend in shape and scale parameters for generalized Pareto distributions , while~\cite{hallTajvidi2000nonparametric} estimated trends in the parameters of generalized extreme-value distributions using non-parametric, local estimators, for which asymptotic results were established under locally constant or linear trends. The paper of~\cite{deHaan2015TailTrend} considered inference of parametric trends for heteroscedastic data and subsequently~\cite{Einmahl_Haan_Zhou_2014} proposed a general framework for inference of heteroscedastic extremes including strong asymptotic results. This work was later extended by~\cite{bucher2024statistics} to a time-series setting, and~\cite{he2026extreme} provided a more general treatment of heterogeneous extreme observations using tail averages.
Statistical inference of independent heavy-tailed data subject to right-censoring has been studied in the independent and identically distributed (iid) setting, among others by~\cite{beirlant2007estimation, Einmahl_Villetard} who proposed estimators for the tail index and extreme quantiles, for which they proved asymptotic Gaussianity, and more recently by~\cite{Bladt_Rodionov_2025} who also proved limit results for general extreme Kaplan--Meier integrals. These contributions address either heterogeneity or censoring, but not the interaction between deterministic tail heterogeneity and heteroscedastic random censoring, which is central here.
Related extreme-value literature that deviates from the fully observed iid setting includes estimation of local and global trends in the tail index~\citep{deHaan_Zhou_2021}, as well as tail index regression using kernel estimators in the presence of covariates~\citep{Daouia2010, GardesStupfler2014}.

Our approach is based on local, or conditional, product-limit estimation. For a fixed design point, we estimate the event distribution locally around it by a Beran-type estimator~\citep{beran1981nonparametric}. This local construction is essential, since under heteroscedastic censoring, global Kaplan--Meier estimators generally estimate censoring-weighted mixture distributions rather than the true event mixture distributions. Classical asymptotic results for the Beran estimator are typically formulated on deterministic intervals with finite upper endpoint; see, for example~\cite{dabrowska1987nonparametric} and~\cite{gonzalezManteiga1994asymptotic}. By contrast, in the present extreme-value framework, the local product-limit estimator is evaluated at a large order statistic that is increasing to the right endpoint as the sample size increases. Consequently the Nelson--Aalen and Beran approximations must hold on a sequence of intervals whose upper endpoint increases with the sample. Such increasing-set representations are only understood for the Kaplan--Meier estimator in the iid random-censorship setting; see~\cite{STUTE_1992} and~\cite{Csorgo_96}. To study censored heteroscedastic extremes, the present paper requires the extension of these two strands by developing local Nelson--Aalen and Beran theory on increasing tail sets for a non-identically distributed deterministic-design array. The resulting survival-theoretic statements are not specific to heteroscedastic tail models, and thus can be considered a contribution to probabilistic representations in survival analysis.

The rest of the paper is organized as follows. Section~\ref{section_model_and_estimators} introduces the censored heteroscedastic extremes model, shows that the observed extremes identify $c_{F}c_{G}$, and defines the local Beran-type scedasis estimator. Section~\ref{section_main_results} contains the main asymptotic results: local empirical convergence, denominator stability, a Nelson--Aalen array representation on increasing tail sets, and consistency of the relative scedasis estimator. Section~\ref{section_simulation_study} presents a simulation study comparing the proposed estimator with a benchmark applied directly to the censored observations. Section~\ref{section_real_data_analysis} applies the method to French property-casualty insurance claim data. Section~\ref{section_discussion} discusses alternative estimators, normalization, and boundary correction. Proofs and additional results are delegated to the appendices.

\section{Model and estimators}\label{section_model_and_estimators}
In this section we introduce the heteroscedastic model of real-valued random variables subject to random (non-informative) censoring and we show that the observed tail heterogeneity is governed by the product of the target and censoring scedasis functions. In addition, we introduce local limit survival estimators from which we construct the estimator of the relative target scedasis function. 

\subsection{Heteroscedastic censoring model}
At design points $i=1,\ldots,n$ we observe independent tuples $(Z_{1}^{(n)}, \delta_{1}^{(n)}),\ldots,(Z_{n}^{(n)}, \delta_{n}^{(n)})$ of right-censored observations and their corresponding concomitants
\begin{align*}
    Z_{i}^{(n)} = X_{i}^{(n)}\wedge C_{i}^{(n)},
    \qquad
    \delta_{i}^{(n)} = 1\{Z_{i}^{(n)} = X_{i}^{(n)}\}.
\end{align*}
The target random variables $X_{1}^{(n)},\ldots, X_{n}^{(n)}$ follow continuous distributions $F_{n,1},\ldots, F_{n,n}$, the censoring variables $C_{1}^{(n)},\ldots, C_{n}^{(n)}$ follow continuous distributions $G_{n,1},\ldots, G_{n,n}$ that share a common right endpoint $x^{*} = \sup\{x:F_{n,i}(x) < 1\}  = \sup\{x:G_{n,i}(x) < 1\} \in (0,\infty]$. We assume there exist continuous distribution functions $F$ and $G$ with the same right endpoint and continuous densities $c_{F}$ and $c_{G}$ on $[0,1]$ that are strictly positive on $(0,1)$, such that
\begin{align}\label{eq_Scedasis_F_and_G}
    \lim_{x \to x^{*}}\frac{1-F_{n,i}(x)}{1-F(x)} = c_{F}\Big(\frac{i}{n}\Big), \quad
    \lim_{x \to x^{*}}\frac{1-G_{n,i}(x)}{1-G(x)} = c_{G}\Big(\frac{i}{n}\Big),
\end{align}
uniformly for all $n \in \amsmathbb{N}$ and all $i=1,\ldots, n$. This assumption is similar to (1.1) in~\cite{Einmahl_Haan_Zhou_2014}.
We impose the standard assumption of entirely random (non-informative) censoring,
\begin{align}\label{eq_entirely_random_censoring}
    X_{i}^{(n)} \independent C_{i}^{(n)},
\end{align}
for all $i\leq n$ and all $n$ (see~\cite{AndersenBorganGillKeiding1993,Stute1995}), and assume that no ties are present in the observations. Denote by $H_{n,1},\ldots, H_{n,n}$ the distribution functions of $Z_{1}^{(n)}, \ldots , Z_{n}^{(n)}$ and note that 
\begin{align*}
     1-H_{n,i}(x) &= \amsmathbb{P}(Z_{i}^{(n)} > x) =  (1-F_{n,i}(x))(1-G_{n,i}(x)),
\end{align*}
for all $i\leq n$, due to~\eqref{eq_entirely_random_censoring}. Since both target and censoring variables are heteroscedastic with the same upper endpoint, the observations $Z_{1}^{(n)},\ldots , Z_{n}^{(n)}$ are also heteroscedastic. This follows from the fact that $1-H(x):= (1-F(x))(1-G(x))$ is continuous and hence
\begin{align}\label{eq_H_scedasis_dens}
    \lim_{x\to x^{*} } \frac{1-H_{n,i}(x)}{1-H(x)}
    =
     \lim_{x\to x^{*}} \frac{(1-F_{n,i}(x))(1-G_{n,i}(x))}{(1-F(x))(1-G(x))} = c_{F}\Big(\frac{i}{n}\Big)c_{G}\Big(\frac{i}{n}\Big) =: c_{H}\Big(\frac{i}{n}\Big).
\end{align}
However, $c_{H}$ does not necessarily integrate to one. Consequently the observations identify $c_{H} = c_{F} c_{G}$, up to a multiplicative constant, rather than the target scedasis density $c_{F}$. In what follows, we concentrate on \emph{relative} estimation of the scedasis function.

In addition, we introduce for every $i\leq n$ the following notation
\begin{align*}
    H_{n,i}^{1}(x) &:=\amsmathbb{P}(Z_{i}^{(n)} \leq x, \delta_{i}^{(n)} = 1)= \int_{0}^{x} (1-G_{n,i}(y-)) F_{n,i}(\mathrm{d} y),
    \\
    \Lambda_{i}(x) &:=\int_{0}^{x} \frac{1}{1-F_{n,i}(y-)} F_{n,i}(\mathrm{d} y)
    =
    \int_{0}^{x} \frac{1}{1-H_{n,i}(y-)} H^{1}_{n,i}(\mathrm{d} y), \quad x < x^{*},
\end{align*}
where the latter corresponds to the target cumulative hazard of $X_{i}^{(n)}$. Notice that we write $G_{n,i}(y-)$ and likewise $F_{n,i}(y-)$, $H_{n,i}(y-)$ to align notation with the existing survival theory, even though these functions are continuous and the minus may be omitted.

\subsection{Local limit estimators}
Inference of the individual survival functions $1-F_{n,i}$ is key for estimating the target scedasis function $c_{F}$. To this end we introduce the following local kernel estimators that subsample observations around a given $s\in (0,1)$,
\begin{align*}
    \amsmathbb{H}_{n}(x \vert s) &= \frac{1}{nh_{n}}\sum_{i=1}^{n} 1\{Z_{i}^{(n)} \leq x\}K\Big(\frac{s - i/n}{h_{n}}\Big), \\
    \amsmathbb{H}^{1}_{n}(x \vert s) &= \frac{1}{nh_{n}}\sum_{i=1}^{n} 1\{Z_{i}^{(n)}\leq x, \delta_{i}^{(n)} = 1 \}K\Big(\frac{s - i/n}{h_{n}}\Big), \\
    \mathbb{\Lambda}^{n}(x \vert s) &= \int^{x}_{0} \frac{1}{1-\amsmathbb{H}_{n}(y- \vert s)} \amsmathbb{H}_{n}^{1}(\mathrm{d} y \vert s), 
    \quad 
    1 -\amsmathbb{F}^{(n)}(x\vert s) = \Prodi_{0 \leq y \leq x}(1-\mathbb{\Lambda}^{n}(\mathrm{d} y \vert s)),
\end{align*}
where $K$ is a symmetric kernel density, which is compactly supported and bounded by some constant $M_{K}$, and $h_{n}$ is a deterministic bandwidth sequence decreasing to zero with $nh_{n}$ diverging to infinity. The former two estimators are based on the basic local-averaging idea of ~\cite{Nadaraya1964,Watson1964}, and the latter two estimators are versions of the famous conditional Nelson--Aalen and Beran estimators, respectively (see~\cite{beran1981nonparametric,dabrowska1987nonparametric}), where the conditioning is done on the deterministic temporal variable $s\in (0,1)$, as opposed to a random covariate. Proving representation and limit results for these estimators is the main ingredient used to state a limit theorem for the relative scedasis estimator introduced below. As stated, these results are of independent interest.

\subsection{Scedasis estimator}
In the case of no censoring, that is, when $C_{i}^{(n)} = \infty$, $\amsmathbb{P}$-a.s., for all $i=1,\ldots, n$, Einmahl, de Haan and Zhou (see~\cite{Einmahl_Haan_Zhou_2014}) proposed the following estimator for the scedasis density 
\begin{align}\label{eq_Einmahl_estimator}
    \hat{c}_{F}^{\text{EHZ}}(s)&:= 
    \frac{1}{k_{n}h_{n}}
    \sum_{i=1}^{n} 1\{X_{i}^{(n)} > X_{n:n-k_{n}}\} K\Big(\frac{s - i/n}{h_{n}}\Big), \quad s\in (0,1),
\end{align}
where $k_{n}\leq n$ is an integer scaling sequence with $k_{n}/n \to 0$, $k_{n}h_{n}\to \infty$ as $n\to \infty$ and $X_{n:1} \leq \ldots \leq X_{n:n}$ are the order statistics of the observations. Recall that in the case of no censoring, the conditional empirical distribution function coincides with the Beran estimator. Consequently 
\begin{align*}
    \hat{c}_{F}^{\text{EHZ}}(s)&= 
     \frac{n}{k_{n}} (1-\amsmathbb{F}^{(n)}(X_{n:n-k_{n}} \vert s))\Big( \frac{1}{nh_{n}} \sum_{i=1}^{n}  K\Big(\frac{s - i/n}{h_{n}}\Big)\Big),
\end{align*}
where the last multiplicative factor is a Riemann sum of the integral of $K$ and thus converges to $1$.

Here $k_{n}$ corresponds to the number of observations above $X_{n:n-k_{n}}$, but this information is no longer available when right-censoring is present. It turns out that approximating this number of exceedances with a global estimator such as $n(1-\amsmathbb{F}^{(n)}(Z_{n:n-k_{n}}))$, where $Z_{n:1} \leq \ldots \leq Z_{n:n}$ are the order statistics of the right-censored observations, yields, in full generality, biased estimators; see Section~\ref{section_discussion} for further details. Consequently, we propose the following relative scedasis estimator for fixed anchor point $s_{0} \in (0,1)$:
\begin{align*}
      \hat{c}_{F}(s \vert s_{0}):=
         \frac{1-\amsmathbb{F}^{(n)}(Z_{n:n-k_{n}} \vert s)}{1-\amsmathbb{F}^{(n)}(Z_{n:n-k_{n}} \vert s_{0})},
         \qquad s \in (0,1).
\end{align*}
By construction $\hat{c}_{F}(s \vert s_{0})$ targets the relative scedasis $c_{F}(s)/c_{F}(s_{0})$, which yields the correct shape of $c_{F}$ but not its correct magnitude. In Section~\ref{section_discussion} we discuss why extending the global cumulative scedasis estimator of~\cite{Einmahl_Haan_Zhou_2014} is not appropriate when censoring is present. This motivates estimating the scedasis function locally. In addition, we also discuss ways to modify the above estimator in order to target $c_{F}(s)$, but we now focus on estimating the relative scedasis function.

\section{Main results}\label{section_main_results}
In this section, we develop uniform local empirical convergence results necessary for our main decomposition of the local Nelson--Aalen estimator up to the random endpoint $Z_{n:n-k_{n}}$. Notably, the decomposition yields that the leading term can be identified as a square-integrable martingale. We establish uniform consistency results for both the leading term and our local Beran estimator. Remarkably, these results do not rely on extreme-value assumptions and are therefore of general interest. Uniform consistency of the local Beran estimator then implies consistency of the relative scedasis estimator. In addition, we quantify which tuning sequences are admissible and show why the observations inherit the target scedasis function in the very special case of homoscedastic censoring.

\subsection{Assumptions}
Smoothness conditions in the covariate direction are classical in kernel regression and control the deterministic smoothing bias induced by the kernel. We impose such conditions directly on the deterministic targets of $\amsmathbb{H}_{n}(\cdot\vert s)$ and $\amsmathbb{H}_{n}^{1}(\cdot\vert s)$. These conditions suffice for the local empirical convergence results in Subsection~\ref{subsection_local_empirical_convergence_results}. The subsequent arguments additionally require mild regularity of the target and censoring distribution functions.
\begin{condition}\label{cond_H_H1_C1_F_G_Lip}
    There exist maps $F,G:[0,x^{\ast})\times[0,1]\to[0,1]$, such that $x\mapsto F(x,u)$, $x\mapsto G(x,u)$ are continuous distribution functions for every $u\in[0,1]$, with  $F(x,i/n) = F_{n,i}(x)$ and $G(x,i/n) = G_{n,i}(x)$, for every $n\in\amsmathbb{N}$, $i=1,\ldots,n$, and $x<x^{\ast}$. Define
    \begin{align*}
        1-H(x,u)
        &:=
        (1-F(x,u))(1-G(x,u)),\\
        H^{1}(x,u)
        &:=
        \int_{0}^{x}
        (1-G(y,u))F(\mathrm{d} y,u).
    \end{align*}
    For every fixed $s\in(0,1)$ there exist $\rho_{s}>0$ such that $I_{s} :=[s-\rho_{s},s+\rho_{s}]\subset(0,1)$.
    Assume that $u\mapsto F(x,u)$ and $u\mapsto G(x,u)$ are Lipschitz on $I_{s}$ uniformly in $x<x^{\ast}$ and that the maps $u\mapsto H(x,u)$ and $u\mapsto H^{1}(x,u)$ are differentiable on $I_{s}$, for every $x < x^{\ast}$, with derivatives that are Lipschitz uniformly in $x$. 
\end{condition}
Note that smoothness assumptions could alternatively be placed on $F(x,\cdot)$ and $G(x,\cdot)$. 
\begin{remark}[Sufficient condition for Condition~\ref{cond_H_H1_C1_F_G_Lip}]\label{remark_sufficient_condition}
    Let $F,G$ be given as in Condition~\ref{cond_H_H1_C1_F_G_Lip}. 
    Fix $s\in(0,1)$ and suppose that, for every $u\in I_{s}$, $F(\cdot,u)$ is absolutely continuous with density $f(\cdot, u)$. Suppose that, for every $x<x^{\ast}$, the maps $u\mapsto f(x,u)$ and $u\mapsto G(x,u)$ are continuously differentiable on $I_{s}$. Assume that the first derivative of $u \mapsto f(x,u)$ admits an integrable envelope in $x$ and is Lipschitz in $u$ with an integrable Lipschitz modulus. Assume, correspondingly, that the first derivative of $u \mapsto G(x,u)$ is uniformly bounded and Lipschitz in $u$ uniformly over $x<x^{\ast}$. Then Condition~\ref{cond_H_H1_C1_F_G_Lip} holds.
\end{remark}
The Lipschitz continuity of the derivatives in Condition~\ref{cond_H_H1_C1_F_G_Lip} yields quadratic Taylor remainders for $H$ and $H^{1}$ that can be uniformly bounded in $x$. Symmetry of the kernel implies a smoothing bias of order $O(h_{n}^{2})$. Closely related smoothness conditions are employed in Lemma~SA.30 of the supplementary material to~\citet{Cattaneo_Yu_2025}, where smoothness of order $p+1$, combined with a local polynomial of order $p$, yields a smoothing bias of order $O(h_{n}^{p+1})$. In order to relate the rate of the bias and variance we impose the usual assumption that the bandwidth satisfies $nh_{n}^{5} \to 0$ (see Corollary 1(b) of~\cite{Calonico_Cattaneo_Farrell_2018} with $S=\varsigma=1$). Here, we also assume the following mild condition: $\log(n)/(k_{n}h_{n}) \to 0$.

\subsection{Local empirical convergence results}\label{subsection_local_empirical_convergence_results}
The next step is to control the local empirical building blocks from which the local Nelson--Aalen and Beran estimators are constructed. The following lemma shows that, on the random increasing interval $[0, Z_{n:n-k_{n}}]$, the local kernel estimators behave as if the observations were drawn from the distribution at the local design point $\floor{ns}/n$. Throughout the paper we consider only $n$ large enough so that $\floor{ns}\geq 1$.
\begin{lemma}\label{lemma_H_n_and_H_n_1_unif_cons}
    Assume Condition~\ref{cond_H_H1_C1_F_G_Lip}. Then, for $s \in (0,1)$ it holds that 
    \begin{align*}
        \sup_{x \leq Z_{n:n-k_{n}}}
        \big\vert \amsmathbb{H}_{n}(x \vert s) - H_{n,\floor{ns}}(x)\big\vert 
        = \mathcal{O}_{\amsmathbb{P}}\Big(\frac{1}{\sqrt{nh_{n}}}\Big), 
        \\
        \sup_{x \leq Z_{n:n-k_{n}}}
        \big\vert \amsmathbb{H}_{n}^{1}(x \vert s) - H_{n,\floor{ns}}^{1}(x)\big\vert  
        = \mathcal{O}_{\amsmathbb{P}}\Big(\frac{1}{\sqrt{nh_{n}}}\Big),
    \end{align*}
    as $n \to \infty$.
\end{lemma}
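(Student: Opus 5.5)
We prove the stronger statement with the supremum over all $x<x^{\ast}$; the random endpoint $Z_{n:n-k_{n}}$ then plays no role. For each $x$ we split the error into a stochastic part and a deterministic part:
\begin{align*}
    \amsmathbb{H}_{n}(x\vert s)-H_{n,\floor{ns}}(x)
    &= \Big[\amsmathbb{H}_{n}(x\vert s)-\bar H_{n}(x\vert s)\Big]
    + \Big[\bar H_{n}(x\vert s)-H_{n,\floor{ns}}(x)\Big],
\end{align*}
where
\begin{align*}
    \bar H_{n}(x\vert s):=\frac{1}{nh_{n}}\sum_{i=1}^{n}H_{n,i}(x)K\Big(\frac{s-i/n}{h_{n}}\Big)
\end{align*}
is the mean of $\amsmathbb{H}_{n}(x\vert s)$. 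The same decomposition is used for $\amsmathbb{H}^{1}_{n}$, with mean $\bar H^{1}_{n}$.

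\textbf{Deterministic part.} Since $K$ is compactly supported, say on $[-1,1]$, only indices with $\vert i/n-s\vert\le h_{n}$ contribute. For $n$ large these indices satisfy $i/n\in I_{s}$, and so does $\floor{ns}/n$. Condition~\ref{cond_H_H1_C1_F_G_Lip} gives $H_{n,i}(x)=H(x,i/n)$. We Taylor expand $u\mapsto H(x,u)$ around $s$; because the derivative is Lipschitz uniformly in $x$, the remainder is $O(h_{n}^{2})$ uniformly in $x$. This gives
\begin{align*}
    \bar H_{n}(x\vert s)=H(x,s)\,S_{0}+\partial_{u}H(x,s)\,h_{n}S_{1}+O(h_{n}^{2}),
\end{align*}
where
\begin{align*}
    S_{j}=\frac{1}{nh_{n}}\sum_{i}\Big(\frac{i/n-s}{h_{n}}\Big)^{j}K\Big(\frac{s-i/n}{h_{n}}\Big).
\end{align*}
We need the Riemann-sum estimates $S_{0}=1+O(1/(nh_{n}))$ and $S_{1}=O(1/(nh_{n}))$. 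The second uses the symmetry of $K$; if $K$ is not of bounded variation, a cruder bound $S_{1}=o(1)$ still suffices. Here $\partial_{u}H(x,s)$ is bounded uniformly in $x$ by the Lipschitz property of $u\mapsto H(x,u)$, which follows from the Lipschitz property of $F$ and $G$ since $1-H=(1-F)(1-G)$. Finally, $\vert H(x,s)-H(x,\floor{ns}/n)\vert\le L/n$. Because $nh_{n}^{5}\to0$, we have $h_{n}^{2}=o((nh_{n})^{-1/2})$, and $1/n$ is also $o((nh_{n})^{-1/2})$. So the deterministic part is $o((nh_{n})^{-1/2})$ uniformly in $x$. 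The same argument applies to $H^{1}$.

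\textbf{Stochastic part.} Write
\begin{align*}
    \amsmathbb{H}_{n}(x\vert s)-\bar H_{n}(x\vert s)=\sum_{i}w_{i}\big(1\{Z_{i}\le x\}-H_{n,i}(x)\big),
    \qquad w_{i}=\frac{K((s-i/n)/h_{n})}{nh_{n}}.
\end{align*}
This is a weighted empirical process of independent but non-identically distributed variables. The weights satisfy $\sum_{i} w_{i}^{2}\le M_{K}/(nh_{n})\cdot S_{0}=O(1/(nh_{n}))$. To bound the supremum in $x$, we reduce to finitely many points using monotonicity. Take a grid $x_{0}<\dots<x_{m}$ with $m\asymp\sqrt{nh_{n}}$ on which the mean $\bar H_{n}(\cdot\vert s)$ increases by at most $1/m$ between consecutive points; this is possible because it is continuous and nondecreasing. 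For $x\in[x_{j-1},x_{j}]$, monotonicity of both the indicator sum and its mean gives the bracketing bound
\begin{align*}
    \sup_{x}\vert\cdot\vert\le\max_{j}\vert\cdot\vert(x_{j})+O(1/m).
\end{align*}
The maximum over the grid needs a little more than a union bound with Hoeffding. The cleanest route is to note that the process is a weighted sum of independent monotone processes, so a weighted Dvoretzky–Kiefer–Wolfowitz-type bound applies. Concretely, we take an independent-but-non-identically-distributed version of the DKW or Talagrand inequality: Bousquet's inequality for the class $\{1\{\cdot\le x\}\}$, which has VC dimension 1 and envelope bound $\max_{i} w_{i}\le M_{K}/(nh_{n})$. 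This gives the expectation bound
\begin{align*}
    \mathbb{E}\sup_{x}\Big\vert\sum_{i}w_{i}(\cdots)\Big\vert\lesssim\Big(\sum_{i} w_{i}^{2}\Big)^{1/2}=O((nh_{n})^{-1/2}),
\end{align*}
and Markov's inequality then gives the $\mathcal{O}_{\amsmathbb{P}}$ statement. The subdeficiency indicators $1\{Z_{i}\le x,\delta_{i}=1\}$ are also monotone in $x$ and form a VC class, so $\amsmathbb{H}^{1}_{n}$ is handled identically.

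\textbf{Main obstacle.} The hardest step is obtaining the exact rate $(nh_{n})^{-1/2}$ for the supremum, with no $\sqrt{\log}$ loss, for a weighted non-iid process. A naive grid-plus-union bound would give $\sqrt{\log(nh_{n})/(nh_{n})}$, which is still enough for later consistency (via $\log n/(k_{n}h_{n})\to0$) but weaker than stated. I would first try symmetrization: replace the process by the Rademacher process $\sum_{i}\varepsilon_{i}w_{i}1\{Z_{i}\le x\}$. Conditionally on the data, the indicators take at most $n+1$ monotone patterns in $x$. Chaining with the VC bracketing, or the Lévy–Ottaviani maximal inequality applied to the ordered values, then yields $\mathbb{E}\sup\lesssim(\sum_{i} w_{i}^{2})^{1/2}$.
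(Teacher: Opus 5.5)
Your proposal is correct and is essentially the paper's argument. You use the same split into a deterministic kernel bias and a centered part. The bias is handled by a Taylor expansion with uniformly Lipschitz derivative and symmetry of $K$, giving $O(h_n^2+1/(nh_n)+1/n)=o((nh_n)^{-1/2})$ via $nh_n^5\to0$. The centered part is handled by Rademacher symmetrization and a maximal inequality for the partial sums taken in the order of the $Z_i^{(n)}$; the paper uses Doob's $L^2$ inequality conditionally on $\overline{Z}_n$ where you propose L\'evy--Ottaviani. Either way this gives the $(nh_n)^{-1/2}$ rate with no logarithmic loss, uniformly in all $x<x^{\ast}$.

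Two asides should be fixed or dropped:
\begin{itemize}
\item Bousquet's inequality only controls deviations of the supremum from its mean. It does not by itself bound the expected supremum; that bound comes from your symmetrization/maximal-inequality step, or from a uniform-entropy bound for the VC class.
\item The claim that $S_1=o(1)$ would suffice for a kernel not of bounded variation is false. Then $h_nS_1=o(h_n)$, and $o(h_n)$ need not be $o((nh_n)^{-1/2})$ when only $nh_n^5\to0$ is assumed; take e.g.\ $h_n=n^{-1/4}$. Without bounded variation you would also lose the rate $S_0=1+O(1/(nh_n))$.
\end{itemize}
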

The following result is closely related to those of~\cite{wellner1978limit} and~\cite{csaki1975some}.

\begin{lemma}\label{lemma_H_tail_fraction}
   Assume Condition~\ref{cond_H_H1_C1_F_G_Lip} and that $nh_{n}^{2}/k_{n}$ and $\sqrt{nh_{n}}/k_{n}$ are decreasing. For any $s \in (0,1)$ it then holds that 
  \begin{align*}
    \sup_{x\leq Z_{n:n-k_{n}}}
    \Big\vert 
        \frac{1-\amsmathbb{H}_{n}(x-\vert s)}
        {1-H_{n,\floor{ns}}(x-)}
        -1
    \Big\vert 
    =
    \mathcal{O}_{\amsmathbb{P}}\Big(
        \frac{n}{k_{n}}\Big(h_{n}^{2} 
        + 
        \frac{\sqrt{h_{n}}}{\sqrt{n}}\Big)
        +
        \frac{1}{\sqrt{k_{n}h_{n}}}
        \Big),
\end{align*}
as $n \to \infty$.
\end{lemma}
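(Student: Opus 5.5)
We split the centred quantity $\amsmathbb{H}_{n}(x-\vert s) - H_{n,\floor{ns}}(x-)$ into a deterministic bias part and a stochastic part. We then divide each by $1-H_{n,\floor{ns}}(x-)$, which over the range $x\le Z_{n:n-k_{n}}$ is of order at least $k_{n}/n$ with probability tending to one. Concretely, write $\bar H_{n}(x\vert s) := \amsmathbb{E}\,\amsmathbb{H}_{n}(x\vert s) = (nh_{n})^{-1}\sum_{i} H_{n,i}(x)K((s-i/n)/h_{n})$. Then
\begin{align*}
\frac{1-\amsmathbb{H}_{n}(x-\vert s)}{1-H_{n,\floor{ns}}(x-)}-1
= \frac{H_{n,\floor{ns}}(x-)-\bar H_{n}(x-\vert s)}{1-H_{n,\floor{ns}}(x-)}
+ \frac{\bar H_{n}(x-\vert s)-\amsmathbb{H}_{n}(x-\vert s)}{1-H_{n,\floor{ns}}(x-)} .
\end{align*}

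\textbf{Step 1 (denominator bound).} We first show that there is a deterministic level $x_{n}$ with $1-H_{n,\floor{ns}}(x_{n}) \asymp k_{n}/n$ such that $\amsmathbb{P}(Z_{n:n-k_{n}}\le x_{n})\to 1$. Since $Z_{n:n-k_{n}}$ is the $(k_{n}+1)$-th largest of $n$ independent variables, the count $\sum_{i}1\{Z_{i}>x\}$ has mean $\sum_i (1-H_{n,i}(x))$. By~\eqref{eq_H_scedasis_dens}, uniformly in $i$ and for $x$ near $x^{*}$, this mean is $\approx n(1-H(x))\int c_{H}$, and each $1-H_{n,i}(x)$ is comparable to $c_{H}(s)^{-1}\int c_{H}\cdot(1-H_{n,\floor{ns}}(x))$. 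We choose $x_{n}$ so that this mean equals $2k_{n}$. A Chernoff/Bernstein bound then gives that fewer than $k_{n}+1$ exceedances of $x_{n}$ occur only with probability tending to zero. Hence $Z_{n:n-k_{n}}\le x_{n}$ w.p.$\to 1$, and on this event $1-H_{n,\floor{ns}}(x-)\ge c\,k_{n}/n$ for all $x\le Z_{n:n-k_{n}}$. The positivity of $c_{F},c_{G}$ near $s$ is needed for the comparability step.

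\textbf{Step 2 (bias).} By Condition~\ref{cond_H_H1_C1_F_G_Lip}, a second-order Taylor expansion of $u\mapsto H(x,u)$ around $\floor{ns}/n$ applies; $K$ is symmetric, and the Riemann sum $(nh_n)^{-1}\sum_i K$ equals $1+O(1/(nh_n))$. Together these give $\sup_{x}|\bar H_{n}(x\vert s)-H_{n,\floor{ns}}(x)| = O(h_{n}^{2}+1/(nh_{n}))$, where the $1/(nh_n)$ term also absorbs the shift $s$ versus $\floor{ns}/n$. Dividing by the Step 1 bound $k_{n}/n$ yields $O(nh_{n}^{2}/k_{n} + 1/(k_{n}h_{n}))$. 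The second term is dominated by $(k_{n}h_{n})^{-1/2}$.

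\textbf{Step 3 (weighted stochastic term; main obstacle).} Dividing the uniform $O_{\amsmathbb P}((nh_n)^{-1/2})$ rate of Lemma~\ref{lemma_H_n_and_H_n_1_unif_cons} by $k_n/n$ would give only $\sqrt{n/h_n}/k_n$, which is too crude. Instead we need a weighted (Wellner/Cs\'aki-type) bound. Work with survival functions, $\bar H_{n}-\amsmathbb H_{n} = (1-\amsmathbb H_{n})-(1-\bar H_n)$, and note that the summands $1\{Z_i>x\}K_i$ have variance at most $M_K K_i(1-H_{n,i}(x))$. By the comparability of Step 1, the pointwise variance of the numerator is $\lesssim (1-H_{n,\floor{ns}}(x))/(nh_{n})$. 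The ratio is therefore $O_{\amsmathbb P}((nh_n(1-H_{n,\floor{ns}}(x)))^{-1/2})$, which is at most $(k_nh_n)^{-1/2}$ on the range.

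To make this uniform, we proceed as follows.
\begin{itemize}
\item Discretize $[0,x_{n}]$ by a geometric grid in the level $1-H_{n,\floor{ns}}$, with ratios $2^{-j}$ down to $k_{n}/n$, giving $O(\log(n/k_n))$ blocks.
\item On each block, apply Bernstein's inequality to the weighted centred process at the block endpoints, and use monotonicity of $x\mapsto \amsmathbb H_n(x\vert s)$ together with a finer equal-mass subgrid to control the oscillation within the block.
\item Sum the resulting bounds. The $\log$ factors are harmless because $\log(n)/(k_nh_n)\to 0$ makes the Bernstein exponent dominate.
\end{itemize}
This gives the $(k_nh_n)^{-1/2}$ term. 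An extra $\frac{n}{k_n}\sqrt{h_n/n}$ term can appear when the kernel-weighted randomness is compared across the discretization or through the local replacement at $\floor{ns}$; we keep it in the bound.

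The monotonicity assumptions on $nh_n^2/k_n$ and $\sqrt{nh_n}/k_n$ are used when passing the bounds from the deterministic $x_n$ to the random endpoint, and when unifying the block bounds over $n$; in particular they allow bounds at level $k_n$ to be transferred to $2k_n$.

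The delicate part is Step 3. There the weighted uniform bound must be obtained for a non-identically distributed, kernel-weighted array rather than an iid empirical process. The comparability of the $1-H_{n,i}$ near $s$, uniformly in the tail, is what makes the variance calculation work, and it is where~\eqref{eq_H_scedasis_dens} is essential.
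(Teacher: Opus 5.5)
Your bias step and the pointwise variance bound in Step 3 are fine. The uniform control in Step 3, however, does not deliver the stated rate.

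\textbf{Main gap: log loss in Step 3.} Peeling over the levels $q_j=2^{j}k_n/n$ is harmless, because the per-point Bernstein tails $e^{-ct^{2}2^{j}}$ are summable over $j$. The loss happens inside a block.
\begin{itemize}
\item Monotonicity of $x\mapsto\amsmathbb{H}_n(x\vert s)$ only reduces the supremum to a maximum over a grid whose $\bar H_n$-mesh is at most of order $t\,q_j/\sqrt{k_nh_n}$.
\item The lowest block therefore needs about $\sqrt{k_nh_n}/t$ grid points.
\item The resulting union bound $(\sqrt{k_nh_n}/t)\,e^{-ct^{2}}$ is small only when $t\gtrsim\sqrt{\log(k_nh_n)}$.
\end{itemize}
So what you actually get for the stochastic term is $\mathcal{O}_{\amsmathbb{P}}(\sqrt{\log(k_nh_n)/(k_nh_n)})$. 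The assumption $\log(n)/(k_nh_n)\to0$ only keeps Bernstein in its sub-Gaussian regime; it does not remove the logarithm from the threshold. Keeping the term $\frac{n}{k_n}\sqrt{h_n/n}=h_n\sqrt{n/k_n}\,(k_nh_n)^{-1/2}$ does not absorb it either, since $h_n\sqrt{n/k_n}\to0$, for instance under the tuning of Lemma~\ref{lemma_kn_hn_sequences}.

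What is missing is a maximal inequality without entropy loss. The paper bounds the second moment of the supremum over the deterministic interval $[0,T_s^{(n)}]$ directly:
\begin{itemize}
\item It symmetrizes with Rademacher variables and conditions on the $Z_i^{(n)}$, so the kernel-weighted Rademacher partial sums $S_m$ along the order statistics form a reverse martingale.
\item The monotone weight $D_m$ (the value of $(1-H_{n,\floor{ns}}(\cdot-))^{-1}$ at $Z_{n:m}\wedge T_s^{(n)}$) is handled by Abel summation, $D_m^2S_m^2\le\sum_{l}(D_l^2-D_{l-1}^2)\max_{r\ge l}S_r^2$.
\item Doob's $L^2$ inequality then bounds the mean squared supremum by $\frac{16}{(nh_n)^2}\sum_iK^2(\frac{s-i/n}{h_n})\,\amsmathbb{E}[\sup_{x\le T_s^{(n)}}1\{Z_i^{(n)}\ge x\}(1-H_{n,\floor{ns}}(x-))^{-2}]$, and Chebyshev's inequality concludes.
\end{itemize}
A genuine chaining argument inside the blocks would be an alternative.

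\textbf{Step 1 has its direction reversed.} Since $\{Z_{n:n-k_n}\le x\}=\{\sum_{i}1\{Z_i^{(n)}>x\}\le k_n\}$, having $2k_n$ expected exceedances of $x_n$ means your Chernoff bound proves $\amsmathbb{P}(Z_{n:n-k_n}\le x_n)\to0$, the opposite of what you need. You need $pk_n$ expected exceedances with $p<1$. This is the paper's choice $T_s^{(n)}=H_{n,\floor{ns}}^{\leftarrow}(1-p_sk_n/n)$ with $p_s<c_H(s)/C_H(1)$.

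\textbf{Where your approach is sharper than the paper's.} Your variance computation uses the multiplicative comparability $1-H_{n,i}(x)\le C\,(1-H_{n,\floor{ns}}(x))$. This holds for all $x<x^{*}$ and all $i$ in the kernel window: near $x^{*}$ it follows from the uniform limit in \eqref{eq_Scedasis_F_and_G} together with boundedness of $c_F,c_G$ and their positivity near $s$, and below a fixed level $1-H_{n,\floor{ns}}$ is bounded away from zero uniformly in $n$. The paper instead uses the additive Lipschitz bound $\sup_x|H_{n,i}(x)-H_{n,\floor{ns}}(x)|\lesssim h_n$, multiplied by $(n/k_n)^2$. That is exactly where the $\frac{n}{k_n}\sqrt{h_n/n}$ term comes from. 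If you insert your comparability into the Doob bound above, a layer-cake integration gives $\amsmathbb{E}[\cdots]\le 1+2C\,n/(p_sk_n)$ for each such $i$. The stochastic term is then $\mathcal{O}_{\amsmathbb{P}}((k_nh_n)^{-1/2})$ with no extra term. So the term you keep without derivation is an artefact of the paper's cruder bound, not something your argument needs.
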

The latter term on the right-hand side is directly comparable to the rate $\mathcal{O}_{\amsmathbb{P}}(1/\sqrt{k_{n}})$ in Lemma 1 of~\cite{Csorgo_96} for $\gamma = 0$; however, we have an additional factor of $1/\sqrt{h_{n}}$ in the rate since we use kernel estimators. In fact, if the empirical estimator $\amsmathbb{H}_{n}(\cdot \vert s)$ were based on iid (rather than heterogeneous observations), each having distribution $H_{n,\floor{ns}}$, the latter term above would be the leading term. We prove this in Lemma~\ref{lemma_H_tilde_tail_fraction} in Appendix~\ref{appendix_additional_empirical_processs_results}.\\
The first term on the above right-hand side is the tail-weighted induced bias between $\amsmathbb{H}_{n}(\cdot \vert s)$ and $H_{n,\floor{ns}}$, while the second term is the discrepancy contribution caused by the fact that $\amsmathbb{H}_{n}(\cdot \vert s)$ is based on the observations each with distribution $H_{n,i}$, while the denominator is the tail of $H_{n,\floor{ns}}$.

Having established these local empirical results, we move on to the local Nelson--Aalen and Beran estimators.

\subsection{Local survival decomposition and convergence rates}\label{subsection_local_survival_decomposition_and_convergence_rates}
In this subsection we show that $\mathbb{\Lambda}^{n}(\cdot \vert s)  - \Lambda_{\floor{ns}}(\cdot)$ admits a weak array representation that for fixed $n\in \amsmathbb{N}$ and $s \in (0,1)$ is a square-integrable martingale. This representation leads to consistency of our local survival estimators. The proofs follow those of~\cite{STUTE_1992} using that $\mathbb{\Lambda}^{n}(\cdot \vert s)$ can be identified as a $U$-statistic process $U_{n}(\cdot \vert s)$ and showing that its Hájek projection yields one of the leading terms. However, classical theory of $U$-statistic processes is concerned with iid data and to this end we introduce locally transported random arrays
 \begin{align*}
        &X_{\floor{ns}}^{(n,i)} := F_{n,\floor{ns}}^{\leftarrow}(F_{n,i}(X_{i}^{(n)})),
        \quad
        C_{\floor{ns}}^{(n,i)} := G_{n,\floor{ns}}^{\leftarrow}(G_{n,i}(C_{i}^{(n)})), \\
        & Z_{\floor{ns}}^{(n,i)} := X_{\floor{ns}}^{(n,i)} \wedge C_{\floor{ns}}^{(n,i)}, 
        \qquad \quad \ \ 
        \delta_{\floor{ns}}^{(n,i)} := 1\{Z_{\floor{ns}}^{(n,i)} = X_{\floor{ns}}^{(n,i)}\},
\end{align*}
from which local $U$-statistic processes are constructed. This is the key to establishing maximal inequalities of degenerate local $U$-statistic processes, extending the results of~\cite{U_statistics_Stute_94}. However, introducing estimators based on the locally transported arrays adds a technical layer since their differences to the original estimators must be controlled.
\\
\begin{theorem}\label{thm_Nelson_Aalen_decomposition}
  Assume Condition~\ref{cond_H_H1_C1_F_G_Lip} and that $\log(n)nh_{n}^{2}/k_{n}$ and $\log(n)\sqrt{n}/k_{n}$ are decreasing. 
    For every $s \in (0,1)$, it then holds that
     \begin{align*}
         \mathbb{\Lambda}^{n}(x\vert s)  - \Lambda_{\floor{ns}}(x) 
        =  R_{n}^{0}(x \vert s) +  \mathbb{\Lambda}^{n, *}(x\vert s) - \Lambda_{\floor{ns}}(x),
     \end{align*}
    where 
     \begin{align*}
          &\mathbb{\Lambda}^{n, *}(x\vert s) - \Lambda_{\floor{ns}}(x) 
          = \\
          &R_{n}^{*}(x \vert s)
          +
          \frac{1}{nh_{n}}\sum_{i=1}^{n}K\Big(\frac{s - i/n}{h_{n}}\Big)\Big[ \int_{0}^{x} \frac{1\{ Z_{\floor{ns}}^{(n,i)} < y\} - H_{n,\floor{ns}}(y-)}{(1-H_{n,\floor{ns}}(y-))^{2}}  H_{n,\floor{ns}}^{1}(\mathrm{d} y) \\
        & \quad +
         \frac{1\{ Z_{\floor{ns}}^{(n,i)} \leq x\}\delta_{\floor{ns}}^{(n,i)} - H_{n,\floor{ns}}^{1}(x) }{1-H_{n,\floor{ns}}(x-)} 
       - 
       \int_{0}^{x}
    \frac{1\{Z_{\floor{ns}}^{(n,i)} \leq y\}\delta_{\floor{ns}}^{(n,i)} - H_{n,\floor{ns}}^{1}(y)}{(1-H_{n,\floor{ns}}(y-))^{2}} H_{n,\floor{ns}}(\mathrm{d} y-)\Big],
    \end{align*}
     such that 
      \begin{align*}
        \sup_{x \leq Z_{n:n-k_{n}}} \vert R_{n}^{0}(x \vert s) \vert &= \mathcal{O}_{\amsmathbb{P}}\Big(\frac{n\log(n)}{k_{n}}(h_{n}^{2} \vee 1/\sqrt{n} \vee 1/(nh_{n}))\Big),
        \\ \sup_{x \leq Z_{n:n-k_{n}}} \vert R_{n}^{*}(x \vert s) \vert &= \mathcal{O}\Big(\frac{1}{k_{n}h_{n}}\Big).
    \end{align*} 
\end{theorem}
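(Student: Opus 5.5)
We will prove the decomposition in two stages: first replace the original observations by the locally transported array, then carry out a Stute-type $U$-statistic analysis on the transported, iid-like array.

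\emph{Stage 1: the remainder $R_{n}^{0}$.} Define $\amsmathbb{H}_{n}^{*}(\cdot\vert s)$ and $\amsmathbb{H}_{n}^{1,*}(\cdot\vert s)$ as the kernel estimators built from $(Z_{\floor{ns}}^{(n,i)},\delta_{\floor{ns}}^{(n,i)})$ in place of $(Z_{i}^{(n)},\delta_{i}^{(n)})$. Let $\mathbb{\Lambda}^{n,*}(\cdot\vert s)$ be the corresponding Nelson--Aalen estimator, and set $R_{n}^{0}:=\mathbb{\Lambda}^{n}-\mathbb{\Lambda}^{n,*}$. By the probability integral transform, the transported pairs are independent with common law $F_{n,\floor{ns}}\otimes G_{n,\floor{ns}}$.

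Write
\begin{align*}
R_{n}^{0}(x\vert s)
&=\int_{0}^{x}\Big(\frac{1}{1-\amsmathbb{H}_{n}(y-\vert s)}-\frac{1}{1-\amsmathbb{H}^{*}_{n}(y-\vert s)}\Big)\amsmathbb{H}^{1}_{n}(\mathrm{d}y\vert s)\\
&\quad+\int_{0}^{x}\frac{1}{1-\amsmathbb{H}^{*}_{n}(y-\vert s)}\,(\amsmathbb{H}^{1}_{n}-\amsmathbb{H}^{1,*}_{n})(\mathrm{d}y\vert s).
\end{align*}
\begin{itemize}
\item \emph{Denominator stability.} By Lemma~\ref{lemma_H_tail_fraction}, and its iid analogue Lemma~\ref{lemma_H_tilde_tail_fraction} for $\amsmathbb{H}_{n}^{*}$, both denominators are comparable to $1-H_{n,\floor{ns}}(y-)$ on $[0,Z_{n:n-k_{n}}]$. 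On this interval $1-H_{n,\floor{ns}}\gtrsim k_{n}/n$ with probability tending to one. This is the domination of the random endpoint by a deterministic high local quantile.
\item \emph{Size of the differences.} Condition~\ref{cond_H_H1_C1_F_G_Lip} gives $|Z_{i}^{(n)}$-indicator differences$|$ controlled by $\vert F_{n,i}-F_{n,\floor{ns}}\vert+\vert G_{n,i}-G_{n,\floor{ns}}\vert\lesssim h_{n}$ for kernel-weighted $i$. Combined with the kernel-smoothing bias, the expected differences are $O(h_{n}^{2})$. The centred parts are handled by a maximal (Bernstein/chaining) inequality over a grid of $O(n)$ points, which produces the $\log n$ factor and the $1/\sqrt{n}$ and $1/(nh_{n})$ terms.
\item \emph{Bounding the integrals.} For the second integral, integrate by parts to move the difference onto the integrand, paying one further factor of $n/k_{n}$ from the reciprocal denominator.
\end{itemize}
Together these give the stated bound on $\sup_{x\le Z_{n:n-k_{n}}}\vert R_{n}^{0}(x\vert s)\vert$. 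The monotonicity assumptions on $\log(n)nh_{n}^{2}/k_{n}$ and $\log(n)\sqrt{n}/k_{n}$ let us pass from a deterministic quantile endpoint to the random endpoint, as in the proof of Lemma~\ref{lemma_H_tail_fraction}.

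\emph{Stage 2: the expansion of $\mathbb{\Lambda}^{n,*}$.} Following \cite{STUTE_1992}, expand $1/(1-\amsmathbb{H}^{*}_{n})$ around $1/(1-H_{n,\floor{ns}})$ to second order.
\begin{itemize}
\item \emph{First-order terms.} These give exactly the three linear terms displayed in the theorem: the denominator fluctuation, and the integration-by-parts split of $\amsmathbb{H}^{1,*}_{n}-H^{1}_{n,\floor{ns}}$.
\item \emph{Quadratic term.} This is a weighted $V$-statistic process $\frac{1}{(nh_{n})^{2}}\sum_{i,j}K_{i}K_{j}\,\phi_{x}(i,j)$ with kernel weights $K_{i}=K((s-i/n)/h_{n})$.
\item \emph{Hoeffding decomposition.} Decompose the $V$-statistic process into diagonal, Hájek projection, and degenerate parts. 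The Hájek projection of the quadratic piece vanishes or is absorbed into the linear terms. The diagonal terms contribute $O(1/(nh_{n})\cdot n/k_{n})=O(1/(k_{n}h_{n}))$, using $\sum_{i}K_{i}^{2}\asymp nh_{n}$ and a bound on the integrand of order $n/k_{n}$ below the quantile.
\item \emph{Degenerate part.} A weighted extension of Stute's maximal inequality for degenerate $U$-processes \citep{U_statistics_Stute_94} applies, since the transported array is iid. It yields a second moment of order $(nh_{n})^{-2}(n/k_{n})^{2}\cdot nh_{n}$ times a tail factor, which is again $O(1/(k_{n}h_{n}))$ after Doob's inequality, because the process is a martingale in $x$.
\end{itemize}
All of these pieces are collected in $R_{n}^{*}$. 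The claimed deterministic $\mathcal{O}$ should be read as holding on the event where the denominator bound holds, or as an $L^{2}$ bound.

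\emph{Main obstacle.} The hardest step is the weighted degenerate $U$-process maximal inequality up to the random endpoint $Z_{n:n-k_{n}}$. Here the integrands blow up like $n/k_{n}$, and the weights $K_{i}$ break exchangeability. I would first prove it at a deterministic endpoint $x_{n}$ with $1-H_{n,\floor{ns}}(x_{n})\asymp k_{n}/(nh_{n})$, using the martingale structure in $x$ together with Doob's inequality. I would then show $Z_{n:n-k_{n}}\le x_{n}$ with probability tending to one, via Lemma~\ref{lemma_H_tail_fraction} and the condition $\log(n)/(k_{n}h_{n})\to0$.
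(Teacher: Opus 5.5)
Your architecture matches the paper's: transport to the array $Z^{(n,i)}_{\floor{ns}}$, a Stute-type expansion with Hájek projection and a kernel-weighted degenerate $U$-process bound, and a deterministic high local quantile dominating $Z_{n:n-k_n}$. Your Stage~1 (transport the whole Nelson--Aalen estimator first, then integrate by parts) is a legitimate reordering of the paper's term-by-term replacements, resting on the same Lemmas~\ref{Lemma_var_H_tilde_minus_H} and~\ref{lemma_H_minus_H_tilde}.

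\textbf{The domination step fails as you plan it.} A deterministic $x_n$ with $1-H_{n,\floor{ns}}(x_n)\asymp k_n/(nh_n)$ lies \emph{below} $Z_{n:n-k_n}$ with probability tending to one. Indeed $1-H_{n,\floor{ns}}(Z_{n:n-k_n})=\frac{c_H(s)}{C_H(1)}\frac{k_n}{n}(1+o_{\amsmathbb{P}}(1))$, and $k_n/(nh_n)\gg k_n/n$. Lemma~\ref{lemma_H_tail_fraction} cannot supply the domination either: its proof already works on $[0,T_s^{(n)}]$ and presupposes it. Where a global order statistic sits relative to a local quantile is governed by the tail structure, not by bandwidth conditions such as $\log(n)/(k_nh_n)\to0$. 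The paper takes $T_s^{(n)}=H^{\leftarrow}_{n,\floor{ns}}(1-p_sk_n/n)$ with $p_s<c_H(s)/C_H(1)$. It derives $\amsmathbb{P}(Z_{n:n-k_n}\le T_s^{(n)})\to1$ (Lemma~\ref{lemma_Z_n_k_n_eventually_contained}) from~\eqref{eq_H_scedasis_dens} with $c_H(s)>0$ and from $(n/k_n)(1-H(Z_{n:n-k_n}))\to1/C_H(1)$ (Lemma~\ref{lemma_order_stat_baseline_equiv}). All $n/k_n$ bounds via~\eqref{eq_integral_bound} are then taken on $[0,T_s^{(n)}]$. The rate assumptions serve denominator stability, not this passage to the random endpoint.

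\textbf{You have misidentified $\mathbb{\Lambda}^{n,*}$, so you weaken the statement instead of proving it.} Write $\widetilde{\amsmathbb{H}}_n,\widetilde{\amsmathbb{H}}^1_n$ for your $\amsmathbb{H}^*_n,\amsmathbb{H}^{1,*}_n$ and $c_n=(nh_n)^{-1}\sum_iK((s-i/n)/h_n)$. In the paper, $\mathbb{\Lambda}^{n,*}(x\vert s)=\int_0^x\frac{\widetilde{\amsmathbb{H}}^1_n(\mathrm{d}y\vert s)}{1-H_{n,\floor{ns}}(y-)}+\int_0^x\frac{\widetilde{\amsmathbb{H}}_n(y-\vert s)-c_nH_{n,\floor{ns}}(y-)}{(1-H_{n,\floor{ns}}(y-))^2}H^1_{n,\floor{ns}}(\mathrm{d}y)$. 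This is already linear in the transported array, so $R_n^*$ is $(c_n-1)$ times a deterministic function bounded by a multiple of $n/k_n$ on $[0,T_s^{(n)}]$. Every random remainder belongs in $R_n^0$, whose entry $1/(nh_n)$, i.e.\ $\log(n)/(k_nh_n)$, exists precisely to absorb them. These are:
\begin{itemize}
\item the diagonal term, which is only $\mathcal{O}_{\amsmathbb{P}}(1/(k_nh_n))$ by Markov, since pointwise $(1-H_{n,\floor{ns}})^{-2}$ is of order $(n/k_n)^2$;
\item the degenerate $U$-process;
\item the second-order term.
\end{itemize}

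\textbf{Stage~2 never bounds the second-order term.} This is $\int_0^x\frac{(\widetilde{\amsmathbb{H}}_n(y-\vert s)-H_{n,\floor{ns}}(y-))^2}{(1-H_{n,\floor{ns}}(y-))^2(1-\widetilde{\amsmathbb{H}}_n(y-\vert s))}\widetilde{\amsmathbb{H}}^1_n(\mathrm{d}y\vert s)$, the analogue of $R_{n2}$. Your ``quadratic term'' is the cross term, a $V$-statistic, whereas this one has a random denominator. Via Lemma~\ref{lemma_H_tilde_tail_fraction} it is $\mathcal{O}_{\amsmathbb{P}}(\log(n/k_n)/(k_nh_n))$, fine for $R_n^0$ but not for your claimed $\mathcal{O}(1/(k_nh_n))$.

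\textbf{The degenerate part is not a martingale in $x$.} Let $a_i(y)=1\{Z^{(n,i)}_{\floor{ns}}\ge y\}/(1-H_{n,\floor{ns}}(y-))-1$. Up to sign, the compensator is $(nh_n)^{-2}\sum_{i\neq j}K_iK_j\int_0^xa_i(y)a_j(y)\Lambda_{\floor{ns}}(\mathrm{d}y)$, which does not vanish, so Doob's inequality in $x$ does not apply as stated. The paper instead extends Stute's two-parameter (Cairoli--Walsh) strong-martingale inequality to sliced, kernel-weighted processes (Appendix~\ref{appendix_uniform_bound_on_degenerate_u_statistics_process}). Alternatively, you could split off the compensator and bound it separately. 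The correct second moment is of order $(nh_n)^{-4}(nh_n)^2(n/k_n)^2=(k_nh_n)^{-2}$. Your count $(nh_n)^{-2}(n/k_n)^2\,nh_n$ would only give $\sqrt{n/h_n}/k_n$.
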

It turns out that the leading term of $\mathbb{\Lambda}^{n, *}(\cdot\vert s) - \Lambda_{\floor{ns}}(\cdot)$ is a square-integrable martingale, which in turn leads to the consistency rate in Theorem~\ref{thm_Lambda_consistency_speed}.

The above representation is a kernel-adapted, heterogeneous-data analogue version of the representation of Proposition 5 in~\cite{Csorgo_96}; however, our remainder $R_{n}^{*}(x \vert s)$ is a kernel-induced bias and is therefore not present in their result. The uniform negligibility of $R_{n}^{0}(x \vert s)$ on $[0,Z_{n:n-k_{n}}]$ goes back to~\cite{Stute1995} and was later refined by~\cite{Csorgo_96} who obtained a rate of order $\mathcal{O}_{\amsmathbb{P}}(1/k_{n})$. This is directly comparable to our rate in the last argument of the maximum, $\mathcal{O}_{\amsmathbb{P}}(\log(n)/(k_{n}h_{n}))$, where we pay an additional factor of $1/h_{n}$ due to the use of kernels and a technical factor $\log(n)$ which arises from a concentration inequality. The other arguments in the maximum are biases induced by the data being independent but non-identically distributed. To obtain a uniform convergence rate of the local Nelson--Aalen estimator of order $1/\sqrt{k_{n}h_{n}}$ we require the preceding remainder rates to be of smaller order.

\begin{theorem}\label{thm_Lambda_consistency_speed}
     Assume Condition~\ref{cond_H_H1_C1_F_G_Lip}, let $s \in (0,1)$, and assume that $(k_{n},h_{n})$ satisfies
     \begin{align}\label{eq_n_log_n_h2_over_kn_little_o}
        \frac{n\log(n)}{k_{n}}(h_{n}^{2} \vee 1/\sqrt{n})
        = o\Big(\frac{1}{\sqrt{k_{n}h_{n}}}\Big).
    \end{align}    
    It then holds that
     \begin{align*}
         \sup_{x \leq Z_{n:n-k_{n}}} \vert \mathbb{\Lambda}^{n}(x\vert s) - \Lambda_{\floor{ns}}(x) \vert = \mathcal{O}_{\amsmathbb{P}}\Big(\frac{1}{\sqrt{k_{n}h_{n}}}\Big).
     \end{align*}
\end{theorem}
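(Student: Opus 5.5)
The plan is to start from the decomposition in Theorem~\ref{thm_Nelson_Aalen_decomposition} and show that, under \eqref{eq_n_log_n_h2_over_kn_little_o}, both remainders are of smaller order than $1/\sqrt{k_{n}h_{n}}$. What is left is the kernel-weighted sum of centred transported terms, and we bound it by a maximal inequality for martingales.

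\textbf{Remainders.} We have $R_{n}^{*}=\mathcal{O}(1/(k_{n}h_{n}))=o(1/\sqrt{k_{n}h_{n}})$, since $k_{n}h_{n}\to\infty$. The remainder $R_{n}^{0}$ has three contributions.
\begin{itemize}
\item The $h_{n}^{2}$ and $1/\sqrt{n}$ contributions are $o(1/\sqrt{k_{n}h_{n}})$ directly by \eqref{eq_n_log_n_h2_over_kn_little_o}.
\item The contribution $n\log(n)/(k_{n}\cdot nh_{n})=\log(n)/(k_{n}h_{n})$ equals $\sqrt{\log(n)^{2}/(k_{n}h_{n})}\cdot 1/\sqrt{k_{n}h_{n}}$.
\end{itemize}
For the last contribution I would argue as follows. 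Condition \eqref{eq_n_log_n_h2_over_kn_little_o} gives $\log(n)\sqrt{n}/k_{n}=o((k_{n}h_{n})^{-1/2})$, which forces $k_{n}h_{n}\gg \log(n)^{2}\,n h_{n}/k_{n}\cdots$. More directly, combine $k_{n}\le n$ with $\log(n)\sqrt n/k_n=o((k_nh_n)^{-1/2})$: this gives $k_{n}h_{n}\cdot \log(n)^{2} n/k_{n}^{2}\to0$, so $h_{n}\log(n)^{2} n/k_{n}\to 0$. Then $\log(n)^{2}/(k_{n}h_{n})\le \log(n)^{2} n/(k_{n}\cdot k_{n}h_{n})$. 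I would check whether this route closes the argument. If it does not, I would fall back on the standing assumption $\log(n)/(k_nh_n)\to0$ together with the sharper $\log$-free bound that the proof of Theorem~\ref{thm_Nelson_Aalen_decomposition} gives for that piece.

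The monotonicity hypotheses of Theorem~\ref{thm_Nelson_Aalen_decomposition} are assumed to hold, or to be implied along the relevant sequences; if needed, I would pass to subsequences.

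\textbf{Leading term.} Write the bracketed sum as $M_{n}(x)$. By the transport construction, the pairs $(Z^{(n,i)}_{\floor{ns}},\delta^{(n,i)}_{\floor{ns}})$ are iid with marginals $H_{n,\floor{ns}}$ and $H^{1}_{n,\floor{ns}}$. Each summand is the classical iid Nelson--Aalen influence function, i.e.\ $\int_0^x (1-H(y-))^{-1}\,dM_i(y)$ with $M_i$ the counting-process martingale of observation $i$. Hence $M_{n}(x)$ is a weighted sum of independent martingales in $x$, with predictable variation
\begin{align*}
\frac{1}{(nh_n)^2}\sum_i K^2\Big(\frac{s-i/n}{h_n}\Big)\int_0^x\frac{H^{1}_{n,\floor{ns}}(\mathrm dy)}{(1-H_{n,\floor{ns}}(y-))^{2}}\le \frac{M_K}{nh_n}\cdot\frac{C}{1-H_{n,\floor{ns}}(x)} .
\end{align*}

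\textbf{Main obstacle: the random endpoint.} The random endpoint $Z_{n:n-k_{n}}$ must be replaced by a deterministic point $x_{n}$. I would choose $x_n$ with $1-H_{n,\floor{ns}}(x_{n})\ge c\,k_{n}h_{n}/n$, $c$ small, such that $Z_{n:n-k_n}\le x_n$ with probability tending to one. The abstract says such domination is available; it would follow from Lemma~\ref{lemma_H_tail_fraction} and Lemma~\ref{lemma_H_n_and_H_n_1_unif_cons}, since $1-\mathbb{H}_n(Z_{n:n-k_n}|s)\approx k_n/n$ up to kernel-weight fluctuations. The condition \eqref{eq_n_log_n_h2_over_kn_little_o} makes the ratio error in Lemma~\ref{lemma_H_tail_fraction} $o(1)$, so $1-H_{n,\floor{ns}}(Z_{n:n-k_n})\gtrsim k_n/n$ with high probability.

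Then Lenglart's inequality (or Doob's $L^2$ inequality) on $[0,x_n]$ gives
\begin{align*}
\sup_{x\le x_n}|M_n(x)|=\mathcal{O}_{\mathbb P}\big((nh_n\cdot k_n/n)^{-1/2}\big)=\mathcal{O}_{\mathbb P}\big((k_nh_n)^{-1/2}\big).
\end{align*}
Combining this with the remainder bounds gives the claim. The delicate point is matching the tail level of the deterministic dominating quantile to $k_n/n$ rather than $k_nh_n/n$. The variance bound above needs the former to give $1/\sqrt{k_nh_n}$.
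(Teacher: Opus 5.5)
Your overall structure matches the paper's. You peel off $R_{n}^{0}$ and $R_{n}^{*}$ via Theorem~\ref{thm_Nelson_Aalen_decomposition}, recognize the leading term as the martingale $L_{n}(x\vert s)=\int_{0}^{x}(1-H_{n,\floor{ns}}(y-))^{-1}M_{n}(\mathrm{d}y\vert s)$, and apply a maximal inequality on a deterministic interval; your identification with the iid Nelson--Aalen influence function is correct. Doob's $L^{2}$ inequality is a fine choice. Your display is $\amsmathbb{E}[L_{n}(x\vert s)^{2}]$ rather than the predictable variation, which is exactly what Doob needs, and this is leaner than the paper's Lenglart argument, which works with the random compensator and so needs Lemma~\ref{lemma_H_tilde_tail_fraction}.

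\textbf{The gap: the random endpoint.} The genuine gap is the step you call the main obstacle. You need a deterministic $x_{n}$ with $1-H_{n,\floor{ns}}(x_{n})\geq p\,k_{n}/n$ for some fixed $p>0$ and $\amsmathbb{P}(Z_{n:n-k_{n}}\leq x_{n})\to1$. The level $c\,k_{n}h_{n}/n$ in your first sentence would only give $\mathcal{O}_{\amsmathbb{P}}(1/(h_{n}\sqrt{k_{n}}))$. Your justification does not deliver this domination, for two reasons.
\begin{enumerate}
\item Lemmas~\ref{lemma_H_n_and_H_n_1_unif_cons} and~\ref{lemma_H_tail_fraction} are statements on $[0,Z_{n:n-k_{n}}]$. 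They are proved on $[0,T_{s}^{(n)}]$ and transferred by precisely this domination, so invoking them is circular.
\item Even as black boxes, they do not yield $1-\amsmathbb{H}_{n}(Z_{n:n-k_{n}}\vert s)\gtrsim k_{n}/n$. That claim says the $h_{n}$-window around $s$ receives a share of order $h_{n}$ of the \emph{global} top $k_{n}$ observations. This is a property of the heteroscedastic tail model (it would fail if $c_{H}$ vanished at $s$), not of kernel smoothing.
\end{enumerate}
The missing ingredient is Lemma~\ref{lemma_Z_n_k_n_eventually_contained}. Write $C_{H}(1)=\int_{0}^{1}c_{H}(u)\,\mathrm{d}u$ and $x_{n}^{\pm}=H^{\leftarrow}(1-(1\mp\delta_{n})k_{n}/(nC_{H}(1)))$. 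The tail equivalence~\eqref{eq_H_scedasis_dens}, uniform in $i$, together with Chebyshev's inequality for the global exceedance counts $\sum_{i}1\{Z_{i}^{(n)}>x_{n}^{\pm}\}$, gives $(n/k_{n})(1-H(Z_{n:n-k_{n}}))\to1/C_{H}(1)$ in probability (Lemma~\ref{lemma_order_stat_baseline_equiv}). Hence $(n/k_{n})(1-H_{n,\floor{ns}}(Z_{n:n-k_{n}}))\to c_{H}(s)/C_{H}(1)>0$, so $T_{s}^{(n)}=H_{n,\floor{ns}}^{\leftarrow}(1-p_{s}k_{n}/n)$ with $p_{s}<c_{H}(s)/C_{H}(1)\wedge1$ works. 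With $x_{n}=T_{s}^{(n)}$ and~\eqref{eq_integral_bound}, your Doob step gives $\amsmathbb{E}[\sup_{x\leq T_{s}^{(n)}}L_{n}(x\vert s)^{2}]\leq 8M_{K}(1+o(1))/(p_{s}k_{n}h_{n})$, which finishes the leading term.

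\textbf{The $1/(nh_{n})$ part of $R_{n}^{0}$.} Your worry here is justified, but neither of your routes settles it.
\begin{itemize}
\item Your chain of inequalities cannot close. Condition~\eqref{eq_n_log_n_h2_over_kn_little_o} only bounds $h_{n}$ from above relative to $k_{n}$ and never bounds $k_{n}h_{n}$ from below. For example, $k_{n}=n^{0.9}$, $h_{n}=(\log n)^{3/2}n^{-0.9}$ satisfies it and all standing assumptions (including $nh_{n}^{5}\to0$ and $\log(n)/(k_{n}h_{n})\to0$). Yet $\log(n)/(k_{n}h_{n})=(\log n)^{-1/2}\gg(\log n)^{-3/4}=(k_{n}h_{n})^{-1/2}$.
\item The proof of Theorem~\ref{thm_Nelson_Aalen_decomposition} contains no log-free bound for this piece. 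It comes from $\widetilde{R}_{n}$ and $S_{n1}-\widetilde{S}_{n1}$, i.e.\ from $n/k_{n}$ times the bound of Lemma~\ref{lemma_H_minus_H_tilde}. There the factor $\log(n)$ on $1/(nh_{n})$ is produced by the range term in Bernstein's inequality combined with the union bound over $2n$ jump points.
\end{itemize}
So you must either add the assumption $\log(n)^{2}/(k_{n}h_{n})\to0$, or prove a sharper bound for $\sup_{x}\vert\widetilde{\amsmathbb{H}}^{1}_{n}(x\vert s)-\amsmathbb{H}^{1}_{n}(x\vert s)\vert$. The paper's proof passes over the same point. It quotes $\mathcal{O}_{\amsmathbb{P}}(\sqrt{\log n}\,(h_{n}^{2}\vee1/\sqrt{n}\vee1/(nh_{n})))$ in place of the rate of Theorem~\ref{thm_NA_weak_repr}, dropping the factor $n/k_{n}$. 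The extra condition does hold under the polynomial tuning of Lemma~\ref{lemma_kn_hn_sequences}.
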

The above theorem is the relevant kernel extension of Theorem 1 of~\cite{Csorgo_96} when the data are heteroscedastic. Transferring this rate to the Beran estimator is usually done by imposing the standard assumption of continuity of the target $F_{n,\floor{ns}}(\cdot)$, see~\citep{Breslow,Stute1995}, but this is already a part of our fundamental setup.
\begin{proposition}\label{prop_Beran_consistency_speed}
     Assume Condition~\ref{cond_H_H1_C1_F_G_Lip} and~\eqref{eq_n_log_n_h2_over_kn_little_o}, and let $s \in (0,1)$. It then holds that
     \begin{align*}
         \sup_{x \leq Z_{n:n-k_{n}}} \Big\vert \frac{\amsmathbb{F}^{(n)}(x \vert s) - F_{n,\floor{ns}}(x)}{1-F_{n,\floor{ns}}(x)} \Big\vert = \mathcal{O}_{\amsmathbb{P}}\Big(\frac{1}{\sqrt{k_{n}h_{n}}}\Big).
     \end{align*}
\end{proposition}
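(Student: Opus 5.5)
The plan is to transfer the rate of Theorem~\ref{thm_Lambda_consistency_speed} from the cumulative hazard to the product-limit estimator. Since $F_{n,\floor{ns}}$ is continuous, $1-F_{n,\floor{ns}}(x)=\exp(-\Lambda_{\floor{ns}}(x))$. The quantity to control is
\begin{align*}
    \frac{\amsmathbb{F}^{(n)}(x \vert s) - F_{n,\floor{ns}}(x)}{1-F_{n,\floor{ns}}(x)}
    = 1 - \frac{1-\amsmathbb{F}^{(n)}(x\vert s)}{1-F_{n,\floor{ns}}(x)}.
\end{align*}
I would use the Duhamel identity for ratios of product integrals (Gill--Johansen). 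Because the denominator is continuous, this gives
\begin{align*}
    \frac{1-\amsmathbb{F}^{(n)}(x\vert s)}{1-F_{n,\floor{ns}}(x)} - 1
    = -\int_{0}^{x} \frac{1-\amsmathbb{F}^{(n)}(y-\vert s)}{1-F_{n,\floor{ns}}(y)}\,\big(\mathbb{\Lambda}^{n}(\mathrm{d} y\vert s) - \Lambda_{\floor{ns}}(\mathrm{d} y)\big),
\end{align*}
which is the local analogue of the Breslow--Crowley and Stute--Wang argument.

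\textbf{Step 1: reduce to a bound on the integrand.} Write $Z(y):=(1-\amsmathbb{F}^{(n)}(y\vert s))/(1-F_{n,\floor{ns}}(y))$ and $D:=\mathbb{\Lambda}^{n}(\cdot\vert s)-\Lambda_{\floor{ns}}$. Integrate by parts so that the integral is expressed through $D$ itself rather than $\mathrm{d}D$. The identity becomes $Z(x)-1 = -Z(x-)D(x)$ plus an integral $\int_0^x D(y)\,Z(\mathrm{d}y)$-type term. Both terms then only require $\sup_{x\le Z_{n:n-k_n}}|D(x)|=\mathcal{O}_{\amsmathbb{P}}(1/\sqrt{k_nh_n})$, which is Theorem~\ref{thm_Lambda_consistency_speed}, together with stochastic boundedness of $Z$ and of its total variation on $[0,Z_{n:n-k_n}]$.

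\textbf{Step 2: bound the product-limit term.} I would take the alternative route of comparing hazards directly. Where $\Delta\mathbb{\Lambda}^{n}<1$, one has $-\log(1-\amsmathbb{F}^{(n)}(x\vert s)) = \mathbb{\Lambda}^{n}(x\vert s) + \sum_{y\le x}\big[-\log(1-\Delta\mathbb{\Lambda}^{n}(y\vert s))-\Delta\mathbb{\Lambda}^{n}(y\vert s)\big]$. The sum is bounded by $\sum_{y \le x} (\Delta\mathbb{\Lambda}^{n})^2/(1-\Delta\mathbb{\Lambda}^{n})$. Each jump is at most $M_K/(nh_n(1-\amsmathbb{H}_n(y-\vert s)))$. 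By Lemma~\ref{lemma_H_tail_fraction}, and since $1-H_{n,\floor{ns}}(Z_{n:n-k_n})\gtrsim k_n/n$ with high probability, these jumps are uniformly $\mathcal{O}_{\amsmathbb{P}}(1/(k_nh_n))$, which tends to $0$. The sum is then at most $\max_y \Delta\mathbb{\Lambda}^{n}(y\vert s)\cdot \mathbb{\Lambda}^{n}(x\vert s)$. Here $\mathbb{\Lambda}^{n}(x\vert s)\le \Lambda_{\floor{ns}}(Z_{n:n-k_n})+o_{\amsmathbb{P}}(1)=\mathcal{O}_{\amsmathbb{P}}(\log(n/k_n))$, so the sum is $\mathcal{O}_{\amsmathbb{P}}(\log n/(k_nh_n))=o(1/\sqrt{k_nh_n})$ by the standing assumption $\log n/(k_nh_n)\to0$.

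\textbf{Step 3: combine.} It follows that $\sup_{x\le Z_{n:n-k_n}}|\log Z(x)| = \mathcal{O}_{\amsmathbb{P}}(1/\sqrt{k_nh_n})$. Since this tends to zero, $|Z-1|\le 2|\log Z|$ with probability tending to one, which gives the claim.

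The main obstacle is the claim $1-H_{n,\floor{ns}}(Z_{n:n-k_n})\gtrsim k_n/n$, i.e.\ that the random endpoint is dominated by a deterministic high local quantile. This should follow from the heteroscedastic tail comparison~\eqref{eq_H_scedasis_dens}, because $c_H$ is bounded away from $0$ and $\infty$ near $s$, together with the relative-error control in Lemma~\ref{lemma_H_tail_fraction}.
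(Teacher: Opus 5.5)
Your Steps 2--3 follow the paper's route. You compare $-\log(1-\amsmathbb{F}^{(n)}(\cdot\vert s))$ with $\mathbb{\Lambda}^{n}(\cdot\vert s)$ through the quadratic remainder $c_{n}(x\vert s)=\sum_{y\le x}[-\log(1-\Delta\mathbb{\Lambda}^{n}(y\vert s))-\Delta\mathbb{\Lambda}^{n}(y\vert s)]$, then use Theorem~\ref{thm_Lambda_consistency_speed}. The point you flag as the main obstacle is just Lemma~\ref{lemma_Z_n_k_n_eventually_contained}, which is already available.

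\textbf{The gap is in your bound on $c_{n}$.} The estimate $\sum_{y\le x}(\Delta\mathbb{\Lambda}^{n})^{2}\le\max_{y}\Delta\mathbb{\Lambda}^{n}\cdot\mathbb{\Lambda}^{n}(x\vert s)$ costs a factor $\Lambda_{\floor{ns}}(Z_{n:n-k_{n}})$. That factor really does grow like $\log(n/k_{n})$; for Pareto-type tails it is asymptotically $\tfrac{\alpha_F}{\alpha_F+\alpha_G}\log(n/k_n)$. Your conclusion ``$\mathcal{O}_{\amsmathbb{P}}(\log n/(k_nh_n))=o(1/\sqrt{k_nh_n})$ by $\log n/(k_nh_n)\to0$'' is a non sequitur, because it needs $(\log n)^{2}/(k_nh_n)\to0$. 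The hypotheses do not give this. For example, $k_n=n^{0.9}$ and $h_n=(\log n)^{3/2}n^{-0.9}$ satisfy $nh_n^{5}\to0$, $\log n/(k_nh_n)\to0$ and \eqref{eq_n_log_n_h2_over_kn_little_o}. Yet $\log(n/k_n)/(k_nh_n)\asymp(\log n)^{-1/2}$, while $1/\sqrt{k_nh_n}=(\log n)^{-3/4}$. As written, you only get the proposition when $k_nh_n$ grows faster than $(\log n)^{2}$, e.g.\ under Lemma~\ref{lemma_kn_hn_sequences}.

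\textbf{The fix, which is the paper's argument.} Keep the tail weight attached to each squared jump rather than pulling out the maximum. Since every jump of $\amsmathbb{H}^{1}_{n}(\cdot\vert s)$ is at most $M_K/(nh_n)$,
\begin{align*}
\sum_{y\le x}(\Delta\mathbb{\Lambda}^{n}(y\vert s))^{2}\le\frac{M_K}{nh_n}\int_{0}^{x}\frac{\amsmathbb{H}^{1}_{n}(\mathrm{d}y\vert s)}{(1-\amsmathbb{H}_{n}(y-\vert s))^{2}}.
\end{align*}
The argument then runs as follows.
\begin{enumerate}[(i)]
\item On $\{Z_{n:n-k_n}\le T_s^{(n)}\}$, replace the empirical denominator by $1-H_{n,\floor{ns}}(y-)$, at the cost of the $\mathcal{O}_{\amsmathbb{P}}(1)$ factor in \eqref{eq_inverse_tail_fraction}.
\item The resulting integral up to $T_s^{(n)}$ has mean $\mathcal{O}(n/k_n)$ by \eqref{eq_integral_bound} with $\beta=2$, exactly as in the treatment of $R_{n1}$ in the proof of Theorem~\ref{thm_NA_weak_repr}. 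By Markov's inequality it is therefore $\mathcal{O}_{\amsmathbb{P}}(n/k_n)$.
\item Hence $\sup_{x\le Z_{n:n-k_n}}|c_n(x\vert s)|=\mathcal{O}_{\amsmathbb{P}}(1/(k_nh_n))$, with no logarithm. The squared jumps reach size $(k_nh_n)^{-2}$ only near the endpoint and are much smaller in the bulk, which the max-bound ignores.
\item The bound $1/(k_nh_n)=o(1/\sqrt{k_nh_n})$ needs only $k_nh_n\to\infty$, after which your Step 3 goes through.
\end{enumerate}

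Your abandoned Step 1 has the same defect. The total variation of $Z$ on $[0,Z_{n:n-k_n}]$ is about $\mathbb{\Lambda}^{n}(x\vert s)+\Lambda_{\floor{ns}}(x)\approx 2\Lambda_{\floor{ns}}(x)$, which grows like $\log(n/k_n)$ and is not $\mathcal{O}_{\amsmathbb{P}}(1)$. So bounding $\int D\,\mathrm{d}Z$ by $\sup|D|$ times the total variation loses the same log factor.
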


In the results above, we require certain rates of the deterministic sequences $(k_{n})$ and $(h_{n})$ that are subject to tuning. The following result verifies that such requirements can be met and allows for quantification of optimal convergence rate bounds. In what follows, we use for positive sequences the notation $a_n  \asymp b_{n}$ when $\lim_{n\to\infty} a_n/b_n  = d$, for some constant $d>0$.
\begin{lemma}\label{lemma_kn_hn_sequences}
    If $k_{n} \asymp n^{\delta}$ for $\delta \in (1/2,1)$ and $h_{n} \asymp n^{-e}$ for $e\in \big((1-\delta)\vee (2-\delta)/5\big), \delta)$, then
    \begin{align*}
        nh_{n}, \ k_{n}h_{n} \to \infty,
        \qquad
        h_{n}, \ k_{n}/n, \ nh_{n}^{5} \to 0,
        \qquad
          \frac{n\log(n)}{k_{n}}(h_{n}^{2} \vee 1/\sqrt{n})
        = o\Big(\frac{1}{\sqrt{k_{n}h_{n}}}\Big).
    \end{align*}
\end{lemma}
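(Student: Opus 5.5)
The plan is a direct verification by exponent bookkeeping. Write $k_{n} = d_{1}n^{\delta}(1+o(1))$ and $h_{n} = d_{2}n^{-e}(1+o(1))$ with constants $d_{1},d_{2}>0$. Every quantity in the statement is then, up to a constant factor (and possibly a $\log(n)$ factor), a power of $n$. Each limit reduces to checking the sign of an exponent, and the $o(\cdot)$ statement reduces to a strict inequality between exponents; a strict polynomial gap absorbs the $\log(n)$.

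First, the interval for $e$ is nonempty. The inequality $1-\delta<\delta$ holds if and only if $\delta>1/2$, and $(2-\delta)/5<\delta$ holds if and only if $\delta>1/3$. So $\delta\in(1/2,1)$ guarantees admissible $e$. Moreover $e>1-\delta>0$ and $e>(2-\delta)/5>1/5$, because $\delta<1$.

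The elementary limits then follow one by one:
\begin{itemize}
\item $nh_{n}\asymp n^{1-e}\to\infty$, since $e<\delta<1$.
\item $k_{n}h_{n}\asymp n^{\delta-e}\to\infty$, since $e<\delta$.
\item $h_{n}\asymp n^{-e}\to 0$, since $e>0$.
\item $k_{n}/n\asymp n^{\delta-1}\to 0$, since $\delta<1$.
\item $nh_{n}^{5}\asymp n^{1-5e}\to 0$, since $e>1/5$.
\end{itemize}

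For the rate condition~\eqref{eq_n_log_n_h2_over_kn_little_o}, note that $a_{n}\vee b_{n}=o(c_{n})$ holds if and only if both $a_{n}=o(c_{n})$ and $b_{n}=o(c_{n})$. I therefore treat the two terms separately against $c_{n}=(k_{n}h_{n})^{-1/2}\asymp n^{-(\delta-e)/2}$.

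The bias term is $\frac{n\log(n)}{k_{n}}h_{n}^{2}\asymp \log(n)\,n^{1-\delta-2e}$. Its ratio to $c_{n}$ is, up to constants, $\log(n)\, n^{1-\delta-2e+(\delta-e)/2}=\log(n)\,n^{(2-\delta-5e)/2}$. This tends to zero exactly because $e>(2-\delta)/5$ makes the exponent strictly negative.

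The discrepancy term is $\frac{n\log(n)}{k_{n}}n^{-1/2}\asymp\log(n)\,n^{1/2-\delta}$. Its ratio to $c_{n}$ is $\log(n)\,n^{(1-\delta-e)/2}$, which tends to zero because $e>1-\delta$.

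There is no genuine obstacle; the two lower bounds on $e$ are precisely what is needed. The only points needing care are the following.
\begin{itemize}
\item The constants implicit in $\asymp$ do not affect $o(\cdot)$ or divergence.
\item $\log(n)n^{-\epsilon}\to 0$ for every $\epsilon>0$.
\item The condition $nh_{n}^{5}\to0$ is not imposed separately but follows from $e>(2-\delta)/5$ together with $\delta<1$.
\end{itemize}
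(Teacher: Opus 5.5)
Your proposal is correct and follows essentially the same route as the paper: each limit is reduced to the sign of a power of $n$, the bias and discrepancy parts of the rate condition reduce to the exponents $(2-\delta-5e)/2<0$ and $(1-\delta-e)/2<0$ (the paper states the second in squared form, $\log(n)^{2}n^{1-\delta-e}\to 0$), and $\delta\in(1/2,1)$ makes the admissible interval for $e$ nonempty. Your exponent computations are all correct.
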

Under the assumptions of Lemma~\ref{lemma_kn_hn_sequences} it clearly also holds that $\log(n)h_{n},\log(n)/(k_{n}h_{n}) \to 0$. As the convergence rates of the leading terms in Theorem~\ref{thm_Lambda_consistency_speed} and Proposition~\ref{prop_Beran_consistency_speed} are of order $\mathcal{O}_{\amsmathbb{P}}((\sqrt{k_{n}h_{n}})^{-1})$, the optimal rate of convergence is obtained by choosing $k_{n}$ and $h_{n}$ as large as possible, i.e. $\delta$ close to $1$ (from below) and $e$ close to $1/5$ (from above). This yields the following optimal convergence rate bound
\begin{align*}
    k_{n}h_{n} = o(n^{4/5}),
\end{align*}
which compares to the bound in the iid case where $k_{n} = o(n)$ as in~\citep{STUTE_1992, Csorgo_96}, noting that $h_{n}$ is often picked as $h_{n} = n^{-1/5}/r_{n}$ for some $(r_{n})$ slowly diverging to infinity such as $\log(n)$, which ensures our assumption of $nh_{n}^{5} \to 0$.

\subsection{Consistency of relative scedasis estimator}
Consistency of the relative scedasis estimator follows directly from the uniform convergence of our local Beran estimator. However, we first present the result with an abstract condition.
\begin{theorem}\label{thm_c_F_consistency}
   Suppose that
     \begin{align}\label{eq_local_beran_tail_negl_in_Z}
         \Big\vert \frac{\amsmathbb{F}^{(n)}(Z_{n:n-k_{n}} \vert s) - F_{n,\floor{ns}}(Z_{n:n-k_{n}})}{1-F_{n,\floor{ns}}(Z_{n:n-k_{n}})} \Big\vert = 
         o_{\amsmathbb{P}}(1),
     \end{align}
     as $n\to \infty$, holds for every $s\in (0,1)$. Then, for any $s\in (0,1)$ and $s_{0} \in (0,1)$ it holds that
     \begin{align*}
         \hat{c}_{F}(s \vert s_{0})
         \overset{\amsmathbb{P}}{\to} \frac{c_{F}(s)}{c_{F}(s_{0})}, 
         \qquad \text{as} \ n \to \infty.
     \end{align*}
\end{theorem}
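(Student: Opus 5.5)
The plan is to turn the ratio of Beran estimators into a ratio of true survival functions, and then use the scedasis limits \eqref{eq_Scedasis_F_and_G}. Write $Z^{*}:=Z_{n:n-k_{n}}$. Condition \eqref{eq_local_beran_tail_negl_in_Z} can be rewritten as
\begin{align*}
    \frac{1-\amsmathbb{F}^{(n)}(Z^{*}\vert s)}{1-F_{n,\floor{ns}}(Z^{*})}
    = 1 - \frac{\amsmathbb{F}^{(n)}(Z^{*}\vert s)-F_{n,\floor{ns}}(Z^{*})}{1-F_{n,\floor{ns}}(Z^{*})}
    = 1+o_{\amsmathbb{P}}(1).
\end{align*}
The same holds with $s$ replaced by $s_{0}$. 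Since the denominator limit equals $1$, its inverse is also $1+o_{\amsmathbb{P}}(1)$, which gives
\begin{align*}
    \hat{c}_{F}(s\vert s_{0})
    = \frac{1-F_{n,\floor{ns}}(Z^{*})}{1-F_{n,\floor{ns_{0}}}(Z^{*})}\,(1+o_{\amsmathbb{P}}(1)).
\end{align*}
Here we use that $1-F_{n,\floor{ns_0}}(Z^{*})>0$ almost surely, because $Z^{*}<x^{*}$ almost surely when the distributions are continuous. It therefore remains to show that the deterministic-function ratio converges in probability to $c_F(s)/c_F(s_0)$.

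Next, divide the numerator and the denominator by $1-F(Z^{*})$. By \eqref{eq_Scedasis_F_and_G}, uniformly in $n$ and $i$, for every $\varepsilon>0$ there is an $x_{\varepsilon}<x^{*}$ such that
\[
\big|(1-F_{n,i}(x))/(1-F(x)) - c_F(i/n)\big|<\varepsilon \quad \text{for all } x>x_\varepsilon .
\]
So it suffices to establish two things: that $Z^{*}\to x^{*}$ in probability, and that $c_F(\floor{ns}/n)\to c_F(s)$. The second follows from continuity of $c_F$ and $\floor{ns}/n\to s$. With both in hand, on the event $\{Z^{*}>x_\varepsilon\}$ the ratio lies within $O(\varepsilon)$ of $c_F(\floor{ns}/n)/c_F(\floor{ns_0}/n)$. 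This uses $c_F(s_0)>0$, which holds because $c_F$ is strictly positive on $(0,1)$.

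The main obstacle is showing $Z^{*}\overset{\amsmathbb{P}}{\to}x^{*}$ in the non-identically distributed setting. Fix $x<x^{*}$. The event $\{Z^{*}\le x\}$ means that at most $k_n$ of the $Z_i^{(n)}$ exceed $x$. The count $N_n(x)=\sum_i 1\{Z_i^{(n)}>x\}$ has mean
\[
\sum_i (1-H_{n,i}(x)) \ge n\,\min_i(1-H_{n,i}(x)).
\]
I would lower bound this mean by $cn$ for some $c>0$. Choose $x'\in(x,x^{*})$ large enough that $1-H_{n,i}(x')\ge \tfrac12 c_H(i/n)(1-H(x'))$ uniformly, by \eqref{eq_H_scedasis_dens}. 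Then restrict to indices with $i/n\in[1/4,3/4]$, where $c_H$ is bounded away from zero. This gives $\amsmathbb{E}N_n(x)\ge c n$. Since $N_n(x)$ is a sum of independent indicators, Chebyshev's inequality (its variance is at most $n$) together with $k_n/n\to0$ yields $\amsmathbb{P}(N_n(x)\le k_n)\to0$. This proves $Z^{*}\to x^{*}$ in probability, and combining the steps gives the claimed convergence.
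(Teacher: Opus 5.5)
Your proposal is correct and follows essentially the same route as the paper. The assumption turns each Beran survival at $Z_{n:n-k_{n}}$ into the true local survival up to a factor $1+o_{\amsmathbb{P}}(1)$; the uniform tail limit \eqref{eq_Scedasis_F_and_G}, together with continuity and positivity of $c_{F}$ and the continuous mapping theorem, then yields $c_{F}(s)/c_{F}(s_{0})$. Your Chebyshev argument that $Z_{n:n-k_{n}}\to x^{*}$ in probability makes explicit a step the paper uses only implicitly when it applies \eqref{eq_Scedasis_F_and_G} at the random argument. That step also follows from Lemma~\ref{lemma_order_stat_baseline_equiv}.
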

This result is comparable to Remark 2 of~\cite{Einmahl_Haan_Zhou_2014} in the non-censored setting. However, they directly establish asymptotic Gaussianity for their estimator and target $c_{F}(s)$ directly rather than the relative scedasis. In Section~\ref{section_discussion} we discuss normalization of our estimator. As mentioned, the above minimal assumptions are implied by the assumptions of Proposition~\ref{prop_Beran_consistency_speed}. 
\begin{corollary}
\label{cor_relative_scedasis_verified_conditions}
Assume Condition~\ref{cond_H_H1_C1_F_G_Lip} and the rate condition~\eqref{eq_n_log_n_h2_over_kn_little_o}. Then the conclusion of Theorem~\ref{thm_c_F_consistency} holds.
\end{corollary}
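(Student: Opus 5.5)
The corollary is a verification step. The plan is to show that its assumptions imply the abstract condition~\eqref{eq_local_beran_tail_negl_in_Z} of Theorem~\ref{thm_c_F_consistency} for every $s\in(0,1)$, and then to invoke that theorem.

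Fix an arbitrary $s\in(0,1)$. Condition~\ref{cond_H_H1_C1_F_G_Lip} and the rate condition~\eqref{eq_n_log_n_h2_over_kn_little_o} are exactly the hypotheses of Proposition~\ref{prop_Beran_consistency_speed}. That proposition gives
\begin{align*}
\sup_{x \leq Z_{n:n-k_{n}}} \Big\vert \frac{\amsmathbb{F}^{(n)}(x \vert s) - F_{n,\floor{ns}}(x)}{1-F_{n,\floor{ns}}(x)} \Big\vert = \mathcal{O}_{\amsmathbb{P}}\Big(\frac{1}{\sqrt{k_{n}h_{n}}}\Big).
\end{align*}
Evaluating at the endpoint $x=Z_{n:n-k_{n}}$, the quantity in~\eqref{eq_local_beran_tail_negl_in_Z} is bounded by this supremum. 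The endpoint lies in the range of the supremum, and $1-F_{n,\floor{ns}}(Z_{n:n-k_{n}})>0$ almost surely because the target and censoring distributions share the right endpoint $x^{*}$ and are continuous.

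It remains to see that $\mathcal{O}_{\amsmathbb{P}}((k_nh_n)^{-1/2})=o_{\amsmathbb{P}}(1)$, that is, that $k_nh_n\to\infty$. This is part of the standing assumptions: $k_nh_n\to\infty$ is imposed when the estimator~\eqref{eq_Einmahl_estimator} is introduced, and $\log(n)/(k_nh_n)\to0$ is assumed in the Assumptions subsection. Hence~\eqref{eq_local_beran_tail_negl_in_Z} holds for every $s$, and in particular at $s$ and at the anchor $s_0$. Theorem~\ref{thm_c_F_consistency} then yields $\hat c_F(s\vert s_0)\overset{\amsmathbb{P}}{\to} c_F(s)/c_F(s_0)$.

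There is no real obstacle here. The only points to check are that the supremum in the proposition includes its random endpoint, and that the divergence $k_nh_n\to\infty$ is available under the stated assumptions rather than only under the parametrization of Lemma~\ref{lemma_kn_hn_sequences}. If needed, the latter parametrization supplies a concrete admissible choice of $(k_n,h_n)$.
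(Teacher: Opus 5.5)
Your proposal is correct and follows the same route as the paper. The paper also obtains the corollary directly from Proposition~\ref{prop_Beran_consistency_speed}: the uniform $\mathcal{O}_{\amsmathbb{P}}((k_nh_n)^{-1/2})$ bound on $[0,Z_{n:n-k_n}]$, evaluated at the endpoint and combined with the standing assumption $k_nh_n\to\infty$, gives~\eqref{eq_local_beran_tail_negl_in_Z} for every $s\in(0,1)$, after which Theorem~\ref{thm_c_F_consistency} applies.
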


\begin{remark}\label{remark_homoscedastic_censoring}
    There is an interesting setting of right-censored data, namely the case of homoscedastic censoring, i.e. $G_{n,i} \equiv G$. In this case the observations $(Z_{i}^{(n)})$ inherit the scedasis function $c_{F}$ from the target observations $(X_{i}^{(n)})$ due to the fact that~\eqref{eq_H_scedasis_dens} simplifies as follows
\begin{align*}
    \lim_{x\to x^{*} } \frac{1-H_{n,i}(x)}{1-H(x)}
    =
     \lim_{x\to x^{*}} \frac{(1-F_{n,i}(x))(1-G(x))}{(1-F(x))(1-G(x))} 
     = 
     \lim_{x\to x^{*}} \frac{1-F_{n,i}(x)}{1-F(x)}
     = 
     c_{F}\Big(\frac{i}{n}\Big).
\end{align*}
This implies that the possibly right-censored observations $Z_{1}^{(n)}, \ldots , Z_{n}^{(n)}$ can be directly plugged into the estimator of~\cite{Einmahl_Haan_Zhou_2014}, $\hat{c}_{F}^{\text{EHZ}}(\cdot)$, which is then consistent and asymptotically Gaussian. This argument holds under homoscedastic censoring. Consequently comparing our estimator to a relative version of $\hat{c}_{F}^{\text{EHZ}}(\cdot)$ yields a diagnostic tool to quantify whether the censoring mechanism is heteroscedastic or not; if the estimators coincide it suggests homoscedastic censoring but if they disagree it suggests heteroscedastic censoring, at least for the chosen bandwidth $h_{n}$ and threshold $k_{n}$. Hence comparing these estimators provides a useful benchmark, and this is what we investigate in the simulation and real data sections. To our knowledge, no other benchmark estimators exist.
\end{remark}
Extending the above consistency results to asymptotic normality is a promising direction but outside the scope of this paper. It requires foundational work extending central limit theorems for non-identically distributed arrays, which can then be applied to the decomposition of Theorem~\ref{thm_Nelson_Aalen_decomposition}.

\section{Simulation study}\label{section_simulation_study}
In this section we study the finite-sample performance of $\hat{c}_{F}(\cdot\vert s_{0})$ under three heteroscedastic extreme-value models. The purpose is twofold: first, to assess the pointwise and pathwise accuracy, and second, to quantify the
loss incurred by applying the estimator of~\cite{Einmahl_Haan_Zhou_2014} directly to heteroscedastic censored observations. To assess the accuracy we compute Monte Carlo averages of the pointwise mean squared errors and pathwise uniform errors.\\
In the simulations below, we consider the following beta densities,
\begin{align*}
    c_{F}^{\circ}(s) = \frac{s^{2}(1-s)^{2}}{B(3,3)},
    \qquad
    c_{G}^{\circ}(s) = \frac{1-s}{B(1,2)},
    \qquad
    c_{H}^{\circ}(s) = c_{F}^{\circ}(s)c_{G}^{\circ}(s),
\end{align*}
where $B(\alpha, \beta)=\Gamma(\alpha)\Gamma(\beta)/\Gamma(\alpha + \beta)$, see Figure~\ref{fig_theoretical_curve_simulated_frechet_survival}. We note that $c_{H}^{\circ}$ does not integrate to one. For $i=n$ we have that $c_{F}^{\circ}(1)=c_{G}^{\circ}(1) = 0$ and hence to avoid degenerate distributions we use $\varepsilon$-regularized densities; $c_{F}(s) := (c_{F}^{\circ}(s) + \varepsilon)/(1+\varepsilon)$, and likewise $c_{G}(s):=(c_{G}^{\circ}(s) + \varepsilon)/(1+\varepsilon)$ for $\varepsilon = 10^{-4}$. We benchmark $\hat{c}_{F}(\cdot\vert s_{0})$ against the relative Einmahl et al. estimator
\begin{align*}
     \hat{c}_{F}^{\mathrm{EHZ}}(s\vert s_{0})
    =
    \hat{c}_{F}^{\mathrm{EHZ}}(s)/
    \hat{c}_{F}^{\mathrm{EHZ}}(s_{0}),
\end{align*}
computed from the observed $Z_{i}^{(n)}$'s. According to Remark~\ref{remark_homoscedastic_censoring} this benchmark is valid under homoscedastic censoring, but not under the heteroscedastic censoring. 
\begin{figure}[hbt!] 
    \centering
    \includegraphics[width=.45\textwidth]{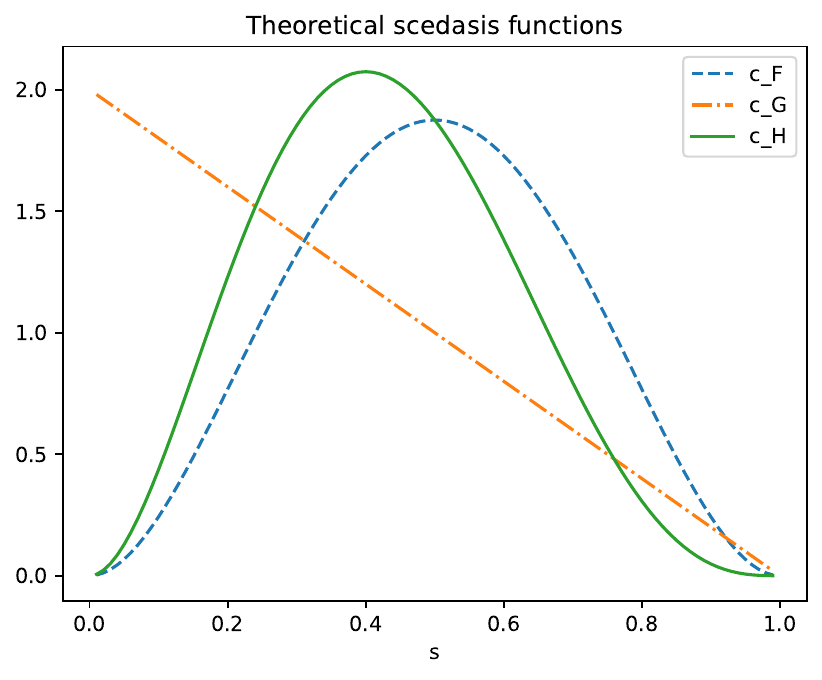} 
    \includegraphics[width=.49\textwidth]{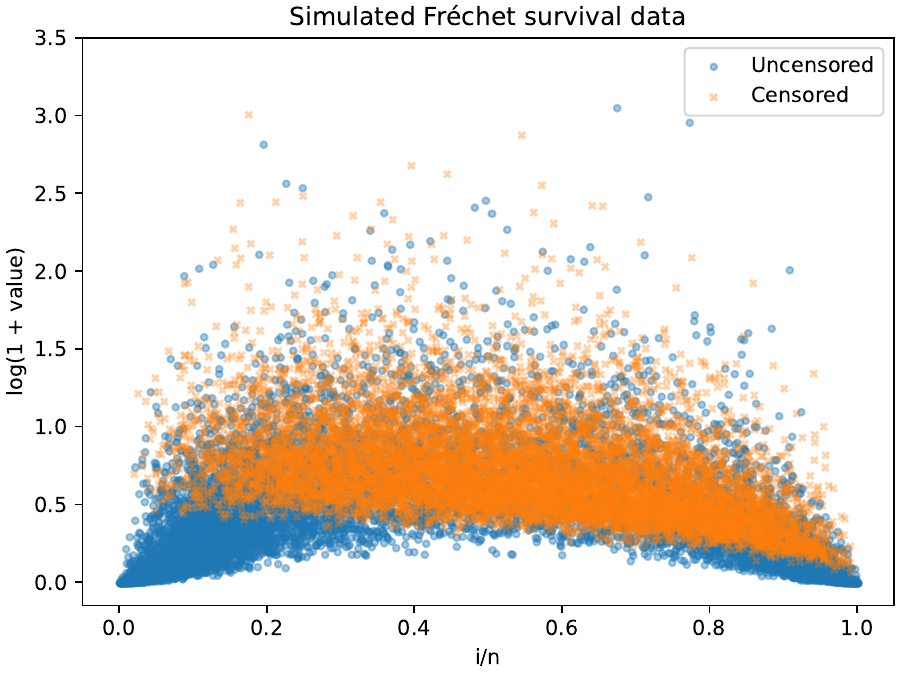} 
    \includegraphics[width=.5\textwidth]{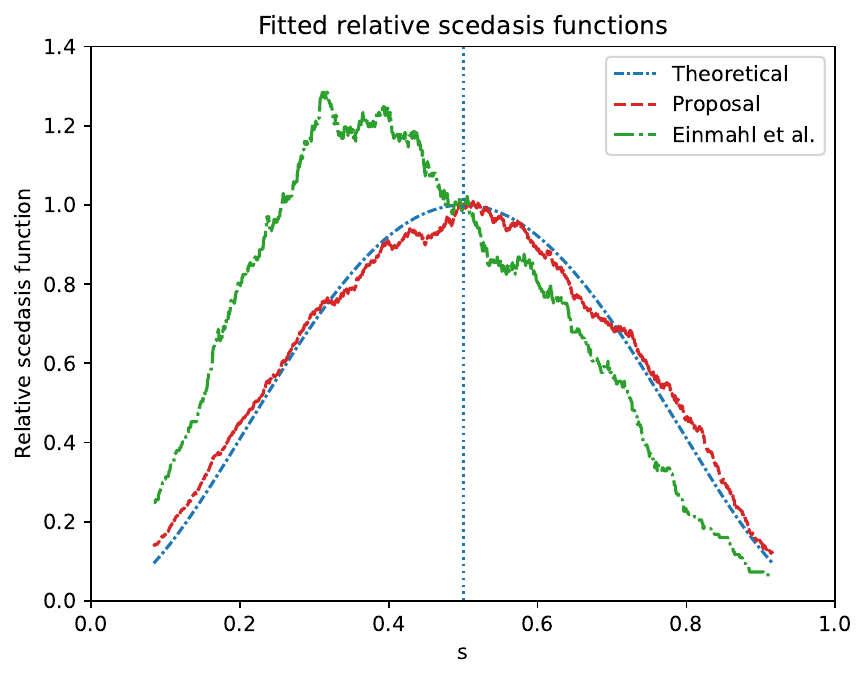} 
    \caption{(Top left) Theoretical scedasis functions $c_{F},c_{G}$ and $c_{H}$ shown as dashed blue, dot-dashed orange and solid green curves, respectively. (Top right) $n=20{,}000$ simulated Fr\'echet survival data points for tail indices $\alpha_{F}=1$ and $\alpha_{G}=2$ on the $\log(1+x)$-scale, where fully observed values $(\delta_{i}^{(n)} = 1)$ are marked with blue dots and censored values $(\delta_{i}^{(n)} = 0)$ are marked with orange x's. (Bottom) Depiction of $\hat{c}_{F}(s \vert s_{0})$ (red dashed line) and $\hat{c}_{F}^{\mathrm{EHZ}}(s\vert s_{0})$ (green dot-dashed line) for $s\in [h_{n},1-h_{n}]$. Here $s_{0} = 1/2$ (marked with a vertical blue dotted line), $k_{n} = \ceil{n/40}$ and $h_{n} = n^{-1/4}$.}
    \label{fig_theoretical_curve_simulated_frechet_survival}
\end{figure}
\begin{figure}[hbt!] 
    \centering
    \includegraphics[width=.82\textwidth]{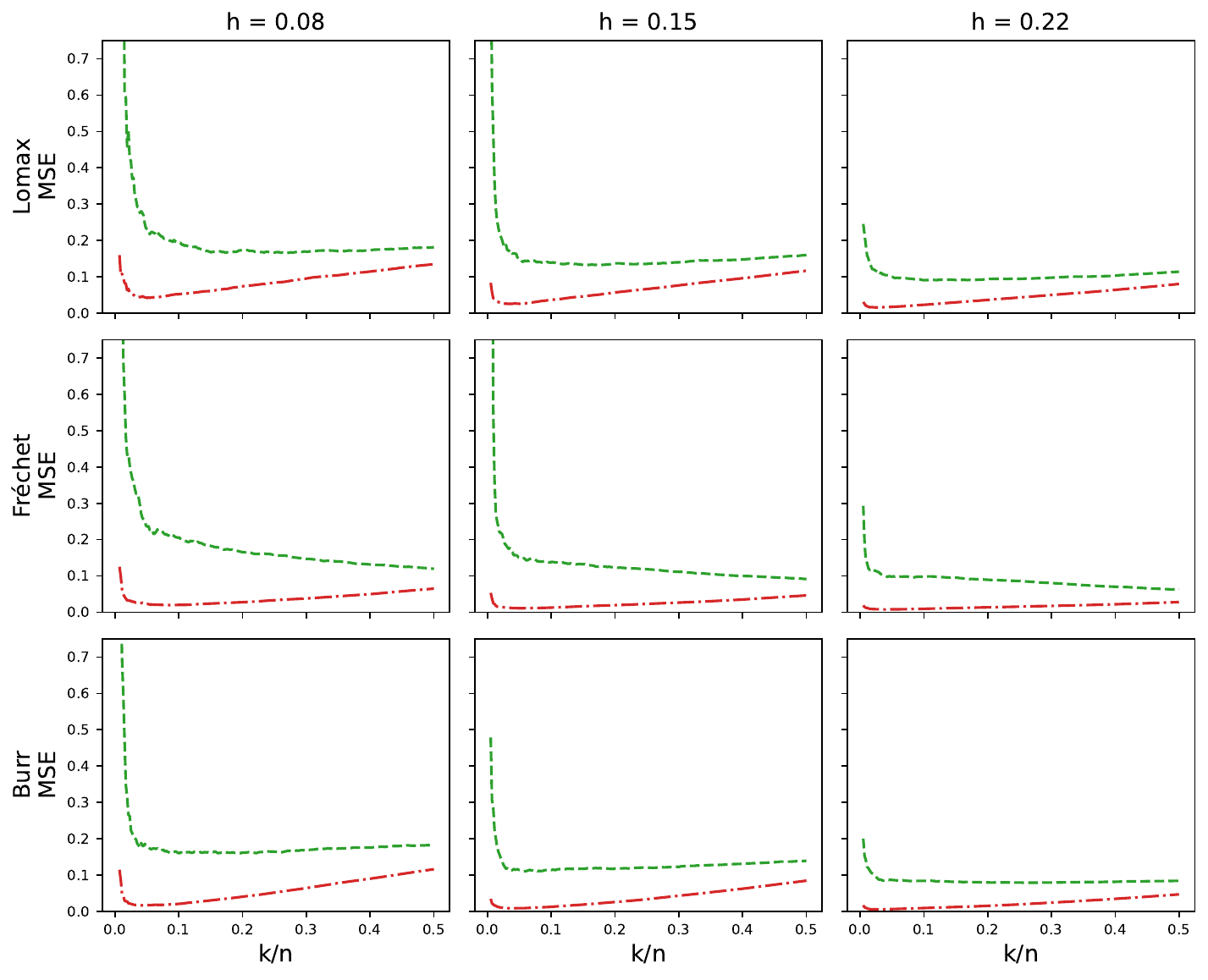}
    \includegraphics[width=.82\textwidth]{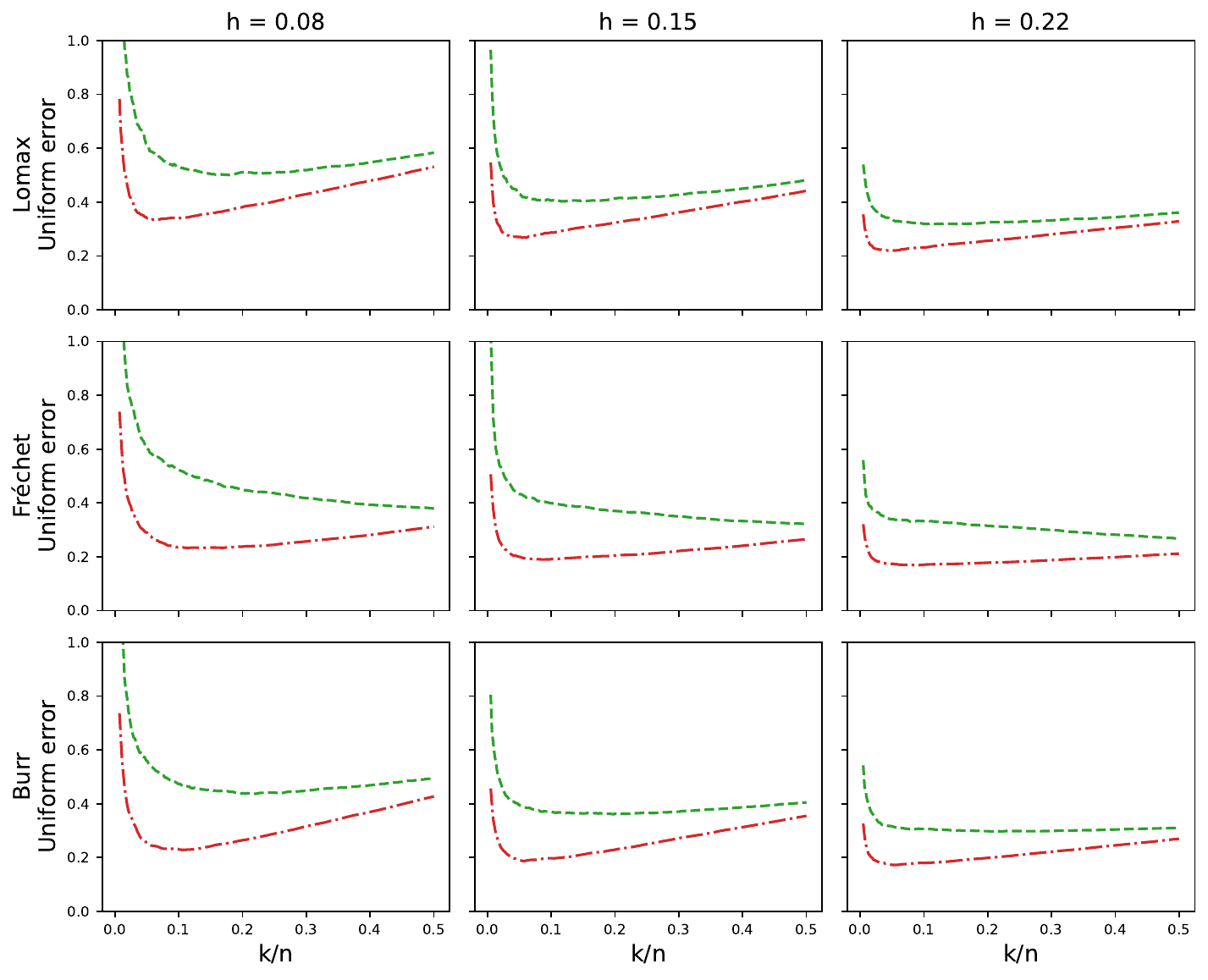}
    \caption{Simulation study: Estimated pointwise mean squared error (top panel) for $s=2/5$, and pathwise uniform error (bottom panel) for $\hat{c}_{F}(\cdot \vert 1/2)$ and $  \hat{c}_{F}^{\mathrm{EHZ}}(\cdot \vert 1/2)$ in red dot-dashed and green dashed lines respectively as functions of $k_{n}/n \in [0.005,1/2]$, where the errors are computed from $B=200$ batches of $n=2{,}000$ simulated observations from heteroscedastic Lomax, Fr\'echet and Burr (in first, second and third row respectively for MSE and fourth, fifth and sixth respectively for uniform error) for bandwidth values $h_{n}\in \{0.08, 0.15, 0.22\}$ (from first to last column respectively) corresponding to the approximate values of $n^{-1/3}, n^{-1/4}, n^{-1/5}$.}
    \label{fig_simulation_mse_unif_error}
\end{figure}

The three parametric distributions from which we draw samples in the simulations are given as follows. Throughout, let $U_{1},\ldots, U_{n}$ and $V_{1},\ldots, V_{n}$ denote iid standard uniform random variables.

\begin{enumerate}
    \item Heteroscedastic Lomax (Pareto II): for $\alpha_{F}=1$ and $\alpha_{G}=2$ 
    \begin{align*}
        X_{i}^{(n)} := c_{F}(i/n)^{1/\alpha_{F}}((1-U_{i})^{-1/\alpha_{F}} -1),
        \qquad
        C_{i}^{(n)} := c_{G}(i/n)^{1/\alpha_{G}}((1-V_{i})^{-1/\alpha_{G}} -1).
    \end{align*}
    \item Heteroscedastic Fr\'echet: for $\alpha_{F}=1$ and $\alpha_{G}=2$,
    \begin{align*}
        X_{i}^{(n)} := (c_{F}(i/n)/\log(1/U_{i}))^{1/\alpha_{F}},
        \qquad
        C_{i}^{(n)} := (c_{G}(i/n)/\log(1/V_{i}))^{1/\alpha_{G}}.
    \end{align*}
    For the above choices of scedasis densities, we depict $20{,}000$ realizations of $(Z_{i}^{(n)},\delta_{i}^{(n)})$ in Figure~\ref{fig_theoretical_curve_simulated_frechet_survival}.
    \item Heteroscedastic Burr: for $a_{F}=a_{G}=2$, $b_{F}=1/2$ and $b_{G}=1$,
    \begin{align*}
        &X_{i}^{(n)} := c_{F}(i/n)^{1/(a_{F}b_{F})}((1-U_{i})^{-1/b_{F}} -1)^{1/a_{F}},\\
        &C_{i}^{(n)} := c_{G}(i/n)^{1/(a_{G}b_{G})}((1-V_{i})^{-1/b_{G}} -1)^{1/a_{G}}.
    \end{align*}
\end{enumerate}
In Appendix~\ref{appendix_simulations} we verify that these constructions admit heteroscedastic distribution functions in the sense of~\eqref{eq_Scedasis_F_and_G} and that they satisfy Condition~\ref{cond_H_H1_C1_F_G_Lip}.

Below we fix $s_{0}=1/2$ and $s=2/5$, draw $B=200$ batches of $n=2{,}000$ observations for each of the distributions, and use the uniform kernel. All losses are plotted for $k_{n}/n \in [0.005,1/2]$ for the bandwidth choices $h_{n}\in \{0.08, 0.15, 0.22\}$ and  corresponding to the approximate values of $n^{-1/3}, n^{-1/4}, n^{-1/5}$ respectively. The pathwise losses are computed over the interior grid $s\in[h_{n},1-h_{n}]$ to retain correct sample sizes. Figure~\ref{fig_simulation_mse_unif_error} reports the pointwise mean squared error and the pathwise uniform error of $\hat{c}_{F}(s \vert s_{0})$.

Across all three simulation designs, the proposed estimator yields substantially smaller losses than the benchmark over the displayed threshold range, as expected. However, we note that the improvement is most pronounced for small and moderate values of $k_{n}/n$, where heterogeneous censoring has the largest effect on the observed tail. For larger bandwidths, the loss curves become smoother and the threshold sensitivity is reduced.
\\
For the pointwise MSE, the proposed estimator attains its smallest values at small threshold levels. The pathwise uniform error exhibits a similar pattern, although its minimum is sometimes shifted slightly toward larger values of $k_{n}/n$. Additional pointwise bias and integrated squared error diagnostics are reported in Appendix~\ref{appendix_simulations}.
\\
Losses are averaged over Monte Carlo replications for which the normalizing denominator of the relative estimators is non-zero. Over the displayed threshold range $k_{n}/n\geq 0.005$, denominator failures are rare; the corresponding maximum failure rates are also reported in Appendix~\ref{appendix_simulations}.

\section{Real data analysis}\label{section_real_data_analysis}
Censored extreme-value methods have recently been applied to non-life insurance claim settlements under a marginal random-censoring framework, see~\cite{BladtGoegebeurGuillou2026}. That analysis provides a benchmark for tail-index estimation under censoring, but treats the observed losses as an exchangeable censored sample, which disregards the clear changes of censoring distribution as one approaches the valuation date. Here we study a different question, under the more realistic assumption of heteroscedastic extremes. Namely, whether the contribution to the extreme tail varies systematically over occurrence time. We therefore retain the temporal ordering of the claims and apply the proposed scedasis estimator to assess temporal tail heterogeneity while accounting for incomplete settlements.

The data\footnote{Available as the \texttt{freclaimset3dam9207} dataset in the \texttt{CASdatasets} R package.}  consist of $109{,}992$ individual commercial property-casualty claim histories from 1992--2007, with occurrence dates and cumulative paid and incurred amounts recorded at daily valuation dates. Since explicit settlement indicators are not available, we treat a claim as fully developed at a given valuation date when the cumulative paid and incurred amounts coincide. For a fixed valuation date and claim $i$, let
\begin{align*}
    P_{i}^{(n)}
    &:=
    \text{cumulative payment of claim } i,
    \qquad
    I_{i}^{(n)}
    :=
    \text{cumulative incurred amount of claim } i.
\end{align*}
We construct the censored observations by setting
\begin{align*}
    Z_{i}^{(n)}
    &:=
    P_{i}^{(n),\mathrm{adj}},
    \qquad
    \delta_{i}^{(n)}
    :=
    1\{P_{i}^{(n)}=I_{i}^{(n)}\},
\end{align*}
where $P_{i}^{(n),\mathrm{adj}}$ denotes the inflation-adjusted cumulative paid amount; see Appendix~\ref{appendix_real_data}. After retaining claims with strictly positive cumulative paid amounts at the final valuation date, the analysis sample contains $n=107{,}728$ observations, of which $2{,}654$ correspond to right-censored data. We plot the corresponding observations in Figure~\ref{fig_scatterplot_and_tail_non_censoring} together with the average proportion of non-censoring among the $k_{n}$ largest order statistics.

\begin{figure}[hbt!] 
    \centering
    \includegraphics[width=.48\textwidth]{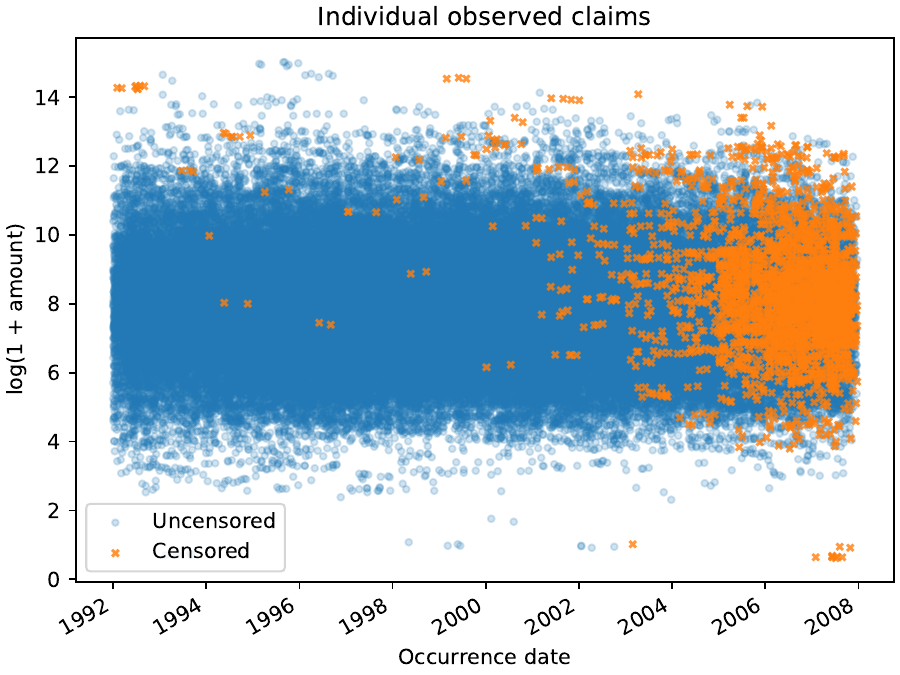}
    \includegraphics[width=.48\textwidth]{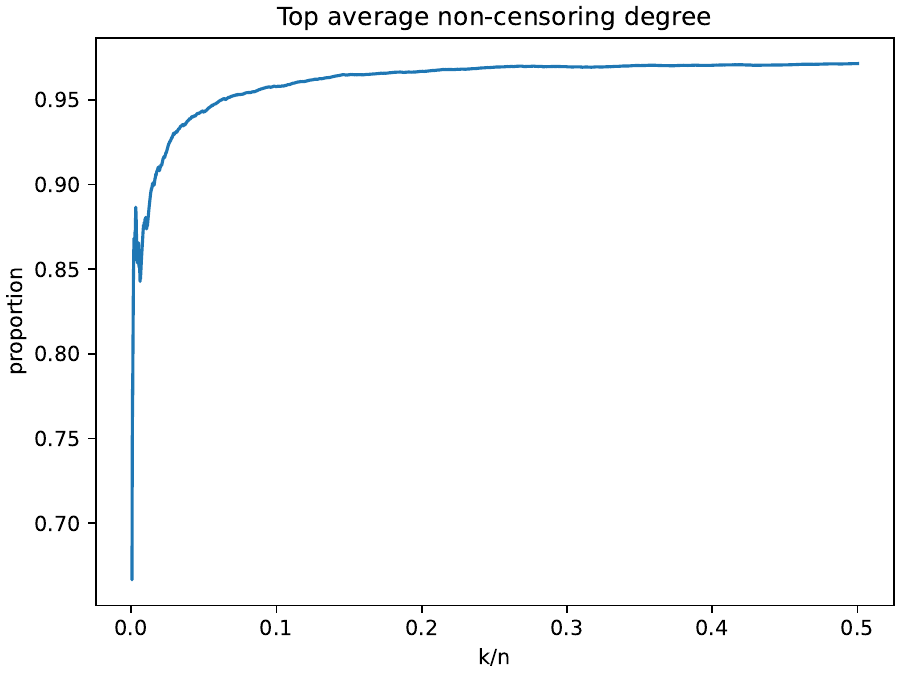}
    \caption{Real data: (Left) Scatter plot of strictly positive, inflation-adjusted claim sizes on the $\log(1+x)$ scale where fully observed claims $(\delta_{i}^{(n)} = 1)$ are marked with blue dots and censored claims $(\delta_{i}^{(n)} = 0)$ are marked with orange x's. (Right) Average degree of non-censoring among the $k_{n}$ largest order statistics as a function of $k_{n}/n \in [0.0005, 0.5]$.}
    \label{fig_scatterplot_and_tail_non_censoring}
\end{figure}
By~\eqref{eq_Scedasis_F_and_G} and~\eqref{eq_H_scedasis_dens},
the model assumes a common, and positive, tail index across occurrence time. We assess
this assumption using censored Hill estimates on temporal subsamples in
Appendix~\ref{appendix_real_data}; the diagnostic does not reveal
large discrepancies over the threshold range used below.
We proceed with our analysis, plotting now the estimated relative scedasis curves.
\begin{figure}[hbt!] 
    \centering
    \includegraphics[width=.48\textwidth]{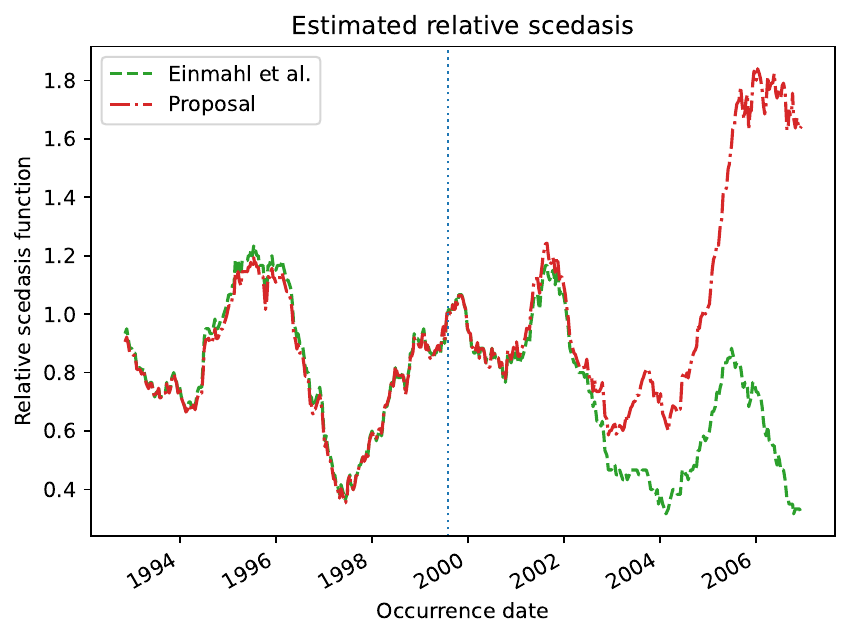}
    \includegraphics[width=.48\textwidth]{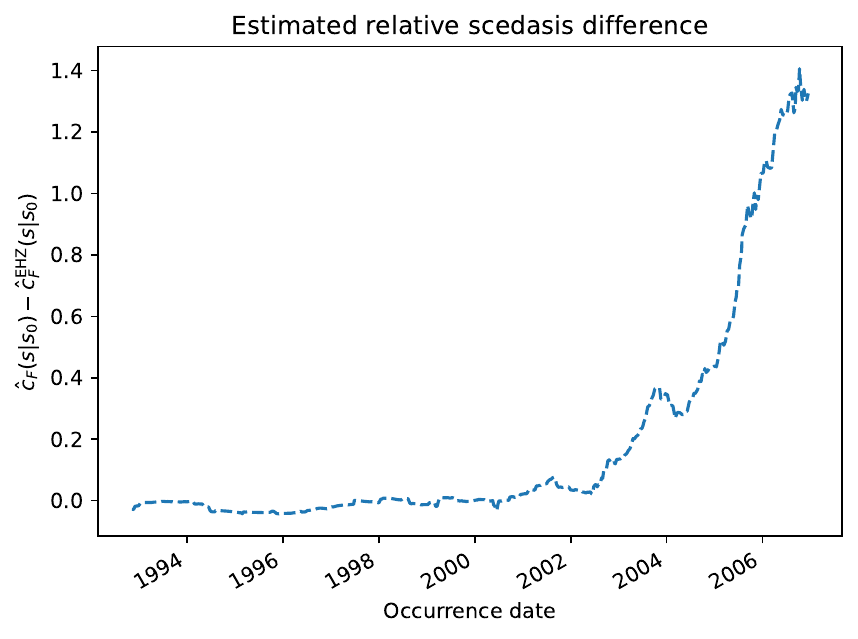}
    \caption{Estimated relative scedasis and corresponding difference. (Left) Estimated scaled scedasis curves from our proposed estimator $\hat{c}_{F}(s \vert s_{0})$ in dot-dashed red compared with $\hat{c}_{F}^{\text{EHZ}}(s\vert s_{0})$ in dashed green. Here $s_{0} = 1/2$ is shown in the vertical blue line on the left, $k_{n} = \floor{n/200}$ and $h_{n} = 2n^{-1/3}$. (Right) Difference between the two estimators $ \hat{c}_{F}(s\vert s_{0}) - \hat{c}_{F}^{\mathrm{EHZ}}(s\vert s_{0})$.}
    \label{fig_freclaim_estimated_scedasis_curves_and_difference}
\end{figure}
In Figure~\ref{fig_freclaim_estimated_scedasis_curves_and_difference} we see that our estimator coincides with the scaled version of the~\cite{Einmahl_Haan_Zhou_2014} estimator in the regions where there is no or little censoring, while in the areas with a high degree of censoring, our estimator is substantially larger than the benchmark. 
At a fixed valuation date, claims with more recent occurrence dates have had less time to develop. Hence the probability that $P_{i}^{(n)}=I_{i}^{(n)}$ is naturally occurrence-time dependent, making homoscedastic censoring structurally impossible in this application. In this case, $\hat{c}_{F}^{\text{EHZ}}(s\vert s_{0})$ is expected to be downward biased because censored observations are treated as fully observed.

To assess the stability of the estimated scedasis curve, we construct a sequence of retrospective valuation datasets. For $j\in\{0,2,5,8\}$, let the valuation year be $2007-j$. We retain only claims with occurrence dates before this valuation year and use the corresponding cumulative payments observed at that valuation date. Thus, claims occurring in the final $j$ years of the full observation window are removed, and the censored observations are reconstructed from the paid/incurred information available at year $2007-j$. More precisely, for each $j\in\{0,2,5,8\}$, we form a sample $(Z_{i}^{(n_{j},j)},\delta_{i}^{(n_{j},j)})_{i=1}^{n_{j}}$, ordered by occurrence date, by setting 
\begin{align*}
    Z_{i}^{(n_{j},j)}
    :=
    P_{i,j}^{\mathrm{adj}}, 
    \qquad
    \delta_{i}^{(n_{j},j)}
    :=
    1\{P_{i,j}=I_{i,j}\},
\end{align*}
where $P_{i,j}$ and $I_{i,j}$ denote the cumulative paid and incurred amounts at valuation year $2007-j$, respectively. The case $j=0$ corresponds to the full dataset at the final valuation date, while the cases $j=2,5,8$ mimic the information that would have been available at earlier valuation horizons. The numbers of retained observations and censored claims are reported for each dataset in Appendix~\ref{appendix_real_data}.

We then apply the same scedasis estimators to each reconstructed sample. If the proposed censoring adjustment is effective, the resulting curves should be broadly stable across valuation horizons on the common occurrence-time region, whereas an estimator that does not account for heterogeneous censoring might show systematic distortion in the more recent, heavily censored part of each sample.

\begin{figure}[hbt!] 
    \centering
    \includegraphics[width=.48\textwidth]{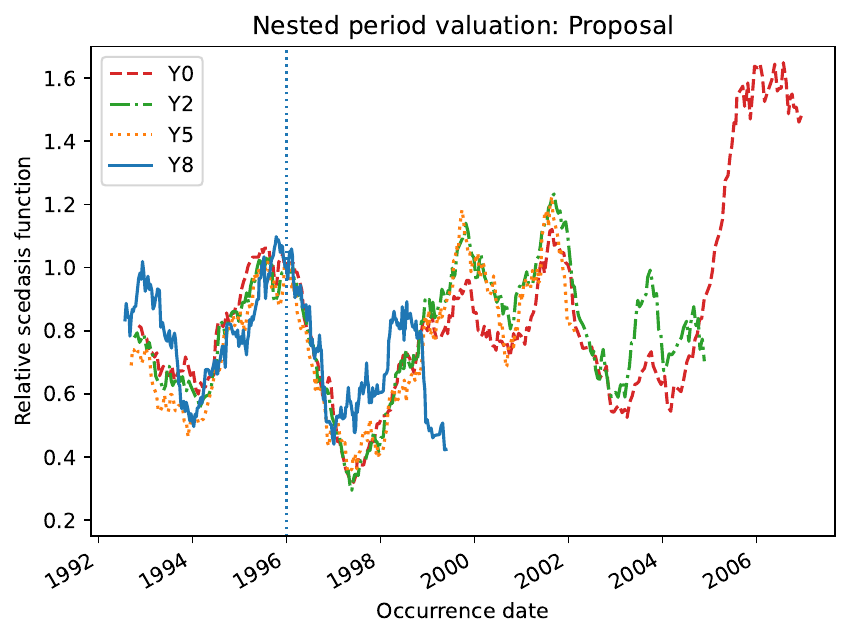}
    \includegraphics[width=.48\textwidth]{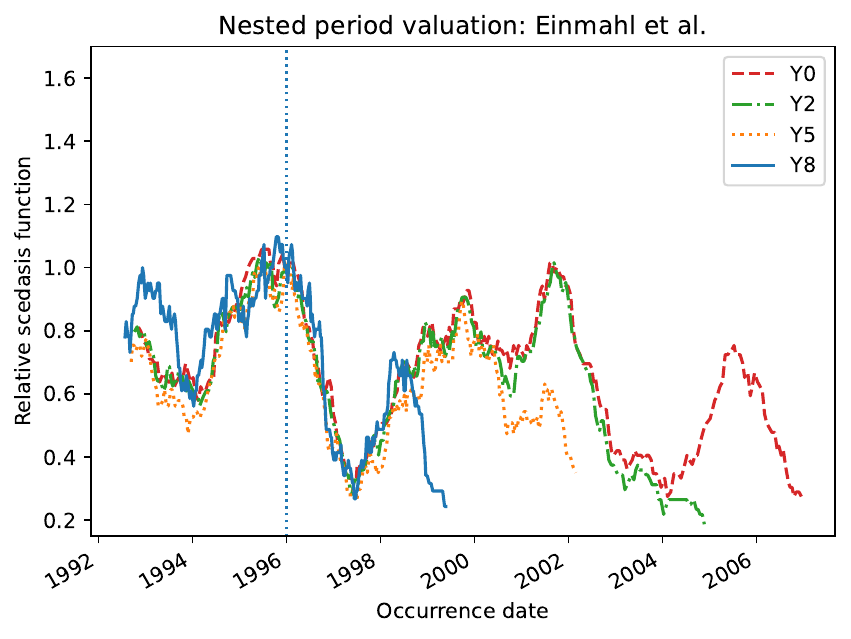}
    \caption{
    Retrospective valuation diagnostic. For $j\in\{0,2,5,8\}$, the dataset $\texttt{Yj}$ is constructed by retaining claims with occurrence dates no later than valuation year $2007-j$, and by reconstructing $(Z_{i}^{(n_{j},j)},\delta_{i}^{(n_{j},j)})$ from the cumulative paid and incurred amounts available at that valuation date. Thus \texttt{Y0} corresponds to the full dataset at the final valuation date, while \texttt{Y2}, \texttt{Y5}, and \texttt{Y8} mimic earlier valuation horizons. The figure compares the resulting relative scedasis estimates using the proposed estimator (left) and the Einmahl et al. benchmark (right). Here $s_{0}$ is set to January 1, 1996 and marked with a vertical dashed line in both panels, and $k_{n} = \floor{n/200}$ and $h_{n} = 2n^{-1/3}$, where $n$ denotes the sample size in each of the subsets, see Table~\ref{tab_real_data_valuation_sample_sizes}.
    }
    \label{fig_freclaim_scedasis_valuation_plots}
\end{figure}

In Figure~\ref{fig_freclaim_scedasis_valuation_plots}, the estimated relative scedasis curves produced by the proposed estimator broadly agree across the successive curves. Their shapes and levels are similar, which further indicates that the estimator is reasonable for the concrete dataset at hand, and robust to the choice of valuation date. However, for the first and shortest period there seems to be a negative bias in the right part where censoring is most severe; upon further investigation, it turns out that the estimated tail index of this period further we see that this period has a significantly higher tail index than the rest of the periods, suggesting weaker compatibility with the common tail index assumption; see Figure~\ref{fig_freclaim_tail_valuation_plots} in Appendix~\ref{appendix_real_data}. As for the benchmark, we consistently see a negative bias in the right part of the curves where censoring is most severe. This supports the conclusion that, for this specific insurance dataset, ignoring temporally heterogeneous censoring leads to downward distortion in the estimated scedasis curve.
\FloatBarrier

\section{Discussion of estimators}\label{section_discussion}
In this final section we provide clarifying remarks on the estimators used in the paper. Specifically, we argue in the first subsection why the cumulative scedasis is not estimable directly in the presence of random right-censoring using Nelson--Aalen-type estimators. In the second subsection we discuss the normalization of the relative scedasis estimator and propose an alternative estimator to obtain the correct normalization.

\subsection{Why not estimate the cumulative scedasis directly?}\label{subsection_discussion_cumul_c_estimator}
In the uncensored setting,~\cite{Einmahl_Haan_Zhou_2014} study the integrated scedasis function
\begin{align*}
    C_{F}(s) := \int_{0}^{s}c_{F}(u)\mathrm{d} u, \quad s \in [0,1],
\end{align*}
which they estimate by
\begin{align*}
    \widehat{C}(s) :&= 
    \frac{1}{k_{n}}\sum_{i=1}^{\floor{ns}}1\{X_{i}^{(n)} > X_{n:n-k_{n}}\}
    =
     \frac{\floor{ns}}{k_{n}}(1-\amsmathbb{F}^{(\floor{ns})}(X_{n:n-k_{n}})),
\end{align*}
where $\amsmathbb{F}^{(\floor{ns})}$ denotes the empirical distribution function based on $(X_{1}^{(n)},\ldots,X_{\floor{ns}}^{(n)})$. This estimator is particularly useful because the process $s\mapsto \widehat C(s)$ admits functional limit theorems.
\\
A seemingly natural extension to censored observations would be to replace the empirical distribution function with the \emph{prefix} Kaplan--Meier estimator, based on the first $\floor{ns}$ samples, while also replacing $\floor{ns}/k_{n}$ by $s/(1-\amsmathbb{F}^{(n)}(Z_{n:n-k_{n}}))$ (disregarding the small rounding error). This amounts to 
\begin{align*}
     \widehat{C}_{\text{KM}}(s):=
     s\frac{1-\amsmathbb{F}^{(\floor{ns})}(Z_{n:n-k_{n}})}{1-\amsmathbb{F}^{(n)}(Z_{n:n-k_{n}})},
\end{align*}
where $\amsmathbb{F}^{(\floor{ns})}$ denotes the prefix Kaplan--Meier estimator constructed from $(Z_{i}^{(n)},\delta_{i}^{(n)})_{i=1}^{\floor{ns}}$, given by
\begin{align*}
    \mathbb{\Lambda}^{(\floor{ns})}(x) &= 
    \int^{x}_{0} \frac{1}{1-\floor{ns}^{-1}\sum_{i=1}^{\floor{ns}}1\{Z_{i}^{(n)} < y\}} \frac{1}{\floor{ns}}\sum_{i=1}^{\floor{ns}} 1\{Z_{i}^{(n)}\leq \mathrm{d} y, \delta_{i}^{(n)} = 1\}, \\
    1 -\amsmathbb{F}^{(\floor{ns})}(x) &= \Prodi_{0 \leq y \leq x}(1-\mathbb{\Lambda}^{\floor{ns}}(\mathrm{d} y)).
\end{align*}
The building blocks in the prefix Nelson--Aalen estimator are sequential empirical processes (see~\cite{Kiefer1961}), partially resembling the structure of the above $\widehat{C}(s)$ which can be identified as a so-called sequential tail empirical process.

The main mathematical difficulty with this construction, as is argued below, is beyond an additional stochastic error caused by censoring. Indeed, under heteroscedastic censoring, pooling the observations in a prefix changes the population object underlying the Nelson--Aalen estimator. Computing the expected value of the numerator and denominator of $\mathbb{\Lambda}^{(\floor{ns})}(x)$ provides the following natural deterministic hazard:
\begin{align*}
        \int_{0}^{x} \frac{1}
        {\floor{ns}^{-1}\sum_{i=1}^{\floor{ns}}(1 - G_{n,i}(y-))(1 - F_{n,i}(y-))}
        \frac{1}{\floor{ns}}\sum_{j=1}^{\floor{ns}}(1 - G_{n,j}(y-)) F_{n,j}(\mathrm{d} y).
    \end{align*}
    A crucial difficulty is that the distribution functions of the censoring mechanisms in the above expression do not cancel out, leading to a censoring-weighted hazard. By contrast, the cumulative hazard of the target mixture distribution of $X_{1}^{(n)},\ldots,X_{\floor{ns}}^{(n)}$ is
    \begin{align*}
        \int_{0}^{x} \frac{1}
        {\floor{ns}^{-1}\sum_{i=1}^{\floor{ns}}(1 - F_{n,i}(y-))} 
       \frac{1}{\floor{ns}}\sum_{j=1}^{\floor{ns}}
       F_{n,j}(\mathrm{d} y),
    \end{align*}
These two quantities coincide when censoring is homoscedastic, i.e. $G_{n,i} \equiv G$, because the common censoring survival factor cancels from numerator and denominator. Under heteroscedastic censoring, however, the factors $(1-G_{n,i}(y-))$ bias the population target. 
In Figure~\ref{fig_distortion} we show in a simple numeric study that $\mathbb{\Lambda}^{\floor{ns}}$ estimates the (biased) censoring-weighted hazard and consequently cannot generally recover the target prefix hazard. This illustrates that, in full generality, $\widehat{C}_{\text{KM}}$ does not recover $C_{F}$.
\begin{figure}[hbt!] 
    \centering
    \includegraphics[width=.48\textwidth]{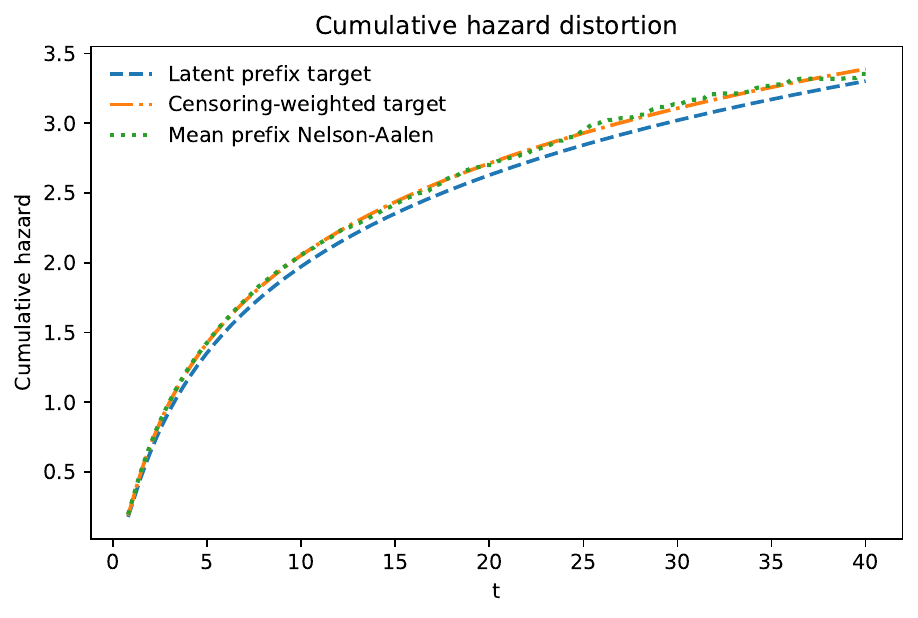}
    \includegraphics[width=.48\textwidth]{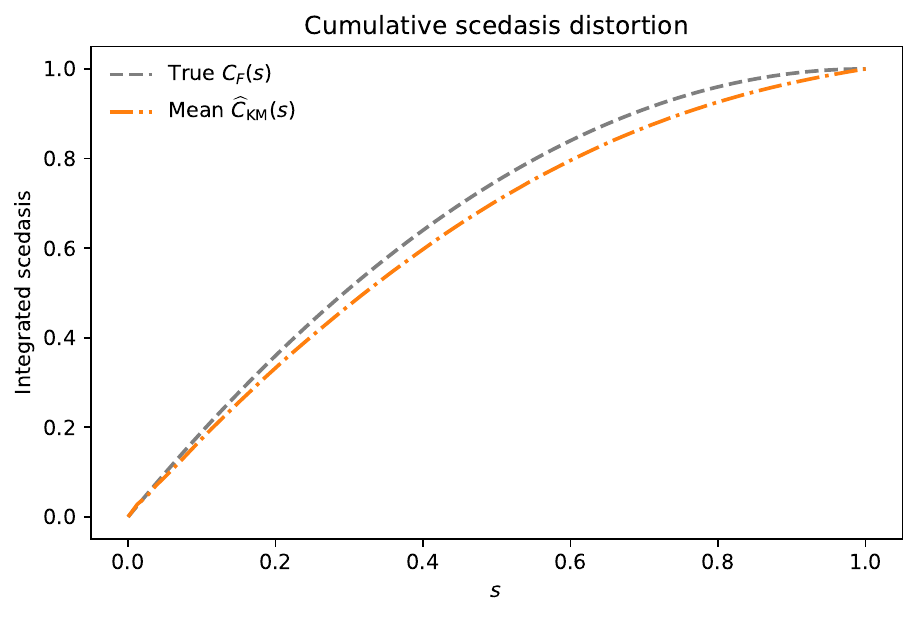}
    \caption{Target distortion of $\mathbb{\Lambda}^{\floor{ns}}$ and $\widehat{C}_{\text{KM}}$. We draw $N=50$ batches of $n=100{,}000$ heteroscedastic Fr\'echet observations and (left) plot the mean $\mathbb{\Lambda}^{\floor{ns}}$ compared with the target prefix target and the censoring-weighted target for $t\leq 40$ and fixed $s=1/2$. In the right panel, we depict the mean $\widehat{C}_{\text{KM}}$ compared with the true $C_{F}(s)$ for $s\in [0,1]$ and $k_{n} = \floor{n/200}$. As above  $\alpha_{F}=1$ and $\alpha_{G}=2$ but now $c_{F}$ is the Beta$(1,2)$ density and $c_{G}$ is the Beta$(3,3)$ density.}
    \label{fig_distortion}
\end{figure}

\subsection{Normalization and boundary correction}
The estimator in Theorem~\ref{thm_c_F_consistency} identifies the
relative scedasis function. For fixed interior points $s,s_{0}\in(0,1)$, we have that $\hat{c}_{F}(s\vert s_{0})$ recovers $c_{F}(s)/c_{F}(s_{0})$, hence the shape of $c_{F}$, but not its normalizing constant.

Since $c_{F}$ integrates to one, a natural scale correction is obtained by normalizing over a uniform grid of size $n-1$. A direct normalization based on the local product-limit estimators is, however, affected by boundary terms. Indeed, although the pointwise arguments used in the proofs below can be extended to interior grids, the grid also contains points within distance $h_{n}$ of the endpoints $\{0,1\}$. At such points the effective kernel support is truncated and the population analogue of the local estimator differs from the interior one.

The boundary effect can be isolated to the normalizing constant, and hence we propose keeping the original local estimator in the numerator and using boundary-corrected local product-limit estimators (see~\cite{Jones1993}) only in the denominator. Denote by $J$ another symmetric kernel that is compactly supported on $[-1,1]$. For $b \in [0,1]$, define 
\begin{align*}
    \mu_{r}^{L}(b)
    &:=
    \int_{-1}^{b}u^{r}J(u)\,\mathrm{d} u,
    \qquad
    \mu_{r}^{R}(b)
    :=
    \int_{-b}^{1}u^{r}J(u)\,\mathrm{d} u,
    \qquad r=0,1,2.
\end{align*}
and the corresponding left and right boundary kernels
\begin{align*}
    J_{b}^{L}(u)
    &:=
    \frac{
        \mu_{2}^{L}(b)-u\mu_{1}^{L}(b)
    }{
        \mu_{0}^{L}(b)\mu_{2}^{L}(b)-\{\mu_{1}^{L}(b)\}^{2}
    }
    J(u)1\{-1\leq u\leq b\}, \\
    J_{b}^{R}(u)
    &:=
    \frac{
        \mu_{2}^{R}(b)-u\mu_{1}^{R}(b)
    }{
        \mu_{0}^{R}(b)\mu_{2}^{R}(b)-\{\mu_{1}^{R}(b)\}^{2}
    }
    J(u)1\{-b\leq u\leq 1\}.
\end{align*}
For $u \in [-1,1]$ and $s \in (0,1)$ consider the piecewise kernel
\begin{align*}
    J_{n,s}(u)&:= 
    J_{s/h_{n}}^{L}(u)1\{s < h_{n}\}
    +
    J(u)1\{h_{n}\leq s \leq 1-h_{n}\}
    +
    J_{(1-s)/h_{n}}^{R}(u)1\{1-h_{n} < s\}.
\end{align*} 
Let $\amsmathbb{F}^{(n),b}(\cdot\vert s)$ denote the above Beran estimator obtained by replacing $K$ with $J_{n,s}$
\begin{align*}
     \hat{c}_{F}^{b}(s) 
    &= \frac{1-\amsmathbb{F}^{(n)}(Z_{n:n-k_{n}} \vert s)}
    {(n-1)^{-1} \sum_{i=1}^{n-1} 1-\amsmathbb{F}^{(n),b}(Z_{n:n-k_{n}} \vert i/n)},
    \qquad 
    s \in (0,1).
\end{align*}
This estimator is a censoring-adjusted analogue of $\hat{c}$ on page 36 in~\cite{Einmahl_Haan_Zhou_2014}. Establishing consistency of $\hat{c}_{F}^{b}(s)$ requires a grid-uniform extension of the local product-limit theory together with control of the boundary-corrected denominator; we leave this as an open problem.

\section*{Acknowledgments}
The authors were supported by the Carlsberg Foundation, grant CF23-1096. The authors would like to thank Jan Beirlant for early discussions regarding inference of censored heteroscedastic extremes.

\newpage
\bibliography{bibliography}

@article{Einmahl_Haan_Zhou_2014,
  author  = {Einmahl, John H. J. and de Haan, Laurens and Zhou, Chen},
  title   = {Statistics of heteroscedastic extremes},
  journal = {Journal of the Royal Statistical Society Series B: Statistical Methodology},
  year    = {2016},
  volume  = {78},
  number  = {1},
  pages   = {31--51},
  doi     = {10.1111/rssb.12099}
}

@article{U_statistics_Stute_94,
  author  = {Stute, Winfried},
  title   = {{$U$-statistic processes: A martingale approach}},
  journal = {The Annals of Probability},
  year    = {1994},
  volume  = {22},
  number  = {4},
  pages   = {1725--1744},
  doi     = {10.1214/aop/1176988480}
}

@article{Csorgo_96,
  author  = {Cs{\"o}rg{\H{o}}, S{\'a}ndor},
  title   = {Universal {Gaussian} approximations under random censorship},
  journal = {The Annals of Statistics},
  year    = {1996},
  volume  = {24},
  number  = {6},
  pages   = {2744--2778},
  doi     = {10.1214/aos/1032181178}
}

@article{STUTE_1992,
  author  = {Stute, Winfried},
  title   = {Strong and weak representations of cumulative hazard function and {Kaplan--Meier} estimators on increasing sets},
  journal = {Journal of Statistical Planning and Inference},
  year    = {1994},
  volume  = {42},
  number  = {3},
  pages   = {315--329},
  doi     = {10.1016/0378-3758(94)00032-8}
}

@article{Einmahl_Villetard,
  author  = {Einmahl, John H. J. and Fils-Villetard, Am{\'e}lie and Guillou, Armelle},
  title   = {Statistics of extremes under random censoring},
  journal = {Bernoulli},
  year    = {2008},
  volume  = {14},
  number  = {1},
  pages   = {207--227},
  doi     = {10.3150/07-BEJ104}
}

@article{Breslow,
  author  = {Breslow, N. and Crowley, J.},
  title   = {A large sample study of the life table and product limit estimates under random censorship},
  journal = {The Annals of Statistics},
  year    = {1974},
  volume  = {2},
  number  = {3},
  pages   = {437--453},
  doi     = {10.1214/aos/1176342705}
}

@book{vanderVaart2023,
  author    = {van der Vaart, A. W. and Wellner, Jon A.},
  title     = {Weak Convergence and Empirical Processes: With Applications to Statistics},
  edition   = {2nd},
  series    = {Springer Series in Statistics},
  publisher = {Springer},
  address   = {Cham},
  year      = {2023},
  isbn      = {978-3-031-29038-1},
  doi       = {10.1007/978-3-031-29040-4}
}

@article{Cairoli_Walsh,
  author  = {Cairoli, R. and Walsh, John B.},
  title   = {Stochastic integrals in the plane},
  journal = {Acta Mathematica},
  year    = {1975},
  volume  = {134},
  pages   = {111--183},
  doi     = {10.1007/BF02392100}
}

@book{ChowTeicher1997,
  author    = {Chow, Yuan Shih and Teicher, Henry},
  title     = {Probability Theory: Independence, Interchangeability, Martingales},
  edition   = {3rd},
  series    = {Springer Texts in Statistics},
  publisher = {Springer},
  address   = {New York},
  year      = {1997},
  isbn      = {978-0-387-98228-1},
  doi       = {10.1007/978-1-4612-1950-7}
}

@techreport{beran1981nonparametric,
  author      = {Beran, Rudolf},
  title       = {Nonparametric Regression with Randomly Censored Survival Data},
  institution = {University of California, Berkeley},
  address     = {Berkeley, CA},
  year        = {1981},
  type        = {Technical Report},
  note        = {Unpublished manuscript}
}

@article{dabrowska1987nonparametric,
  author  = {Dabrowska, Dorota M.},
  title   = {Nonparametric regression with censored survival time data},
  journal = {Scandinavian Journal of Statistics},
  year    = {1987},
  volume  = {14},
  number  = {3},
  pages   = {181--197}
}

@article{mcneil1997,
  author  = {McNeil, Alexander J.},
  title   = {Estimating the tails of loss severity distributions using extreme value theory},
  journal = {ASTIN Bulletin},
  year    = {1997},
  volume  = {27},
  number  = {1},
  pages   = {117--137},
  doi     = {10.2143/AST.27.1.563210}
}

@book{embrechts1997,
  author    = {Embrechts, Paul and Kl{\"u}ppelberg, Claudia and Mikosch, Thomas},
  title     = {Modelling Extremal Events: For Insurance and Finance},
  series    = {Stochastic Modelling and Applied Probability},
  publisher = {Springer},
  address   = {Berlin},
  year      = {1997},
  doi       = {10.1007/978-3-642-33483-2}
}

@article{bornhuetter1972,
  author  = {Bornhuetter, Ronald L. and Ferguson, Ronald E.},
  title   = {The actuary and {IBNR}},
  journal = {Proceedings of the Casualty Actuarial Society},
  year    = {1972},
  volume  = {59},
  pages   = {181--195}
}

@article{mack1994stochastic,
  author  = {Mack, Thomas},
  title   = {Which stochastic model is underlying the {Chain Ladder} method?},
  journal = {Insurance: Mathematics and Economics},
  year    = {1994},
  volume  = {15},
  number  = {2--3},
  pages   = {133--138},
  doi     = {10.1016/0167-6687(94)90789-7}
}

@article{arjas1989claims,
  author  = {Arjas, Elja},
  title   = {The claims reserving problem in non-life insurance: Some structural ideas},
  journal = {ASTIN Bulletin},
  year    = {1989},
  volume  = {19},
  number  = {2},
  pages   = {139--152},
  doi     = {10.2143/AST.19.2.2014905}
}

@article{norberg1993prediction,
  author  = {Norberg, Ragnar},
  title   = {Prediction of outstanding liabilities in non-life insurance},
  journal = {ASTIN Bulletin},
  year    = {1993},
  volume  = {23},
  number  = {1},
  pages   = {95--115},
  doi     = {10.2143/AST.23.1.2005103}
}

@misc{inseeICC,
  author       = {{Insee}},
  title        = {Indice du co{\^u}t de la construction des immeubles {\`a} usage d'habitation ({ICC})},
  howpublished = {S{\'e}rie chronologique, identifiant 000008630},
  year         = {2026},
  note         = {Institut national de la statistique et des {\'e}tudes {\'e}conomiques. Accessed June 24, 2026}
}

@article{BladtGoegebeurGuillou2026,
  author  = {Bladt, Martin and Goegebeur, Yuri and Guillou, Armelle},
  title   = {Asymptotically unbiased estimation of the extreme value index under random censoring},
  journal = {Insurance: Mathematics and Economics},
  year    = {2026},
  volume  = {127},
  pages   = {103225},
  doi     = {10.1016/j.insmatheco.2026.103225}
}

@article{Wuthrich03072018,
  author  = {W{\"u}thrich, Mario V.},
  title   = {Machine learning in individual claims reserving},
  journal = {Scandinavian Actuarial Journal},
  year    = {2018},
  volume  = {2018},
  number  = {6},
  pages   = {465--480},
  doi     = {10.1080/03461238.2018.1428681}
}

@article{deHaan2015TailTrend,
  author  = {de Haan, Laurens and Klein Tank, Albert and Neves, Cl{\'a}udia},
  title   = {On tail trend detection: Modeling relative risk},
  journal = {Extremes},
  year    = {2015},
  volume  = {18},
  pages   = {141--178},
  doi     = {10.1007/s10687-014-0207-8}
}

@book{AndersenBorganGillKeiding1993,
  author    = {Andersen, Per Kragh and Borgan, {\O}rnulf and Gill, Richard D. and Keiding, Niels},
  title     = {Statistical Models Based on Counting Processes},
  series    = {Springer Series in Statistics},
  publisher = {Springer-Verlag},
  address   = {New York},
  year      = {1993},
  isbn      = {978-0-387-97872-7},
  doi       = {10.1007/978-1-4612-4348-9}
}

@article{Stute1995,
  author  = {Stute, Winfried},
  title   = {The central limit theorem under random censorship},
  journal = {The Annals of Statistics},
  year    = {1995},
  volume  = {23},
  number  = {2},
  pages   = {422--439},
  doi     = {10.1214/aos/1176324528}
}

@article{Nadaraya1964,
  author  = {Nadaraya, E. A.},
  title   = {On estimating regression},
  journal = {Theory of Probability and Its Applications},
  year    = {1964},
  volume  = {9},
  number  = {1},
  pages   = {141--142},
  doi     = {10.1137/1109020}
}

@article{Watson1964,
  author  = {Watson, Geoffrey S.},
  title   = {Smooth regression analysis},
  journal = {Sankhy{\=a}: The Indian Journal of Statistics, Series A},
  year    = {1964},
  volume  = {26},
  number  = {4},
  pages   = {359--372}
}

@article{wellner1978limit,
  author  = {Wellner, Jon A.},
  title   = {Limit theorems for the ratio of the empirical distribution function to the true distribution function},
  journal = {Zeitschrift f{\"u}r Wahrscheinlichkeitstheorie und Verwandte Gebiete},
  year    = {1978},
  volume  = {45},
  number  = {1},
  pages   = {73--88},
  doi     = {10.1007/BF00635964}
}

@incollection{csaki1975some,
  author    = {Cs{\'a}ki, Endre},
  title     = {Some notes on the law of the iterated logarithm for empirical distribution function},
  booktitle = {Limit Theorems of Probability Theory},
  editor    = {R{\'e}v{\'e}sz, P{\'a}l},
  series    = {Colloquia Mathematica Societatis J{\'a}nos Bolyai},
  volume    = {11},
  pages     = {47--57},
  publisher = {North-Holland},
  address   = {Amsterdam},
  year      = {1975},
  isbn      = {0-7204-2834-3}
}

@article{Kiefer1961,
  author  = {Kiefer, Jack Carl},
  title   = {On large deviations of the empiric {D.F.} of vector chance variables and a law of the iterated logarithm},
  journal = {Pacific Journal of Mathematics},
  year    = {1961},
  volume  = {11},
  number  = {2},
  pages   = {649--660},
  doi     = {10.2140/pjm.1961.11.649}
}

@article{Jones1993,
  author  = {Jones, M. C.},
  title   = {Simple boundary correction for kernel density estimation},
  journal = {Statistics and Computing},
  year    = {1993},
  volume  = {3},
  pages   = {135--146},
  doi     = {10.1007/BF00147776}
}

@article{beirlant2007estimation,
  author  = {Beirlant, Jan and Guillou, Armelle and Dierckx, Goedele and Fils-Villetard, Am{\'e}lie},
  title   = {Estimation of the extreme value index and extreme quantiles under random censoring},
  journal = {Extremes},
  year    = {2007},
  volume  = {10},
  pages   = {151--174},
  doi     = {10.1007/s10687-007-0039-x}
}

@article{bucher2024statistics,
  author  = {B{\"u}cher, Axel and Jennessen, Tobias},
  title   = {Statistics for heteroscedastic time series extremes},
  journal = {Bernoulli},
  year    = {2024},
  volume  = {30},
  number  = {1},
  pages   = {46--71},
  doi     = {10.3150/22-BEJ1560}
}

@article{Daouia2010,
  author  = {Daouia, Abdelaati and Gardes, Laurent and Girard, St{\'e}phane and Lekina, Alexandre},
  title   = {Kernel estimators of extreme level curves},
  journal = {TEST},
  year    = {2011},
  volume  = {20},
  number  = {2},
  pages   = {311--333},
  doi     = {10.1007/s11749-010-0196-0}
}

@article{GardesStupfler2014,
  author  = {Gardes, Laurent and Stupfler, Gilles},
  title   = {Estimation of the conditional tail index using a smoothed local {Hill} estimator},
  journal = {Extremes},
  year    = {2014},
  volume  = {17},
  number  = {1},
  pages   = {45--75},
  doi     = {10.1007/s10687-013-0174-5}
}

@article{he2026extreme,
  author  = {He, Yi and Einmahl, John H. J.},
  title   = {Extreme value statistics for general heterogeneous data through the average tail},
  journal = {Journal of the American Statistical Association},
  year    = {2026},
  doi     = {10.1080/01621459.2026.2676731},
  note    = {Published online}
}

@article{gonzalezManteiga1994asymptotic,
  author  = {Gonz{\'a}lez-Manteiga, Wenceslao and Cadarso-Su{\'a}rez, Carmen},
  title   = {Asymptotic properties of a generalized {Kaplan--Meier} estimator with some applications},
  journal = {Journal of Nonparametric Statistics},
  year    = {1994},
  volume  = {4},
  number  = {1},
  pages   = {65--78},
  doi     = {10.1080/10485259408832601}
}

@article{mejzler1956problem,
  author  = {Mejzler, D. G.},
  title   = {On the problem of the limit distribution for the maximal term of a variational series},
  journal = {L'vov Politechn. Inst. Naucn. Zp. (Fiz.-Mat.)},
  year    = {1956},
  volume  = {38},
  pages   = {90--109},
  note    = {In Russian}
}

@article{davisonSmith1990models,
  author  = {Davison, A. C. and Smith, R. L.},
  title   = {Models for exceedances over high thresholds},
  journal = {Journal of the Royal Statistical Society. Series B (Methodological)},
  year    = {1990},
  volume  = {52},
  number  = {3},
  pages   = {393--442},
  doi     = {10.1111/j.2517-6161.1990.tb01796.x}
}

@article{hallTajvidi2000nonparametric,
  author  = {Hall, Peter and Tajvidi, Nader},
  title   = {Nonparametric analysis of temporal trend when fitting parametric models to extreme-value data},
  journal = {Statistical Science},
  year    = {2000},
  volume  = {15},
  number  = {2},
  pages   = {153--167},
  doi     = {10.1214/ss/1009212755}
}

@article{csorgo1986weighted,
  author  = {Cs{\"o}rg{\H{o}}, Mikl{\'o}s and Cs{\"o}rg{\H{o}}, S{\'a}ndor and Horv{\'a}th, Lajos and Mason, David M.},
  title   = {Weighted empirical and quantile processes},
  journal = {The Annals of Probability},
  year    = {1986},
  volume  = {14},
  number  = {1},
  pages   = {31--85},
  doi     = {10.1214/aop/1176992617}
}

@article{deHaan_Zhou_2021,
  author  = {de Haan, Laurens and Zhou, Chen},
  title   = {Trends in extreme value indices},
  journal = {Journal of the American Statistical Association},
  year    = {2021},
  volume  = {116},
  number  = {535},
  pages   = {1265--1279},
  doi     = {10.1080/01621459.2019.1705307}
}

@misc{Bladt_Rodionov_2025,
  author        = {Bladt, Martin and Rodionov, Igor},
  title         = {Censored extreme value estimation},
  year          = {2025},
  howpublished  = {arXiv preprint arXiv:2312.10499},
  eprint        = {2312.10499},
  archiveprefix = {arXiv},
  primaryclass  = {math.ST},
  doi           = {10.48550/arXiv.2312.10499}
}

@article{Cattaneo_Yu_2025,
  author  = {Cattaneo, Matias D. and Yu, Ruiqi Rae},
  title   = {Strong approximations for empirical processes indexed by {Lipschitz} functions},
  journal = {The Annals of Statistics},
  year    = {2025},
  volume  = {53},
  number  = {3},
  pages   = {1203--1229},
  doi     = {10.1214/25-AOS2500}
}

@article{Calonico_Cattaneo_Farrell_2018,
  author  = {Calonico, Sebastian and Cattaneo, Matias D. and Farrell, Max H.},
  title   = {On the effect of bias estimation on coverage accuracy in nonparametric inference},
  journal = {Journal of the American Statistical Association},
  year    = {2018},
  volume  = {113},
  number  = {522},
  pages   = {767--779},
  doi     = {10.1080/01621459.2017.1285776}
}

\newpage
\begin{appendix}
\section{Proofs of main results}\label{appendix_proof_of_main_results}
In this appendix we prove the main results stated in Section~\ref{section_main_results}. The Appendix is organized as follows. \\
In Appendix~\ref{appendix_subs_local_empirical_results} we prove the local empirical results of Subsection~\ref{subsection_local_empirical_convergence_results}. These proofs rely on a mean-squared-error decomposition into a bias and variance-like term as is usually done in kernel regression: the bias is handled by Condition~\ref{cond_H_H1_C1_F_G_Lip}, while the variance-like term is handled using a symmetrization inequality. Additional local empirical process results are given in Appendix~\ref{appendix_additional_empirical_processs_results}. In Appendix~\ref{appendix_subs_local_nelson_aalen_decomposition} we provide the decomposition of the local Nelson--Aalen estimator $\mathbb{\Lambda}^{n}(\cdot\vert s)$, partially following the steps in~\cite{STUTE_1992}. However, the proposed decomposition is more involved since we approximate the estimators based on the observations by their locally transported counterparts presented in Subsection~\ref{subsection_local_survival_decomposition_and_convergence_rates}. Crucially we show negligibility of the approximation errors, which ensures that the locally transported estimators constitute the leading terms, on which the $U$-statistic process results of~\cite{U_statistics_Stute_94} can be extended. In Appendix~\ref{appendix_proof_of_nelson_aalen_array_representation} we prove Theorem~\ref{thm_Nelson_Aalen_decomposition} by showing negligibility of all the decomposition remainder terms given in Appendix~\ref{appendix_subs_local_nelson_aalen_decomposition}. The proofs heavily rely on Lemma~\ref{lemma_Z_n_k_n_eventually_contained} and the empirical results of Subsection~\ref{subsection_local_empirical_convergence_results} and Appendix~\ref{appendix_additional_empirical_processs_results}, while also using the $U$-statistic process results of Appendix~\ref{appendix_uniform_bound_on_degenerate_u_statistics_process}. In Appendix~\ref{appendix_proof_of_nelson_aalen_consistency_rate} we prove Theorem~\ref{thm_Lambda_consistency_speed} by following the martingale techniques used in the proof of Proposition 3 in~\cite{Csorgo_96}. In Appendix~\ref{appendix_proof_of_beran_consistency_rate} we prove Proposition~\ref{prop_Beran_consistency_speed} following the argument of the proof of Theorem 2 in~\cite{Csorgo_96}: The result follows immediately from Theorem~\ref{thm_Lambda_consistency_speed} using the fact that $F_{n,\floor{ns}}(\cdot)$ is continuous. In Appendix~\ref{appendix_proof_of_admissible_tuning_sequences} we quantify the admissible tuning sequences $(k_{n})$ and $(h_{n})$ controlling the threshold and bandwidth respectively. Choosing first the threshold sequence $(k_{n})$ and then the bandwidth sequence $(h_{n})$ depending on $(k_{n})$ allows for the most direct interpretation. In Appendix~\ref{appendix_proof_of_scedasis_consistency} we prove consistency of the relative scedasis estimator $\hat{c}_{F}(s \vert s_{0})$. This result follows directly from Proposition~\ref{prop_Beran_consistency_speed}. Finally, in Appendix~\ref{appendix_uniform_bound_on_degenerate_u_statistics_process} we extend the uniform bounds of~\cite{U_statistics_Stute_94} from the iid setting to local degenerate $U$-statistic processes.

Proving limit results on the random interval $[0,Z_{n:n-k_{n}}]$ is done as in~\cite{STUTE_1992} by considering a suitable deterministic and growing interval, involving a high quantile, that eventually contains $[0,Z_{n:n-k_{n}}]$ with probability going to one. To this end define for fixed $s\in(0,1)$ and $p\in (0,1)$,
\begin{align*}
    T_{s,p}^{(n)}
    &:=
    H_{n,\floor{ns}}^{\leftarrow}(1-pk_{n}/n),
\end{align*}
We begin by showing that there exists a $p\in (0,1)$ such that this high threshold eventually dominates $Z_{n:n-k_{n}}$ with probability going to one.
\begin{lemma}\label{lemma_Z_n_k_n_eventually_contained}
   For every $s \in (0,1)$, there exists a $p \in (0,1)$ such that
    \begin{align}\label{eq_Z_n_k_n_eventually_contained}
        \amsmathbb{P}\big(Z_{n:n-k_{n}} \leq T_{s,p}^{(n)} \big)
        \to 1,
        \qquad
        \text{as} \ n \to \infty.
    \end{align}
\end{lemma}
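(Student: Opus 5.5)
The plan is to reduce the event $\{Z_{n:n-k_{n}} > T_{s,p}^{(n)}\}$ to a statement about a sum of independent Bernoulli variables and then apply Chebyshev's inequality. Put $T_{n}:=T_{s,p}^{(n)}$ and $N_{n}:=\sum_{i=1}^{n}1\{Z_{i}^{(n)}>T_{n}\}$. Since there are no ties, $Z_{n:n-k_{n}}>T_{n}$ holds exactly when at least $k_{n}+1$ observations exceed $T_{n}$. Hence
\begin{align*}
    \amsmathbb{P}\big(Z_{n:n-k_{n}}>T_{n}\big)=\amsmathbb{P}(N_{n}\geq k_{n}+1).
\end{align*}
The summands of $N_{n}$ are independent Bernoulli variables with success probabilities $1-H_{n,i}(T_{n})$. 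Therefore $\operatorname{Var}(N_{n})\leq \amsmathbb{E}N_{n}$. It suffices to choose $p$ so that $\amsmathbb{E}N_{n}\leq k_{n}/2$ for all large $n$. Chebyshev's inequality then gives
\begin{align*}
    \amsmathbb{P}(N_{n}\geq k_{n}+1)\leq \frac{\operatorname{Var}(N_{n})}{(k_{n}/2)^{2}}\leq \frac{4\,\amsmathbb{E}N_{n}}{k_{n}^{2}}\leq \frac{2}{k_{n}},
\end{align*}
and this tends to zero because $k_{n}\to\infty$, which follows from $k_{n}h_{n}\to\infty$.

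The main step is the asymptotics of $\amsmathbb{E}N_{n}=\sum_{i}(1-H_{n,i}(T_{n}))$.

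First, $H_{n,\floor{ns}}$ is continuous, so $1-H_{n,\floor{ns}}(T_{n})=pk_{n}/n\to 0$. Consequently $T_{n}\to x^{*}$.

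Next, the uniform convergence in~\eqref{eq_Scedasis_F_and_G} gives, through the product form~\eqref{eq_H_scedasis_dens}, the uniform statement
\begin{align*}
    \sup_{n}\max_{i\leq n}\Big\vert \frac{1-H_{n,i}(x)}{1-H(x)}-c_{H}(i/n)\Big\vert \to 0, \qquad x\to x^{*}.
\end{align*}
This transfer uses that $c_{F}$ and $c_{G}$ are bounded on $[0,1]$.

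Applying this at $i=\floor{ns}$ and using continuity of $c_{H}$ with $c_{H}(s)>0$, we get $1-H(T_{n})=(1+o(1))\,pk_{n}/(n\,c_{H}(s))$. Applying it at all $i$ then yields
\begin{align*}
    \amsmathbb{E}N_{n}=(1-H(T_{n}))\Big(\sum_{i=1}^{n}c_{H}(i/n)+o(n)\Big)
    =(1+o(1))\,p\,k_{n}\,\frac{\int_{0}^{1}c_{H}(u)\,\mathrm{d}u}{c_{H}(s)}.
\end{align*}
Here the Riemann sum of the continuous function $c_{H}$ on $[0,1]$ converges to its integral.

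Finally, choose $p\in(0,1)$ with $p<c_{H}(s)/\bigl(4\int_{0}^{1}c_{H}\bigr)$. Then $\amsmathbb{E}N_{n}\leq k_{n}/2$ eventually, which concludes the argument.

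The only delicate point is the uniformity just used. The pointwise limits in~\eqref{eq_Scedasis_F_and_G} are not enough, because $T_{n}$ moves with $n$ and the index $i$ ranges over the whole array. The error term must be $o(1)\cdot(1-H(T_{n}))$ uniformly in $i$, so that summing it over $n$ indices contributes only $o(k_{n})$. This is exactly what the assumed uniformity over all $n$ and $i$ provides. Boundedness of $c_{H}$ on $[0,1]$ and positivity at the interior point $s$ ensure that the normalisations are well defined.
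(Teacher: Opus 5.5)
Your proof is correct. It follows a more direct route than the paper's.

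\textbf{The paper's route.} The paper first proves a separate two-sided concentration result, Lemma~\ref{lemma_order_stat_baseline_equiv}: $(n/k_{n})(1-H(Z_{n:n-k_{n}}))\to 1/\int_{0}^{1}c_{H}(u)\,\mathrm{d}u$ in probability. It obtains this from Chebyshev bounds on exceedance counts at two baseline quantiles $x_{n}^{\pm}$ defined through the fixed baseline $H$. It then multiplies by the tail ratio $(1-H_{n,\lfloor ns\rfloor})/(1-H)$ evaluated at the random point $Z_{n:n-k_{n}}$. This gives $(n/k_{n})(1-H_{n,\lfloor ns\rfloor}(Z_{n:n-k_{n}}))\to c_{H}(s)/\int_{0}^{1}c_{H}(u)\,\mathrm{d}u$ in probability, and the lemma follows for any $p<c_{H}(s)/\int_{0}^{1}c_{H}(u)\,\mathrm{d}u$.

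\textbf{Your route.} You apply a single one-sided Chebyshev bound to the exceedance count at the deterministic local threshold $T_{s,p}^{(n)}$ itself. You never evaluate the tail equivalence at a random argument, and you get an explicit bound of $2/k_{n}$.

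\textbf{What each buys.} The paper's route identifies the exact limit of the normalized local tail at the order statistic, and hence the sharp range of admissible $p$. Your factor $4$ is only a convenience. Requiring $\amsmathbb{E}N_{n}\leq(1-\eta)k_{n}$ gives $\amsmathbb{P}(N_{n}\geq k_{n}+1)\leq 1/(\eta^{2}k_{n})$, which recovers the same range.

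\textbf{One step stated too quickly.} Because $H_{n,\lfloor ns\rfloor}$ changes with $n$, the fact that $1-H_{n,\lfloor ns\rfloor}(T_{n})=pk_{n}/n\to0$ does not by itself give $T_{n}\to x^{*}$. You also need $\inf_{n}(1-H_{n,\lfloor ns\rfloor}(x_{1}))>0$ for each fixed $x_{1}<x^{*}$. This follows from the uniform tail equivalence you already invoke, together with $c_{H}(\lfloor ns\rfloor/n)\geq c_{H}(s)/2$ for large $n$ and monotonicity in $x$. It should be written out, since the rest of your argument applies the tail equivalence at $x=T_{n}$.
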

This lemma is the analogue of Lemma 2.4 in~\cite{STUTE_1992} for heteroscedastic data.
We begin by proving Lemma~\ref{lemma_Z_n_k_n_eventually_contained}.

\begin{proof}[Proof of Lemma~\ref{lemma_Z_n_k_n_eventually_contained}]
    Let $s \in (0,1)$ and $n \in \amsmathbb{N}$ be given and let $p_s \in (0,1)$ be arbitrary for now. The continuity of $x \mapsto H_{n,\floor{ns}}(x)$ yields that
    \begin{align*}
       & \amsmathbb{P}\Big(Z_{n:n-k_{n}} \leq H_{n,\floor{ns}}^{\leftarrow}\Big(1-p_{s}\frac{k_{n}}{n}\Big) \Big)
        =
        \amsmathbb{P}\Big(H_{n,\floor{ns}}(Z_{n:n-k_{n}})  \leq 1-p_{s}\frac{k_{n}}{n}\Big) \\
        & =
        \amsmathbb{P}\Big( p_{s}\frac{k_{n}}{n} \leq 1-H_{n,\floor{ns}}(Z_{n:n-k_{n}})\Big)
        = 
        \amsmathbb{P}\Big( p_{s} \leq
        \frac{n}{k_{n}}(1-H(Z_{n:n-k_{n}}))
        \frac{1-H_{n,\floor{ns}}(Z_{n:n-k_{n}})}{1-H(Z_{n:n-k_{n}})}\Big) \\
        &=
        \amsmathbb{P}\Big( p_{s} \leq
        c_{H}(s)/C_{H}(1) +(c_{H}(\floor{ns}/n)-c_{H}(s))/ C_{H}(1) \\
        &\qquad 
        + \Big(\frac{n}{k_{n}}(1-H(Z_{n:n-k_{n}}))
        \frac{1-H_{n,\floor{ns}}(Z_{n:n-k_{n}})}{1-H(Z_{n:n-k_{n}})}
        -
         c_{H}(\floor{ns}/n)/C_{H}(1) \Big)
        \Big).
    \end{align*}
    By continuity of $c_{H}(\cdot)$ it holds that $\vert c_{H}(\floor{ns}/n)-c_{H}(s) \vert = o(1)$. Lemma~\ref{lemma_order_stat_baseline_equiv} yields that $(n/k_{n})(1-H(Z_{n:n-k_{n}})) \overset{\amsmathbb{P}}{\to} 1/C_{H}(1)$ and from~\eqref{eq_H_scedasis_dens} it immediately follows that
    \begin{align*}
        \Big\vert  
        \frac{1-H_{n,\floor{ns}}(Z_{n:n-k_{n}})}{1-H(Z_{n:n-k_{n}})} -
        c_{H}(\floor{ns}/n)
        \Big\vert \overset{\amsmathbb{P}}{\to} 0,
    \end{align*}
    as $n \to \infty$. Consequently 
    \begin{align*}
        \Big\vert\frac{n}{k_{n}}(1-H(Z_{n:n-k_{n}}))
        \frac{1-H_{n,\floor{ns}}(Z_{n:n-k_{n}})}{1-H(Z_{n:n-k_{n}})}
        -
         c_{H}(\floor{ns}/n)/C_{H}(1) \Big\vert
         = o_{\amsmathbb{P}}(1),
    \end{align*}
    as $n \to \infty$. Gathering the above argument yields that, as $n \to \infty$,
    \begin{align*}
        \amsmathbb{P}\Big(Z_{n:n-k_{n}} \leq H_{n,\floor{ns}}^{\leftarrow}\Big(1-p_{s}\frac{k_{n}}{n}\Big) \Big)
        =
        \amsmathbb{P}\Big( p_{s} \leq
        c_{H}(s)/C_{H}(1) + o(1) + o_{\amsmathbb{P}}(1)\Big).
    \end{align*}
   Letting
    \begin{align*}
        p_{s} \in (0, c_{H}(s)/C_{H}(1) \wedge 1), 
    \end{align*}
    and noting that the latter interval is nonempty because $c_{H}(s)>0$ for $s\in(0,1)$, yields that the above probability tends to $1$ as $n\to \infty$. This concludes the proof.
\end{proof}

In all subsequent sections we choose $p_{s}$ such that~\eqref{eq_Z_n_k_n_eventually_contained} holds and define $T_{s}^{(n)}:= T_{s,p_{s}}^{(n)}$.

\subsection{Proof of local empirical convergence results}\label{appendix_subs_local_empirical_results}
\begin{proof}[Proof of Lemma~\ref{lemma_H_n_and_H_n_1_unif_cons}]
    Let $s \in (0,1)$ be given.
    We consider the mean squared error (MSE), which can be decomposed into the squared bias and a variance-like term. Doing so while using the triangle inequality yields that
    \begin{align*}
        &\amsmathbb{E}\Big[\sup_{x \leq Z_{n:n-k_{n}}}\Big(\amsmathbb{H}_{n}(x \vert s) - H_{n,\floor{ns}}(x)\Big)^{2}\Big]  \\
        & \leq
           2\amsmathbb{E}\Big[\sup_{x \leq Z_{n:n-k_{n}}}(\amsmathbb{H}_{n}(x \vert s) -   \amsmathbb{E}[\amsmathbb{H}_{n}(x \vert s)]\Big)^{2}\Big] 
        +
        2\sup_{x \leq Z_{n:n-k_{n}}}\Big(\amsmathbb{E}[\amsmathbb{H}_{n}(x \vert s)] - H_{n,\floor{ns}}(x)\Big)^{2}.
    \end{align*}
    We first bound the bias term on the deterministic interval $[0, T_{s}^{(n)}]$ as follows
    \begin{align*}
         &\sup_{x \leq T_{s}^{(n)}} \vert H_{n,\floor{ns}}(x) -  \amsmathbb{E}[\amsmathbb{H}_{n}(x \vert s)]\vert
         =
          \sup_{x \leq T_{s}^{(n)}} \big\vert H_{n,\floor{ns}}(x) -   
          \frac{1}{nh_{n}}\sum_{i=1}^{n}K\Big(\frac{s -i/n}{h_{n}} \Big) H(x,i/n)
          \big\vert \\ 
          &\leq
          \sup_{x \leq T_{s}^{(n)}} \big\vert 
          H_{n,\floor{ns}}(x) -  
          \frac{1}{h_{n}}\int_{0}^{1}
          K\Big(\frac{s -u}{h_{n}} \Big)H(x,u) \mathrm{d} u
          \big\vert \\
          & \quad +
          \sup_{x \leq T_{s}^{(n)}} \big\vert 
          \int_{0}^{1}\frac{1}{h_{n}}
          K\Big(\frac{s -u}{h_{n}} \Big)H(x,u) \mathrm{d} u
          -
          \frac{1}{n}\sum_{i=1}^{n}\frac{1}{h_{n}}K\Big(\frac{s -i/n}{h_{n}} \Big) H(x,i/n)
          \big\vert,
    \end{align*}
    using the extended map $H(\cdot, \cdot)$ of Condition~\ref{cond_H_H1_C1_F_G_Lip}. The latter supremum is a Riemann error and is consequently bounded by $n^{-1}$ times the total variation of each summand (respectively integrand).
    Each summand is of total variation bounded by a constant times $1/h_{n}$, using that both $K$ and $H$ are bounded. Consequently the latter supremum is bounded by an $\mathcal{O}((nh_{n})^{-1})$. We turn to the first of the latter terms and note that
    \begin{align*}
        &\sup_{x \leq T_{s}^{(n)}} \big\vert 
          H(x, \floor{ns}/n) -  
          \frac{1}{h_{n}}\int_{0}^{1}
          K\Big(\frac{s -u}{h_{n}} \Big)H(x,u) \mathrm{d} u
          \big\vert \\
          &=
          \sup_{x \leq T_{s}^{(n)}} \big\vert 
         H(x, \floor{ns}/n) -  
          \int_{(s-1)/h_{n}}^{s/h_{n}}
          K(w)H(x,s-wh_{n}) \mathrm{d} w
          \big\vert. 
    \end{align*}
    The kernel $K$ is compactly supported and we may assume without loss of generality that it is supported on $[-1, 1]$. For the given $s \in (0,1)$, choose $\rho_{s}$ such that $I_{s} :=[s-\rho_{s},s+\rho_{s}]\subset(0,1)$ and let $t$ satisfy that $s+t \in I_{s}$. Denote by 
    \begin{align*}
        \partial_{u}H(x,w) = \frac{\partial}{\partial u}(H(x,u))\big\vert_{u=w},
    \end{align*}
    the partial derivative of $u\mapsto H(x,u)$ evaluated in $w$ for fixed $x\leq T_{s}^{(n)}$.
    Then by the fundamental theorem of calculus we have that 
    \begin{align*}
        H(x,s+t) - H(x,s) = \int_{0}^{t}  \partial_{u}H(x,s+v) \mathrm{d} v.
    \end{align*}
    and consequently
    \begin{align*}
        \vert H(x,s+t) - H(x,s) - t\partial_{u}H(x,s)\vert  &= \Big\vert \int_{0}^{t}  \partial_{u}H(x,s+v) -\partial_{u}H(x,s) \mathrm{d} v \Big\vert \\ &\leq
        \int_{0}^{\vert t\vert}  Lv \mathrm{d} v
        =\frac{Lt^{2}}{2},
    \end{align*}
    using Condition~\ref{cond_H_H1_C1_F_G_Lip} where, $L$ is some positive Lipschitz constant. Taking $t=-wh_{n}$ we obtain 
    \begin{align*}
        H(x,s -wh_{n}) = H(x,s) - wh_{n}\partial_{u}H(x,s) + R_{n}(x,w),
    \end{align*}
    where
    \begin{align*}
        \sup_{x\leq T_{s}^{(n)}}\vert R_{n}(x,w) \vert \leq \frac{L}{2}h_{n}^{2}w^{2}.
    \end{align*}
    For $n\in \amsmathbb{N}$ sufficiently large it holds that $s/h_{n} > 1$ and $(s-1)/h_{n} < -1$, since $s \in (0,1)$ and it holds that $\floor{ns}/n \in I_{s}$. For such $n$ the latter difference equals
    \begin{align*}
        &\sup_{x \leq T_{s}^{(n)}} \Big\vert  
          \int_{-1}^{1}
          K(w)(H(x, \floor{ns}/n) - H(x,s-wh_{n})) \mathrm{d} w
          \Big\vert \\
          & =
          \sup_{x \leq T_{s}^{(n)}} \Big\vert 
          \int_{-1}^{1}
          K(w)(H(x, \floor{ns}/n) - H(x,s) + wh_{n}\partial_{u}H(x,s) - R_{n}(x,w) \mathrm{d} w
          \Big\vert  \\
          & \leq
          \sup_{x \leq T_{s}^{(n)}} \big\vert H(x, \floor{ns}/n) - H(x,s) \big\vert\\
          & \quad +
          h_{n}
           \sup_{x \leq T_{s}^{(n)}} \Big\vert \partial_{u}H(x,s) \Big\vert
           \int_{-1}^{1}
          K(w) w \mathrm{d} w
          +\int_{-1}^{1}K(w) \sup_{x\leq T_{s}^{(n)}}\vert R_{n}(x,w) \vert \mathrm{d} w
           \\
           &\leq
            \mathcal{O}\Big(\frac{\vert\floor{ns} -s \vert}{n}\Big) 
            +
            h_{n}^{2}\frac{L}{2} \int_{-1}^{1}w^{2}K(w) \mathrm{d} w
          \leq
          \mathcal{O}(n^{-1}) + \mathcal{O}(h_{n}^{2}) = o\Big(\frac{1}{\sqrt{nh_{n}}}\Big),
    \end{align*}
    using Condition~\ref{cond_H_H1_C1_F_G_Lip} to bound the first-order derivative and to bound $R_{n}(x,w)$ as above, using that the kernel $K$ is symmetric and in the last inequality that $nh_{n}^{5} \to 0$. \\
    On the event $\{Z_{n:n-k_{n}} \leq T_{s}^{(n)}\}$ we have that
    \begin{align*}
        \sup_{x \leq Z_{n:n-k_{n}}}\Big(\amsmathbb{E}[\amsmathbb{H}_{n}(x \vert s)] - H_{n,\floor{ns}}(x)\Big)^{2}
        \leq
        \sup_{x \leq T_{s}^{(n)}}\Big(\amsmathbb{E}[\amsmathbb{H}_{n}(x \vert s)] - H_{n,\floor{ns}}(x)\Big)^{2}
        =
        o\Big(\frac{1}{nh_{n}}\Big).
    \end{align*}
    Lemma~\ref{lemma_Z_n_k_n_eventually_contained} yields that the interval $[0, Z_{n:n-k_{n}}]$ can be covered by $[0, T_{s}^{(n)}]$ with probability going to one. 
    For the variance-like term we consider the supremum over the random interval of interest $[0, Z_{n:n-k_{n}}]$, and use symmetrization with a sequence of iid Rademacher variables $(\varepsilon_{i})$ with Lemma 2.3.6 of~\cite{vanderVaart2023}
    \begin{align*}
        &\amsmathbb{E}\Big[\sup_{x \leq Z_{n:n-k_{n}}}\Big(\amsmathbb{H}_{n}(x \vert s) -   \amsmathbb{E}[\amsmathbb{H}_{n}(x \vert s)]\Big)^{2}\Big] \\
        &=
        \amsmathbb{E}\Big[\sup_{x \leq Z_{n:n-k_{n}}}\Big(  \frac{1}{nh_{n}}\sum_{i=1}^{n}\Big( 1\{Z_{i}^{(n)} \leq x\} - H_{n,i}(x)\Big) K\Big(\frac{s-i/n}{h_{n}}\Big)\Big)^{2}\Big] \\
        & \leq
        4 \amsmathbb{E}\Big[\sup_{x \leq Z_{n:n-k_{n}}}\Big(  \frac{1}{nh_{n}}\sum_{i=1}^{n}\varepsilon_{i}1\{Z_{i}^{(n)} \leq x\} K\Big(\frac{s-i/n}{h_{n}}\Big)\Big)^{2}\Big]
        =: 
        \frac{4}{(nh_{n})^{2}} \amsmathbb{E}\Big[\sup_{x \leq Z_{n:n-k_{n}}}\big( S(x) \big)^{2}\Big].
    \end{align*}
    Let $i_{1},\ldots, i_{n}$ be the indices of the order statistics $Z_{n:1},\ldots, Z_{n:n}$. When $x \in (Z_{n:m}, Z_{n:m+1}]$ we have that
    \begin{align*}
        S(x) = S_{m}:= \sum_{j=1}^{m}\varepsilon_{i_{j}}K\Big(\frac{s-i_{j}/n}{h_{n}}\Big), 
        \quad m=1,\ldots,n.
    \end{align*}
    Consequently we may bound the supremum $\amsmathbb{P}$-a.s. as follows
    \begin{align*}
        \sup_{x \leq Z_{n:n-k_{n}}}\big( S(x) \big)^{2}
        \leq
        \max_{1\leq m \leq n}\big( S_{m} \big)^{2}.
    \end{align*}
    Define $\overline{Z}_{n}:=(Z_{1}^{(n)},\ldots, Z_{n}^{(n)})$ and note that $(S_{m})$ is a martingale with respect to $\mathcal{G}_{m} := \sigma(\overline{Z}_{n}, \varepsilon_{i_{1}},\ldots, \varepsilon_{i_{m}})$, since it is adapted, 
    \begin{align*}
        \amsmathbb{E}[S_{m+1} \vert \mathcal{G}_{m}]
        =
        \amsmathbb{E}[S_{m} + \varepsilon_{i_{m+1}}K((s-i_{m+1}/n)/h_{n}) \vert \mathcal{G}_{m}]
        =
        S_{m},
    \end{align*} 
    and clearly, for any fixed $n$, we have $\amsmathbb{E}[S_{n}^{2}]< \infty$. 
    In particular, Doob's inequality yields that
    \begin{align*}
        \amsmathbb{E}\Big[\sup_{x \leq Z_{n:n-k_{n}}}\big( S(x) \big)^{2}\Big]
        \leq
        \amsmathbb{E}\Big[ \max_{1\leq m \leq n}\big( S_{m} \big)^{2}\Big] 
        \leq
        4\amsmathbb{E}[S_{n}^{2}] 
        =
        4\sum_{i=1}^{n} K^{2}\Big(\frac{s-i/n}{h_{n}}\Big).
    \end{align*}
    Summarizing, we have that
    \begin{align*}
        &\amsmathbb{E}\Big[\sup_{x \leq Z_{n:n-k_{n}}}\Big(\amsmathbb{H}_{n}(x \vert s) -   \amsmathbb{E}[\amsmathbb{H}_{n}(x \vert s)]\Big)^{2}\Big]
        \leq
           \frac{16}{(nh_{n})^{2}}\sum_{i=1}^{n}K^{2}\Big(\frac{s-i/n}{h_{n}}\Big)
          =
          \mathcal{O}\Big(\frac{1}{nh_{n}}\Big).
    \end{align*}
    This concludes the two terms of the MSE. An application of Chebyshev's inequality finishes the proof. The second statement of the Lemma is proven identically to the first statement.
\end{proof}

\begin{proof}[Proof of Lemma~\ref{lemma_H_tail_fraction}]
We note that
\begin{align*}
    \amsmathbb{E}\Big[
    \Big(
    \sup_{x \leq Z_{n:n-k_{n}}}\Big(\frac{1-\amsmathbb{H}_{n}(x- \vert s)}{1-H_{n,\floor{ns}}(x-)} \Big)
    -
    1
    \Big)^{2}
    \Big]
    =
    \amsmathbb{E}\Big[
    \Big(
    \sup_{x \leq Z_{n:n-k_{n}}}\Big(\frac{H_{n,\floor{ns}}(x-)-\amsmathbb{H}_{n}(x- \vert s)}{1-H_{n,\floor{ns}}(x-)} \Big)
    \Big)^{2}
    \Big],
\end{align*}
which is the mean squared error of the fraction considered in Lemma~\ref{lemma_H_minus_H_gamma_fraction_rate} with $\gamma = 0$. In particular, the Lemma yields that the latter fraction converges in probability to 0, since $nh_{n}^{2}/k_{n}$, $\sqrt{nh_{n}}/k_{n}$ and $1/(k_{n}h_{n})$ are all decreasing.
\end{proof}

\subsection{Local Nelson--Aalen decomposition}\label{appendix_subs_local_nelson_aalen_decomposition}
First we introduce the auxiliary estimators based on the locally transformed data.
\begin{align*}
     &\widetilde{\amsmathbb{H}}_{n}(x \vert s) 
     : = \frac{1}{nh_{n}} \sum_{i=1}^{n} 1\{Z_{\floor{ns}}^{(n,i)}\leq x\} K\Big(\frac{s - i/n}{h_{n}}\Big) , 
     \\
     &\widetilde{\amsmathbb{H}}^{1}_{n}(x \vert s) : = 
     \frac{1}{nh_{n}} \sum_{i=1}^{n} 1\{Z_{\floor{ns}}^{(n,i)}\leq x, \delta_{\floor{ns}}^{(n,i)} = 1\} K\Big(\frac{s - i/n}{h_{n}}\Big) ,
\end{align*}
On $y \leq Z_{n,n}$, for $n \in \amsmathbb{N}$ large enough, we have that
\begin{align*}
    \frac{1}{1-\amsmathbb{H}_{n}(y-\vert s)}
    &=
    \frac{2}{1-H_{n,\floor{ns}}(y-)} \\
   & \qquad +
    \frac{(\amsmathbb{H}_{n}(y-\vert s) - H_{n,\floor{ns}}(y-))^{2}}{(1-H_{n,\floor{ns}}(y-))^{2}(1-\amsmathbb{H}_{n}(y-\vert s))}
    -\frac{1-\amsmathbb{H}_{n}(y-\vert s)}{(1-H_{n,\floor{ns}}(y-))^{2}}.
\end{align*}
We can then integrate both sides with respect to $\amsmathbb{H}^{1}_{n}$ for each $x \leq Z_{n,n}$. This yields
\begin{align*}
    &\mathbb{\Lambda}^{n}(x\vert s) =
    2\int_{0}^{x}
    \frac{1}{1-H_{n,\floor{ns}}(y-)} \amsmathbb{H}^{1}_{n}(\mathrm{d} y \vert s) \\
    & \quad 
    +
    \int_{0}^{x}
    \frac{(\amsmathbb{H}_{n}(y- \vert s) - H_{n,\floor{ns}}(y-))^{2}}{(1-H_{n,\floor{ns}}(y-))^{2}(1-\amsmathbb{H}_{n}(y- \vert s))}\amsmathbb{H}^{1}_{n}(\mathrm{d} y \vert s) -
    \int_{0}^{x}\frac{1-\amsmathbb{H}_{n}(y- \vert s)}{(1-H_{n,\floor{ns}}(y-))^{2}}\amsmathbb{H}^{1}_{n}(\mathrm{d} y \vert s).
\end{align*}
The latter term can be approximated as follows
\begin{align*}
     &\int_{0}^{x}\frac{1-\amsmathbb{H}_{n}(y- \vert s)}{(1-H_{n,\floor{ns}}(y-))^{2}}\amsmathbb{H}^{1}_{n}(\mathrm{d} y \vert s)=
      \int_{0}^{x}\frac{1-\widetilde{\amsmathbb{H}}_{n}(y- \vert s)}{(1-H_{n,\floor{ns}}(y-))^{2}}\widetilde{\amsmathbb{H}}^{1}_{n}(\mathrm{d} y \vert s) \\
      & \qquad +
      \Big(\int_{0}^{x}\frac{1-\amsmathbb{H}_{n}(y- \vert s)}{(1-H_{n,\floor{ns}}(y-))^{2}}\amsmathbb{H}^{1}_{n}(\mathrm{d} y \vert s)
    -
    \int_{0}^{x}\frac{1-\widetilde{\amsmathbb{H}}_{n}(y- \vert s)}{(1-H_{n,\floor{ns}}(y-))^{2}}\widetilde{\amsmathbb{H}}^{1}_{n}(\mathrm{d} y \vert s)\Big) \\
    &=:
     \int_{0}^{x}\frac{1-\widetilde{\amsmathbb{H}}_{n}(y- \vert s)}{(1-H_{n,\floor{ns}}(y-))^{2}}\widetilde{\amsmathbb{H}}^{1}_{n}(\mathrm{d} y \vert s) + \widetilde{R}_{n}(x\vert s),
\end{align*}
and the leading term can be decomposed as
\begin{align*}
    &\int_{0}^{x}\frac{1-\widetilde{\amsmathbb{H}}_{n}(y- \vert s)}{(1-H_{n,\floor{ns}}(y-))^{2}}\widetilde{\amsmathbb{H}}^{1}_{n}(\mathrm{d} y \vert s) \\
    & =
    \int_{0}^{x}\frac{\sum_{j=1}^{n}1\{Z_{\floor{ns}}^{(n,j)} \geq y\}K((s -j/n)/h_{n})}{(nh_{n})^{2}(1-H_{n,\floor{ns}}(y-))^{2}}
    \sum_{i=1}^{n} 1\{Z_{\floor{ns}}^{(n,i)}\leq \mathrm{d} y, \delta_{\floor{ns}}^{(n,i)} = 1 \}K\Big(\frac{s - i/n}{h_{n}}\Big) \\
    &\qquad - \Big(1- \frac{1}{nh_{n}}\sum_{i=1}^{n}K\Big(\frac{s-i/n}{h_{n}}\Big) \Big)
    \int_{0}^{x}\frac{1}{(1-H_{n,\floor{ns}}(y-))^{2}}\widetilde{\amsmathbb{H}}^{1}_{n}(\mathrm{d} y \vert s) \\
    & =
    \frac{1}{(nh_{n})^{2}}
    \sum_{i=1}^{n}\sum_{j=1}^{n} \frac{1\{ Z_{\floor{ns}}^{(n,j)} \geq Z_{\floor{ns}}^{(n,i)} \}1\{Z_{\floor{ns}}^{(n,i)} \leq x, \delta_{\floor{ns}}^{(n,i)} = 1\}}{(1-H_{n,\floor{ns}}(Z_{\floor{ns}}^{(n,i)}-))^{2}}K\Big(\frac{s -j/n}{h_{n}}\Big)K\Big(\frac{s -i/n}{h_{n}}\Big) \\
    & \qquad - \Big(1- \frac{1}{nh_{n}}\sum_{i=1}^{n}K\Big(\frac{s-i/n}{h_{n}}\Big) \Big)
    \int_{0}^{x}\frac{1}{(1-H_{n,\floor{ns}}(y-))^{2}}\widetilde{\amsmathbb{H}}^{1}_{n}(\mathrm{d} y \vert s)\\
    &= 
    \frac{1}{(nh_{n})^{2}}
    \sum_{i \neq j}^{n} \frac{1\{ Z_{\floor{ns}}^{(n,j)} \geq Z_{\floor{ns}}^{(n,i)} \}1\{Z_{\floor{ns}}^{(n,i)} \leq x, \delta_{\floor{ns}}^{(n,i)} = 1\}}{(1-H_{n,\floor{ns}}(Z_{\floor{ns}}^{(n,i)}-))^{2}}K\Big(\frac{s -j/n}{h_{n}}\Big)K\Big(\frac{s -i/n}{h_{n}}\Big) \\
    & \qquad 
    +
    \frac{1}{(nh_{n})^{2}}
    \sum_{i =1}^{n} \frac{1\{Z_{\floor{ns}}^{(n,i)} \leq x, \delta_{\floor{ns}}^{(n,i)} = 1\}}{(1-H_{n,\floor{ns}}(Z_{\floor{ns}}^{(n,i)}-))^{2}}K^{2}\Big(\frac{s -i/n}{h_{n}}\Big) \\
     & \qquad - \Big(1- \frac{1}{nh_{n}}\sum_{i=1}^{n}K\Big(\frac{s-i/n}{h_{n}}\Big) \Big)
    \int_{0}^{x}\frac{1}{(1-H_{n,\floor{ns}}(y-))^{2}}\widetilde{\amsmathbb{H}}^{1}_{n}(\mathrm{d} y \vert s) \\
    & =:
    \frac{1}{(nh_{n})^{2}} U_{n}(x \vert s)
    +
    \Big[ \Big(-1 + \frac{1}{nh_{n}}\sum_{i=1}^{n}K\Big(\frac{s-i/n}{h_{n}}\Big) \Big)
    \int_{0}^{x}\frac{1}{(1-H_{n,\floor{ns}}(y-))^{2}}\widetilde{\amsmathbb{H}}^{1}_{n}(\mathrm{d} y \vert s) \\
     & \qquad + \frac{1}{n}\int_{0}^{x}\frac{1}{(1-H_{n,\floor{ns}}(y-))^{2}} 
    \Big( \frac{1}{nh_{n}^{2}}\sum_{i=1}^{n} 1\{Z_{\floor{ns}}^{(n,i)}\leq \mathrm{d} y, \delta_{\floor{ns}}^{(n,i)} = 1 \}K^{2}\Big(\frac{s - i/n}{h_{n}}\Big) \Big) \Big] \\
    & =:  \frac{1}{(nh_{n})^{2}} U_{n}(x \vert s) + R_{n1}(x\vert s).
\end{align*}
Then $\mathbb{\Lambda}^{n}(\cdot \vert s)$ can be written as
\begin{align*}
      \mathbb{\Lambda}^{n}(x\vert s) &=
    \int_{0}^{x}
    \frac{2}{1-H_{n,\floor{ns}}(y-)} \amsmathbb{H}^{1}_{n}(\mathrm{d} y \vert s)
    +
    \int_{0}^{x}
    \frac{(\amsmathbb{H}_{n}(y- \vert s) - H_{n,\floor{ns}}(y-))^{2}}{(1-H_{n,\floor{ns}}(y-))^{2}(1-\amsmathbb{H}_{n}(y- \vert s))}\amsmathbb{H}^{1}_{n}(\mathrm{d} y \vert s) \\
    & \qquad \qquad -
    \big(\widetilde{R}_{n}(x\vert s) + \frac{1}{(nh_{n})^{2}} U_{n}(x \vert s) + R_{n1}(x\vert s)\big) \\
    &=:
    2S_{n1}(x \vert s) + R_{n2}(x \vert s) - \big(\widetilde{R}_{n}(x\vert s) - \frac{1}{(nh_{n})^{2}} U_{n}(x \vert s) - R_{n1}(x\vert s)\big).
\end{align*}
Following the idea of~\cite{STUTE_1992} we're interested in adding and subtracting the Hájek projection of $U_{n}(\cdot \vert s)$, and using the uniform error bounds of Appendix~\ref{appendix_uniform_bound_on_degenerate_u_statistics_process} on the corresponding degenerate $U$-statistic process. To this end, define the variables
\begin{align*}
    W_{\floor{ns}}^{(n,i)} = \begin{cases}
        Z_{\floor{ns}}^{(n,i)}, \quad \delta_{\floor{ns}}^{(n,i)} =1, \\
        \infty, \quad \ \ \ \ \delta_{\floor{ns}}^{(n,i)}=0,
    \end{cases}
\end{align*}
for all $i \leq n$. Note that it then holds that
\begin{align*}
    \amsmathbb{P}(W_{\floor{ns}}^{(n,i)} \leq y) = \amsmathbb{P}(Z_{\floor{ns}}^{(n,i)} \leq y, \delta_{\floor{ns}}^{(n,i)} =1) = H_{n,\floor{ns}}^{1}(y),
\end{align*}
by definition of $H_{n,\floor{ns}}^{1}$. Similarly it holds that $\delta_{\floor{ns}}^{(n,i)}1\{Z_{\floor{ns}}^{(n,i)} \leq y\} = 1\{W_{\floor{ns}}^{(n,i)} \leq y\}$. Define 
\begin{align*}
    h_{n,\floor{ns}}(y,z) = \frac{1\{z\geq y\}}{(1-H_{n,\floor{ns}}(y-))^{2}}, \quad y \leq x^{*},
\end{align*}
and note that $U_{n}(\cdot \vert s)$ can now be written as
\begin{align*}
    U_{n}(x\vert s)
    &=
    \sum_{i \neq j}^{n} \frac{1\{ Z_{\floor{ns}}^{(n,j)} \geq Z_{\floor{ns}}^{(n,i)} \}}{(1-H_{n,\floor{ns}}(Z_{\floor{ns}}^{(n,i)}-))^{2}}
    1\{Z_{\floor{ns}}^{(n,i)} \leq x, \delta_{\floor{ns}}^{(n,i)} = 1\}
    K\Big(\frac{s -j/n}{h_{n}}\Big)K\Big(\frac{s -i/n}{h_{n}}\Big) \\
    &= 
    \sum_{i \neq j}^{n} h_{n,\floor{ns}}(W_{\floor{ns}}^{(n,i)},Z_{\floor{ns}}^{(n,j)})
    1\{W_{\floor{ns}}^{(n,i)} \leq x\}
    K\Big(\frac{s -j/n}{h_{n}}\Big)K\Big(\frac{s -i/n}{h_{n}}\Big).
\end{align*}
On the set $\{i \neq j\}$ we have $W_{\floor{ns}}^{(n,i)}$ and $Z_{\floor{ns}}^{(n,j)}$ are independent. We compute the Hájek projection of each $(W_{\floor{ns}}^{(n,i)}, Z_{\floor{ns}}^{(n,j)}) \mapsto  h_{n,\floor{ns}}(W_{\floor{ns}}^{(n,i)}, Z_{\floor{ns}}^{(n,j)}) 1\{W_{\floor{ns}}^{(n,i)} \leq x\}$ to obtain the Hájek projection of $U_{n}(\cdot \vert s)$. Below we use the notation $K_{n,s}(i,j) := K((s -j/n)/h_{n})K((s -i/n)/h_{n})$.
\begin{align*}
    &\widehat{U}_{n}(x\vert s)
      :=
     \sum_{i\neq j}^{n}  K_{n,s}(i,j) \Big\{ \int_{0}^{x^{*}} h_{n,\floor{ns}}(W_{\floor{ns}}^{(n,i)},z) 1\{W_{\floor{ns}}^{(n,i)} \leq x\} H_{n,\floor{ns}}(\mathrm{d} z) \\
      &\qquad +
     \int_{0}^{x} h_{n,\floor{ns}}(y,Z_{\floor{ns}}^{(n,j)})  H_{n,\floor{ns}}^{1}(\mathrm{d} y) 
     -
     \int_{0}^{x^{*}} \int_{0}^{x}h_{n,\floor{ns}}(y,z)  H_{n,\floor{ns}}^{1}(\mathrm{d} y)  H_{n,\floor{ns}}(\mathrm{d} z)  \Big\} \\
     &=
     \sum_{i\neq j}^{n}  K_{n,s}(i,j) \Big\{ \int_{0}^{x^{*}} \frac{1\{z\geq W_{\floor{ns}}^{(n,i)}\}}{(1-H_{n,\floor{ns}}(W_{\floor{ns}}^{(n,i)}-))^{2}} 1\{W_{\floor{ns}}^{(n,i)} \leq x\} H_{n,\floor{ns}}(\mathrm{d} z) \\
      &\qquad \qquad \qquad \qquad  +
     \int_{0}^{x} \frac{1\{Z_{\floor{ns}}^{(n,j)}\geq y\}}{(1-H_{n,\floor{ns}}(y-))^{2}}  H_{n,\floor{ns}}^{1}(\mathrm{d} y)  \\
     & \qquad \qquad \qquad \qquad  -
     \int_{0}^{x^{*}} \int_{0}^{x \wedge z} \frac{1}{(1-H_{n,\floor{ns}}(y-))^{2}}  H_{n,\floor{ns}}^{1}(\mathrm{d} y)  H_{n,\floor{ns}}(\mathrm{d} z) \Big\} \\
     &=
     \sum_{i\neq j}^{n}  K_{n,s}(i,j) \Big\{ \int_{0}^{x^{*}} \frac{1\{Z_{\floor{ns}}^{(n,i)} \leq x \wedge z\}}{(1-H_{n,\floor{ns}}(Z_{\floor{ns}}^{(n,i)}-))^{2}} 1\{\delta_{\floor{ns}}^{(n,i)} = 1\} H_{n,\floor{ns}}(\mathrm{d} z) \\
      &\qquad \qquad \qquad \qquad  +
     \int_{0}^{x} \frac{1\{Z_{\floor{ns}}^{(n,j)}\geq y\}}{(1-H_{n,\floor{ns}}(y-))^{2}}  H_{n,\floor{ns}}^{1}(\mathrm{d} y) \\
     & \qquad \qquad \qquad \qquad  -
     \int_{0}^{x} \int_{y}^{x^{*}} H_{n,\floor{ns}}(\mathrm{d} z) \frac{1}{(1-H_{n,\floor{ns}}(y-))^{2}}  H_{n,\floor{ns}}^{1}(\mathrm{d} y)  \Big\} \\
     &=
     \sum_{i\neq j}^{n}  K_{n,s}(i,j) \Big\{ \frac{1\{Z_{\floor{ns}}^{(n,i)} \leq x \}1\{\delta_{\floor{ns}}^{(n,i)} = 1\}}{(1-H_{n,\floor{ns}}(Z_{\floor{ns}}^{(n,i)}-))^{2}}  \int_{Z_{\floor{ns}}^{(n,i)}}^{x^{*}}  H_{n,\floor{ns}}(\mathrm{d} z) \\
      &\qquad +
     \int_{0}^{x} \frac{1\{Z_{\floor{ns}}^{(n,j)}\geq y\} - (1 - H_{n,\floor{ns}}(y -))}{(1-H_{n,\floor{ns}}(y-))^{2}}  H_{n,\floor{ns}}^{1}(\mathrm{d} y) \Big\}
     \\
     &=
     \sum_{i\neq j}^{n}  K_{n,s}(i,j) \Big\{ \frac{1\{Z_{\floor{ns}}^{(n,i)} \leq x \}1\{\delta_{\floor{ns}}^{(n,i)} = 1\}}{1-H_{n,\floor{ns}}(Z_{\floor{ns}}^{(n,i)}-)}     \\
      & \qquad \qquad +
     \int_{0}^{x} \frac{1\{Z_{\floor{ns}}^{(n,j)}\geq y\} - (1 - H_{n,\floor{ns}}(y -))}{(1-H_{n,\floor{ns}}(y-))^{2}}  H_{n,\floor{ns}}^{1}(\mathrm{d} y) \Big\}
     \\
     &=
     \Big(\sum_{j=1}^{n} K\Big(\frac{s-j/n}{h_{n}}\Big)\Big) \sum_{i=1}^{n} K\Big(\frac{s-i/n}{h_{n}}\Big) \Big\{ \frac{1\{Z_{\floor{ns}}^{(n,i)} \leq x \}1\{\delta_{\floor{ns}}^{(n,i)} = 1\}}{1-H_{n,\floor{ns}}(Z_{\floor{ns}}^{(n,i)}-)}     \\
      & \qquad \qquad \qquad \qquad +
     \int_{0}^{x} \frac{1\{Z_{\floor{ns}}^{(n,i)}\geq y\} - (1 - H_{n,\floor{ns}}(y -))}{(1-H_{n,\floor{ns}}(y-))^{2}}  H_{n,\floor{ns}}^{1}(\mathrm{d} y) \Big\} \\
     & \qquad -
     \sum_{i=1}^{n} K^{2}\Big(\frac{s-i/n}{h_{n}}\Big)\Big\{ \frac{1\{Z_{\floor{ns}}^{(n,i)} \leq x \}1\{\delta_{\floor{ns}}^{(n,i)} = 1\}}{1-H_{n,\floor{ns}}(Z_{\floor{ns}}^{(n,i)}-)}     \\
      & \qquad \qquad \qquad \qquad +
     \int_{0}^{x} \frac{1\{Z_{\floor{ns}}^{(n,i)}\geq y\} - (1 - H_{n,\floor{ns}}(y -))}{(1-H_{n,\floor{ns}}(y-))^{2}}  H_{n,\floor{ns}}^{1}(\mathrm{d} y) \Big\}\\
     &=: (nh_{n}) \Big(\frac{1}{nh_{n}}\sum_{j=1}^{n} K\Big(\frac{s-j/n}{h_{n}}\Big)\Big)\widehat{V}_{n}(x \vert s) - R_{n3}(x \vert s)\\
     &=
     (nh_{n})\widehat{V}_{n}(x \vert s) +
     \Big(\frac{1}{nh_{n}}\sum_{j=1}^{n} K\Big(\frac{s-j/n}{h_{n}}\Big) -1 \Big)\widehat{V}_{n}(x \vert s) - R_{n3}(x \vert s)
\end{align*}
In addition, we approximate $S_{n1}(\cdot \vert s)$ by its locally transported counterpart 
\begin{align*}
    \widetilde{S}_{n1}(x \vert s):=
    \int_{0}^{x}
    \frac{1}{1-H_{n,\floor{ns}}(y-)}  \widetilde{\amsmathbb{H}}^{1}_{n}(\mathrm{d} y \vert s).
\end{align*}
This yields that
\begin{align*}
    &\mathbb{\Lambda}^{n}(x\vert s) 
    =
    2S_{n1}(x \vert s)  - \frac{1}{nh_{n}} U_{n}(x \vert s) - \widetilde{R}_{n}(x \vert s) - R_{n1}(x\vert s) + R_{n2}(x \vert s) \\
    &=
    2\widetilde{S}_{n1}(x \vert s)   - \frac{1}{nh_{n}} \widehat{V}_{n}(x \vert s) \\
    & \quad -\widetilde{R}_{n}(x \vert s) - R_{n1}(x\vert s) + R_{n2}(x \vert s) + \frac{1}{(nh_{n})^{2}} R_{n3}(x \vert s) - 2(\widetilde{S}_{n1}(x \vert s)  - S_{n1}(x \vert s) ) \\
    & \quad 
    -  \frac{1}{nh_{n}}\Big(1 -\frac{1}{nh_{n}}\sum_{j=1}^{n} K\Big(\frac{s-j/n}{h_{n}}\Big) \Big)\widehat{V}_{n}(x \vert s) 
    - \frac{1}{(nh_{n})^{2}} (U_{n}(x \vert s) - \widehat{U}_{n}(x \vert s)) \\
    &=: 2\widetilde{S}_{n1}(x \vert s)  - \frac{1}{nh_{n}} \widehat{V}_{n}(x \vert s) + R_{n}^{0}(x \vert s),
\end{align*}
where the terms first two terms yields the representation
\begin{align*}
    &\mathbb{\Lambda}^{n, *}(x\vert s) := 2\widetilde{S}_{n1}(x \vert s)  - \frac{1}{nh_{n}} \widehat{V}_{n}(x \vert s) \\
    &=
    2\int_{0}^{x}
    \frac{1}{1-H_{n,\floor{ns}}(y-)} \widetilde{\amsmathbb{H}}^{1}_{n}(\mathrm{d} y \vert s) -
    \frac{1}{nh_{n}}\sum_{i=1}^{n}  K\Big(\frac{s-i/n}{h_{n}}\Big) 
    \Big\{ \frac{1\{Z_{\floor{ns}}^{(n,i)} \leq x \}1\{\delta_{\floor{ns}}^{(n,i)} = 1\}}{1-H_{n,\floor{ns}}(Z_{\floor{ns}}^{(n,i)}-)}     \\
      & \qquad \qquad \qquad \qquad \qquad \qquad \qquad \qquad  +
     \int_{0}^{x} \frac{1\{Z_{\floor{ns}}^{(n,i)}\geq y\} - (1 - H_{n,\floor{ns}}(y -))}{(1-H_{n,\floor{ns}}(y-))^{2}}  H_{n,\floor{ns}}^{1}(\mathrm{d} y) \Big\}\\
    &=
    \int_{0}^{x}
    \frac{1}{1-H_{n,\floor{ns}}(y-)}  \widetilde{\amsmathbb{H}}^{1}_{n}(\mathrm{d} y \vert s)\\
     & \qquad -
     \int_{0}^{x} \frac{ (nh_{n})^{-1}\sum_{i=1}^{n}K((s-i/n)/h_{n})(1\{Z_{\floor{ns}}^{(n,i)}\geq y\} - (1 - H_{n,\floor{ns}}(y -)))}{(1-H_{n,\floor{ns}}(y-))^{2}} H_{n,\floor{ns}}^{1}(\mathrm{d} y). 
\end{align*}
We state the following useful fact from~\cite{STUTE_1992} Lemma 2.1, which is used throughout the paper: for any distribution function $J$ it holds that $J(J^{-1}(u)-)\leq u$ for all $u \in (0,1)$ and 
\begin{align}\label{eq_integral_bound}
    \int_{0}^{J^{-1}(u)} \frac{1}{(1-J(x-))^{{\beta}}} \mathrm{d} J(x) \leq \frac{\beta}{(\beta -1)(1-u)^{\beta -1}}, \quad 0<u<1, \ \text{for all} \ \beta >1.
\end{align}

\subsection{Proof of Theorem~\ref{thm_Nelson_Aalen_decomposition} (Array representation; Local Nelson--Aalen estimator)}\label{appendix_proof_of_nelson_aalen_array_representation}
After establishing a technical lemma we prove Theorem~\ref{thm_Nelson_Aalen_decomposition} by first bounding the difference $\mathbb{\Lambda}^{n}(\cdot \vert s) - \mathbb{\Lambda}^{n,\ast}(\cdot \vert s)$ and subsequently the difference $\mathbb{\Lambda}^{n,\ast}(\cdot \vert s) - \Lambda_{n,\floor{ns}}(\cdot)$ uniformly on the interval $[0,T_{s}^{(n)}]$. The corresponding results on the random interval $[0,Z_{n:n-k_{n}}]$ follow then directly by Lemma~\ref{lemma_Z_n_k_n_eventually_contained}.

\begin{lemma}\label{lemma_integral_wrt_H_tilde_size_n_over_k}
It holds that
\begin{align*}
    \int_{0}^{T_{s}^{(n)}}\frac{1}{(1-H_{n,\floor{ns}}(y-))^{2}}\widetilde{\amsmathbb{H}}^{1}_{n}(\mathrm{d} y \vert s)
    =
    \mathcal{O}_{\amsmathbb{P}}(n/k_{n}).
\end{align*}
\end{lemma}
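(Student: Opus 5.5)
The plan is to compute the expectation of the nonnegative random variable in question and apply Markov's inequality. The key observation is that the locally transported pairs $(Z_{\floor{ns}}^{(n,i)},\delta_{\floor{ns}}^{(n,i)})$ all have the same law. Indeed, $X_{\floor{ns}}^{(n,i)}=F_{n,\floor{ns}}^{\leftarrow}(F_{n,i}(X_{i}^{(n)}))$, and $F_{n,i}(X_{i}^{(n)})$ is standard uniform by continuity of $F_{n,i}$. Hence $X_{\floor{ns}}^{(n,i)}\sim F_{n,\floor{ns}}$, and likewise $C_{\floor{ns}}^{(n,i)}\sim G_{n,\floor{ns}}$. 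By \eqref{eq_entirely_random_censoring} these two variables are independent, since they are measurable functions of $X_{i}^{(n)}$ and $C_{i}^{(n)}$ respectively. Consequently
\begin{align*}
\amsmathbb{P}\big(Z_{\floor{ns}}^{(n,i)}\leq y,\ \delta_{\floor{ns}}^{(n,i)}=1\big)=H^{1}_{n,\floor{ns}}(y)
\end{align*}
for every $i\leq n$, with no dependence on $i$. This removes the heterogeneity entirely, and no smoothness condition is needed for this lemma.

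Writing the integral as a kernel-weighted sum and taking expectations termwise, I obtain
\begin{align*}
\amsmathbb{E}\Big[\int_{0}^{T_{s}^{(n)}}\frac{\widetilde{\amsmathbb{H}}^{1}_{n}(\mathrm{d} y \vert s)}{(1-H_{n,\floor{ns}}(y-))^{2}}\Big]
=\Big(\frac{1}{nh_{n}}\sum_{i=1}^{n}K\Big(\frac{s-i/n}{h_{n}}\Big)\Big)\int_{0}^{T_{s}^{(n)}}\frac{H^{1}_{n,\floor{ns}}(\mathrm{d} y)}{(1-H_{n,\floor{ns}}(y-))^{2}}.
\end{align*}
The prefactor is a Riemann sum converging to $\int K=1$, as already noted in the paper, so it is $\mathcal{O}(1)$.

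To bound the integral, I use $H^{1}_{n,\floor{ns}}(\mathrm{d} y)=(1-G_{n,\floor{ns}}(y-))F_{n,\floor{ns}}(\mathrm{d} y)\leq H_{n,\floor{ns}}(\mathrm{d} y)$. This holds because $1-H=(1-F)(1-G)$ gives $\mathrm{d}H=(1-G)\mathrm{d}F+(1-F)\mathrm{d}G\geq(1-G)\mathrm{d}F$ as measures. The integral is therefore bounded by $\int_{0}^{T_{s}^{(n)}}(1-H_{n,\floor{ns}}(y-))^{-2}H_{n,\floor{ns}}(\mathrm{d} y)$. Since $T_{s}^{(n)}=H_{n,\floor{ns}}^{\leftarrow}(1-p_{s}k_{n}/n)$, applying \eqref{eq_integral_bound} with $J=H_{n,\floor{ns}}$, $\beta=2$ and $u=1-p_{s}k_{n}/n$ bounds this by $2/(p_{s}k_{n}/n)=2n/(p_{s}k_{n})$. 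The expectation is thus $\mathcal{O}(n/k_{n})$, and Markov's inequality yields the claimed $\mathcal{O}_{\amsmathbb{P}}(n/k_{n})$.

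The only step needing care is the equality in law of the transported variables together with their independence. The continuity of $F_{n,i}$ and $G_{n,i}$ assumed in the model makes the probability integral transform exact, and the generalized inverse maps a uniform variable to the target law. Everything else is a direct substitution into \eqref{eq_integral_bound}.
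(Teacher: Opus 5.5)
Your proposal is correct and follows the paper's proof essentially step for step. You bound the mean of the nonnegative kernel-weighted sum by $\frac{2n}{p_{s}k_{n}}\cdot\frac{1}{nh_{n}}\sum_{i=1}^{n}K((s-i/n)/h_{n})$, using $H^{1}_{n,\floor{ns}}\leq H_{n,\floor{ns}}$ and \eqref{eq_integral_bound} with $\beta=2$, and then conclude with Markov's inequality. Your explicit checks that every transported pair has sub-distribution $H^{1}_{n,\floor{ns}}$ and that $H^{1}\leq H$ holds as measures supply details the paper uses only implicitly.
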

\begin{proof}[Proof of Lemma~\ref{lemma_integral_wrt_H_tilde_size_n_over_k}]
Note that 
\begin{align*}
     \int_{0}^{T_{s}^{(n)}}\frac{1}{(1-H_{n,\floor{ns}}(y-))^{2}}\widetilde{\amsmathbb{H}}^{1}_{n}(\mathrm{d} y \vert s) 
     =
     \frac{1}{nh_{n}}\sum_{i=1}^{n}K\Big(\frac{s-i/n}{h_{n}}\Big) \frac{1\{Z_{\floor{ns}}^{(n,i)} \leq T_{s}^{(n)} \}1\{\delta_{\floor{ns}}^{(n,i)} = 1\}}{(1-H_{n,\floor{ns}}(Z_{\floor{ns}}^{(n,i)}-))^{2}},
\end{align*}
which is a positive stochastic process with mean 
\begin{align*}
  &\frac{1}{nh_{n}}\sum_{i=1}^{n}K\Big(\frac{s-i/n}{h_{n}}\Big) \int_{0}^{T_{s}^{(n)}}\frac{1}{(1-H_{n,\floor{ns}}(y-))^{2}}H_{n,\floor{ns}}^{1}(\mathrm{d} y) \\
  &\leq
   \frac{1}{nh_{n}}\sum_{i=1}^{n}K\Big(\frac{s-i/n}{h_{n}}\Big)
   \int_{0}^{T_{s}^{(n)}}\frac{1}{(1-H_{n,\floor{ns}}(y-))^{2}}H_{n,\floor{ns}}(\mathrm{d} y)
   \leq
    \frac{2n}{p_{s}k_{n}}\frac{1}{nh_{n}}\sum_{i=1}^{n}K\Big(\frac{s-i/n}{h_{n}}\Big),
\end{align*}
which is asymptotically of size $(2n/p_{s}k_{n})$. Consequently
\begin{align*}
    &\amsmathbb{P}\Big( \frac{p_{s}k_{n}}{2n} \int_{0}^{T_{s}^{(n)}}\frac{1}{(1-H_{n,\floor{ns}}(y-))^{2}}\widetilde{\amsmathbb{H}}^{1}_{n}(\mathrm{d} y \vert s) > u \Big)
    \leq
    \frac{1}{u}\frac{1}{nh_{n}}\sum_{i=1}^{n}K\Big(\frac{s-i/n}{h_{n}}\Big) \\
    &=
    1/u - 1/u\Big(1 -\frac{1}{nh_{n}}\sum_{i=1}^{n}K\Big(\frac{s-i/n}{h_{n}}\Big)\Big).
\end{align*}
For $n\in \amsmathbb{N}$ sufficiently large the latter term can be made smaller than any given $\delta \in (0,1)$. 
For such $n$
\begin{align*}
    \amsmathbb{P}\Big( \frac{p_{s}k_{n}}{2n} \int_{0}^{T_{s}^{(n)}}\frac{1}{(1-H_{n,\floor{ns}}(y-))^{2}}\widetilde{\amsmathbb{H}}^{1}_{n}(\mathrm{d} y \vert s) > u \Big)
    \leq
    \frac{1-\delta}{u},
\end{align*}
which vanishes for $u \to \infty$. This concludes the proof.
\end{proof}

\begin{theorem}\label{thm_NA_weak_repr}
Assume Condition~\ref{cond_H_H1_C1_F_G_Lip}, assume that $\log(n)nh_{n}^{2}/k_{n}$ and $\log(n)\sqrt{n}/k_{n}$ are decreasing in $n$, and let $s \in (0,1)$.
 It then holds that
\begin{align*}
    &\mathbb{\Lambda}^{n}(x\vert s)
    =
     R_{n}^{0}(x \vert s) + 
    \int_{0}^{x}
\frac{1}{1-H_{n,\floor{ns}}(y-)}  \widetilde{\amsmathbb{H}}^{1}_{n}(\mathrm{d} y \vert s)\\
 & \qquad -
 \int_{0}^{x} \frac{ (nh_{n})^{-1}\sum_{i=1}^{n}K((s-i/n)/h_{n})(1\{Z_{\floor{ns}}^{(n,i)}\geq y\} - (1 - H_{n,\floor{ns}}(y -)))}{(1-H_{n,\floor{ns}}(y-))^{2}} H_{n,\floor{ns}}^{1}(\mathrm{d} y)   \\
     & =: R_{n}^{0}(x \vert s) +  \mathbb{\Lambda}^{n, *}(x\vert s) ,
\end{align*}
where 
\begin{align}\label{eq_error_term_rate_NA_repr}
    \sup_{x \leq Z_{n:n-k_{n}}} \vert R_{n}^{0}(x \vert s) \vert = \mathcal{O}_{\amsmathbb{P}}\Big(\frac{n\log(n)}{k_{n}}(h_{n}^{2} \vee 1/\sqrt{n}\vee 1/(nh_{n}))\Big)
\end{align}
\end{theorem}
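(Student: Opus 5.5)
The algebraic identity $\mathbb{\Lambda}^{n}=\mathbb{\Lambda}^{n,*}+R_{n}^{0}$ has already been derived in Appendix~\ref{appendix_subs_local_nelson_aalen_decomposition}. What remains is the rate \eqref{eq_error_term_rate_NA_repr}. By Lemma~\ref{lemma_Z_n_k_n_eventually_contained} it suffices to bound each constituent of $R_{n}^{0}$ uniformly over the deterministic interval $[0,T_{s}^{(n)}]$. I will treat the terms in the order below.

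\textbf{Kernel-sum terms.} First, $(1-(nh_{n})^{-1}\sum_j K((s-j/n)/h_{n}))$ is a Riemann error of order $\mathcal{O}((nh_{n})^{-1})$. Each term carrying this factor is therefore controlled by $\mathcal{O}((nh_{n})^{-1})$ times one of two quantities: the integral in Lemma~\ref{lemma_integral_wrt_H_tilde_size_n_over_k}, which is $\mathcal{O}_{\amsmathbb{P}}(n/k_{n})$, or $\sup_{x}|\widehat V_{n}(x|s)|/(nh_{n})$. The latter is bounded in the same way using \eqref{eq_integral_bound} with $\beta=2$ and $u=1-p_{s}k_{n}/n$, which gives $\mathcal{O}(n/k_{n})$. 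This covers the first part of $R_{n1}$ and the $\widehat V_{n}$-correction. The diagonal terms, namely the second part of $R_{n1}$ and $R_{n3}/(nh_{n})^{2}$, carry $K^{2}$ with an extra $1/(nh_{n})$. Bounding them in mean by \eqref{eq_integral_bound} again yields $\mathcal{O}_{\amsmathbb{P}}(n/(k_{n}\,nh_{n}))$. Together these give the $1/(nh_{n})$ part of the rate.

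\textbf{Quadratic term.} For $R_{n2}$ I will bound the ratio $(1-H_{n,\floor{ns}})/(1-\amsmathbb{H}_{n})$ by $\mathcal{O}_{\amsmathbb{P}}(1)$ using Lemma~\ref{lemma_H_tail_fraction}, and write the squared deviation as $\sup|\amsmathbb{H}_{n}-H_{n,\floor{ns}}|^{2}/(1-H)^{2}$. One factor of $(1-H)^{-1}\le n/(p_{s}k_{n})$ combined with one squared-deviation factor gives $(n/k_{n})(h_{n}^{4}+1/(nh_{n})+\dots)$ by Lemma~\ref{lemma_H_n_and_H_n_1_unif_cons}. The remaining integral of $(1-H)^{-1}$ against $\amsmathbb{H}^{1}_{n}$ is logarithmic, of order $\log(n/k_{n})\le\log n$; this is the source of the $\log n$ factor. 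A cleaner alternative is to use the weighted version Lemma~\ref{lemma_H_minus_H_gamma_fraction_rate} with a suitable $\gamma$.

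\textbf{Transport errors.} Next come the transport differences $\widetilde R_{n}$ and $\widetilde S_{n1}-S_{n1}$. Here $\amsmathbb{H}_{n}$ and $\widetilde{\amsmathbb{H}}_{n}$ differ only through indices with $|i/n-s|\le h_{n}$. For those indices the transported variables satisfy $|H_{n,i}-H_{n,\floor{ns}}|\lesssim h_{n}$ by the Lipschitz part of Condition~\ref{cond_H_H1_C1_F_G_Lip}, and with the symmetric kernel the first-order terms cancel, leaving a bias of order $h_{n}^{2}$. I will integrate by parts, moving the random measure onto the weight $(1-H_{n,\floor{ns}})^{-1}$ or $(1-H_{n,\floor{ns}})^{-2}$. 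The differences $\amsmathbb{H}_{n}-\widetilde{\amsmathbb{H}}_{n}$ and $\amsmathbb{H}^{1}_{n}-\widetilde{\amsmathbb{H}}^{1}_{n}$ then split into a mean part of order $h_{n}^{2}$ and a centred fluctuation. For the fluctuation, the per-index mismatch probability is $\lesssim h_{n}$, so a Bernstein or maximal inequality gives $\sqrt{h_{n}/(nh_{n})}\sqrt{\log n}\lesssim \log n/\sqrt{n}$ after scaling. Multiplying by the weight bound $n/k_{n}$ produces $(n\log n/k_{n})(h_{n}^{2}\vee n^{-1/2})$. The monotonicity hypotheses on $\log(n)nh_{n}^{2}/k_{n}$ and $\log(n)\sqrt n/k_{n}$ are what allow these bounds to be taken uniformly over $x$, as in the proof of Lemma~\ref{lemma_H_tail_fraction}.

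\textbf{Degenerate $U$-statistic.} The last and hardest term is $(nh_{n})^{-2}(U_{n}-\widehat U_{n})$, a degenerate weighted $U$-statistic process in the transported iid-like array with kernel $h_{n,\floor{ns}}$, which is unbounded near the endpoint. I will invoke the maximal inequality of Appendix~\ref{appendix_uniform_bound_on_degenerate_u_statistics_process}, the extension of \cite{U_statistics_Stute_94}, restricted to $x\le T_{s}^{(n)}$. Its second moment is governed by $\sum_{i\ne j}K_{n,s}(i,j)^{2}\,\amsmathbb{E}[h^{2}1\{W\le T\}]$, and \eqref{eq_integral_bound} with $\beta=4$ gives $(nh_{n})^{2}(n/k_{n})^{3}$. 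This must be balanced carefully to produce $\mathcal{O}_{\amsmathbb{P}}(\log(n)/(k_{n}h_{n}))$. I expect this to be the main obstacle: the naive $L^{2}$ bound is too crude, and one instead needs a truncation or martingale argument in the spirit of \cite{STUTE_1992} Lemma 2.5, accepting the $\log n$ factor. Summing all contributions then gives \eqref{eq_error_term_rate_NA_repr}.
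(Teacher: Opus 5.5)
There is a genuine gap at the step you flag as the hardest. The degenerate $U$-statistic remainder $(nh_n)^{-2}(U_n-\widehat U_n)$ is never actually bounded, and your reason for needing a new truncation or martingale argument rests on a miscalculation. In $h_{n,\floor{ns}}(y,z)=1\{z\ge y\}(1-H_{n,\floor{ns}}(y-))^{-2}$, the indicator contributes a factor $1-H_{n,\floor{ns}}(y-)$ once you integrate out $z$. The relevant $L^2$ mass is therefore
\[
\int_{0}^{T_{s}^{(n)}}\frac{H^{1}_{n,\floor{ns}}(\mathrm{d}y)}{(1-H_{n,\floor{ns}}(y-))^{3}}\le\frac{3}{2}\Big(\frac{n}{p_{s}k_{n}}\Big)^{2},
\]
which is \eqref{eq_integral_bound} with $\beta=3$, not $\beta=4$; the mass is of order $(n/k_n)^2$, not $(n/k_n)^3$. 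Feed this into the $p=2$ maximal inequality of Theorem~\ref{thm_U_stat_repr_Xi_Yj_remainder_unif_bound} on $[0,T_s^{(n)}]$, with Remark~\ref{remark_extending_U_stat_result_to_extended_variables} covering $W=\infty$. This gives $\amsmathbb{E}[\sup_{x\le T_s^{(n)}}|(nh_n)^{-2}(U_n-\widehat U_n)(x\vert s)|^2]=\mathcal{O}((nh_n)^{-2}(n/k_n)^2)=\mathcal{O}((k_nh_n)^{-2})$. So the plain $L^2$ bound already yields $\mathcal{O}_{\amsmathbb{P}}(1/(k_nh_n))$, with no logarithm and no truncation, and this is exactly how the paper closes the proof. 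As written, your argument leaves its hardest term open.

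Your primary treatment of $R_{n2}$ also loses a power. After replacing $1/(1-\amsmathbb{H}_n)$ by $\mathcal{O}_{\amsmathbb{P}}(1)/(1-H_{n,\floor{ns}})$, the integrand carries $(1-H_{n,\floor{ns}})^{-3}$. Pulling out one factor $n/(p_sk_n)$ leaves an integral of $(1-H_{n,\floor{ns}})^{-2}$, which is $\mathcal{O}_{\amsmathbb{P}}(n/k_n)$, not logarithmic. Combined with the unweighted rate $\mathcal{O}_{\amsmathbb{P}}((nh_n)^{-1/2})$ of Lemma~\ref{lemma_H_n_and_H_n_1_unif_cons}, this gives $n/(k_n^2h_n)$. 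That exceeds $\log(n)/(k_nh_n)$ by a factor $n/(k_n\log n)$, and it is not dominated by the other entries of the maximum under the stated hypotheses.

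Your ``cleaner alternative'' is therefore necessary, not optional. Lemma~\ref{lemma_H_minus_H_gamma_fraction_rate} with $\gamma\in(0,1/2)$, together with \eqref{eq_integral_bound} at $\beta=1+2\gamma$, gives $(nh_n)^{-1}(n/k_n)^{1-2\gamma}(n/k_n)^{2\gamma}=1/(k_nh_n)$ plus bias terms within the target; this is the paper's route. Consequently the $\log n$ in \eqref{eq_error_term_rate_NA_repr} comes neither from $R_{n2}$ nor from the $U$-statistic. It comes only from the Bernstein and union-bound control of the transport differences (Lemma~\ref{lemma_H_minus_H_tilde}), which you describe correctly.

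The monotonicity hypotheses do not give uniformity in $x$. Their role is to make Lemma~\ref{lemma_H_tail_fraction} deliver the $\mathcal{O}_{\amsmathbb{P}}(1)$ ratio bound and to absorb $nh_n^2/k_n$ in the mean bound for $R_{n1}$.

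The rest of your plan matches the paper: the Riemann-error and diagonal $K^2$ terms, and the transport errors. For the first part of $\widetilde R_n$, your integration by parts is more careful than the paper's step of pulling a supremum out of an integral against a signed measure.
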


\begin{proof}[Proof of Theorem~\ref{thm_NA_weak_repr}]
We show that each of the remainder terms of $R_{n}^{0}(\cdot \vert s)$ satisfies the uniform weak bound~\eqref{eq_error_term_rate_NA_repr}. First, consider the difference 
\begin{align*}
    &\widetilde{R}_{n}(x \vert s) = \int_{0}^{x}\frac{1-\amsmathbb{H}_{n}(y- \vert s)}{(1-H_{n,\floor{ns}}(y-))^{2}}\amsmathbb{H}^{1}_{n}(\mathrm{d} y \vert s)
    -
    \int_{0}^{x}\frac{1-\widetilde{\amsmathbb{H}}_{n}(y- \vert s)}{(1-H_{n,\floor{ns}}(y-))^{2}}\widetilde{\amsmathbb{H}}^{1}_{n}(\mathrm{d} y \vert s) \\
    &=
    \int_{0}^{x}\frac{1-\amsmathbb{H}_{n}(y- \vert s)}{(1-H_{n,\floor{ns}}(y-))^{2}}(\amsmathbb{H}^{1}_{n}(\mathrm{d} y \vert s)- \widetilde{\amsmathbb{H}}^{1}_{n}(\mathrm{d} y \vert s))
    +
    \int_{0}^{x}\frac{\widetilde{\amsmathbb{H}}_{n}(y- \vert s)-\amsmathbb{H}_{n}(y- \vert s)}{(1-H_{n,\floor{ns}}(y-))^{2}}\widetilde{\amsmathbb{H}}^{1}_{n}(\mathrm{d} y \vert s).
\end{align*}
Note that
\begin{align*}
    &\sup_{x\leq T_{s}^{(n)}} \Big\vert \int_{0}^{x}\frac{1-\amsmathbb{H}_{n}(y- \vert s)}{(1-H_{n,\floor{ns}}(y-))^{2}}(\amsmathbb{H}^{1}_{n}(\mathrm{d} y \vert s)- \widetilde{\amsmathbb{H}}^{1}_{n}(\mathrm{d} y \vert s)) \Big\vert\\
    &\leq
    \sup_{x\leq T_{s}^{(n)}} \Big\vert \frac{1-\amsmathbb{H}_{n}(x- \vert s)}{1-H_{n,\floor{ns}}(x-)} \Big\vert
    \sup_{x\leq T_{s}^{(n)}} \Big\vert \int_{0}^{x}\frac{1}{1-H_{n,\floor{ns}}(y-)}(\amsmathbb{H}^{1}_{n}(\mathrm{d} y \vert s)- \widetilde{\amsmathbb{H}}^{1}_{n}(\mathrm{d} y \vert s)) \Big\vert \\
    &=
    \mathcal{O}_{\amsmathbb{P}}(1)
    \sup_{x\leq T_{s}^{(n)}} \big\vert 
    S_{n}(x\vert s) - \widetilde{S}_{n}(x\vert s)
    \big\vert,
\end{align*}
using Lemma~\ref{lemma_H_tail_fraction}. Below we bound the latter supremum.
Then we return to the second term of $\widetilde{R}_{n}(x \vert s)$, which can be uniformly bounded on $[0,T_{s}^{(n)}]$ as follows
\begin{align*}
    &\sup_{x\leq T_{s}^{(n)}} \Big\vert \ \int_{0}^{x}\frac{\widetilde{\amsmathbb{H}}_{n}(y- \vert s)-\amsmathbb{H}_{n}(y- \vert s)}{(1-H_{n,\floor{ns}}(y-))^{2}}\widetilde{\amsmathbb{H}}^{1}_{n}(\mathrm{d} y \vert s)\Big\vert\\
    & \leq
    \sup_{x \leq T_{s}^{(n)}}\big\vert \widetilde{\amsmathbb{H}}^{1}_{n}(x \vert s)
     -
     \amsmathbb{H}^{1}_{n}(x \vert s) \big\vert
     \int_{0}^{T_{s}^{(n)}}\frac{1}{(1-H_{n,\floor{ns}}(y-))^{2}}\widetilde{\amsmathbb{H}}^{1}_{n}(\mathrm{d} y \vert s) \\
     & =
     \sup_{x \leq T_{s}^{(n)}}\big\vert \widetilde{\amsmathbb{H}}^{1}_{n}(x \vert s)
     -
     \amsmathbb{H}^{1}_{n}(x \vert s) \big\vert
     \mathcal{O}_{\amsmathbb{P}}(n/k_{n})\\
     &\leq
      \mathcal{O}_{\amsmathbb{P}}\Big(\log(n)(h_{n}^{2} \vee 1/\sqrt{n} \vee 1/(nh_{n}))\Big)\mathcal{O}_{\amsmathbb{P}}(n/k_{n})= 
        \mathcal{O}_{\amsmathbb{P}}\Big(\frac{n\log(n)}{k_{n}}(h_{n}^{2} \vee 1/\sqrt{n} \vee 1/(nh_{n}))\Big),
\end{align*}
using Lemma~\ref{lemma_H_n_and_H_n_1_unif_cons}, Lemma~\ref{lemma_integral_wrt_H_tilde_size_n_over_k}, Lemma~\ref{lemma_H_minus_H_tilde} with the fact that $H^{1}_{n,\floor{ns}}\leq H_{n,\floor{ns}}$ and the definition $T_{s}^{(n)}$. 
Moving on to $R_{n1}(\cdot\vert s)$, recall that
\begin{align*}
   &R_{n1}(x\vert s)
    =
    \frac{1}{n}\int_{0}^{x}\frac{1}{(1-H_{n,\floor{ns}}(y-))^{2}} 
    \Big( \frac{1}{nh_{n}^{2}}\sum_{i=1}^{n} 1\{Z_{i}^{(n)}\leq \mathrm{d} y, \delta_{i}^{(n)} = 1 \}K^{2}\Big(\frac{s - i/n}{h_{n}}\Big) \Big) \\
     & \quad - \Big(1- \frac{1}{nh_{n}}\sum_{i=1}^{n}K\Big(\frac{s-i/n}{h_{n}}\Big) \Big)
    \int_{0}^{x}\frac{1}{(1-H_{n,\floor{ns}}(y-))^{2}}\amsmathbb{H}^{1}_{n}(\mathrm{d} y \vert s) =: R_{n1}^{I}(x\vert s) - R_{n1}^{II}(x\vert s).
\end{align*}
It holds that 
\begin{align*}
    &\amsmathbb{E}\Big[ \int_{0}^{T_{s}^{(n)}}\frac{1}{(1-H_{n,\floor{ns}}(y-))^{2}}\amsmathbb{H}^{1}_{n}(\mathrm{d} y \vert s)\Big]\\
    &=
    \frac{1}{nh_{n}}\sum_{i=1}^{n}K\Big(\frac{s-i/n}{h_{n}}\Big)
    \int_{0}^{T_{s}^{(n)}}\frac{1}{(1-H_{n,\floor{ns}}(y-))^{2}}H_{n,i}^{1}(\mathrm{d} y) \\
    &=
     \frac{1}{nh_{n}}\sum_{i=1}^{n}K\Big(\frac{s-i/n}{h_{n}}\Big)
    \int_{0}^{T_{s}^{(n)}}\frac{1}{(1-H_{n,\floor{ns}}(y-))^{2}}H_{n,\floor{ns}}^{1}(\mathrm{d} y)\\
    & \quad+
     \frac{1}{nh_{n}}\sum_{i=1}^{n}K\Big(\frac{s-i/n}{h_{n}}\Big)
    \int_{0}^{T_{s}^{(n)}}\frac{1}{(1-H_{n,\floor{ns}}(y-))^{2}}(H_{n,i}^{1}(\mathrm{d} y) -H_{n,\floor{ns}}^{1}(\mathrm{d} y)) \\
    & \leq
    \frac{2n}{p_{s}k_{n}} \frac{1}{nh_{n}}\sum_{i=1}^{n}K\Big(\frac{s-i/n}{h_{n}}\Big)
    +
    \Big(\frac{n}{k_{n}}\Big)^{2}\Big\vert\frac{1}{nh_{n}}\sum_{i=1}^{n}K\Big(\frac{s-i/n}{h_{n}}\Big)(H_{n,i}^{1}(y) -H_{n,\floor{ns}}^{1}(y)) \Big\vert\\
    &=
    \mathcal{O}\Big(\frac{n}{k_{n}}\Big(1+\frac{nh_{n}^{2}}{k_{n}}\Big)\Big) =   \mathcal{O}\Big(\frac{n}{k_{n}}\Big), 
\end{align*}
by assumption, where we used~\eqref{eq_integral_bound}, the triangle inequality and the bias calculations of the proof of Lemma~\ref{lemma_H_n_and_H_n_1_unif_cons}.
Since the random integral in the first line above is positive, the argument of the Lemma~\ref{lemma_integral_wrt_H_tilde_size_n_over_k} can be repeated to conclude that it itself is an $\mathcal{O}_{\amsmathbb{P}}(n/k_{n})$ term. Hence
\begin{align*}
   &\vert R_{n1}^{I}(x\vert s) \vert
   =
   \frac{1}{n}\int_{0}^{x}\frac{1}{(1-H_{n,\floor{ns}}(y-))^{2}} 
    \Big( \frac{1}{nh_{n}^{2}}\sum_{i=1}^{n} 1\{Z_{i}^{(n)}\leq \mathrm{d} y, \delta_{i}^{(n)} = 1 \}K^{2}\Big(\frac{s - i/n}{h_{n}}\Big) \Big) \\
    & \leq
    \frac{M_{K}}{nh_{n}}
    \int_{0}^{x}\frac{1}{(1-H_{n,\floor{ns}}(y-))^{2}} 
    \amsmathbb{H}_{n}^{1}(\mathrm{d} y \vert s) = 
    \frac{M_{K}}{nh_{n}}
    \mathcal{O}_{\amsmathbb{P}}\Big(\frac{n}{k_{n}}\Big) = \mathcal{O}_{\amsmathbb{P}}\Big(\frac{1}{k_{n}h_{n}}\Big),
    \end{align*}
    using Lemma~\ref{lemma_H_n_and_H_n_1_unif_cons} in the second equality and~\eqref{eq_integral_bound} in the last inequality. In addition
\begin{align*}
    &\vert R_{n1}^{II}(x\vert s) \vert
   \leq
   \Big\vert 1- \frac{1}{nh_{n}}\sum_{i=1}^{n}K\Big(\frac{s-i/n}{h_{n}}\Big) \Big\vert
   \int_{0}^{T_{s}^{(n)}}\frac{1}{(1-H_{n,\floor{ns}}(y-))^{2}}\amsmathbb{H}^{1}_{n}(\mathrm{d} y \vert s) \\
   &\leq
   \mathcal{O}_{\amsmathbb{P}}\Big( \frac{1}{nh_{n}}\frac{n}{k_{n}} \Big)
   =
   \mathcal{O}_{\amsmathbb{P}}\Big(\frac{1}{k_{n}h_{n}}\Big),
\end{align*}
using similar arguments as above with the calculations from the proof of Lemma~\ref{lemma_H_n_and_H_n_1_unif_cons}. This concludes $R_{n1}(\cdot \vert s)$.
Let $\gamma \in [0,1/2)$ and $x \leq T_{s}^{(n)}$. Then 
\begin{align*}
    \left \vert R_{n2}(x \vert s) \right\vert
     &=
     \int_{0}^{x}
    \frac{(\amsmathbb{H}_{n}(y- \vert s) - H_{n,\floor{ns}}(y-))^{2}}{(1-H_{n,\floor{ns}}(y-))^{2}(1-\amsmathbb{H}_{n}(y- \vert s))}\amsmathbb{H}^{1}_{n}(\mathrm{d} y \vert s) \\
    &
    \leq  \sup_{x \leq T_{s}^{(n)}}\Big(\frac{\amsmathbb{H}_{n}(x- \vert s) - H_{n,\floor{ns}}(x-)}{(1-H_{n,\floor{ns}}(x-))^{1-\gamma}}\Big)^{2}
    \sup_{x \leq T_{s}^{(n)}}\Big(\frac{1- H_{n,\floor{ns}}(x-)}{1-\amsmathbb{H}_{n}(x- \vert s)}\Big) \\
    & \qquad \qquad \qquad \qquad \times 
    \int_{0}^{x}
    \frac{1}{(1-H_{n,\floor{ns}}(y-))^{1+2\gamma}}\amsmathbb{H}_{n}(\mathrm{d} y \vert s). 
\end{align*}
Lemma~\ref{lemma_H_minus_H_gamma_fraction_rate} yields that  
\begin{align*}
    &\sup_{x \leq T_{s}^{(n)}}\Big(\frac{\amsmathbb{H}_{n}(x- \vert s) - H_{n,\floor{ns}}(x-)}{(1-H_{n,\floor{ns}}(x-))^{1-\gamma}}\Big)^{2}=
   \mathcal{O}_{\amsmathbb{P}}\Big(
    \Big(\frac{n}{k_{n}}\Big)^{1-\gamma}(h_{n}^{2} 
    + 
    \frac{\sqrt{h_{n}}}{\sqrt{n}})
    +
    \frac{1}{\sqrt{nh_{n}}}\Big(\frac{n}{k_{n}}\Big)^{(1-2\gamma)/2}
    \Big),
\end{align*}
taking care of the first factor. As for the second factor, note that
\begin{align*}
   \sup_{x \leq Z_{n:n-k_{n}}}\Big\vert\frac{1-\amsmathbb{H}_{n}(x- \vert s)}{1-H_{n,\floor{ns}}(x-)} 
    -
    1\Big\vert
    =
    o_{\amsmathbb{P}}(1),
\end{align*}
using Lemma~\ref{lemma_H_minus_H_gamma_fraction_rate} with $\gamma = 0$. Consequently $(1-\amsmathbb{H}_{n}(x- \vert s))/(1-H_{n,\floor{ns}}(x-))$ is bounded away from zero in probability, uniformly on the random interval $[0,Z_{n:n-k_{n}}]$ which is contained in $[0,T_{s}^{(n)}]$ with probability tending to one. Consequently
\begin{align}\label{eq_inverse_tail_fraction}
    \sup_{x \leq T_{s}^{(n)}}\Big(\frac{1- H_{n,\floor{ns}}(x-)}{1-\amsmathbb{H}_{n}(x- \vert s)}\Big)
    =
    \sup_{x \leq T_{s}^{(n)}}1/\Big(\frac{1-\amsmathbb{H}_{n}(x- \vert s)}{1-H_{n,\floor{ns}}(x-)} \Big)
    =
    \mathcal{O}_{\amsmathbb{P}}(1).
\end{align}
This concludes the second factor.
Repeating the argument from the term $R_{n1}^{I}(x\vert s)$, the Markov inequality yields that 
the integral from the bound of $R_{n2}(x \vert s)$ is of order 
\begin{align*}
  \int_{0}^{x}
    \frac{1}{(1-H_{n,\floor{ns}}(y-))^{1+2\gamma}}\amsmathbb{H}_{n}(\mathrm{d} y \vert s)
    =
    \mathcal{O}_{\amsmathbb{P}}\Big(\Big(\frac{n}{k_{n}}\Big)^{2\gamma}\Big).
\end{align*} 
Gathering the above yields
\begin{align*}
    \left \vert R_{n2}(x \vert s) \right\vert
     &\leq 
      \mathcal{O}\Big(
    \Big(\frac{n}{k_{n}}\Big)^{1-\gamma}(h_{n}^{2} 
    + 
    \frac{\sqrt{h_{n}}}{\sqrt{n}})
    +
    \frac{1}{\sqrt{nh_{n}}}\Big(\frac{n}{k_{n}}\Big)^{(1-2\gamma)/2}
    \Big)^{2}\mathcal{O}_{\amsmathbb{P}}(1)  \mathcal{O}_{\amsmathbb{P}}\Big(\Big(\frac{n}{k_{n}}\Big)^{2\gamma}\Big) \\
    & =
    \mathcal{O}_{\amsmathbb{P}}\Big(\Big(\frac{n\log(n)}{k_{n}}(h_{n}^{2} \vee 1/\sqrt{n} \vee 1/(nh_{n}))\Big)^{2}\Big),
\end{align*}
where the right-hand side does not depend on $x$. This concludes $R_{n2}(x \vert s)$. Moving on to $R_{n3}(x \vert s)$, note that 
\begin{align*}
    &\frac{1}{(nh_{n})^{2}} \left \vert R_{n3}(x \vert s) \right\vert
    =
     \Big\vert \frac{1}{(nh_{n})^{2}}\sum_{i=1}^{n} K^{2}\Big(\frac{s-i/n}{h_{n}}\Big)\Big\{ \frac{1\{Z_{\floor{ns}}^{(n,i)} \leq x \}1\{\delta_{\floor{ns}}^{(n,i)} = 1\}}{1-H_{n,\floor{ns}}(Z_{\floor{ns}}^{(n,i)}-)}     \\
      & \qquad \qquad \qquad \qquad \qquad \qquad  +
     \int_{0}^{x} \frac{1\{Z_{\floor{ns}}^{(n,i)}\geq y\} - (1 - H_{n,\floor{ns}}(y -))}{(1-H_{n,\floor{ns}}(y-))^{2}}  H_{n,\floor{ns}}^{1}(\mathrm{d} y) \Big\} \Big\vert \\
     & \leq
     \frac{M_{K}}{nh_{n}} \int_{0}^{T_{s}^{(n)}} \frac{1}{1 - H_{n,\floor{ns}}(y -)} \widetilde{\amsmathbb{H}}_{n}^{1}(\mathrm{d} y \vert s) \\
     & \qquad +
     \frac{2}{nh_{n}} \sum_{i=1}^{n} K^{2}\Big(\frac{s-i/n}{h_{n}}\Big)
     \frac{1}{nh_{n}} \int_{0}^{T_{s}^{(n)}} \frac{1}{1 - H_{n,\floor{ns}}(y -)^{2}} H_{n,\floor{ns}}^{1}(\mathrm{d} y) \\
     &= 
      \frac{M_{K}}{nh_{n}}\mathcal{O}_{\amsmathbb{P}}\Big(\frac{n}{k_{n}}\Big) +
     \frac{\mathcal{O}(1)}{nh_{n}} \int_{0}^{T_{s}^{(n)}} \frac{1}{1 - H_{n,\floor{ns}}(y -)^{2}} H_{n,\floor{ns}}^{1}(\mathrm{d} y) 
     \leq
     \mathcal{O}_{\amsmathbb{P}}\Big(\frac{n}{k_{n}}\frac{1}{nh_{n}}\Big)
     = \mathcal{O} \Big( \frac{1}{k_{n}h_{n}}\Big),
\end{align*}
where we used the crude bound $\vert 1\{Z_{\floor{ns}}^{(n,i)}\geq y\} - (1 - H_{n,\floor{ns}}(y -)) \vert \leq 2$ in the first inequality, used Lemma~\ref{lemma_H_n_and_H_n_1_unif_cons} in the second equality and the fact that $H^{1}_{n,\floor{ns}} \leq H_{n,\floor{ns}}$ with~\eqref{eq_integral_bound} in the last line. This concludes $R_{n3}(x \vert s)$. In addition, we have that
\begin{align*}
   &\frac{1}{nh_{n}}\Big(1 -\frac{1}{nh_{n}}\sum_{j=1}^{n} K\Big(\frac{s-j/n}{h_{n}}\Big) \Big)\widehat{V}_{n}(x \vert s) \\
   &=
   \Big(1 -\frac{1}{nh_{n}}\sum_{j=1}^{n} K\Big(\frac{s-j/n}{h_{n}}\Big) \Big) \\
   & \quad \times \Big(\int_{0}^{x}
    \frac{1}{1-H_{n,\floor{ns}}(y-)}  \widetilde{\amsmathbb{H}}^{1}_{n}(\mathrm{d} y \vert s)
      +
     \int_{0}^{x} \frac{(1-\widetilde{\amsmathbb{H}}_{n}(y-)) - (1 - H_{n,\floor{ns}}(y -)))}{(1-H_{n,\floor{ns}}(y-))^{2}} H_{n,\floor{ns}}^{1}(\mathrm{d} y) \Big) \\
     & \leq
     \mathcal{O}\Big(\frac{1}{nh_{n}}\Big)
     \Big( \mathcal{O}_{\amsmathbb{P}}\Big(\frac{n}{k_{n}}\Big) +
     \sup_{x < T_{s}^{(n)}}\vert \widetilde{\amsmathbb{H}}_{n}(x)- H_{n,\floor{ns}}(x) \vert
     \int_{0}^{x} \frac{1}{(1-H_{n,\floor{ns}}(y-))^{2}} H_{n,\floor{ns}}^{1}(\mathrm{d} y) \Big) \\
     &\leq
      \mathcal{O}\Big(\frac{1}{nh_{n}}\Big)
     \Big( \mathcal{O}_{\amsmathbb{P}}\Big(\frac{n}{k_{n}}\Big) +
     \mathcal{O}_{\amsmathbb{P}}\Big(\frac{1}{\sqrt{nh_{n}}}\frac{n}{k_{n}}\Big) \Big)
    =
    \mathcal{O}_{\amsmathbb{P}}\Big(\frac{1}{k_{n}h_{n}}\Big),
\end{align*}
uniformly on $[0,T_{s}^{(n)}]$, where we used the calculations and statement of Lemma~\ref{lemma_H_n_and_H_n_1_unif_cons} in the first and second inequality with~\eqref{eq_integral_bound}. The term $2(\widetilde{S}_{n1}(x \vert s)  - S_{n1}(x \vert s) )$ can be bounded as follows
\begin{align*}
   &\sup_{x \leq T_{s}^{(n)}} \vert S_{n1}(x \vert s) - \widetilde{S}_{n1}(x \vert s) \vert 
   \leq
    \sup_{x \leq T_{s}^{(n)}}\big\vert \widetilde{\amsmathbb{H}}^{1}_{n}(x \vert s)
     -
     \amsmathbb{H}^{1}_{n}(x \vert s) \big\vert /(1-H_{n,\floor{ns}}(T_{s}^{(n)}-)) \\
     &= 
     \frac{n}{p_{s}k_{n}} \sup_{x \leq T_{s}^{(n)}}\big\vert \widetilde{\amsmathbb{H}}^{1}_{n}(x \vert s)
     -
     \amsmathbb{H}^{1}_{n}(x \vert s) \big\vert  
     \leq \frac{n}{p_{s}k_{n}} \mathcal{O}_{\amsmathbb{P}}\big(\sqrt{\log(n)} (h_{n}^{2} \vee 1/\sqrt{n} \vee 1/(nh_{n}))\big)\\
     &=  \mathcal{O}_{\amsmathbb{P}}\Big(\frac{n\log(n)}{k_{n}}(h_{n}^{2} \vee 1/\sqrt{n} \vee 1/(nh_{n}))\Big),
\end{align*}
using Lemma~\ref{lemma_H_minus_H_tilde}. Finally we turn to the last remainder term which is the local degenerate $U$-statistic process,
\begin{align*}
    \frac{1}{(nh_{n})^{2}}(U_{n}(x \vert s) - \widehat{U}_{n}(x \vert s)).
\end{align*}
Split both $U_{n}$ and $\widehat{U}_{n}$ into two sums corresponding to the indices $1 \leq i < j \leq n$ and $1 \leq j < i \leq n$. Then Theorem~\ref{thm_U_stat_repr_Xi_Yj_remainder_unif_bound} yields 
\begin{align*}
    \frac{1}{(nh_{n})^{2}} \vert U_{n}(x \vert s) - \widehat{U}_{n}(x \vert s) \vert \leq
     \frac{1}{(nh_{n})^{2}} \vert R_{s}^{(n)}(x, \infty ) \vert, 
\end{align*}
where 
\begin{align*}
   \amsmathbb{E}\Big[ \sup_{x \leq T_{s}^{(n)}} \vert R_{s}^{(n)}(x,\infty) \vert^{2} \Big]
   \leq (a_{s}^{(n)})^{2} \mathcal{O}(n^{2}h_{n}^{2}),
\end{align*}
and the constant $a_{s}^{(n)}$ satisfies
\begin{align*}
    a_{s}^{(n)}
    \leq
    \widetilde{A} \Big[\int_{0}^{\infty}\int_{0}^{T_{s}^{(n)}} \big\vert h_{n,\floor{ns}}(y,z) \big\vert^{2}H_{n,\floor{ns}}^{1}(\mathrm{d} y) H_{n,\floor{ns}}(\mathrm{d} z)\Big]^{1/2},
\end{align*}
with constant $\widetilde{A}$. The double integral in the above bracket can be rewritten and bounded as follows
\begin{align*}
    &\int_{0}^{\infty}\int_{0}^{T_{s}^{(n)}} \big\vert h_{n,\floor{ns}}(y,z) \big\vert^{2}H_{n,\floor{ns}}^{1}(\mathrm{d} y) H_{n,\floor{ns}}(\mathrm{d} z)\\
    &=
    \int_{0}^{\infty}\int_{0}^{T_{s}^{(n)} \wedge z} \frac{1}{(1-H_{n,\floor{ns}}(y-))^{4}} H_{n,\floor{ns}}^{1}(\mathrm{d} y) H_{n,\floor{ns}}(\mathrm{d} z) \\
    &=
    \int_{0}^{T_{s}^{(n)}} \int_{y}^{\infty}H_{n,\floor{ns}}(\mathrm{d} z) \frac{1}{(1-H_{n,\floor{ns}}(y-))^{4}} H_{n,\floor{ns}}^{1}(\mathrm{d} y) \\
    &=
    \int_{0}^{T_{s}^{(n)}} \frac{1}{(1-H_{n,\floor{ns}}(y-))^{3}} H_{n,\floor{ns}}^{1}(\mathrm{d} y) \\
    & \leq
    \int_{0}^{T_{s}^{(n)}} \frac{1}{(1-H_{n,\floor{ns}}(y-))^{3}} H_{n,\floor{ns}}(\mathrm{d} y)  \leq
    \frac{3}{2(p_{s}k_{n}/n)^{2}},
\end{align*}
using again~\eqref{eq_integral_bound}. Summarizing
\begin{align*}
    &\amsmathbb{E}\Big[\frac{1}{(nh_{n})^{2}} \sup_{x \leq T_{s}^{(n)}} \Big\vert  R_{s}^{(n)}(x,\infty) \Big\vert^{2} \Big]  
    \leq
    \frac{1}{n^{4}h_{n}^{4}} (a_{s}^{(n)})^{2} \mathcal{O}(n^{2}h_{n}^{2}) \\
    & =
    \mathcal{O}\Big(\frac{1}{n^{2}h_{n}^{2}}\Big)\widetilde{A}^{2}\int_{0}^{\infty}\int_{0}^{T_{s}^{(n)}} \big\vert h_{n,\floor{ns}}(y,z) \big\vert^{2}H_{n,\floor{ns}}^{1}(\mathrm{d} y) H_{n,\floor{ns}}(\mathrm{d} z)
    \leq
    \mathcal{O}\Big(\frac{1}{k_{n}^{2}h_{n}^{2}}\Big),
\end{align*}
using the above bound. Lemma~\ref{lemma_Z_n_k_n_eventually_contained} yields that the interval $[0, Z_{n:n-k_{n}}]$ can be covered by $[0, T_{s}^{(n)}]$ with probability going to one. This finishes the proof.
\end{proof}
\begin{proposition}\label{prop_array_repr_NA}
Assume Condition~\ref{cond_H_H1_C1_F_G_Lip}, and let $s \in (0,1)$.
 It then holds that 
\begin{align*}
      &\mathbb{\Lambda}^{n, *}(x\vert s) - \Lambda_{\floor{ns}}(x) =
      \frac{1}{nh_{n}}\sum_{i=1}^{n}K\Big(\frac{s - i/n}{h_{n}}\Big)l_{n,i}(x\vert s)
      +
      R_{n}^{*}(x \vert s),
\end{align*}
   where the latter sum is a normalized sum of independent, centered summands with
\begin{align*}
   &l_{n,i}(x\vert s) = 
    \int_{0}^{x} \frac{1\{ Z_{\floor{ns}}^{(n,i)} < y\} - H_{n,\floor{ns}}(y-)}{(1-H_{n,\floor{ns}}(y-))^{2}}  H_{n,\floor{ns}}^{1}(\mathrm{d} y) \\
    & \quad +
     \frac{1\{ Z_{\floor{ns}}^{(n,i)} \leq x\}\delta_{\floor{ns}}^{(n,i)} - H_{n,\floor{ns}}^{1}(x) }{1-H_{n,\floor{ns}}(x-)} 
   - 
   \int_{0}^{x}
\frac{1\{Z_{\floor{ns}}^{(n,i)} \leq y\}\delta_{\floor{ns}}^{(n,i)} - H_{n,\floor{ns}}^{1}(y)}{(1-H_{n,\floor{ns}}(y-))^{2}}H_{n,\floor{ns}}(\mathrm{d} y-),
\end{align*}
and where 
\begin{align*}
    \sup_{x \leq Z_{n:n-k_{n}}} \vert R_{n}^{*}(x \vert s) \vert = \mathcal{O}\Big(\frac{1}{k_{n}h_{n}}\Big),
\end{align*} 
\end{proposition}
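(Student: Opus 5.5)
The plan is to obtain the representation by rewriting $\mathbb{\Lambda}^{n,*}(\cdot\vert s)$ exactly, using only the explicit form derived at the end of Appendix~\ref{appendix_subs_local_nelson_aalen_decomposition}. No probabilistic estimates are needed except for a deterministic Riemann-sum error. Write $\kappa_{n}:=(nh_{n})^{-1}\sum_{i=1}^{n}K((s-i/n)/h_{n})$, and abbreviate $H:=H_{n,\floor{ns}}$, $H^{1}:=H^{1}_{n,\floor{ns}}$ and $I_{i}(y):=1\{Z_{\floor{ns}}^{(n,i)}\leq y\}\delta_{\floor{ns}}^{(n,i)}$.

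First, the transported variables have the right laws. Since $F_{n,i},G_{n,i}$ are continuous, $F_{n,i}(X_{i}^{(n)})$ and $G_{n,i}(C_{i}^{(n)})$ are independent standard uniforms. Hence $X_{\floor{ns}}^{(n,i)}\sim F_{n,\floor{ns}}$ and $C_{\floor{ns}}^{(n,i)}\sim G_{n,\floor{ns}}$, and these two variables are independent. It follows that $Z_{\floor{ns}}^{(n,i)}\sim H$, that $\amsmathbb{E}[I_{i}(y)]=H^{1}(y)$, and that the pairs $(Z_{\floor{ns}}^{(n,i)},\delta_{\floor{ns}}^{(n,i)})$, $i=1,\ldots,n$, are independent. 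This already gives the claimed independence and centering of the summands $l_{n,i}(x\vert s)$, once we have shown that they are the terms produced below.

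Next, split the first integral in $\mathbb{\Lambda}^{n,*}$ as
\begin{align*}
\int_{0}^{x}\frac{\widetilde{\amsmathbb{H}}^{1}_{n}(\mathrm{d} y\vert s)}{1-H(y-)}
=\kappa_{n}\Lambda_{\floor{ns}}(x)+\frac{1}{nh_{n}}\sum_{i=1}^{n}K\Big(\frac{s-i/n}{h_{n}}\Big)\int_{0}^{x}\frac{(I_{i}-H^{1})(\mathrm{d} y)}{1-H(y-)}.
\end{align*}
Integrate the inner integral by parts. Since $H$ is continuous, $\mathrm{d}(1-H(y-))^{-1}=(1-H(y-))^{-2}H(\mathrm{d} y)$, and the integral equals
\begin{align*}
\frac{I_{i}(x)-H^{1}(x)}{1-H(x-)}-\int_{0}^{x}\frac{I_{i}(y)-H^{1}(y)}{(1-H(y-))^{2}}H(\mathrm{d} y-).
\end{align*}
These are exactly the second and third terms of $l_{n,i}$. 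The boundary term at $0$ vanishes because $I_{i}(0)=H^{1}(0)=0$ almost surely.

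For the second integral in $\mathbb{\Lambda}^{n,*}$, use $1\{Z\geq y\}-(1-H(y-))=-(1\{Z<y\}-H(y-))$. The minus sign in front of that integral then turns it into $(nh_{n})^{-1}\sum_{i}K(\cdot)\int_{0}^{x}(1\{Z_{\floor{ns}}^{(n,i)}<y\}-H(y-))(1-H(y-))^{-2}H^{1}(\mathrm{d} y)$, which is the first term of $l_{n,i}$. Collecting terms gives the stated identity with
\[
R_{n}^{*}(x\vert s)=(\kappa_{n}-1)\Lambda_{\floor{ns}}(x).
\]

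It remains to bound $R_{n}^{*}$. The Riemann-sum argument in the proof of Lemma~\ref{lemma_H_n_and_H_n_1_unif_cons} gives $\vert\kappa_{n}-1\vert=\mathcal{O}((nh_{n})^{-1})$, using that $K$ has bounded variation and integrates to one, and that $s/h_{n}>1$ and $(1-s)/h_{n}>1$ for large $n$. For the hazard, since $H^{1}\leq H$ and $H$ is continuous,
\[
\Lambda_{\floor{ns}}(T_{s}^{(n)})\leq\int_{0}^{T_{s}^{(n)}}\frac{H(\mathrm{d} y)}{1-H(y-)}=-\log\big(1-H(T_{s}^{(n)})\big)=\log\Big(\frac{n}{p_{s}k_{n}}\Big).
\]
Hence $\sup_{x\leq T_{s}^{(n)}}\vert R_{n}^{*}(x\vert s)\vert=\mathcal{O}(\log(n/k_{n})/(nh_{n}))$. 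Because $\log(n/k_{n})\leq n/k_{n}$, this is $\mathcal{O}(1/(k_{n}h_{n}))$. Monotonicity of $\Lambda$ together with Lemma~\ref{lemma_Z_n_k_n_eventually_contained} transfers the bound to $[0,Z_{n:n-k_{n}}]$; on the complementary event, whose probability tends to zero, one uses the bound with $Z_{n:n-k_{n}}$ in place of $T_{s}^{(n)}$.

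The only delicate step is the integration by parts. It must be justified pathwise for the step function $I_{i}$ against $(1-H(y-))^{-1}$, which is finite on $[0,x]$ for $x<x^{*}$. The continuity of $H$ guarantees there are no jump cross-terms, so that the $H(\mathrm{d} y-)$ form in $l_{n,i}$ is exact. Everything else is bookkeeping.
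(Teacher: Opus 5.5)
Your proposal is correct and follows the paper's proof. You use the same split of $\mathbb{\Lambda}^{n,*}(\cdot\vert s)$ and the same integration by parts of the $\widetilde{\amsmathbb{H}}^{1}_{n}$-integral, and your remainder $(\kappa_{n}-1)\Lambda_{\floor{ns}}(x)$ is exactly the paper's $R_{n}^{*}$, which the paper writes in the integrated-by-parts form $(\kappa_{n}-1)\big(H^{1}_{n,\floor{ns}}(x)/(1-H_{n,\floor{ns}}(x-))-\int_{0}^{x}H^{1}_{n,\floor{ns}}(y)(1-H_{n,\floor{ns}}(y-))^{-2}H_{n,\floor{ns}}(\mathrm{d}y)\big)$, before both of you transfer the bound to $[0,Z_{n:n-k_{n}}]$ via Lemma~\ref{lemma_Z_n_k_n_eventually_contained}; the differences are minor: your bound $\Lambda_{\floor{ns}}(T_{s}^{(n)})\leq\log(n/(p_{s}k_{n}))$ is sharper than the paper's crude $\mathcal{O}(n/k_{n})$ bound via~\eqref{eq_integral_bound}, and you verify the independence and centering of the summands explicitly, which the paper leaves implicit.
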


\begin{proof}[Proof of Proposition~\ref{prop_array_repr_NA}]
Recall that
\begin{align*}
       &\mathbb{\Lambda}^{n, *}(x\vert s) =\int_{0}^{x}
    \frac{1}{1-H_{n,\floor{ns}}(y-)}  \widetilde{\amsmathbb{H}}^{1}_{n}(\mathrm{d} y \vert s)\\
     & \qquad -
     \int_{0}^{x} \frac{ (nh_{n})^{-1}\sum_{i=1}^{n}K((s-i/n)/h_{n})(1\{Z_{\floor{ns}}^{(n,i)}\geq y\} - (1 - H_{n,\floor{ns}}(y -)))}{(1-H_{n,\floor{ns}}(y-))^{}} H_{n,\floor{ns}}^{1}(\mathrm{d} y). 
\end{align*}
    Then 
    \begin{align*}
     &\mathbb{\Lambda}^{n, *}(x\vert s) - \Lambda_{\floor{ns}}(x) =  \int_{0}^{x}
    \frac{1}{1-H_{n,\floor{ns}}(y-)}  \big(\widetilde{\amsmathbb{H}}^{1}_{n}(\mathrm{d} y \vert s) - H^{1}_{n,\floor{ns}}(\mathrm{d} y) \big)
      \\
     & \qquad -
     \int_{0}^{x} \frac{ (nh_{n})^{-1}\sum_{i=1}^{n}K((s-i/n)/h_{n})(1\{Z_{\floor{ns}}^{(n,i)}\geq y\} - (1 - H_{n,\floor{ns}}(y -)))}{(1-H_{n,\floor{ns}}(y-))^{2}} H_{n,\floor{ns}}^{1}(\mathrm{d} y) \\
     &=
        \frac{1}{nh_{n}}\sum_{i=1}^{n}K\Big(\frac{s - i/n}{h_{n}}\Big) \Big\{
        \int_{0}^{x} \frac{1\{ Z_{\floor{ns}}^{(n,i)} < y\} - H_{n,\floor{ns}}(y-)}{(1-H_{n,\floor{ns}}(y-))^{2}}  H_{n,\floor{ns}}^{1}(\mathrm{d} y) \\
        & \quad +
         \frac{1\{ Z_{\floor{ns}}^{(n,i)} \leq x\}\delta_{\floor{ns}}^{(n,i)} - H_{n,\floor{ns}}^{1}(x) }{1-H_{n,\floor{ns}}(x-)} 
       - 
       \int_{0}^{x}
    \frac{1\{Z_{\floor{ns}}^{(n,i)} \leq y\}\delta_{\floor{ns}}^{(n,i)} - H_{n,\floor{ns}}^{1}(y)}{(1-H_{n,\floor{ns}}(y-))^{2}}H_{n,\floor{ns}}(\mathrm{d} y-)  \Big\} \\
        &+
        \Big(\frac{1}{nh_{n}}\sum_{i=1}^{n}K\Big(\frac{s - i/n}{h_{n}}\Big) -1\Big)
        \Big(\frac{H_{n,\floor{ns}}^{1}(x) }{1-H_{n,\floor{ns}}(x-)}
        -
        \int_{0}^{x}
    \frac{H_{n,\floor{ns}}^{1}(y)}{(1-H_{n,\floor{ns}}(y-))^{2}} 
   H_{n,\floor{ns}}(\mathrm{d} y-) 
        \Big) \\
        &=
         \frac{1}{nh_{n}}\sum_{i=1}^{n}K\Big(\frac{s - i/n}{h_{n}}\Big)l_{n,i}(x\vert s) \\
        &+
        \Big(\frac{1}{nh_{n}}\sum_{i=1}^{n}K\Big(\frac{s - i/n}{h_{n}}\Big) -1\Big)
        \Big(\frac{H_{n,\floor{ns}}^{1}(x) }{1-H_{n,\floor{ns}}(x-)}
        -
        \int_{0}^{x}
    \frac{H_{n,\floor{ns}}^{1}(y)}{(1-H_{n,\floor{ns}}(y-))^{2}} 
   H_{n,\floor{ns}}(\mathrm{d} y-) 
        \Big),
    \end{align*}
    where the latter term is bounded by $ \mathcal{O}(1/(k_{n}h_{n}))$ since,
    \begin{align*}
         \Big(\frac{1}{nh_{n}}\sum_{i=1}^{n}K\Big(\frac{s - i/n}{h_{n}}\Big) -1\Big)
         =
         \mathcal{O}\Big(\frac{1}{nh_{n}}\Big) 
    \end{align*}
    and on $[0, T_{s}^{(n)}]$, we have
    \begin{align*}
        &\Big\vert\frac{H_{n,\floor{ns}}^{1}(x) }{1-H_{n,\floor{ns}}(x-)}
        -
        \int_{0}^{x}
    \frac{H_{n,\floor{ns}}^{1}(y)}{(1-H_{n,\floor{ns}}(y-))^{2}} 
   H_{n,\floor{ns}}(\mathrm{d} y-) 
        \Big\vert \\
        & \leq
        \frac{1}{1-H_{n,\floor{ns}}(T_{s}^{(n)}-)}
        +
        \int_{0}^{T_{s}^{(n)}}
    \frac{1}{(1-H_{n,\floor{ns}}(y-))^{2}} 
   H_{n,\floor{ns}}(\mathrm{d} y-) \leq
     \mathcal{O}\Big(\frac{n}{k_{n}}\Big), 
    \end{align*}
    by definition of $T_{s}^{(n)}$ and~\eqref{eq_integral_bound}.
    This yields that 
    \begin{align*}
        \sup_{x \leq T_{s}^{(n)}} \vert R_{n}^{*}(x \vert s) \vert = \mathcal{O}\Big(\frac{1}{k_{n}h_{n}}\Big).
    \end{align*} 
    Lemma~\ref{lemma_Z_n_k_n_eventually_contained} yields that $[0,Z_{n:n-k_{n}}]$ is contained in $[0, T_{s}^{(n)}]$ with probability going to one, which concludes the proof.
\end{proof}

\begin{proof}[Proof of Theorem~\ref{thm_Nelson_Aalen_decomposition}]
    The proof follows directly from Theorem~\ref{thm_NA_weak_repr} and Proposition~\ref{prop_array_repr_NA}.
\end{proof}

\subsection{Proof of Theorem~\ref{thm_Lambda_consistency_speed} (Consistency rate; Local Nelson--Aalen estimator)}\label{appendix_proof_of_nelson_aalen_consistency_rate}
Let $n\in \amsmathbb{N}$ and $s \in (0,1)$ be fixed and define the $\sigma$-algebra
\begin{align*}
    \mathcal{F}_{x}^{(n)} = 
    \sigma(\{ (1\{Z_{\floor{ns}}^{(n,i)}\leq x\}, 1\{Z_{\floor{ns}}^{(n,i)}\leq x\}\delta_{\floor{ns}}^{(n,i)},
    1\{Z_{\floor{ns}}^{(n,i)}\leq x\}Z_{\floor{ns}}^{(n,i)}), i=1,\ldots,n\}).
\end{align*}
    If we define 
\begin{align*}
    M_{n}(x \vert s) :=
   \frac{1}{nh_{n}} \sum_{i=1}^{n}  K\Big(\frac{s - i/n}{h_{n}}\Big) \Big[ 1\{Z_{\floor{ns}}^{(n,i)}\leq x\}\delta_{\floor{ns}}^{(n,i)} - \int_{0}^{x}1\{Z_{\floor{ns}}^{(n,i)}\geq y\} \Lambda_{\floor{ns}}(\mathrm{d} y) \Big],
\end{align*}
then $\{(M_{n}(x\vert s), \mathcal{F}_{x}^{(n)}): x \in [0, x^{*})\}$ is a square-integrable martingale with predictable variation process
\begin{align*}
    \langle M_{x} \rangle (x\vert s) = 
    \frac{1}{(nh_{n})^{2}} \sum_{i=1}^{n} K^{2}\Big(\frac{s - i/n}{h_{n}}\Big) \int_{0}^{x}1\{Z_{\floor{ns}}^{(n,i)}\geq y\} \Lambda_{\floor{ns}}(\mathrm{d} y).
\end{align*}
With the above notation, note that
\begin{align*}
     &\mathbb{\Lambda}^{n, *}(x\vert s) - \Lambda_{\floor{ns}}(x) =  \int_{0}^{x}\frac{1}{1-H_{n,\floor{ns}}(y-)} M_{n}(\mathrm{d} y\vert s) \\
    &+      
    \Big(\frac{1}{nh_{n}}\sum_{i=1}^{n}K\Big(\frac{s - i/n}{h_{n}}\Big) -1\Big)
        \int_{0}^{x}
    \frac{H_{n,\floor{ns}}^{1}(y)}{(1-H_{n,\floor{ns}}(y-))^{2}} 
   H_{n,\floor{ns}}(\mathrm{d} y-) \\
    & =
     \int_{0}^{x}\frac{1}{1-H_{n,\floor{ns}}(y-)} M_{n}(\mathrm{d} y\vert s)  
     +
     \mathcal{O}\Big(\frac{1}{k_{n}h_{n}}\Big)
     =: L_{n}(x\vert s) + \mathcal{O}\Big(\frac{1}{k_{n}h_{n}}\Big).
\end{align*}
where $L_{n}(x\vert s)$ is also a square-integrable martingale with respect to $\mathcal{F}_{x}^{(n)}$ for every fixed $n \in \amsmathbb{N}$ and $s \in (0,1)$. Its predictable variation process is given by
\begin{align*}
    \langle L_{n} \rangle (x\vert s)
    =
    \int_{0}^{x}\frac{1}{(1-H_{n,\floor{ns}}(y-))^{2}} \langle M_{n} \rangle (\mathrm{d} y\vert s).
\end{align*}
From Theorem~\ref{thm_NA_weak_repr}, we have that $ \mathbb{\Lambda}^{n}(\cdot\vert s) = \mathbb{\Lambda}^{n, *}(\cdot\vert s) + R_{n}^{0}(\cdot \vert s)$, where $R_{n}^{0}(\cdot \vert s)$ satisfies~\eqref{eq_error_term_rate_NA_repr}.
Theorem~\ref{thm_NA_weak_repr} yields that 
\begin{align*}
     &\sup_{x \leq Z_{n:n-k_{n}}} \vert \mathbb{\Lambda}^{n}(x\vert s) - \Lambda_{\floor{ns}}(x) \vert \\
     &\leq 
     \sup_{x \leq Z_{n:n-k_{n}}} \vert \mathbb{\Lambda}^{n}(x\vert s) - \mathbb{\Lambda}^{n, *}(x\vert s) \vert  
     +
     \sup_{x \leq Z_{n:n-k_{n}}} \vert  \mathbb{\Lambda}^{n, *}(x\vert s) - \Lambda_{\floor{ns}}(x) \vert  
     \\
     &=
      \sup_{x \leq Z_{n:n-k_{n}}} \vert   L_{n}(x \vert s) \vert  + 
      \mathcal{O}_{\amsmathbb{P}}\big(\sqrt{\log(n)} (h_{n}^{2} \vee 1/\sqrt{n} \vee 1/(nh_{n}))\big)\\
     &
      =
      \sup_{x \leq Z_{n:n-k_{n}}} \vert   L_{n}(x \vert s) \vert  + 
      o_{\amsmathbb{P}}\Big(\frac{1}{\sqrt{k_{n}h_{n}}}\Big),
\end{align*}
by~\eqref{eq_n_log_n_h2_over_kn_little_o}.
For fixed $n \in \amsmathbb{N}$ and $s \in (0,1)$, an application of Lenglart's inequality yields that for any $\varepsilon>0$ and $y > 0$,
\begin{align*}
    \amsmathbb{P}\big(\sup_{x \leq T_{s}^{(n)}} \vert  L_{n}(x \vert s) \vert \geq \varepsilon/\sqrt{k_{n}h_{n}}\big)
    \leq
    \frac{yk_{n}h_{n}}{\varepsilon^{2}} +  \amsmathbb{P}\big( \langle L_{n} \rangle (T_{s}^{(n)} \vert s) \geq y\big).
\end{align*}
We immediately note that 
\begin{align*}
   &\langle L_{n} \rangle (T_{s}^{(n)} \vert s)
    =
    \int_{0}^{T_{s}^{(n)}}\frac{1}{(1-H_{n,\floor{ns}}(y-))^{2}} \langle M_{n} \rangle (\mathrm{d} y\vert s) \\
    &=
     \frac{1}{(nh_{n})^{2}} \sum_{i=1}^{n} K^{2}\Big(\frac{s - i/n}{h_{n}}\Big) 
     \int_{0}^{T_{s}^{(n)}}\frac{1\{Z_{\floor{ns}}^{(n,i)}\geq y\}}{(1-H_{n,\floor{ns}}(y-))^{2}} \Lambda_{\floor{ns}}(\mathrm{d} y) \\
     & \leq
      \frac{M_{K}}{nh_{n}} 
     \int_{0}^{T_{s}^{(n)}}\frac{1-\widetilde{\amsmathbb{H}}_{n}(y- \vert s)}{(1-H_{n,\floor{ns}}(y-))^{2}} \Lambda_{\floor{ns}}(\mathrm{d} y)
     =
      \frac{M_{K}}{nh_{n}} 
     \int_{0}^{T_{s}^{(n)}}\frac{1-\widetilde{\amsmathbb{H}}_{n}(y- \vert s)}{(1-H_{n,\floor{ns}}(y-))^{3}} H_{n,\floor{ns}}^{1}(\mathrm{d} y) \\
     & \leq
     \frac{M_{K}}{nh_{n}} 
     \sup_{x \leq T_{s}^{(n)}} \Big\vert  \frac{1-\widetilde{\amsmathbb{H}}_{n}(x- \vert s)}{1-H_{n,\floor{ns}}(x-)} \Big\vert
     \int_{0}^{T_{s}^{(n)}}\frac{1}{(1-H_{n,\floor{ns}}(y-))^{2}} H_{n,\floor{ns}}(\mathrm{d} y) \\
     & \leq
     \frac{M_{K}}{p_{s}k_{n}h_{n}} 
     \sup_{x \leq T_{s}^{(n)}}\Big\vert \frac{1-\widetilde{\amsmathbb{H}}_{n}(x- \vert s)}{1-H_{n,\floor{ns}}(x-)} \Big\vert =: \frac{M_{K}}{p_{s}k_{n}h_{n}} J_{s}^{(n)},
\end{align*}
using \eqref{eq_integral_bound}. Decomposing the probability statement of interest yields
\begin{align*}
    &\amsmathbb{P}\big(\sup_{x \leq Z_{n:n-k_{n}}} \vert  L_{n}(x \vert s) \vert \geq \varepsilon/\sqrt{k_{n}h_{n}}\big)\\
    &\leq
    \amsmathbb{P}\big(\sup_{x \leq T_{s}^{(n)}} \vert  L_{n}(x \vert s) \vert \geq \varepsilon/\sqrt{k_{n}h_{n}}\big) 
    +
    \amsmathbb{P}\big(T_{s}^{(n)} \leq Z_{n:n-k_{n}} \big).
\end{align*}
Since $p_{s}$ was chosen such that~\eqref{eq_Z_n_k_n_eventually_contained} holds, the latter probability is asymptotically negligible. In addition, Lemma~\ref{lemma_H_tilde_tail_fraction} yields that $J_{s}^{(n)}\overset{\amsmathbb{P}}{\to} 1$, and hence, for $y = CM_{K}/(p_{s}k_{n}h_{n})$, we have that
\begin{align*}
    &\amsmathbb{P}\big(\sup_{x \leq Z_{n:n-k_{n}}} \vert  L_{n}(x \vert s) \vert \geq \varepsilon/\sqrt{k_{n}h_{n}}\big) \leq
   \frac{CM_{K}}{p_{s}\varepsilon^{2}} +
     \amsmathbb{P}\big(J_{s}^{(n)} \geq C \big) + o(1),
\end{align*}
for every $\varepsilon > 0$ and $C > 0$. Then
\begin{align*}
    \lim_{\varepsilon \to \infty}\limsup_{n \to \infty} \Big( \frac{CM_{K}}{p_{s}\varepsilon^{2}} +
     \amsmathbb{P}\big(J_{s}^{(n)} \geq C \big) \Big) 
     =
     \limsup_{n\to \infty} \amsmathbb{P}\big(J_{s}^{(n)} \geq C \big),
\end{align*}
where the limit $\varepsilon \to \infty$ can be chosen arbitrarily slowly. Letting also $C \to \infty$ makes the latter limit go to zero, which concludes the proof.\\

\subsection{Proof of Proposition~\ref{prop_Beran_consistency_speed} (Consistency rate; Beran estimator)}\label{appendix_proof_of_beran_consistency_rate}
The proof is based on the proof of the corresponding unconditional counterpart in~\cite{Csorgo_96} Theorem 2. 
Setting $c_{n}(x \vert s) = -\log(1-\amsmathbb{F}^{(n)}(x \vert s)) - \mathbb{\Lambda}^{n}(x \vert s)$, for $x < Z_{n:n}$, the continuity of $F_{n,\floor{ns}}$, with the Breslow-Crowley expansion (see~\cite{Breslow}) yields
\begin{align*}
    \Big\vert \frac{\amsmathbb{F}^{(n)}(x \vert s) - F_{n,\floor{ns}}(x)}{1-F_{n,\floor{ns}}(x)} - h(x) \Big\vert
    \leq
    \vert (\mathbb{\Lambda}^{n}(x\vert s) - \Lambda_{\floor{ns}}(x)) - h(x) \vert
    + \vert R_{n4}(x \vert s) \vert,
\end{align*}
for any function $h: \amsmathbb{R} \mapsto \amsmathbb{R}$, where
\begin{align*}
    R_{n4}(x \vert s) = \exp(\vert \mathbb{\Lambda}^{n}(x\vert s) - \Lambda_{\floor{ns}}(x) \vert)\big[\vert\mathbb{\Lambda}^{n}(x\vert s) - \Lambda_{\floor{ns}}(x) \vert^{2}/2 + \vert c_{n}(x \vert s) \exp(\vert c_{n}(x \vert s)) \vert\big]. 
\end{align*}
Since we have no ties, we note that $\Delta \mathbb{\Lambda}^{n}(x\vert s) \leq 1/2$ whenever $x \leq \amsmathbb{H}^{\leftarrow}_{n}(1- 2/(nh_{n}) \vert s)$.
On the latter event, it holds that
\begin{align*}
   \vert c_{n}(x \vert s) \vert &= \vert -\log(1-\amsmathbb{F}^{(n)}(x \vert s)) - \mathbb{\Lambda}^{n}(x \vert s)\vert \\
   &=
   \Big\vert \sum_{0 \leq y \leq x}\big[-\log(1-\Delta \mathbb{\Lambda}^{n}(y\vert s)) -\Delta \mathbb{\Lambda}^{n}(y\vert s) \big] \Big\vert
   \leq
   2\sum_{0 \leq y \leq x} (\Delta \mathbb{\Lambda}^{n}(y\vert s))^{2},
\end{align*}
where we used that $-\log(1-u) -u \leq 2u^{2}$ for $u \leq 1/2$. Since for every $y$
\begin{align*}
    (\Delta \mathbb{\Lambda}^{n}(y\vert s))^{2}
    =
    \frac{(\Delta \amsmathbb{H}_{n}^{1}(y \vert s))^{2}}{(1-\amsmathbb{H}_{n}(y- \vert s))^{2}}
    \leq
    \frac{1}{nh_{n}} \frac{\Delta \amsmathbb{H}_{n}^{1}(y \vert s)}{(1-\amsmathbb{H}_{n}(y- \vert s))^{2}},
\end{align*}
it holds that
\begin{align*}
      2\sum_{0 \leq y \leq x} (\Delta \mathbb{\Lambda}^{n}(y\vert s))^{2}
      \leq
      \frac{2}{nh_{n}}\sum_{0 \leq y \leq x} \frac{\Delta \amsmathbb{H}_{n}^{1}(y \vert s)}{(1-\amsmathbb{H}_{n}(y- \vert s))^{2}}
      =
       \frac{2}{nh_{n}} \int_{0}^{x}\frac{1}{(1-\amsmathbb{H}_{n}(y- \vert s))^{2}} \amsmathbb{H}_{n}^{1}(\mathrm{d} y \vert s).
\end{align*}
Define the event
\begin{align*}
    \Omega_{s}^{(n)}(p_{s}):= \{H_{n,\floor{ns}}^{\leftarrow}(1-p_{s}k_{n}/n) \leq \amsmathbb{H}^{\leftarrow}_{n}(1- 2/(nh_{n}) \vert s)\} = \{T_{s}^{(n)} \leq \amsmathbb{H}^{\leftarrow}_{n}(1- 2/(nh_{n}) \vert s)\}.
\end{align*}
Since $p_{s} \in (0,1)$ it holds that $p_{s}k_{n}h_{n}/2 > 1$ for $n$ sufficiently large and consequently $\amsmathbb{P}(\Omega_{s}^{(n)}(p_{s})) \to 1$, due to Lemma~\ref{lemma_H_n_and_H_n_1_unif_cons} and Vervaat's Lemma. On $\Omega_{s}^{(n)}(p_{s})$, note that for $x \leq T_{s}^{(n)}$, we have
\begin{align*}
    &\vert c_{n}(x \vert s) \vert \leq 
    \frac{2}{nh_{n}} \int_{0}^{T_{s}^{(n)}}\frac{1}{(1-\amsmathbb{H}_{n}(y- \vert s))^{2}} \amsmathbb{H}_{n}^{1}(\mathrm{d} y \vert s) \\
    &\leq
    \sup_{y \leq T_{s}^{(n)}} 
    \frac{(1 - H_{n,\floor{ns}}(y-))^{2}}{(1-\amsmathbb{H}_{n}(y- \vert s))^{2}} 
    \frac{2}{nh_{n}}
    \int_{0}^{T_{s}^{(n)}} \frac{1}{(1 - H_{n,\floor{ns}}(y-))^{2}} \amsmathbb{H}_{n}^{1}(\mathrm{d} y \vert s)\\
    & \leq
     \sup_{y \leq T_{s}^{(n)}} \Big(
    \frac{1 - H_{n,\floor{ns}}(y-)}{1-\amsmathbb{H}_{n}(y- \vert s)} \Big)^{2}
    \frac{2}{nh_{n}}
    \mathcal{O}_{\amsmathbb{P}}\Big(\frac{n}{k_{n}}\Big)
    =
    \mathcal{O}_{\amsmathbb{P}}\Big(\frac{1}{k_{n}h_{n}}\Big),
\end{align*}
using Markov's inequality as in bound of $R_{n2}(x \vert s)$, as well as ~\eqref{eq_integral_bound} and~\eqref{eq_inverse_tail_fraction}.
Since 
\begin{align*}
    \amsmathbb{P}\big(Z_{n:n-k_{n}} \leq T_{s}^{(n)} \big)
    \to 1,
    \qquad 
    \amsmathbb{P}(\Omega_{s}^{(n)}(p_{s})) \to 1,
\end{align*}
    we conclude that
    \begin{align*}
        \sup_{x \leq Z_{n:n-k_{n}}} \vert c_{n}(x \vert s) \vert \leq  \mathcal{O}_{\amsmathbb{P}}\Big(\frac{1}{k_{n}h_{n}}\Big).
    \end{align*}
    In particular
    \begin{align}\label{eq_NA_KM_relation_plus_remainder}
         &\sup_{x \leq Z_{n:n-k_{n}}} \Big\vert \frac{\amsmathbb{F}^{(n)}(x \vert s) - F_{n,\floor{ns}}(x)}{1-F_{n,\floor{ns}}(x)} - h(x) \Big\vert \nonumber \\
         &\leq
         \sup_{x \leq Z_{n:n-k_{n}}} \vert (\mathbb{\Lambda}^{n}(x\vert s) - \Lambda_{\floor{ns}}(x)) - h(x) \vert  +   \sup_{x \leq Z_{n:n-k_{n}}} \vert R_{n4}(x \vert s) \vert,
    \end{align}
    where 
    \begin{align*}
          \sup_{x \leq Z_{n:n-k_{n}}} \vert R_{n4}(x \vert s) \vert = \mathcal{O}_{\amsmathbb{P}}\Big(\frac{1}{k_{n}h_{n}}\Big).
    \end{align*}
    Choosing $h(\cdot) = 0$ yields the statement of the proposition.

\subsection{Proof of Lemma~\ref{lemma_kn_hn_sequences} (Admissible tuning sequences)}\label{appendix_proof_of_admissible_tuning_sequences}
\begin{proof}[Proof of Lemma~\ref{lemma_kn_hn_sequences}]
The choices $k_{n} \asymp n^{\delta}$ where $\delta \in (1/2,1)$ and $h_{n} \asymp n^{-e}$ for $e\in \big((1-\delta)\vee (2-\delta)/5\big), \delta)$, yields that $nh_{n} \asymp n^{1-e} \to \infty$ and $k_{n}h_{n} \asymp n^{\delta -e} \to \infty$, which is the first statement of the Theorem. For the second statement we have $h_{n} \asymp n^{-e} \to 0$, secondly $k_{n}/n \asymp n^{-(1-\delta)} \to 0$ and finally $nh_{n}^{5} \asymp n^{1-5e} \to 0$ holds, since $e>(2-\delta)/5$. The statement
\begin{align*}
     \frac{n\log(n)}{k_{n}}h_{n}^{2}
        = o\Big(\frac{1}{\sqrt{k_{n}h_{n}}}\Big),
\end{align*}
holds, if
\begin{align*}
    \log(n)\frac{n\sqrt{h_{n}^{5}}}{\sqrt{k_{n}}}
    \asymp
    \log(n)n^{1-\delta/2 - 5e/2}
    =
    o(1),
\end{align*}
which in turn is true, whenever $2-\delta-5e < 0$. This corresponds to the requirement that $e>(2-\delta)/5$.
Likewise the statement
\begin{align*}
     \frac{n\log(n)}{k_{n}}\frac{1}{\sqrt{n}}
        = o\Big(\frac{1}{\sqrt{k_{n}h_{n}}}\Big),
\end{align*}
holds, if
\begin{align*}
    \log(n)^{2}\frac{nh_{n}}{k_{n}}
    \asymp
    \log(n)^{2}n^{1-\delta -e}
    =
    o(1). 
\end{align*}
The latter statement is true since $e>(1-\delta)$.
Finally we note that the choice of $\delta \in (1/2,1)$ ensures that $\big((1-\delta)\vee (2-\delta)/5\big), \delta)$ is non-empty. This concludes the proof.
\end{proof}

\subsection{Proof of Theorem~\ref{thm_c_F_consistency} (Consistency of scedasis estimator)}\label{appendix_proof_of_scedasis_consistency}
\begin{proof}
    Let $s \in (0,1)$ and $s_{0}\in (0,1)$ be given and consider the following factorization of $ \hat{c}_{F}(s \vert s_{0})$
   \begin{align*}
        \hat{c}_{F}(s \vert s_{0})
        =
        \frac{1-\amsmathbb{F}^{(n)}(Z_{n:n-k_{n}} \vert s)}{1-F(Z_{n:n-k_{n}})}
        \Big( \frac{1-\amsmathbb{F}^{(n)}(Z_{n:n-k_{n}} \vert s_{0})}{1-F(Z_{n:n-k_{n}})}\Big)^{-1}.
   \end{align*}
   Note that 
   \begin{align*}
       \frac{1-\amsmathbb{F}^{(n)}(Z_{n:n-k_{n}} \vert s)}{1-F(Z_{n:n-k_{n}})}
       &=
       \frac{1-F_{n,\floor{ns}}(Z_{n:n-k_{n}})}{1-F(Z_{n:n-k_{n}})}
       \frac{1-\amsmathbb{F}^{(n)}(Z_{n:n-k_{n}} \vert s)}{1-F_{n,\floor{ns}}(Z_{n:n-k_{n}})} \\
       &=
       \frac{1-F_{n,\floor{ns}}(Z_{n:n-k_{n}})}{1-F(Z_{n:n-k_{n}})}
       \Big(\frac{F_{n,\floor{ns}}(Z_{n:n-k_{n}}) - \amsmathbb{F}^{(n)}(Z_{n:n-k_{n}} \vert s)}{1-F_{n,\floor{ns}}(Z_{n:n-k_{n}})} 
       +
       1\Big) \\
       &=
        \frac{1-F_{n,\floor{ns}}(Z_{n:n-k_{n}})}{1-F(Z_{n:n-k_{n}})}
       (o_{\amsmathbb{P}}(1) +1),
   \end{align*}
   due to \eqref{eq_local_beran_tail_negl_in_Z}. Using~\eqref{eq_Scedasis_F_and_G}, we have that the latter fraction is
   \begin{align*}
       \frac{1-F_{n,\floor{ns}}(Z_{n:n-k_{n}})}{1-F(Z_{n:n-k_{n}})}
       &=
       c_{F}\Big(\frac{\floor{ns}}{n}\Big)(1+o_{\amsmathbb{P}}(1)) \\
       &=
       c_{F}(s)(1+o_{\amsmathbb{P}}(1))
       +
       \Big(c_{F}\Big(\frac{\floor{ns}}{n}\Big)-c_{F}(s)\Big)(1+o_{\amsmathbb{P}}(1)),
   \end{align*}
   where the first of these terms converges to $c_{F}(s)$, while the second term is asymptotically negligible, since 
   \begin{align*}
        \Big\vert c_{F}\Big(\frac{\floor{ns}}{n}\Big)-c_{F}(s)\Big\vert
        \leq
        \text{cst}\frac{\vert \floor{ns}-ns \vert}{n}
        \leq
        \text{cst}\frac{1}{n},
   \end{align*}
   using that $s \mapsto c_{F}(s)$ is continuous. 
   Consequently,
   \begin{align*}
       \frac{1-\amsmathbb{F}^{(n)}(Z_{n:n-k_{n}} \vert s)}{1-F(Z_{n:n-k_{n}})}
        \overset{\amsmathbb{P}}{\to}
        c_{F}(s),
   \end{align*}
   and hence
   \begin{align*}
        \hat{c}_{F}(s \vert s_{0})
        =
        \frac{1-\amsmathbb{F}^{(n)}(Z_{n:n-k_{n}} \vert s)}{1-F(Z_{n:n-k_{n}})}
        \Big( \frac{1-\amsmathbb{F}^{(n)}(Z_{n:n-k_{n}} \vert s_{0})}{1-F(Z_{n:n-k_{n}})}\Big)^{-1}
        \overset{\amsmathbb{P}}{\to}
        c_{F}(s)(c_{F}(s_{0}))^{-1},
   \end{align*}
   due to the Continuous Mapping Theorem. This concludes the proof.
\end{proof}

\subsection{Uniform bound on local degenerate \texorpdfstring{$U$-statistic}{U-statistic} process}\label{appendix_uniform_bound_on_degenerate_u_statistics_process}
In Appendix~\ref{appendix_subs_local_nelson_aalen_decomposition} we decompose the local Nelson--Aalen estimator $\mathbb{\Lambda}^{n}(\cdot\vert s)$ into two leading terms and several remainder terms. One of these remainders is the degenerate $U$-statistic process $(U_{n}(\cdot \vert s) - \widehat{U}_{n}(\cdot \vert s))/(nh_{n})^{2}$ and this remainder is by far the most challenging: it necessitates extending Theorem 1.5 in~\cite{U_statistics_Stute_94} to local estimators based on locally transported observations. This appendix proves this extension following closely the proof techniques of their paper.
Recall the notation $K_{n,s}(i,j) := K((s -j/n)/h_{n})K((s -i/n)/h_{n})$ and consider a $U$-statistic process of the form
\begin{align*}
    U_{n}(u,v \vert s)
    &=
    \sum_{i \neq j}^{n} h_{n,\floor{ns}}(F_{n,\floor{ns}}^{\leftarrow}(U_{i}), F_{n,\floor{ns}}^{\leftarrow}(U_{j})) \\
    & \qquad \qquad \times 
    1\{F_{n,\floor{ns}}^{\leftarrow}(U_{i}) \leq u, F_{n,\floor{ns}}^{\leftarrow}(U_{j}) \leq v\} 
    K_{n,s}(i,j) \\
    &=
    \sum_{1\leq i < j \leq n} h_{n,\floor{ns}}(F_{n,\floor{ns}}^{\leftarrow}(U_{i}), F_{n,\floor{ns}}^{\leftarrow}(U_{j})) \\
    & \qquad \qquad \times 
    1\{F_{n,\floor{ns}}^{\leftarrow}(U_{i}) \leq u, F_{n,\floor{ns}}^{\leftarrow}(U_{j}) \leq v\} 
    K_{n,s}(i,j) \\
     & \quad+
   \sum_{1\leq j < i \leq n}h_{n,\floor{ns}}(F_{n,\floor{ns}}^{\leftarrow}(U_{i}), F_{n,\floor{ns}}^{\leftarrow}(U_{j})) \\
    & \qquad \qquad \times 
    1\{F_{n,\floor{ns}}^{\leftarrow}(U_{i}) \leq u, F_{n,\floor{ns}}^{\leftarrow}(U_{j}) \leq v\} 
    K_{n,s}(i,j)\\
    & =: I_{n}(u,v \vert s) +  II_{n}(u,v \vert s),
\end{align*}
where $h_{n,\floor{ns}}$ is left unspecified for now. Define $\widetilde{h}_{s}^{(n)}(u,v) := h_{n,\floor{ns}}(F_{n,\floor{ns}}^{\leftarrow}(u), F_{n,\floor{ns}}^{\leftarrow}(v))$ and note that then 
 \begin{align*}
    I_{n}(u,v \vert s)
    &=\sum_{1\leq i < j \leq n} 
     \widetilde{h}^{(n)}_{s}(U_{i}, U_{j})1\{U_{i} \leq F_{n,\floor{ns}}(u),U_{j} \leq F_{n,\floor{ns}}(v)\} K_{n,s}(i,j),
\end{align*}
for $ u,v \in [0, x^{\ast})$. This process is the analogue of $I_{n}(u,v)$ from the proof section in~\cite{U_statistics_Stute_94}; the only differences are that $\widetilde{h}^{(n)}_{s}$ and its time change $F_{n,\floor{ns}}$ depend on the sample size $n$ and the fixed design point $s$, and that we multiply by the kernel product $K_{n,s}(i,j)$. The process $I_{n}(\cdot, \cdot \vert s)$ can be decomposed into its Hájek projection plus four remainder terms.
\begin{align*}
      &I_{n}(u,v \vert s)
      =
       \sum_{1\leq i < j \leq n}K_{n,s}(i,j)\Big[
       \int_{0}^{F_{n,\floor{ns}}(u)}\widetilde{h}^{(n)}_{s}(w, U_{j}) 1\{U_{j} \leq F_{n,\floor{ns}}(v)\} \mathrm{d} w\\
       & \  +
       \int_{0}^{F_{n,\floor{ns}}(v)}\widetilde{h}^{(n)}_{s}(U_{i}, t) 1\{U_{i} \leq F_{n,\floor{ns}}(u)\} \mathrm{d} t
       -
        \int_{0}^{F_{n,\floor{ns}}(u)}\int_{0}^{F_{n,\floor{ns}}(v)}
        \widetilde{h}^{(n)}_{s}(w,t)\mathrm{d} w \mathrm{d} t
       \Big] \\
       & \quad + 
       \beta_{s}^{(n)}(u,v)
       -
       \gamma_{s}^{(n)}(u,v)
       -
       \delta_{s}^{(n)}(u,v)
       +
       \varepsilon_{s}^{(n)}(u,v), 
\end{align*}
where
\begin{align*}
     &\beta_{s}^{(n)}(u,v)
     =
      \sum_{1\leq i < j \leq n} \beta_{s, (i,j)}^{(n)}(u,v)K_{n,s}(i,j), 
      \qquad
    \gamma_{s}^{(n)}(u,v)
     =
      \sum_{1\leq i < j \leq n} \gamma_{s, (i,j)}^{(n)}(u,v)K_{n,s}(i,j), \\
      &
    \delta_{s}^{(n)}(u,v)
     =
      \sum_{1\leq i < j \leq n} \delta_{s, (i,j)}^{(n)}(u,v)K_{n,s}(i,j),
      \qquad
      \ \varepsilon_{s}^{(n)}(u,v)
     =
      \sum_{1\leq i < j \leq n} \varepsilon_{s, (i,j)}^{(n)}(u,v)K_{n,s}(i,j),
\end{align*}
with
\begin{align*}
    \beta_{s, (i,j)}^{(n)}(u,v)
    &=
    \widetilde{h}^{(n)}_{s}(U_{i}, U_{j})1\{U_{i} \leq F_{n,\floor{ns}}(u),U_{j} \leq F_{n,\floor{ns}}(v)\} \\
    & \quad -
    \int_{0}^{F_{n,\floor{ns}}(u)}
    \frac{1\{U_{i} \geq w\}}{1-w} 
    \widetilde{h}^{(n)}_{s}(w, U_{j}) 1\{U_{j} \leq F_{n,\floor{ns}}(v)\} \mathrm{d} w\\
    & \quad -
    \int_{0}^{F_{n,\floor{ns}}(v)}
    \frac{1\{U_{j} \geq t\}}{1-t} 
    \widetilde{h}^{(n)}_{s}(U_{i}, t) 1\{U_{i} \leq F_{n,\floor{ns}}(u)\} \mathrm{d} t\\
    & \quad +
     \int_{0}^{F_{n,\floor{ns}}(u)}\int_{0}^{F_{n,\floor{ns}}(v)}
     \frac{1\{U_{i} \geq w\}}{1-w} 
      \frac{1\{U_{j} \geq t\}}{1-t} 
    \widetilde{h}^{(n)}_{s}(w,t)\mathrm{d} t \mathrm{d} w, 
    \\
    \gamma_{s, (i,j)}^{(n)}(u,v) 
    &= 
     \int_{0}^{F_{n,\floor{ns}}(u)}
    \frac{1\{U_{i} \leq w\}-w}{1-w}\Big[
        \widetilde{h}^{(n)}_{s}(w, U_{j}) 1\{U_{j} \leq F_{n,\floor{ns}}(v)\} \\
        & \qquad \qquad \qquad \qquad \qquad \qquad \qquad \qquad -
        \int_{0}^{F_{n,\floor{ns}}(v)}  \frac{1\{U_{j} \geq t\}}{1-t}    \widetilde{h}^{(n)}_{s}(w,t) \mathrm{d} t
    \Big] \mathrm{d} w,
    \\
    \delta_{s, (i,j)}^{(n)}(u,v) 
    &= 
     \int_{0}^{F_{n,\floor{ns}}(v)}
    \frac{1\{U_{j} \leq t\}-t}{1-t}\Big[
        \widetilde{h}^{(n)}_{s}(U_{i}, t) 1\{U_{i} \leq F_{n,\floor{ns}}(u)\} \\
        & \qquad \qquad \qquad \qquad \qquad \qquad \qquad \qquad -
        \int_{0}^{F_{n,\floor{ns}}(u)}  \frac{1\{U_{i} \geq w\}}{1-w}    \widetilde{h}^{(n)}_{s}(w,t) \mathrm{d} w
    \Big] \mathrm{d} t, 
    \\
    \varepsilon_{s, (i,j)}^{(n)}(u,v) 
    &= 
    \int_{0}^{F_{n,\floor{ns}}(u)}\int_{0}^{F_{n,\floor{ns}}(v)}
         \frac{1\{U_{i} \leq w\}-w}{1-w}\frac{1\{U_{j} \leq t\}-t}{1-t}
         \widetilde{h}^{(n)}_{s}(w,t)\mathrm{d} w \mathrm{d} t.
\end{align*}
For fixed $w\in [0,F_{n,\floor{ns}}(u)]$, the bracketed term in $ \gamma_{s, (i,j)}^{(n)}$ is the martingale part of the Doob-Meyer decomposition of $\widetilde{h}^{(n)}_{s}(w, U_{j}) 1\{U_{j} \leq F_{n,\floor{ns}}(v)\}$, with respect to the filtration $\mathcal{F}_{s}^{(n)}(v) := \sigma(1\{U_{j} \leq F_{n,\floor{ns}}(t)\}, t \leq v)$; likewise for $ \delta_{s, (i,j)}^{(n)}$. Consequently we consider the process $\beta_{s, (i,j)}^{(n)}$ as the crucial martingale part of the decomposition of the two-parameter process $ \widetilde{h}^{(n)}_{s}(U_{i}, U_{j})1\{U_{i} \leq F_{n,\floor{ns}}(u),U_{j} \leq F_{n,\floor{ns}}(v)\}$. Since dependence occurs between some of the summands of $\beta_{s}^{(n)}$, we introduce the associated sliced processes
\begin{align*}
    \beta_{s}^{(n), k}(u,v) = 
    \sum_{1 \leq i < k \leq j \leq n} \beta_{s, (i,j)}^{(n)}(u,v)K_{n,s}(i,j), \qquad 2 \leq k \leq n,
\end{align*}
which ensures that the random variables appearing in the first coordinate are independent from the ones in the second. This ensures that the conditional independence property (F4) of~\cite{Cairoli_Walsh} is satisfied, and hence we get the proper filtration for every fixed $s\in (0,1)$, $1\leq k\leq n$ and $n \in \amsmathbb{N}$,
\begin{align*}
    \mathcal{F}^{k,s}(u,v) = \sigma(1\{U_i \leq w\}, 1\{U_j \leq t\}, 1 \leq i < k, k \leq j \leq n, w \leq F_{n,\floor{ns}}(u), t \leq F_{n,\floor{ns}}(v)).
\end{align*}
Then the sliced process $\beta_{s}^{(n), k}$ is adapted to $\mathcal{F}^{k,s}$ and we have the following Lemma.
\begin{lemma}\label{Lemma_beta_n_k_martingale}
    For each $2 \leq k \leq n$ and fixed $s \in (0,1)$ we have that
    \begin{enumerate}[label=(\alph*), ref=\thelemma \ (\alph*)] 
    \item $\beta_{s}^{(n), k}$  is a centered strong martingale with respect to the filtration $\mathcal{F}^{k,s}$ \label{Lemma_beta_n_k_martingale_MG_prop}
    \item For each $(u,v)$, it holds that $\beta_{s}^{(n), k}$ is a degenerate, centered $U$-statistic. \label{Lemma_beta_n_k_martingale_degenerate_prop}
    \end{enumerate}
\end{lemma}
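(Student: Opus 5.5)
The plan is to rewrite each summand $\beta_{s,(i,j)}^{(n)}(u,v)$ as a double stochastic integral against two independent one-parameter martingales. Once that is done, both (a) and (b) follow from standard facts about product martingales in the sense of~\cite{Cairoli_Walsh}, much as in the proof of Lemma 2.1 in~\cite{U_statistics_Stute_94}.

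\textbf{Step 1: one-parameter martingales.} For each $i$ define
\begin{align*}
    M_{i}(w) := 1\{U_{i}\leq w\} - \int_{0}^{w}\frac{1\{U_{i}\geq r\}}{1-r}\,\mathrm{d} r, \qquad w\in[0,1).
\end{align*}
Since $r\mapsto 1/(1-r)$ is the hazard of the uniform distribution, $M_{i}$ is a centered square-integrable martingale with respect to $\sigma(1\{U_{i}\leq r\}, r\leq w)$. A direct expansion of the four terms in the definition of $\beta_{s,(i,j)}^{(n)}$ gives
\begin{align*}
    \beta_{s,(i,j)}^{(n)}(u,v) = \int_{0}^{F_{n,\floor{ns}}(u)}\int_{0}^{F_{n,\floor{ns}}(v)} \widetilde{h}_{s}^{(n)}(w,t)\, M_{i}(\mathrm{d} w)\, M_{j}(\mathrm{d} t).
\end{align*}
The first term of $\beta_{s,(i,j)}^{(n)}$ is the product of the two jump parts. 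The second and third terms are the cross terms. The fourth term is the product of the two compensators.

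The time change $u\mapsto F_{n,\floor{ns}}(u)$ is deterministic and nondecreasing. It therefore maps the filtration in $(u,v)$ onto the uniform-scale filtration and preserves martingale properties, so it suffices to work on the uniform scale.

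\textbf{Step 2: part (a).} Fix $2\leq k\leq n$. The first coordinates use only $(U_{i})_{i<k}$ and the second only $(U_{j})_{j\geq k}$. These families are independent, so the two marginal filtrations are independent, and this gives the conditional independence property (F4) for $\mathcal{F}^{k,s}$.

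For a rectangle $(u,u']\times(v,v']$, the increment of $\beta_{s,(i,j)}^{(n)}$ equals the double integral of $\widetilde{h}_{s}^{(n)}$ over the corresponding time-changed rectangle against $M_{i}\otimes M_{j}$. I will condition on $\mathcal{F}^{1}(u)\vee\mathcal{F}^{2}(v)$, meaning the full past in one coordinate joined with the full past in the other. Using independence and Fubini for the deterministic integrand, this conditional expectation factors. It equals the integral of $\widetilde{h}_{s}^{(n)}$ against $\amsmathbb{E}[M_{i}(\mathrm{d} w)\vert\mathcal{F}_{i}(u)]\,M_{j}(\mathrm{d} t)$, which vanishes for $w>F_{n,\floor{ns}}(u)$. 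Hence the rectangle increments are conditionally centered, which is the strong martingale property. Summing over $1\leq i<k\leq j\leq n$ with deterministic weights $K_{n,s}(i,j)$ preserves the property.

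Centeredness holds because $\beta_{s}^{(n),k}(u,0)=\beta_{s}^{(n),k}(0,v)=0$ and the increments have mean zero.

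\textbf{Step 3: part (b).} For fixed $(u,v)$, $\beta_{s}^{(n),k}(u,v)$ is a weighted sum of kernels $g(U_{i},U_{j})$ over independent pairs, i.e.\ a (weighted) $U$-statistic. To show it is degenerate, I will compute $\amsmathbb{E}[g(U_{i},U_{j})\vert U_{i}]$. Conditioning on $U_{i}$ freezes the $M_{i}(\mathrm{d} w)$ factor, and by Fubini the remaining quantity is $\int \widetilde{h}_{s}^{(n)}(w,t)\,\amsmathbb{E}[M_{j}(\mathrm{d} t)]=0$. The same argument with the roles of $i$ and $j$ exchanged gives $\amsmathbb{E}[g(U_{i},U_{j})\vert U_{j}]=0$. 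Therefore the $U$-statistic is degenerate and centered.

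\textbf{Main obstacle.} The delicate point is integrability, which is needed to justify Fubini, the interchange of conditional expectation with the stochastic integrals, and square-integrability of the martingales. Both $1/(1-w)$ and the specific kernel $h_{n,\floor{ns}}(y,z)=1\{z\geq y\}(1-H_{n,\floor{ns}}(y-))^{-2}$ blow up near the right endpoint.

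I would first restrict to $u,v\leq T_{s}^{(n)}$, which is all that is used in Theorem~\ref{thm_NA_weak_repr}. On this range the upper limits of integration are bounded away from $1$ for fixed $n$. Consequently $\widetilde{h}_{s}^{(n)}$ is bounded there, with bound of order $(n/k_{n})^{2}$, and $\int_{0}^{c}(1-w)^{-2}\,\mathrm{d} w<\infty$ for $c<1$. Under general $h$, I would assume instead the square-integrability used later, namely the finiteness of the double integral entering the constant $a_{s}^{(n)}$.
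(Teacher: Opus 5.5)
Your proposal follows essentially the paper's argument. The paper checks directly that, given $\sigma(U_i\le F_{n,\floor{ns}}(u),U_j)$ (and symmetrically), the rectangle increment of $\widetilde h^{(n)}_s(U_i,U_j)1\{\cdot\}$ matches that of its compensator. It proves degeneracy by computing $\amsmathbb{E}[\beta^{(n)}_{s,(i,j)}(u,v)\vert U_j]=0$ from $\amsmathbb{E}[1\{U_i\ge w\}]=1-w$. Your representation $\beta^{(n)}_{s,(i,j)}=\int\!\int\widetilde h^{(n)}_s\,\mathrm dM_i\,\mathrm dM_j$ packages exactly these identities more cleanly.

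One caveat applies equally to the paper. Your expression $\int\!\int\widetilde h^{(n)}_s\,\amsmathbb{E}[M_i(\mathrm dw)\vert\mathcal F_i(u)]\,M_j(\mathrm dt)$ is the conditional expectation given the past of $U_i$ together with all of $U_j$, i.e.\ the Cairoli--Walsh $\mathcal F^1$. It is not the conditional expectation given the join $\mathcal F^1\vee\mathcal F^2$, because that join contains the whole history of both coordinates and makes the rectangle increment measurable. So what you (and the paper) actually establish is the 1- and 2-martingale property, not the strong property. Under (F4) this is what the Cairoli maximal inequality used in Lemma~\ref{lemma_sliced_beta_pth_moment_bound} needs.
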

\begin{proof}
    For the first statement we check that the conditional expectation of an increment over a given rectangle given the past equals zero.
    Using the independence we need only consider a fixed pair $i < k \leq j$ and the increment can be checked along one axis at a time, since the increments in both directions are conditionally independent. For $F_{n,\floor{ns}}(u) < F_{n,\floor{ns}}(u')$ and $F_{n,\floor{ns}}(v) < F_{n,\floor{ns}}(v')$, let 
    \begin{align*}
        R= (F_{n,\floor{ns}}(u),F_{n,\floor{ns}}(u')]\times (F_{n,\floor{ns}}(v),F_{n,\floor{ns}}(v')]
    \end{align*}
    Independence and the fact that $K_{n,s}(i,j)$ is deterministic yield that we need only focus on the random part of each term.
    \begin{align*}
        &\amsmathbb{E}[ \widetilde{h}^{(n)}_{s}(U_{i}, U_{j})1\{(U_{i},U_{j}) \in R\} \vert U_{i} \leq F_{n,\floor{ns}}(u), U_{j}]\\
        &=
        \amsmathbb{E}\Big[ 
        \int_{ F_{n,\floor{ns}}(u)}^{F_{n,\floor{ns}}(u')}
        \frac{1\{U_{i} \geq w\}}{1-w} 
        \widetilde{h}^{(n)}_{s}(w, U_{j}) \\
        & \qquad \qquad \qquad \times 1\{F_{n,\floor{ns}}(v) < U_{j} \leq F_{n,\floor{ns}}(v')\} \mathrm{d} w \Big\vert U_{i} \leq F_{n,\floor{ns}}(u), U_{j}\Big]\\
        &=
        1\{F_{n,\floor{ns}}(v) < U_{j} \leq F_{n,\floor{ns}}(v')\}\\
        & \qquad \qquad \qquad \times  
        \int_{ F_{n,\floor{ns}}(u)}^{F_{n,\floor{ns}}(u')}
     \amsmathbb{E}\Big[ \frac{1\{U_{i} \geq w\}}{1-w} \Big\vert U_{i} \leq F_{n,\floor{ns}}(u)\Big]
    \widetilde{h}^{(n)}_{s}(w, U_{j}) \mathrm{d} w \\
        &=
        1\{F_{n,\floor{ns}}(v) < U_{j} \leq F_{n,\floor{ns}}(v')\}
         \frac{1\{U_{i} > F_{n,\floor{ns}}(u)\}}{1-F_{n,\floor{ns}}(u)} 
        \int_{ F_{n,\floor{ns}}(u)}^{F_{n,\floor{ns}}(u')}
    \widetilde{h}^{(n)}_{s}(w, U_{j}) \mathrm{d} w,
    \end{align*}
    and similarly
    \begin{align*}
      & \amsmathbb{E}\Big[ 
        \int_{F_{n,\floor{ns}}(v)}^{F_{n,\floor{ns}}(v')}
        \frac{1\{U_{j} \geq t\}}{1-t} 
        \widetilde{h}^{(n)}_{s}(U_{i}, t)1\{F_{n,\floor{ns}}(u) < U_{i} \leq F_{n,\floor{ns}}(u')\} \mathrm{d} t \Big\vert U_{i} \leq F_{n,\floor{ns}}(u), U_{j}\Big]\\  
        &=
        \amsmathbb{E}\Big[ 
          \int_{R}
         \frac{1\{U_{i} \geq w\}}{1-w} 
          \frac{1\{U_{j} \geq t\}}{1-t} 
        \widetilde{h}^{(n)}_{s}(w,t)\mathrm{d} (t \times w)  \Big\vert U_{i} \leq F_{n,\floor{ns}}(u), U_{j}\Big]\\  
        &=
          \int_{R}
         \amsmathbb{E}\Big[ \frac{1\{U_{i} \geq w\}}{1-w}  \Big\vert U_{i} \leq F_{n,\floor{ns}}(u)\Big]
          \frac{1\{U_{j} \geq t\}}{1-t} 
        \widetilde{h}^{(n)}_{s}(w,t)\mathrm{d} (t \times w)  \\  
        &=
           \frac{1\{U_{i} \geq F_{n,\floor{ns}}(u)\}}{1-F_{n,\floor{ns}}(u)} 
          \int_{R}
          \frac{1\{U_{j} \geq t\}}{1-t} 
        \widetilde{h}^{(n)}_{s}(w,t)\mathrm{d} (t \times w) . 
    \end{align*}
    Conditional expectations with respect to $\sigma(U_{j} \leq v, U_{i})$ are given similarly. Consequently, the conditional expectation of an increment over a given rectangle given the past equals zero. The statement of $\beta_{s}^{(n), k}$ having mean zero follows directly from the structure of each $\beta_{s, (i,j)}^{(n)}$; it is a process minus its predictable compensator and another process minus its predictable compensator. This also yields that $\beta_{s}^{(n),k}$ is centered for each $(u,v)$, and the degeneracy follows from the fact that 
    \begin{align*}
    &\amsmathbb{E}[\beta_{s, (i,j)}^{(n)}(u,v) \vert U_{j}]
    =
      \amsmathbb{E}[\widetilde{h}^{(n)}_{s}(U_{i}, U_{j})1\{U_{i} \leq F_{n,\floor{ns}}(u)\} \vert U_{j}]
      1\{U_{j} \leq F_{n,\floor{ns}}(v)\} \\
    & \quad -
    \int_{0}^{F_{n,\floor{ns}}(u)}
    \frac{ \amsmathbb{E}[1\{U_{i} \geq w\}]}{1-w} 
    \widetilde{h}^{(n)}_{s}(w, U_{j}) 1\{U_{j} \leq F_{n,\floor{ns}}(v)\} \mathrm{d} w\\
    & \quad -
    \int_{0}^{F_{n,\floor{ns}}(v)}
    \frac{1\{U_{j} \geq t\}}{1-t} 
     \amsmathbb{E}[\widetilde{h}^{(n)}_{s}(U_{i}, t) 1\{U_{i} \leq F_{n,\floor{ns}}(u)\}] \mathrm{d} t\\
    & \quad +
     \int_{0}^{F_{n,\floor{ns}}(u)}\int_{0}^{F_{n,\floor{ns}}(v)}
     \frac{ \amsmathbb{E}[1\{U_{i} \geq w\}]}{1-w} 
      \frac{1\{U_{j} \geq t\}}{1-t} 
    \widetilde{h}^{(n)}_{s}(w,t)\mathrm{d} t \mathrm{d} w \\
    &=
    1\{U_{j} \leq F_{n,\floor{ns}}(v)\} 
     \int_{0}^{F_{n,\floor{ns}}(u)}
    \widetilde{h}^{(n)}_{s}(w, U_{j})\mathrm{d} w \\
    & \quad -
    1\{U_{j} \leq F_{n,\floor{ns}}(v)\} 
     \int_{0}^{F_{n,\floor{ns}}(u)}
    \widetilde{h}^{(n)}_{s}(w, U_{j})\mathrm{d} w \\
    & \quad -
    \int_{0}^{F_{n,\floor{ns}}(v)}
    \frac{1\{U_{j} \geq t\}}{1-t} 
       \int_{0}^{F_{n,\floor{ns}}(u)} \widetilde{h}^{(n)}_{s}(w,t)\mathrm{d} w
     \mathrm{d} t \\
     & \quad+ 
     \int_{0}^{F_{n,\floor{ns}}(u)}\int_{0}^{F_{n,\floor{ns}}(v)}
      \frac{1\{U_{j} \geq t\}}{1-t} 
    \widetilde{h}^{(n)}_{s}(w,t)\mathrm{d} t \mathrm{d} w = 0,
    \end{align*}
    and likewise $\amsmathbb{E}[\beta_{s, (i,j)}^{(n)}(u,v) \vert U_{i}]=0$. Since $\beta_{s}^{(n)}$ is a sum of degenerate terms, it is itself degenerate. This concludes the proof.
\end{proof}
\begin{lemma}\label{lemma_sliced_beta_pth_moment_bound}
    For any $0 \leq u_{0}, v_{0} < x^{\ast}$ and $p>1$, using $R_0 := [0, u_0] \times [0,v_0]$,
    \begin{align*}
        \amsmathbb{E}\Big[ \sup_{(u,v) \in R_0} \big\vert \beta_{s}^{(n), k} \big\vert^{p}\Big]
        \leq
        \Big(\frac{p}{p-1}\Big)^{2p} \sup_{(u,v) \in R_0}  \amsmathbb{E}\Big[ \big\vert \beta_{s}^{(n), k} \big\vert^{p}\Big].
    \end{align*}
\end{lemma}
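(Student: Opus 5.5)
The bound is Cairoli's strong $L^{p}$ maximal inequality for two-parameter martingales, and I would prove it by applying Doob's one-parameter $L^{p}$ inequality once in each coordinate. The input is Lemma~\ref{Lemma_beta_n_k_martingale_MG_prop}: $\beta_{s}^{(n),k}$ is a centered strong martingale with respect to $\mathcal{F}^{k,s}$. The slicing at $k$ makes the first-coordinate variables $(U_{i})_{i<k}$ independent of the second-coordinate variables $(U_{j})_{j\geq k}$. Hence $\mathcal{F}^{k,s}$ is a product filtration
\begin{align*}
\mathcal{F}^{k,s}(u,v)=\mathcal{F}^{1}(u)\vee\mathcal{F}^{2}(v),
\end{align*}
which satisfies the commutation (conditional independence) property (F4) of~\cite{Cairoli_Walsh}. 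Every sample path of $\beta_{s}^{(n),k}$ is a finite combination of indicators and integrals in $(u,v)$, and is therefore right-continuous in each coordinate. Consequently the suprema over $R_{0}$ can be taken over a countable dense set, so measurability and the use of one-parameter results are unproblematic.

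\textbf{First coordinate.} Fix $v\leq v_{0}$ and consider $u\mapsto M_{v}(u):=\mathbb{E}[\beta_{s}^{(n),k}(u_{0},v)\mid \mathcal{F}^{1}(u)\vee\mathcal{F}^{2}(\infty)]$. The strong martingale property together with (F4) gives $\beta_{s}^{(n),k}(u,v)=\mathbb{E}[M_{v}(u)\mid \mathcal{F}^{k,s}(\infty,v)]$, or more directly that $u\mapsto\beta_{s}^{(n),k}(u,v)$ is a martingale for the filtration $\mathcal{F}^{1}(u)\vee\mathcal{F}^{2}(v)$. Doob's inequality then yields
\begin{align*}
\mathbb{E}\Big[\sup_{u\leq u_{0}}|\beta_{s}^{(n),k}(u,v)|^{p}\Big]\leq\Big(\frac{p}{p-1}\Big)^{p}\mathbb{E}\big[|\beta_{s}^{(n),k}(u_{0},v)|^{p}\big].
\end{align*}

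\textbf{Second coordinate.} This step needs the sup over $u$ placed inside the martingale in $v$, so I would use the standard Cairoli trick. Write
\begin{align*}
\beta_{s}^{(n),k}(u,v)=\mathbb{E}\big[\beta_{s}^{(n),k}(u_{0},v)\mid \mathcal{F}^{1}(u)\vee\mathcal{F}^{2}(\infty)\big],
\end{align*}
which holds by (F4) and the martingale property. Then
\begin{align*}
\sup_{u\leq u_{0}}|\beta_{s}^{(n),k}(u,v)|\leq \sup_{u\leq u_{0}}\mathbb{E}\big[Y_{v}\mid \mathcal{G}_{u}\big],\qquad Y_{v}:=|\beta_{s}^{(n),k}(u_{0},v)|,\quad \mathcal{G}_{u}:=\mathcal{F}^{1}(u)\vee\mathcal{F}^{2}(\infty).
\end{align*}
Applying Doob in $u$ for the filtration $\mathcal{G}_{u}$, uniformly in $v$, bounds the $p$-th moment of $\sup_{(u,v)\in R_{0}}|\beta_{s}^{(n),k}(u,v)|$ by $(p/(p-1))^{p}\,\mathbb{E}[\sup_{v\leq v_{0}}Y_{v}^{p}]$. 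Since $v\mapsto\beta_{s}^{(n),k}(u_{0},v)$ is a martingale in $\mathcal{F}^{k,s}(u_{0},\cdot)$, a second application of Doob gives $\mathbb{E}[\sup_{v\leq v_{0}}Y_{v}^{p}]\leq(p/(p-1))^{p}\,\mathbb{E}[|\beta_{s}^{(n),k}(u_{0},v_{0})|^{p}]$. The last quantity is at most $\sup_{R_{0}}\mathbb{E}[|\beta_{s}^{(n),k}|^{p}]$, which gives the constant $(p/(p-1))^{2p}$.

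\textbf{Main obstacle.} The delicate point is justifying the inner Doob step. The supremum over $v$ of conditional expectations with respect to $\mathcal{G}_{u}$ must be controlled by a single $\mathcal{G}$-martingale whose terminal variable is $\sup_{v}Y_{v}$. This requires both the conditional independence (F4) and integrability of $\sup_{v}Y_{v}$. For fixed $n$, integrability is automatic on $R_{0}$ with $v_{0}<x^{\ast}$: there $\widetilde{h}^{(n)}_{s}$ is bounded by $(1-H_{n,\lfloor ns\rfloor}(v_{0}))^{-2}$, so $\beta_{s}^{(n),k}$ is bounded. Verifying (F4) for the sliced filtration reduces to the independence of the two blocks of uniforms, which is exactly why the slicing at $k$ was introduced.
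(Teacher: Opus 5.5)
Your proposal is correct and follows the paper's route. The paper combines the strong martingale property of Lemma~\ref{Lemma_beta_n_k_martingale_MG_prop} with Theorem 1.2(b) of \cite{Cairoli_Walsh}, and your iterated Doob argument under (F4) for the product filtration is the standard proof of that cited inequality, giving the same constant $(p/(p-1))^{2p}$. (Two details could be trimmed: your separate first-coordinate step is superfluous, and integrability of $\sup_{v}Y_{v}$ already follows from the second Doob step whenever the right-hand side is finite, so no boundedness of $\widetilde{h}^{(n)}_{s}$ is needed.)
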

\begin{proof}
    Follows directly from Lemma~\ref{Lemma_beta_n_k_martingale_MG_prop} and Theorem 1.2(b) of~\cite{Cairoli_Walsh}.
\end{proof}
\begin{remark}
    In Lemma~\ref{lemma_sliced_beta_pth_moment_bound} and all the subsequent results in this Appendix we require that $u_{0}, v_{0} < x^{\ast}$. However, in Remark~\ref{remark_extending_U_stat_result_to_extended_variables} we argue why this can be relaxed to the setting where $u_{0}< x^{\ast}$ and $v_{0}\leq x^{\ast}$ or vice versa.
\end{remark}
We are now concerned with bounding the right-hand side of Lemma~\ref{lemma_sliced_beta_pth_moment_bound}. To this end, fix $(u,v)$ and let, for $2\leq k \leq r \leq n$,
\begin{align*}
    S_{r}^{s} = \sum_{1 \leq i < k \leq j \leq r} \beta_{s, (i,j)}^{(n)}(u,v)K_{n,s}(i,j),
    \qquad
    \mathcal{F}_{r} = \sigma(U_{1},\ldots, U_{k-1},\ldots, U_{r}),
\end{align*}
where we note that $S_{r}^{s}$ is adapted to $\mathcal{F}_{r}$ for every $s$. Moreover
\begin{align*}
    \amsmathbb{E}[S_{r}^{s} \vert \mathcal{F}_{r-1}]
    =
    S_{r-1} + \sum_{1 \leq i < k}  \amsmathbb{E}[\beta_{s, (i,r)}^{(n)}(u,v) \vert \mathcal{F}_{r-1}]  K_{n,s}(i,r)
    =
     S_{r-1}, 
\end{align*}
using the degeneracy of  Lemma~\ref{Lemma_beta_n_k_martingale_degenerate_prop}. Consequently $(S_{r}^{s},\mathcal{F}_{r})_{k \leq r \leq n}$ is a martingale with $S_{n}^{s} =\beta_{s}^{(n), k}(u,v)$. Define 
\begin{align*}
    D_{r}^{s} = \sum_{1 \leq i < k} \beta_{s, (i,r)}^{(n)}(u,v)K_{n,s}(i,r), \qquad k \leq r \leq n.
\end{align*}
Applying Burkholder's inequality with Jensen's inequality yields that
\begin{align*}
    \amsmathbb{E}\big[ \vert \beta_{s}^{(n), k}(u,v) \vert^{p}\big]
    =
    \amsmathbb{E}\big[ \vert S_{n}^{s} \vert^{p}\big]
    \leq
    B^{p}_{p} \amsmathbb{E}\Big[ \big\vert \sum_{r=k}^{n} (D_{r}^{s})^{2} \big\vert^{p/2}\Big]
    \leq
     B^{p}_{p}(n-k+1)^{p/2 - 1} \sum_{r=k}^{n} \amsmathbb{E}[ \vert  D_{r}^{s}\vert^{p}],
\end{align*}
for $p \geq 2$, where the constant
\begin{align*}
    B_{p} = 18p^{3/2}/(p-1)^{1/2},
\end{align*}
can be found in~\cite{ChowTeicher1997} Theorem 1, page 414. To provide an upper bound for $\amsmathbb{E}[ \vert D_{r}^{s}\vert^{p}]$, set for $1\leq i <k$ and $k\leq r \leq n$,
\begin{align*}
    T_{i}^{s} = 
    \sum_{j=1}^{i}\beta_{s, (j,r)}^{(n)}(u,v) K_{n,s}(j,r),
    \qquad 
    \mathcal{G}_{i} = \sigma(U_{1},\ldots, U_{i}, U_{r}).
\end{align*}
We note that $T_{i}^{s}$ is adapted to $\mathcal{G}_{i}$ and Lemma~\ref{Lemma_beta_n_k_martingale_degenerate_prop} yields that
\begin{align*}
     \amsmathbb{E}[\beta_{s, (i,r)}^{(n)}(u,v) \vert \mathcal{G}_{i-1}]  K_{n,s}(i,r) = 0.
\end{align*}
Thus using similar arguments as above, $(T_{i}^{(s)}, \mathcal{G}_{i})_{1 \leq i < k}$ is a martingale. Let $E_{i}^{s} = \beta_{s, (i,r)}^{(n)}(u,v) K_{n,s}(i,r)$ and note that since $T_{k-1}^{s} = D_{r}^{s}$, an application of Burkholder's and Jensen's inequality yields
\begin{align*}
    \amsmathbb{E}[ \vert D_{r}^{s}\vert^{p}]
    \leq
     B^{p}_{p}(k-1)^{p/2 - 1} \sum_{i=1}^{k-1} \amsmathbb{E}[\vert  E_{i}^{s}\vert^{p}],
\end{align*}
for $p \geq 2$. Summarizing, we have that
\begin{align*}
       &\amsmathbb{E}\big[ \vert \beta_{s}^{(n), k}(u,v) \vert^{p}\big] \leq
          B^{2p}_{p}(n-k+1)^{p/2 - 1} (k-1)^{p/2 - 1} \sum_{r=k}^{n} \sum_{i=1}^{k-1} \amsmathbb{E}[\vert E_{i}^{s}\vert^{p}] \\
          &=
          B^{2p}_{p}(n-k+1)^{p/2 - 1} (k-1)^{p/2 - 1} \sum_{r=k}^{n} \sum_{i=1}^{k-1} \amsmathbb{E} [\vert \beta_{s, (i,r)}^{(n)}(u,v) \vert^{p}] K_{n,s}(i,r)^{p} \\
          &=
          B^{2p}_{p}(n-k+1)^{p/2 - 1} (k-1)^{p/2 - 1} \amsmathbb{E}[ \vert \beta_{s, (1,2)}^{(n)}(u,v) \vert^{p} ]\sum_{r=k}^{n} \sum_{i=1}^{k-1}  K_{n,s}(i,r)^{p} \\
          & =
           B^{2p}_{p}(n-k+1)^{p/2 - 1} (k-1)^{p/2 - 1} \amsmathbb{E} [\vert \beta_{s, (1,2)}^{(n)}(u,v) \vert^{p} ]\sum_{r=k}^{n} K\Big(\frac{s-r/n}{h_{n}}\Big)^{p} \sum_{i=1}^{k-1}  K\Big(\frac{s-i/n}{h_{n}}\Big)^{p} \\
           &= 
            B^{2p}_{p}(n-k+1)^{p/2 - 1} (k-1)^{p/2 - 1} \amsmathbb{E}[\vert \beta_{s, (1,2)}^{(n)}(u,v) \vert^{p}]
            \mathcal{O}((n-k+1)h_{n}(k-1)h_{n}),
\end{align*}
using that the kernel is compactly supported.
Since the $(U_{i})$'s are iid, we have now proved the following Lemma regarding the sliced process of interest. 
\begin{lemma} \label{Lemma_sliced_beta_unif_bound}
    For $p \geq 2$, and $0 \leq u_{0}, v_{0} < x^{\ast}$, using $R_0 := [0, u_0] \times [0,v_0]$,
    \begin{align*}
          \amsmathbb{E}\Big[ \sup_{(u,v) \in R_0} \vert \beta_{s}^{(n), k} \vert^{p}\Big]
          \leq
            (a_{1, s}^{(n)})^{p}  (n-k+1)^{p/2} (k-1)^{p/2}\mathcal{O}(h_{n}^{2}), 
    \end{align*}
    with
    \begin{align*}
         (a_{1, s}^{(n)})^{p} = 
          \Big(\frac{p}{p-1}\Big)^{2p}  B^{2p}_{p}  \sup_{(u,v) \in R_0} \amsmathbb{E}\big[ \vert  \beta_{s, (1,2)}^{(n)}(u,v) \vert^{p}\big]. 
    \end{align*}
\end{lemma}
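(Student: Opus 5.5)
The lemma packages the computation carried out just above, so the proof is mainly a matter of combining the displayed bounds in order. Throughout, $s$, $n$ and $2\le k\le n$ are fixed.

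\textbf{Step 1 (remove the supremum).} By Lemma~\ref{lemma_sliced_beta_pth_moment_bound}, which rests on the strong martingale property in Lemma~\ref{Lemma_beta_n_k_martingale_MG_prop} and Cairoli's maximal inequality, we have
\begin{align*}
\amsmathbb{E}\Big[\sup_{(u,v)\in R_0}\vert\beta_s^{(n),k}\vert^p\Big]\le\Big(\frac{p}{p-1}\Big)^{2p}\sup_{(u,v)\in R_0}\amsmathbb{E}\big[\vert\beta_s^{(n),k}(u,v)\vert^p\big].
\end{align*}
This reduces the claim to a bound for fixed $(u,v)$ that is uniform over $R_0$.

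\textbf{Step 2 (fixed $(u,v)$).} View $S_r^s$, for $k\le r\le n$, as a martingale in the outer index $r$. Its increments are $D_r^s$, and the martingale property comes from the degeneracy in Lemma~\ref{Lemma_beta_n_k_martingale_degenerate_prop}. Burkholder's inequality, followed by Jensen's inequality on $\big(\sum_r (D_r^s)^2\big)^{p/2}$, gives the factor $B_p^p(n-k+1)^{p/2-1}$ times $\sum_r\amsmathbb{E}\vert D_r^s\vert^p$.

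Each $D_r^s$ is again a terminal value, namely of the martingale $T_i^s$ in the inner index $i<k$, with respect to $\mathcal{G}_i$; degeneracy is used once more. A second application of Burkholder and Jensen contributes $B_p^p(k-1)^{p/2-1}\sum_{i<k}\amsmathbb{E}\vert E_i^s\vert^p$.

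\textbf{Step 3 (identical distribution and kernel sums).} The $U_i$ are iid. Hence $\amsmathbb{E}\vert\beta_{s,(i,r)}^{(n)}(u,v)\vert^p=\amsmathbb{E}\vert\beta_{s,(1,2)}^{(n)}(u,v)\vert^p$ for every pair $i\neq r$. This is the key point where the locally transported array matters: $\widetilde h_s^{(n)}$ does not depend on $i$ or $r$. The remaining deterministic factor is $\sum_{r=k}^n K((s-r/n)/h_n)^p\sum_{i<k}K((s-i/n)/h_n)^p$. Since $K\le M_K$ is supported on $[-1,1]$, each sum has at most about $2nh_n+1$ nonzero terms. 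I will bound them by $\mathcal{O}((n-k+1)h_n)$ and $\mathcal{O}((k-1)h_n)$, as in the displayed computation.

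Multiplying through gives
\begin{align*}
(n-k+1)^{p/2}(k-1)^{p/2}\,\mathcal{O}(h_n^2)\,B_p^{2p}\,\amsmathbb{E}\vert\beta_{s,(1,2)}^{(n)}(u,v)\vert^p.
\end{align*}
Taking the supremum over $R_0$ and combining with Step 1 yields the constant $(a_{1,s}^{(n)})^p$ as stated.

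\textbf{Main obstacle.} The delicate point is the kernel-sum bound in Step 3. A count of nonzero terms honestly gives $\mathcal{O}(nh_n)$ for each sum, not $\mathcal{O}((n-k+1)h_n)$. I would therefore check whether the $\mathcal{O}(h_n^2)$ should be read as $\min\{(n-k+1),nh_n\}\min\{(k-1),nh_n\}$ divided by $(n-k+1)(k-1)$. If the literal form fails for extreme $k$, I would restate the bound with this sharper factor. It suffices downstream, since summing over slices $k$ still yields the $\mathcal{O}(n^2h_n^2)$ used in the proof of Theorem~\ref{thm_NA_weak_repr}.

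A second check is that Burkholder's inequality may be applied with $p\ge2$. This needs $\amsmathbb{E}\vert\beta_{s,(1,2)}^{(n)}\vert^p<\infty$, and it holds because $u_0,v_0<x^*$ keeps $\widetilde h_s^{(n)}$ bounded on $R_0$.
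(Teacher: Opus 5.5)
Your proposal is correct and follows the paper's proof step for step. The common steps are: Cairoli's maximal inequality via Lemma~\ref{lemma_sliced_beta_pth_moment_bound}; Burkholder plus Jensen applied first to the outer martingale $S_r^s$ and then to the inner martingale $T_i^s$, both resting on degeneracy; the iid reduction to $\beta_{s,(1,2)}^{(n)}$; and the kernel-sum count.

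The ``main obstacle'' you flag is not a real one, and no restatement is needed. Because $s\in(0,1)$ is fixed and $K$ vanishes outside $[-1,1]$, the sum $\sum_{i<k}K((s-i/n)/h_n)^p$ is zero unless $k-1\ge n(s-h_n)$. Likewise $\sum_{r\ge k}K((s-r/n)/h_n)^p$ is zero unless $n-k+1\ge n(1-s-h_n)$. So for large $n$ the crude count $M_K^p(2nh_n+1)$ is at most an $s$-dependent constant times $(k-1)h_n$, respectively $(n-k+1)h_n$, uniformly in $k$, and the literal bound holds.
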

This result will be used to provide a uniform bound of the original process of interest $\beta_{s}^{(n)}(u,v)$. Since the function $u \mapsto u(1-u)$ on $[0,1]$ attains its maximum at $u = 1/2$, it follows that
\begin{align*}
    (n-k+1)^{p/2} (k-1)^{p/2} \leq (n/2)^{p}, \qquad 2 \leq k \leq n, 
\end{align*}
and consequently Lemma~\ref{Lemma_sliced_beta_unif_bound} yields
\begin{align*}
    \big\lVert \sup_{(u,v) \in R_0} \vert \beta_{s}^{(n), k} \vert \big\rVert_{p}
    \leq
    a_{1, s}^{(n)} \mathcal{O}(nh_{n}^{2/p}),
\end{align*}
where $\Vert\cdot \Vert_{p}$ denotes the $\mathcal{L}^{p}$-norm. To uniformly bound $\beta_{s}^{(n)}$, we first consider the case where $n= 2^{m}$ for integer $m$. Then write
\begin{align*}
    \beta_{s}^{(n)}(u,v) &= \sum_{l=1}^{m} B_{l,s}(u,v), \qquad
    B_{l,s}(u,v) = \sum_{k=1}^{2^{m-l}} B_{kl,s}(u,v), \\
    B_{kl,s}(u,v) &= \sum_{(i,j) \in o_{kl}} \beta_{s, (i,j)}^{(n)}(u,v)K_{n,s}(i,j),
\end{align*}
where $o_{kl} = \{(i,j): (k-1)2^{l} < i \leq (2k-1)2^{l-1} < j \leq  k2^{l}\}$, and note that since each $B_{kl,s}$ is a sum over pairwise disjoint index sets, every $B_{l,s}$ is also a strong martingale (similarly to Lemma~\ref{Lemma_beta_n_k_martingale_MG_prop}). Theorem 1.2(b) of~\cite{Cairoli_Walsh} can thus be applied again to infer that
\begin{align*}
        \amsmathbb{E}\Big[ \sup_{(u,v) \in R_0} \vert B_{l,s}(u,v) \vert^{p}\Big]
        \leq
        \Big(\frac{p}{p-1}\Big)^{2p} \sup_{(u,v) \in R_0}  \amsmathbb{E}\left[ \vert B_{l,s}(u,v) \vert^{p}\right].
\end{align*}
To bound the above right-hand side, we use the fact that for independent, zero-mean $\xi_{s}$ and $\eta_{s}$, there exists some $\varepsilon = \varepsilon(p,s) > 0$ such that
\begin{align*}
    \amsmathbb{E}\left[\vert \xi_{s} + \eta_{s} \vert^{p}\right]
    \leq
    (1-\varepsilon) 2^{p} \max\Big( \amsmathbb{E}\left[\vert \xi_{s} \vert^{p}\right], 
     \amsmathbb{E}[\vert \eta_{s} \vert^{p}]\Big).
\end{align*}
Using again Lemma~\ref{Lemma_beta_n_k_martingale}, we have that
\begin{align*}
    &\sup_{(u,v) \in R_0}  \amsmathbb{E}[ \vert B_{l,s}(u,v) \vert^{p}]
    =
    \sup_{(u,v) \in R_0}  \amsmathbb{E}\Big[ \big\vert \sum_{k=1}^{2^{m-l}} B_{kl,s}(u,v) \big\vert^{p}\Big] \\
    &=
    \sup_{(u,v) \in R_0}  \amsmathbb{E}\Big[ \big\vert  B_{(2^{m-l}) l,s}(u,v) + \sum_{k=1}^{2^{m-l-1}} B_{kl,s}(u,v) \big\vert^{p}\Big] \\
    &\leq
     (1-\varepsilon) 2^{p} \sup_{(u,v) \in R_0} \max\Big( \amsmathbb{E}[\vert B_{(2^{m-l}) l,s}(u,v) \vert^{p}], 
     \amsmathbb{E}\Big[ \big\vert  \sum_{k=1}^{2^{m-l-1}} B_{kl,s}(u,v) \big\vert^{p}\Big]\Big)\\
    & =
     (1-\varepsilon) 2^{p} \sup_{(u,v) \in R_0} 
     \amsmathbb{E}\Big[ \big\vert  \sum_{k=1}^{2^{m-l-1}} B_{kl,s}(u,v) \big\vert^{p}\Big] \leq
      (1-\varepsilon)^{m-l} 2^{p(m-l)} \sup_{(u,v) \in R_0} 
     \amsmathbb{E}[\vert  B_{1l,s}(u,v) \vert^{p}]. 
\end{align*}
We note that $B_{1l,s}$ is a sliced process and consequently Lemma~\ref{Lemma_sliced_beta_unif_bound} yields
\begin{align*}
    & \sup_{(u,v) \in R_0} 
     \amsmathbb{E}[\vert  B_{1l,s}(u,v) \vert^{p}]
     \leq
     \amsmathbb{E}\Big[ \sup_{(u,v) \in R_0} \vert  B_{1l,s}(u,v) \vert^{p}\Big] \\
     & =
     \amsmathbb{E}\Big[ \sup_{(u,v) \in R_0} \big\vert  
      \sum_{(i,j) \in o_{1l}} \beta_{s, (i,j)}^{(n)}(u,v)K_{n,s}(i,j)
      \big\vert^{p}\Big]
        \leq 
      (a_{1, s}^{(n)})^{p}  \mathcal{O}(2^{pl}h_{n}^{2}) .
\end{align*}
Gathering what we have above
\begin{align*}
    \sup_{(u,v) \in R_0}  \amsmathbb{E}[ \vert B_{l,s}(u,v) \vert^{p}]
    \leq
    (1-\varepsilon)^{m-l} 2^{p(m-l)} \mathcal{O}(2^{pl}h_{n}^{2})
    =
     (1-\varepsilon)^{m-l} (a_{1, s}^{(n)})^{p} \mathcal{O}(n^{p}h_{n}^{2}),
\end{align*}
and consequently
\begin{align*}
    \big\lVert \sup_{(u,v) \in R_0} \vert \beta_{s}^{(n)}(u,v) \vert\big\rVert_{p}
    \leq
    a_{1, s}^{(n)}\mathcal{O}(nh_{n}^{p/2}) \sum_{l=1}^{m} (1-\varepsilon)^{(m-l)/p} \leq A  a_{1, s}^{(n)}\mathcal{O}(nh_{n}^{p/2}),
\end{align*}
where $1 \leq A = A(p,s) < \infty$ is a constant depending only on $s$ and $p$. In the following Lemma we extend the above result to a general $n$. 
\begin{lemma}\label{Lemma_beta_unif_bound}
    For $p \geq 2$, and $0 \leq u_{0}, v_{0} < x^{\ast}$, using $R_0 := [0, u_0] \times [0,v_0]$,
    \begin{align}\label{eq_beta_uniform_L_p_bound}
          \big\lVert \sup_{(u,v) \in R_0} \vert \beta_{s}^{(n)}(u,v) \vert\big\rVert_{p}
          \leq
            2A a_{1, s}^{(n)}\mathcal{O}(nh_{n}^{p/2}).
    \end{align}
\end{lemma}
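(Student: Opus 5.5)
The plan is to reduce the case of general $n$ to the dyadic case $n=2^{m}$ by padding the array with dummy observations that carry zero kernel weight. The constant $2$ in \eqref{eq_beta_uniform_L_p_bound} should then come only from the inequality $2^{m}\leq 2n$.

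Fix $n$ and choose the integer $m$ with $2^{m-1}<n\leq 2^{m}$. Enlarge the underlying probability space by iid standard uniforms $U_{n+1},\ldots,U_{2^{m}}$, independent of $U_{1},\ldots,U_{n}$. For indices in $\{1,\ldots,2^{m}\}$ define modified weights $\bar K_{n,s}(i,j):=K_{n,s}(i,j)1\{i\leq n,\ j\leq n\}$.

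Let $\bar\beta_{s}^{(n)}$ be the process $\beta_{s}^{(n)}$ built from the $2^{m}$ uniforms with weights $\bar K_{n,s}$. Every summand involving an index larger than $n$ vanishes, so $\bar\beta_{s}^{(n)}=\beta_{s}^{(n)}$ identically in $(u,v)$. It therefore suffices to bound $\bar\beta_{s}^{(n)}$ uniformly on $R_{0}$.

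Next I rerun the dyadic argument above, with $m$ and the weights $\bar K_{n,s}$. I decompose $\bar\beta_{s}^{(n)}=\sum_{l=1}^{m}\bar B_{l,s}$, with $\bar B_{l,s}=\sum_{k}\bar B_{kl,s}$ over the index sets $o_{kl}$, and check three points.
\begin{enumerate}
\item The structural properties are unchanged. Lemma~\ref{Lemma_beta_n_k_martingale} (strong martingale property, degeneracy) and Lemma~\ref{lemma_sliced_beta_pth_moment_bound} use only independence of the $U_{i}$ and determinism of the weights. Both survive zero weights, so the Cairoli--Walsh maximal inequality still applies to each $\bar B_{l,s}$.
\item The kernel-sum bounds still hold. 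In Lemma~\ref{Lemma_sliced_beta_unif_bound} the weights enter only through $\sum_{r}\sum_{i}\bar K_{n,s}(i,r)^{p}\leq\sum_{r\leq n}\sum_{i\leq n}K_{n,s}(i,r)^{p}$. This is bounded by the unpadded quantity, of order $(n-k+1)(k-1)h_{n}^{2}$ with the same constant. The Burkholder--Jensen factors $(n-k+1)^{p/2-1}(k-1)^{p/2-1}$ are now bounded using $2^{m}$ rather than $n$. This gives $\bar B_{1l,s}$ the bound $(a_{1,s}^{(n)})^{p}\mathcal{O}(2^{pl}h_{n}^{2})$. The quantity $a_{1,s}^{(n)}$ is unchanged, since $\beta_{s,(1,2)}^{(n)}$ does not depend on the weights.
\item The splitting inequality still holds. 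The bound $\amsmathbb{E}|\xi_{s}+\eta_{s}|^{p}\leq(1-\varepsilon)2^{p}\max(\amsmathbb{E}|\xi_{s}|^{p},\amsmathbb{E}|\eta_{s}|^{p})$ is used to peel off blocks. Blocks with all weights zero are identically zero, and then the inequality holds trivially because $(1-\varepsilon)2^{p}\geq 1$ for $p\geq 2$ and $\varepsilon<1$.
\end{enumerate}

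Summing the geometric series over $l$ then gives
\begin{align*}
\big\lVert \sup_{(u,v)\in R_{0}}\vert\beta_{s}^{(n)}(u,v)\vert\big\rVert_{p}\leq A\,a_{1,s}^{(n)}\,\mathcal{O}(2^{m}h_{n}^{p/2}),
\end{align*}
with the same rate in $h_{n}$ as in the dyadic case. Since $2^{m}<2n$, this is at most $2A\,a_{1,s}^{(n)}\mathcal{O}(nh_{n}^{p/2})$.

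The main obstacle I expect is making sure the constant $A=A(p,s)$, which depends on $\varepsilon(p,s)$, does not depend on $n$ or on the padding. In particular the splitting inequality must be applied with a uniform $\varepsilon$ even when some blocks are degenerate or nearly degenerate. If that is problematic, my first fallback is to avoid the $(1-\varepsilon)$-inequality altogether. I would instead bound each $\bar B_{l,s}$ directly, treating the blocks $\bar B_{kl,s}$ over $k$ as independent centered summands. Burkholder's inequality then gives $\sup\amsmathbb{E}|\bar B_{l,s}|^{p}\lesssim 2^{(m-l)p/2}\,2^{pl}$. Summing $2^{m-l/2}$-type factors over $l$ still yields $\mathcal{O}(2^{m})=\mathcal{O}(n)$, at the cost of a different constant $A$.
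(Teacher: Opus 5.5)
Your reduction from general $n$ to the dyadic case is correct, and it takes a genuinely different route from the paper. The paper argues by induction on $n$. With $\underline n$ the largest power of two not exceeding $n$, it splits $\beta_{s}^{(n)}$ into three parts and adds their bounds using $n\le 2\underline n$:
\begin{itemize}
\item the cross sum over $i<\underline n\le j$, a sliced process handled by Lemma~\ref{Lemma_sliced_beta_unif_bound};
\item the dyadic block $\{1,\ldots,\underline n\}$;
\item the residual block $\{\underline n,\ldots,n\}$, handled by the induction hypothesis.
\end{itemize}
You instead embed the sample into one of size $2^{m}$ with zero-weight independent dummies. Then $\bar\beta_{s}^{(n)}\equiv\beta_{s}^{(n)}$, and a single application of the dyadic bound gives the factor $2$ directly from $2^{m}<2n$.

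Padding gives a proof with no induction, no cross term and no constant bookkeeping, and this matters here:
\begin{itemize}
\item The paper's final summation $n+A\underline n+2A(n-\underline n)\le 2An$ needs $n\le A\underline n$, i.e.\ silently $A\ge 2$ rather than $A\ge 1$.
\item Its three-way split counts the pairs $(i,\underline n)$, $i<\underline n$, twice.
\item Its induction hypothesis is invoked for the sub-array $\{\underline n,\ldots,n\}$ with the original weights $K_{n,s}(i,j)$, which are not those of a sample of size $n-\underline n+1$. So the hypothesis is implicitly a statement about arbitrary bounded deterministic weights, which is exactly the generality your checks 1--2 make explicit.
\end{itemize}
The paper's route only avoids enlarging the probability space, which is immaterial for an $L^{p}$ bound. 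Your ``main obstacle'' is not one, since the splitting inequality holds with $\varepsilon=\varepsilon(p,s)$ uniformly over independent centred pairs. However, $(1-\varepsilon)2^{p}\ge 1$ does not follow from $\varepsilon<1$; it follows from that uniformity, by taking $\eta\equiv 0$.

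One step you should not copy when you rerun the dyadic argument is the reduction to the first block. With kernel weights, and a fortiori padded ones, the blocks $\bar B_{kl,s}$ are not identically distributed. Indeed $\bar B_{1l,s}\equiv 0$ whenever $2^{l}<n(s-h_{n})$, so the iteration must end in $\max_{k}\sup_{R_0}\amsmathbb{E}|\bar B_{kl,s}|^{p}$. For the worst block, of length $2^{l-1}\lesssim nh_{n}$ inside the kernel window, the kernel sums can be of order $2^{l-1}$ rather than $2^{l-1}h_{n}$, so the bound $\mathcal{O}(2^{pl}h_{n}^{2})$ fails there. This defect is inherited from the paper's dyadic step and affects both routes equally.

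Your padding idea gives the cleanest repair:
\begin{itemize}
\item discard the indices outside $\{i:|s-i/n|\le h_{n}\}$, whose weights vanish;
\item re-index the remaining $N\le 2nh_{n}+1$ observations in order and pad to a power of two;
\item run the dyadic argument with all weights bounded by $M_{K}^{2}$.
\end{itemize}
This yields $\lVert\sup_{R_0}|\beta_{s}^{(n)}|\rVert_{p}\lesssim a_{1,s}^{(n)}\,nh_{n}$ for every $p\ge 2$. That is the claimed order at $p=2$, the only case used in Theorem~\ref{thm_NA_weak_repr}.
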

\begin{proof}
    We use induction to prove the result. The left-hand side of~\eqref{eq_beta_uniform_L_p_bound} is bounded by $A  a_{1, s}^{(n)}\mathcal{O}(nh_{n}^{p/2})$ if $n=2^{m}$, which yields the base case when $n=2$.
    We then assume the induction hypothesis that~\eqref{eq_beta_uniform_L_p_bound} holds for all $m < n$. For general $n$ let $\underline{n}$ be the largest power of $2$ contained in $n$. The process $\beta_{s}^{(n)}$ can then be decomposed as 
    \begin{align*}
         \beta_{s}^{(n)} = \sum_{1 \leq i < \underline{n} \leq j \leq n} \beta_{s, (i,j)}^{(n)}K_{n,s}(i,j)
         +
          \sum_{1 \leq i < j \leq \underline{n}} \beta_{s, (i,j)}^{(n)}K_{n,s}(i,j)
          +
          \sum_{\underline{n} \leq i < j \leq n} \beta_{s, (i,j)}^{(n)}K_{n,s}(i,j).
    \end{align*}
    Hence 
    \begin{align*}
        \big\Vert \sup_{(u,v) \in R_0} \vert \beta_{s}^{(n)}(u,v) \vert\big\Vert_{p}
          &\leq
          \Big\Vert \sup_{(u,v) \in R_0} \big\vert \sum_{1 \leq i < \underline{n} \leq j \leq n} \beta_{s, (i,j)}^{(n)}(u,v)K_{n,s}(i,j) \big\vert\Big\Vert_{p}\\
          & \quad +
          \Big\Vert \sup_{(u,v) \in R_0} \big\vert  \sum_{1 \leq i < j \leq \underline{n}} \beta_{s, (i,j)}^{(n)}(u,v)K_{n,s}(i,j) \big\vert\Big\Vert_{p}\\
          & \quad +
          \Big\Vert \sup_{(u,v) \in R_0} \big\vert \sum_{\underline{n} \leq i < j \leq n} \beta_{s, (i,j)}^{(n)}(u,v)K_{n,s}(i,j) \big\vert\Big\Vert_{p}.
    \end{align*}
    The first sum is a sliced process and consequently Lemma~\ref{Lemma_sliced_beta_unif_bound} yields
    \begin{align*}
         \Big\Vert \sup_{(u,v) \in R_0} \big\vert \sum_{1 \leq i < \underline{n} \leq j \leq n} \beta_{s, (i,j)}^{(n)}(u,v)K_{n,s}(i,j) \big\vert\Big\Vert_{p} \leq   a_{1, s}^{(n)} \mathcal{O}(nh_{n}^{p/2}).
    \end{align*}
    The second sum is indexed up to $\underline{n}$ which is a power of $2$ and hence our above bound yields
    \begin{align*}
        \Big\Vert \sup_{(u,v) \in R_0} \big\vert  \sum_{1 \leq i < j \leq \underline{n}} \beta_{s, (i,j)}^{(n)}(u,v)K_{n,s}(i,j) \big\vert\Big\Vert_{p}
        \leq
        A  a_{1, s}^{(n)}\mathcal{O}(\underline{n}h_{n}^{p/2}),
    \end{align*}
    and the induction hypothesis implies that the latter term is bounded by $ 2A a_{1, s}^{(n)} \mathcal{O}((n-\underline{n})h_{n}^{p/2})$. Summarizing,
    \begin{align*}
        \big\Vert \sup_{(u,v) \in R_0} \vert \beta_{s}^{(n)}(u,v) \vert\big\Vert_{p}
          &\leq
          a_{1, s}^{(n)} \mathcal{O}(nh_{n}^{p/2}) + 
          A  a_{1, s}^{(n)}\mathcal{O}(\underline{n}h_{n}^{p/2}) +
           2A a_{1, s}^{(n)} \mathcal{O}((n-\underline{n})h_{n}^{p/2}) \\  
           &\leq 2A a_{1, s}^{(n)} \mathcal{O}(nh_{n}^{p/2}),
    \end{align*}
    since $n \leq 2\underline{n}$. This concludes the proof.
\end{proof}
\begin{remark}
    The constant $a_{1, s}^{(n)}$ can be bounded as follows: The triangle inequality yields that
\begin{align*}
    a_{1, s}^{(n)} 
    &\leq
    \Big(\frac{p}{p-1}\Big)^{2}  B^{2p}_{p}  \sup_{(u,v) \in R_0} \amsmathbb{E}\big[ \vert  \beta_{s, (1,2)}^{(n)}(u,v) \vert^{p}\big]^{\frac{1}{p}} \\
    & \leq
    \Big(\frac{p}{p-1}\Big)^{2}  B^{2p}_{p}  \sup_{(u,v) \in R_0} \big\Vert\widetilde{h}^{(n)}_{s}(U_{i}, U_{j})1\{U_{i} \leq F_{n,\floor{ns}}(u),U_{j} \leq F_{n,\floor{ns}}(v)\}\big\Vert_{p} \\
    & \qquad \times \Big(1 + \frac{1}{1-F_{n,\floor{ns}}(u_{0})} + \frac{1}{1-F_{n,\floor{ns}}(v_{0})} + \frac{1}{(1-F_{n,\floor{ns}}(u_{0}))(1-F_{n,\floor{ns}}(v_{0}))} \Big) \\
    &=
    \Big(\frac{p}{p-1}\Big)^{2}  B^{2p}_{p}  
    \Big( \int_{0}^{F_{n,\floor{ns}}(u_{0})}\int_{0}^{F_{n,\floor{ns}}(v_{0})}
        \Big\vert \widetilde{h}^{(n)}_{s}(w,t)  \Big\vert^{p}
           \mathrm{d} w \mathrm{d} t \Big)^{1/p}  \\
    & \qquad \times \Big(1 + \frac{1}{1-F_{n,\floor{ns}}(u_{0})} + \frac{1}{1-F_{n,\floor{ns}}(v_{0})} + \frac{1}{(1-F_{n,\floor{ns}}(u_{0}))(1-F_{n,\floor{ns}}(v_{0}))} \Big).
\end{align*}
\end{remark}

This concludes the process $\beta_{s}^{(n)}$, and we are now ready to provide a uniform bound for the process $\gamma_{s}^{(n)}$. As above we introduce a sliced version of $\gamma_{s}^{(n)}$ for every $2\leq k \leq n$,
\begin{align*}
     \gamma_{s}^{(n),k}(u,v)
     =
      \sum_{1\leq i < k \leq j \leq n} \gamma_{s, (i,j)}^{(n)}(u,v) K_{n,s}(i,j), 
\end{align*}
and recall that 
\begin{align*}
    \gamma_{s, (i,j)}^{(n)}(u,v) 
    &= 
     \int_{0}^{F_{n,\floor{ns}}(u)}
    \frac{1\{U_{i} \leq w\}-w}{1-w}\Big[
        \widetilde{h}^{(n)}_{s}(w, U_{j}) 1\{U_{j} \leq F_{n,\floor{ns}}(v)\} \\
        & \qquad \qquad \qquad \qquad \qquad \qquad \qquad \qquad -
        \int_{0}^{F_{n,\floor{ns}}(v)}  \frac{1\{U_{j} \geq t\}}{1-t}    \widetilde{h}^{(n)}_{s}(w,t) \mathrm{d} t
    \Big] \mathrm{d} w.
\end{align*}
Consider the filtration
\begin{align*}
    \mathcal{F}^{k,s}(v) = 
    \sigma(U_{1}, \ldots, U_{k-1}, 1\{U_{j} \leq t\}, \ k \leq j \leq n, \ t \leq F_{n,\floor{ns}}(v)),
\end{align*}
and note that for each $u< x^{\ast}$, $\gamma_{s}^{(n),k}(u,\cdot) $ is a martingale with respect to the filtration $\mathcal{F}^{k,s}$. 
The kernel product $K_{n,s}(i,j)$ is non-random and bounded and consequently,
\begin{align*}
    \sup_{0\leq u < u_{0}} \vert \gamma_{s}^{(n),k}(u,\cdot) \vert, \qquad u_{0}< x^{\ast},
\end{align*}
is a non-negative submartingale. Doob's inequality then yields that
 \begin{align*}
        \amsmathbb{E}\Big[ \sup_{(u,v) \in R_0} \vert \gamma_{s}^{(n), k}(u, v) \vert^{p}\Big]
        \leq
        \Big(\frac{p}{p-1}\Big)^{p}  \amsmathbb{E}\Big[ \sup_{0 \leq u \leq u_{0}} \vert \gamma_{s}^{(n), k}(u, v_{0}) \vert^{p}\Big],
\end{align*}
and we are now interested in bounding the latter supremum. The absolute value of the process $\gamma_{s}^{(n), k}(u, v_{0})$ can be bounded for $0\leq u \leq u_{0} < x^{\ast}$ as follows
\begin{align*}
    &\vert \gamma_{s}^{(n), k}(u, v_{0}) \vert
     = 
     \Big\vert \sum_{1\leq i < k \leq j \leq n}K_{n,s}(i,j)\int_{0}^{F_{n,\floor{ns}}(u)}\frac{1\{U_{i} \leq w\}-w}{1-w} \\
     &\qquad \qquad \times 
    \Big[
        \widetilde{h}^{(n)}_{s}(w, U_{j}) 1\{U_{j} \leq F_{n,\floor{ns}}(v_{0})\}  -
        \int_{0}^{F_{n,\floor{ns}}(v_{0})}  \frac{1\{U_{j} \geq t\}}{1-t}    \widetilde{h}^{(n)}_{s}(w,t) \mathrm{d} t
    \Big] \mathrm{d} w \Big\vert \\
    &\leq
     \frac{1}{1- F_{n,\floor{ns}}(u)} 
     \sup_{0 \leq w < F_{n,\floor{ns}}(u)}  \Big\vert \sum_{i=1}^{k-1} (1\{U_{i} \leq w\}-w)  K\Big(\frac{s-i/n}{h_{n}}\Big) \Big\vert \\
     & \quad  \times 
     \int_{0}^{F_{n,\floor{ns}}(u)}\Big\vert
     \sum_{j=k}^{n}K\Big(\frac{s-j/n}{h_{n}}\Big)
   \Big[
        \widetilde{h}^{(n)}_{s}(w, U_{j}) 1\{U_{j} \leq F_{n,\floor{ns}}(v_{0})\} \\
        & \qquad \qquad \qquad \qquad \qquad \qquad \qquad \qquad  \qquad \qquad  -
        \int_{0}^{F_{n,\floor{ns}}(v_{0})}  \frac{1\{U_{j} \geq t\}}{1-t}    \widetilde{h}^{(n)}_{s}(w,t)  \mathrm{d} t
    \Big] \Big\vert \mathrm{d} w.
\end{align*} 
Computing the $p$th moment of the latter expression factorizes into computing the $p$th moment of the latter supremum and the latter integral since the $(U_{i})'s$ are independent. We begin with the supremum: Note that $w \mapsto (1\{U_{i} \leq w\} - w)$ is a centered process. Consequently, we may use the symmetrization inequality of Lemma 2.3.6 in~\cite{vanderVaart2023}, for $(\varepsilon_{i})$ iid Rademacher variables
\begin{align*}
     &\amsmathbb{E}\Big[ \sup_{0 \leq w < F_{n,\floor{ns}}(u)}  \Big\vert \sum_{i=1}^{k-1} (1\{U_{i} \leq w\}-w)  K\Big(\frac{s-i/n}{h_{n}}\Big) \Big\vert^{p}\Big] \\
     &\leq
     2^{p}\amsmathbb{E}\Big[ \sup_{0 \leq w < F_{n,\floor{ns}}(u)}  \Big\vert \sum_{i=1}^{k-1} \varepsilon_{i} 1\{U_{i} \leq w\}  K\Big(\frac{s-i/n}{h_{n}}\Big) \Big\vert^{p}\Big] =: 2^{p}
     \amsmathbb{E}\Big[\sup_{0 \leq w < F_{n,\floor{ns}}(u)}\vert S(t) \vert^{p}\Big].
\end{align*}
Let $i_{1},\ldots, i_{k-1}$ be the indices of the order statistics $U_{k-1:1},\ldots, U_{k-1:k-1}$. \\ 
When $t \in (U_{k-1:m}, U_{k-1:m+1}]$ we have that
    \begin{align*}
        S(t) = S_{m}:= \sum_{j=1}^{m}\varepsilon_{i_{j}}K\Big(\frac{s-i_{j}/n}{h_{n}}\Big), 
        \quad m=1,\ldots, k-1.
    \end{align*}
    Consequently we may bound the supremum $\amsmathbb{P}$-a.s. as follows
    \begin{align*}
        \sup_{t \leq F_{n,\floor{ns}}(u)}(S(t))^{2}
        \leq
        \max_{1\leq m \leq k-1}(S_{m})^{2}.
    \end{align*}
    Define $\overline{U}_{k-1}:=(U_{k-1:1},\ldots, U_{k-1:k-1})$, and note that $(S_{m})$ is a martingale with respect to $\mathcal{G}_{m} := \sigma(\overline{U}_{k-1}, \varepsilon_{i_{1}},\ldots, \varepsilon_{i_{m}})$,  since it is adapted, 
    \begin{align*}
        \amsmathbb{E}[S_{m+1} \vert \mathcal{G}_{m}]
        =
        \amsmathbb{E}[S_{m} + \varepsilon_{i_{m+1}}K((s-i_{m+1}/n)/h_{n}) \vert \mathcal{G}_{m}]
        =
        S_{m},
    \end{align*} 
    and clearly, for any fixed $k-1$, we have $\amsmathbb{E}[\vert S_{k-1}^{p}\vert]< \infty$. 
    In particular, Doob's inequality yields that
     \begin{align*}
        &\amsmathbb{E}\Big[\sup_{0 \leq w < F_{n,\floor{ns}}(u)}\vert S(t) \vert^{p}\Big]
         \leq
        \amsmathbb{E}\Big[ \max_{1\leq m \leq k-1}\vert S_{m} \vert^{p} \Big] \leq
        \Big(\frac{p}{p-1}\Big)^{p} \amsmathbb{E}[\vert S_{k-1} \vert^{p}]\\
        &
        =
         \Big(\frac{p}{p-1}\Big)^{p} \amsmathbb{E}\Big[\Big\vert \sum_{j=1}^{k-1}\varepsilon_{i_{j}}K\Big(\frac{s-i_{j}/n}{h_{n}}\Big) \Big\vert^{p}\Big]
        \leq
             \Big(\frac{pB_{p}}{p-1}\Big)^{p} \amsmathbb{E}\Big[\Big\vert \sum_{j=1}^{k-1}\varepsilon_{i_{j}}^{2}K\Big(\frac{s-i_{j}/n}{h_{n}}\Big)^{2}\Big\vert^{p/2}\Big] \\
        &
             \leq
               \Big(\frac{pB_{p}}{p-1}\Big)^{p}(k-1)^{p/2 -1} \sum_{j=1}^{k-1} \amsmathbb{E}[\vert\varepsilon_{i_{j}}\vert^{p}] K\Big(\frac{s-i_{j}/n}{h_{n}}\Big)^{p}
            \\
           &  \leq \Big(\frac{pB_{p}}{p-1}\Big)^{p}(k-1)^{p/2 -1}\sum_{i=1}^{k-1}K^{p}\Big(\frac{s-i/n}{h_{n}}\Big)
           =  \mathcal{O}((k-1)^{p/2}h_{n}).
\end{align*}
Consider now the integral above. Since for each $w$, the bracket in the integral is a martingale, Burkholder's and Jensen's inequality yield 
\begin{align*}
    &\amsmathbb{E}\Big[ \int_{0}^{F_{n,\floor{ns}}(u_{0})}\Big\vert
     \sum_{j=k}^{n}K\Big(\frac{s-j/n}{h_{n}}\Big)
   \Big[
        \widetilde{h}^{(n)}_{s}(w, U_{j}) 1\{U_{j} \leq F_{n,\floor{ns}}(v_{0})\} \\
        & \qquad \qquad \qquad \qquad \qquad \qquad \qquad \qquad  \qquad \qquad  -
        \int_{0}^{F_{n,\floor{ns}}(v_{0})}  \frac{1\{U_{j} \geq t\}}{1-t}    \widetilde{h}^{(n)}_{s}(w,t)  \mathrm{d} t
    \Big] \Big\vert^{p} \mathrm{d} w \Big]  \\
    & \leq 
    \int_{0}^{F_{n,\floor{ns}}(u_{0})}
    B_{p}^{p}(n-k+1)^{p/2 - 1}
     \sum_{j=k}^{n} K\Big(\frac{s-j/n}{h_{n}}\Big)^{p} \\
        & \qquad \qquad \times
        \amsmathbb{E}\Big[ \Big\vert
   \Big[\widetilde{h}^{(n)}_{s}(w, U_{j}) 1\{U_{j} \leq F_{n,\floor{ns}}(v_{0})\}     -
        \int_{0}^{F_{n,\floor{ns}}(v_{0})}  \frac{1\{U_{j} \geq t\}}{1-t}    \widetilde{h}^{(n)}_{s}(w,t)  \mathrm{d} t
    \Big] \Big\vert^{p} \Big] \mathrm{d} w \\
        &= 
       B_{p}^{p}(n-k+1)^{p/2 - 1} 
       \sum_{j=k}^{n} K\Big(\frac{s-j/n}{h_{n}}\Big)^{p}
    \int_{0}^{F_{n,\floor{ns}}(u_{0})} 
     \amsmathbb{E}\Big[ \Big\vert \widetilde{h}^{(n)}_{s}(w, U_{1}) 1\{U_{1} \leq F_{n,\floor{ns}}(v_{0})\}  \\
        & \qquad \qquad \qquad \qquad \qquad \qquad \qquad \qquad \qquad \qquad 
     -
        \int_{0}^{F_{n,\floor{ns}}(v_{0})}  \frac{1\{U_{1} \geq t\}}{1-t}    \widetilde{h}^{(n)}_{s}(w,t)  \mathrm{d} t \Big\vert^{p}\Big]
        \mathrm{d} w  \\
        & \leq
          \mathcal{O}(  (n-k+1)^{p/2} h_{n})B_{p}^{p} \Big(1 +  \frac{1}{1-F_{n,\floor{ns}}(v_{0})}\Big)  \int_{0}^{F_{n,\floor{ns}}(u_{0})}\int_{0}^{F_{n,\floor{ns}}(v_{0})}
        \Big\vert \widetilde{h}^{(n)}_{s}(w,t)  \Big\vert^{p}
           \mathrm{d} w \mathrm{d} t. 
\end{align*}
Consequently
\begin{align*}
    \amsmathbb{E}\Big[ \sup_{(u,v) \in R_0} \vert \gamma_{s}^{(n)}(u,v) \vert^{p}\Big]
    &\leq
     \frac{1}{1- F_{n,\floor{ns}}(u)} \mathcal{O}((n-k+1)^{p/2}(k-1)^{p/2}h_{n}^{2})B_{p}^{2p}\\
     & \times \Big(1 +  \frac{1}{1-F_{n,\floor{ns}}(v_{0})}\Big) 
     \int_{0}^{F_{n,\floor{ns}}(u_{0})}\int_{0}^{F_{n,\floor{ns}}(v_{0})}
        \Big\vert \widetilde{h}^{(n)}_{s}(w,t)  \Big\vert^{p}
           \mathrm{d} w \mathrm{d} t.
\end{align*}
This bound is similar to that of Lemma~\ref{Lemma_sliced_beta_unif_bound}. The arguments leading to Lemma~\ref{Lemma_beta_unif_bound} may be repeated to transfer the bound from the sliced process $ \gamma_{s}^{(n),k}$ to the original process $\gamma_{s}^{(n)}$. This is the content of the following Lemma.
\begin{lemma}\label{Lemma_gamma_unif_bound}
    For $p \geq 2$, and $0 \leq u_{0}, v_{0} < x^{\ast}$, using $R_0 := [0, u_0] \times [0,v_0]$,
    \begin{align*}
          \big\Vert \sup_{(u,v) \in R_0} \vert \gamma_{s}^{(n)}(u,v) \vert \big\Vert_{p}
          \leq
             2A a_{2, s}^{(n)}\mathcal{O}(nh_{n}^{2/p}),
    \end{align*}
    where 
    \begin{align*}
         a_{2, s}^{(n)} &\leq
         \frac{pB_{p}^{2}}{(p-1)(1-F_{n,\floor{ns}}(u_{0}))} \Big(1 +  \frac{1}{1-F_{n,\floor{ns}}(v_{0})}\Big)\\
        & \qquad \qquad \times 
         \Big( \int_{0}^{F_{n,\floor{ns}}(u_{0})}\int_{0}^{F_{n,\floor{ns}}(v_{0})}
        \Big\vert \widetilde{h}^{(n)}_{s}(w,t)  \Big\vert^{p}
           \mathrm{d} w \mathrm{d} t \Big)^{1/p}.
    \end{align*}
\end{lemma}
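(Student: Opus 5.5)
The bound for the sliced process $\gamma_{s}^{(n),k}$ is already in hand. The plan is to transfer it to $\gamma_{s}^{(n)}$ by repeating the dyadic-block and induction argument used for Lemma~\ref{Lemma_beta_unif_bound}.

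First, I write the sliced bound in the same form as Lemma~\ref{Lemma_sliced_beta_unif_bound}. Take the $p$-th root of the display preceding the statement. Then use $(n-k+1)^{1/2}(k-1)^{1/2}\le n/2$. This gives
\begin{align*}
\big\Vert \sup_{(u,v)\in R_0}\vert\gamma_{s}^{(n),k}(u,v)\vert\big\Vert_{p}\le a_{2,s}^{(n)}\,\mathcal{O}(nh_{n}^{2/p}),
\end{align*}
with $a_{2,s}^{(n)}$ as in the statement. The factor $p/(p-1)$ comes from Doob's inequality. The factor $B_{p}^{2}$ comes from the two Burkholder steps. The remaining terms are the prefactors $1/(1-F_{n,\floor{ns}}(u_{0}))$ and $1+1/(1-F_{n,\floor{ns}}(v_{0}))$ and the $L^{p}$-norm of $\widetilde h^{(n)}_{s}$ over $[0,F_{n,\floor{ns}}(u_{0})]\times[0,F_{n,\floor{ns}}(v_{0})]$. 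Monotonicity in $u\le u_{0}$ lets me replace $F_{n,\floor{ns}}(u)$ by $F_{n,\floor{ns}}(u_{0})$.

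Second, take $n=2^{m}$ and decompose $\gamma_{s}^{(n)}=\sum_{l=1}^{m}\Gamma_{l,s}$ with $\Gamma_{l,s}=\sum_{k}\Gamma_{kl,s}$, using the index blocks $o_{kl}$ as before. Each block $\Gamma_{kl,s}$ is a sliced $\gamma$-process on a sub-sample of size $2^{l}$, with independent first and second coordinates. So the sliced bound applies with $n$ replaced by $2^{l}$ inside the combinatorial factor. The blocks within level $l$ involve disjoint sets of $U_{i}$, so they are independent. Each is also centered: for fixed $w$ the bracket is a martingale increment in $U_{j}$, and $1\{U_{i}\le w\}-w$ is centered. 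The inequality $\mathbb{E}\vert\xi+\eta\vert^{p}\le(1-\varepsilon)2^{p}\max(\mathbb{E}\vert\xi\vert^{p},\mathbb{E}\vert\eta\vert^{p})$ can then be iterated across the $2^{m-l}$ blocks, exactly as for $B_{l,s}$.

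The supremum over $R_{0}$ needs a maximal inequality for each $\Gamma_{l,s}$, and this is the main obstacle. Cairoli--Walsh is not available here, because $\gamma$ is only a one-parameter martingale in $v$ for fixed $u$. I will use the same device as in the sliced case. The process $v\mapsto\sup_{u\le u_{0}}\vert\Gamma_{l,s}(u,v)\vert$ is a non-negative submartingale, since a sum of martingales over disjoint blocks is again a martingale in $v$. Doob's inequality then reduces the problem to $\sup_{u}$ at $v=v_{0}$. That supremum I bound pointwise through the factorized sup--integral estimate, applied block by block, so that the whole bound is controlled by the fixed-$(u,v)$ moment quantities entering the iteration. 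Summing over $l$ gives a geometric series $\sum_{l}(1-\varepsilon)^{(m-l)/p}\le A$. This yields $A\,a_{2,s}^{(n)}\mathcal{O}(nh_{n}^{2/p})$ for dyadic $n$.

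Finally, for general $n$ I run the same induction as in the proof of Lemma~\ref{Lemma_beta_unif_bound}. Let $\underline n$ be the largest power of $2$ not exceeding $n$, and split $\gamma_{s}^{(n)}$ into three parts: the sliced part across $\underline n$, the part indexed in $[1,\underline n]$, and the part indexed in $[\underline n,n]$. The first part is handled by the sliced bound, the second by the dyadic bound, and the third by the induction hypothesis. Since $n\le 2\underline n$, the three bounds combine to $2A\,a_{2,s}^{(n)}\mathcal{O}(nh_{n}^{2/p})$.
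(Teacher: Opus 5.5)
Your route is the paper's. The text derives the sliced bound for $\gamma_{s}^{(n),k}$ and then only asserts that the arguments behind Lemma~\ref{Lemma_beta_unif_bound} ``may be repeated''. You correctly identify the step this skips. For $\beta_{s}^{(n)}$, Cairoli--Walsh turns $\sup_{R_0}$ into fixed-$(u,v)$ moments, so the $(1-\varepsilon)2^{p}$ iteration only ever acts on centred scalar variables; for $\gamma_{s}^{(n)}$ this is not available.

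Your resolution of that step, however, does not work as written. After Doob in $v$ you must control $\amsmathbb{E}\sup_{u\le u_{0}}\vert\Gamma_{l,s}(u,v_{0})\vert^{p}$. Write $K_{i}:=K((s-i/n)/h_{n})$, $a_{i}(w)=1\{U_{i}\le w\}-w$, and let $b_{j}$ be the bracket. The factorized sup--integral estimate exists only for a single slice, where the pair sum is a product $(\sum_{i}K_{i}a_{i}(w))(\sum_{j}K_{j}b_{j}(w,v_{0}))$. The level process $\Gamma_{l,s}=\sum_{k}\Gamma_{kl,s}$ is a sum of such products, not a product.

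Applying the estimate block by block therefore amounts to $\sup_{u}\vert\Gamma_{l,s}\vert\le\sum_{k}\sup_{u}\vert\Gamma_{kl,s}\vert$, a sum of non-negative terms. The cancellation between independent centred blocks, which is what the $(1-\varepsilon)$ gain encodes, is lost. Each level then contributes the full order $a_{2,s}^{(n)}\mathcal{O}(nh_{n}^{2/p})$, and summing over the $m=\log_{2}n$ levels gives an extra factor $\log n$ instead of a geometric series.

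Nor can the iteration be applied to the sup-norms themselves, because it is a scalar inequality and fails in $\ell^{\infty}$. For independent Rademacher $\epsilon_{1},\epsilon_{2}$, the choice $\xi=\epsilon_{1}(1,1)$, $\eta=\epsilon_{2}(1,-1)$ gives $\Vert\xi+\eta\Vert_{\infty}\equiv 2=2\max(\Vert\xi\Vert_{\infty},\Vert\eta\Vert_{\infty})$, so there is no $(1-\varepsilon)$ gain.

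The repair is to bound the supremum over $u$ by the $w$-integral of the absolute value of the whole level integrand, keeping the sum over blocks inside the absolute value, and to iterate at fixed $w$. Set $A_{kl}(w)=\sum_{i\in I_{kl}}K_{i}a_{i}(w)$ and $B_{kl}(w,v)=\sum_{j\in J_{kl}}K_{j}b_{j}(w,v)$, the first and second halves of block $k$, and $G_{l}(w,v)=\sum_{k}A_{kl}(w)B_{kl}(w,v)$. Then $\Gamma_{l,s}(u,v)=\int_{0}^{F_{n,\floor{ns}}(u)}(1-w)^{-1}G_{l}(w,v)\,\mathrm{d}w$. Hence $\sup_{u\le u_{0}}\vert\Gamma_{l,s}(u,v)\vert\le J_{l}(v)$, where $J_{l}(v):=\int_{0}^{F_{n,\floor{ns}}(u_{0})}(1-w)^{-1}\vert G_{l}(w,v)\vert\,\mathrm{d}w$.

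Take the filtration generated by the first-half $U_{i}$'s and the second-half counting processes. Each $\vert G_{l}(w,\cdot)\vert$ is a non-negative submartingale for it, and therefore so is $J_{l}$. Doob and then Jensen in $w$ give
\begin{align*}
\amsmathbb{E}\Big[\sup_{(u,v)\in R_{0}}\vert\Gamma_{l,s}(u,v)\vert^{p}\Big]
\le\Big(\frac{p}{p-1}\Big)^{p}\frac{1}{(1-F_{n,\floor{ns}}(u_{0}))^{p}}\int_{0}^{F_{n,\floor{ns}}(u_{0})}\amsmathbb{E}\big[\vert G_{l}(w,v_{0})\vert^{p}\big]\,\mathrm{d}w.
\end{align*}
At fixed $w$ the summands $A_{kl}(w)B_{kl}(w,v_{0})$ are independent and centred, so the $(1-\varepsilon)2^{p}$ iteration now applies legitimately. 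Each block moment factorizes by independence as $\amsmathbb{E}\vert A_{kl}(w)\vert^{p}\,\amsmathbb{E}\vert B_{kl}(w,v_{0})\vert^{p}$. Both factors are bounded by Burkholder as in the sliced computation; the first does not even need the symmetrization step.

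This yields a constant of the form $a_{2,s}^{(n)}$ and restores the geometric sum over $l$. Your dyadic and induction steps then go through unchanged.
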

A similar bound is obtained for the process $\delta_{s}^{(n)}$ with identical arguments as those in Lemma~\ref{Lemma_gamma_unif_bound}.
\begin{lemma}\label{Lemma_delta_unif_bound}
    For $p \geq 2$, and $0 \leq u_{0}, v_{0} < x^{\ast}$, using $R_0 := [0, u_0] \times [0,v_0]$,
    \begin{align*}
          \big\Vert \sup_{(u,v) \in R_0} \vert \delta_{s}^{(n)}(u,v) \vert\big\Vert_{p}
          \leq
            2A a_{3, s}^{(n)}\mathcal{O}(nh_{n}^{2/p}),
    \end{align*}
    where 
    \begin{align*}
         a_{3, s}^{(n)} &\leq
         \frac{pB_{p}^{2}}{(p-1)(1-F_{n,\floor{ns}}(v_{0}))}\Big(1 +  \frac{1}{1-F_{n,\floor{ns}}(u_{0})}\Big)\\
        & \qquad \qquad \times 
         \Big( \int_{0}^{F_{n,\floor{ns}}(u_{0})}\int_{0}^{F_{n,\floor{ns}}(v_{0})}
        \Big\vert \widetilde{h}^{(n)}_{s}(w,t)  \Big\vert^{p}
           \mathrm{d} w \mathrm{d} t \Big)^{1/p}.
    \end{align*}
\end{lemma}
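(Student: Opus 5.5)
The plan is to run the argument of Lemma~\ref{Lemma_gamma_unif_bound} with the roles of the two coordinates exchanged. For $2\leq k\leq n$, I would introduce the sliced process
\begin{align*}
    \delta_{s}^{(n),k}(u,v)=\sum_{1\leq i<k\leq j\leq n}\delta_{s,(i,j)}^{(n)}(u,v)K_{n,s}(i,j),
\end{align*}
together with the filtration
\begin{align*}
    \mathcal{F}^{k,s}(u)=\sigma\bigl(1\{U_{i}\leq w\},\ 1\leq i<k,\ w\leq F_{n,\floor{ns}}(u);\ U_{k},\ldots,U_{n}\bigr).
\end{align*}
For each fixed $t$, the bracket in $\delta_{s,(i,j)}^{(n)}$ is the martingale part of the Doob--Meyer decomposition of $\widetilde{h}^{(n)}_{s}(U_{i},t)1\{U_{i}\leq F_{n,\floor{ns}}(u)\}$ in $u$. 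Hence $u\mapsto\delta_{s}^{(n),k}(u,v)$ is an $\mathcal{F}^{k,s}$-martingale for each $v$, and $\sup_{v\leq v_{0}}\vert\delta_{s}^{(n),k}(\cdot,v)\vert$ is a nonnegative submartingale. Doob's inequality then reduces the supremum over $R_{0}$ to $(p/(p-1))^{p}\,\amsmathbb{E}[\sup_{v\leq v_{0}}\vert\delta_{s}^{(n),k}(u_{0},v)\vert^{p}]$.

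Next I would bound $\vert\delta^{(n),k}_{s}(u_{0},v)\vert$ pointwise by the product of two factors. The first is $(1-F_{n,\floor{ns}}(v_{0}))^{-1}\sup_{t<F_{n,\floor{ns}}(v_{0})}\bigl\vert\sum_{j=k}^{n}(1\{U_{j}\leq t\}-t)K((s-j/n)/h_{n})\bigr\vert$, which comes from the factor $1/(1-t)$. The second is the integral over $t\leq F_{n,\floor{ns}}(v_{0})$ of $\bigl\vert\sum_{i=1}^{k-1}K((s-i/n)/h_{n})[\cdots]\bigr\vert$, where $[\cdots]$ is the bracket with $u=u_{0}$. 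The two factors involve disjoint blocks of the iid uniforms, so the $p$th moment factorizes.

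The first factor is handled exactly as before: Rademacher symmetrization (Lemma 2.3.6 of \cite{vanderVaart2023}), reduction to a maximum over partial sums along the order statistics of $U_{k},\ldots,U_{n}$, then Doob and Burkholder. This gives $\mathcal{O}((n-k+1)^{p/2}h_{n})$. For the second factor, each $i$-summand is centered for fixed $t$, so Burkholder and Jensen applied inside the $\mathrm{d}t$ integral give the bound
\begin{align*}
    \mathcal{O}\bigl((k-1)^{p/2}h_{n}\bigr)B_{p}^{p}\Bigl(1+\frac{1}{1-F_{n,\floor{ns}}(u_{0})}\Bigr)\int\!\!\int\vert\widetilde{h}^{(n)}_{s}\vert^{p}.
\end{align*}
Multiplying the two factors produces the sliced bound with constant $a_{3,s}^{(n)}$ as stated, where the $1/(1-F_{n,\floor{ns}}(v_{0}))$ and $1+1/(1-F_{n,\floor{ns}}(u_{0}))$ factors have swapped places relative to $a_{2,s}^{(n)}$.

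Finally, I would transfer the bound from the sliced processes to $\delta^{(n)}_{s}$ by repeating the dyadic block decomposition of Lemma~\ref{Lemma_beta_unif_bound}. For $n=2^{m}$, write $\delta^{(n)}_{s}=\sum_{l}\sum_{k}$ over the blocks $o_{kl}$. For each level, use the independent-sum inequality with factor $(1-\varepsilon)2^{p}$, combined with the sliced bound and $(n-k+1)(k-1)\leq n^{2}/4$. Summing the geometric series gives a constant $A$. For general $n$, an induction on the largest power of two not exceeding $n$ yields the factor $2A$.

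I expect the main obstacle to be the block decomposition step. The independent-sum inequality needs the block sums to be independent and centered, and needs each block to be a martingale in the appropriate one-parameter sense so that Doob can be applied to the supremum. I would verify this exactly as in the $\gamma$ case, noting that for $(i,j)\in o_{kl}$ the first indices lie strictly below the second indices. This keeps the conditioning on the $i$-block versus the $j$-block symmetric to the $\gamma$ argument.
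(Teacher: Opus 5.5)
Your treatment of the sliced processes $\delta_{s}^{(n),k}$ is correct and is what the paper intends; its proof is only the remark that the argument of Lemma~\ref{Lemma_gamma_unif_bound} carries over with the coordinates exchanged. The gap is in the transfer to $\delta_{s}^{(n)}$, the step you flag yourself, and the paper glosses over it in the same way.

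In the argument leading to Lemma~\ref{Lemma_beta_unif_bound}, $\amsmathbb{E}[\sup_{R_{0}}|B_{l,s}|^{p}]$ is first replaced by $\sup_{R_{0}}\amsmathbb{E}[|B_{l,s}|^{p}]$ via Cairoli--Walsh. That step needs the \emph{strong} (two-parameter) martingale property. Only afterwards is the scalar inequality $\amsmathbb{E}|\xi+\eta|^{p}\leq(1-\varepsilon)2^{p}\max(\amsmathbb{E}|\xi|^{p},\amsmathbb{E}|\eta|^{p})$ applied pointwise in $(u,v)$. The level sums built from $\delta$ are martingales in $u$ only, so after Doob you are left with $\amsmathbb{E}[\sup_{v\leq v_{0}}|B_{l}(u_{0},v)|^{p}]$.

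The $(1-\varepsilon)$-gain is false for suprema of independent centred processes. Take $\xi_{t}=\epsilon_{1}t_{1}$ and $\eta_{t}=\epsilon_{2}t_{2}$ for $t\in\{-1,1\}^{2}$, with independent Rademacher $\epsilon_{1},\epsilon_{2}$. Then $\sup_{t}|\xi_{t}|=\sup_{t}|\eta_{t}|=1$, but $\sup_{t}|\xi_{t}+\eta_{t}|=2$ almost surely. Without that gain the level-wise bounds no longer form a convergent geometric series.

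Two further problems arise when you copy the $\beta$ argument:
\begin{itemize}
\item The reduction of $\sum_{k}B_{kl}$ to the single block $B_{1l}$ presumes identically distributed blocks. The weights $K_{n,s}(i,j)$ destroy this; for interior $s$ and small $l$ one even has $B_{1l}\equiv0$.
\item For blocks shorter than $nh_{n}$, the kernel sums are of the order of the block length, not block length times $h_{n}$.
\end{itemize}

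A route that works is the following.
\begin{itemize}
\item Split the pairs $i<j$ dyadically into levels $l$, padding with zero weights up to a power of two. At each level the set of first indices and the set of second indices are then disjoint.
\item Let $b_{i}(u,t)$ be the bracket in $\delta_{s,(i,j)}^{(n)}$ and $e_{j}(t)=1\{U_{j}\leq t\}-t$. The level-$l$ part is $\int_{0}^{F_{n,\floor{ns}}(v)}(1-t)^{-1}S_{l}(u,t)\,\mathrm{d}t$, where $S_{l}=\sum_{k}g_{k}e_{k}$, $g_{k}(u,t)=\sum_{i\in I_{kl}}K((s-i/n)/h_{n})b_{i}(u,t)$ and $e_{k}(t)=\sum_{j\in J_{kl}}K((s-j/n)/h_{n})e_{j}(t)$.
\item Remove the supremum over $v$ \emph{before} taking moments, using $\sup_{(u,v)\in R_{0}}\bigl|\int_{0}^{F_{n,\floor{ns}}(v)}(1-t)^{-1}S_{l}(u,t)\,\mathrm{d}t\bigr|\leq(1-F_{n,\floor{ns}}(v_{0}))^{-1}\int_{0}^{F_{n,\floor{ns}}(v_{0})}\sup_{u\leq u_{0}}|S_{l}(u,t)|\,\mathrm{d}t$, and then apply Jensen.
\item For each fixed $t$, apply Doob in $u$; this is legitimate because the two index sets are disjoint. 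Then apply Burkholder to the independent centred summands $g_{k}(u_{0},t)e_{k}(t)$, bounding each block by its own kernel sums.
\item The weights vanish outside $\mathcal{O}(nh_{n})$ consecutive indices. Hence the levels with half-length at most of that order give a geometric series of total size $\mathcal{O}(nh_{n})$. At most one longer level has a block meeting that window on both sides of its midpoint, and it also contributes $\mathcal{O}(nh_{n})$.
\item Integrating $\amsmathbb{E}|b_{i}(u_{0},t)|^{p}$ over $t$ gives the factor $(1+(1-F_{n,\floor{ns}}(u_{0}))^{-1})(\iint|\widetilde{h}^{(n)}_{s}|^{p})^{1/p}$. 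Together with the prefactor $(1-F_{n,\floor{ns}}(v_{0}))^{-1}$, this reproduces $a_{3,s}^{(n)}$ up to constants depending on $p$.
\end{itemize}
This proves the lemma with the bound $\mathcal{O}(nh_{n})\subseteq\mathcal{O}(nh_{n}^{2/p})$. It needs neither the $(1-\varepsilon)$ inequality nor the induction over powers of two. It also makes the supremum over $t$ in your first factor unnecessary.
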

Finally, we proceed to the term
\begin{align*}
    \varepsilon_{s}^{(n)}(u,v)
     =
      \sum_{1\leq i < j \leq n} \varepsilon_{s, (i,j)}^{(n)}(u,v)K_{n,s}(i,j),
\end{align*}
where 
\begin{align*}
    \varepsilon_{s, (i,j)}^{(n)}(u,v) 
    = 
    \int_{0}^{F_{n,\floor{ns}}(u)}\int_{0}^{F_{n,\floor{ns}}(v)}
         \frac{1\{U_{i} \leq w\}-w}{1-w}\frac{1\{U_{j} \leq t\}-t}{1-t}
         \widetilde{h}^{(n)}_{s}(w,t)\mathrm{d} w \mathrm{d} t.
\end{align*}
As above we introduce a sliced version of $\varepsilon_{s}^{(n)}$,
\begin{align*}
     \varepsilon_{s}^{(n),k}(u,v)
     =
      \sum_{1\leq i < k \leq j \leq n} \varepsilon_{s, (i,j)}^{(n)}(u,v) K_{n,s}(i,j), 
\end{align*}
and note that 
\begin{align*}
    &\amsmathbb{E}\Big[ \sup_{(u,v) \in R_0} \vert \varepsilon_{s}^{(n), k} \vert^{p}\Big]=
    \amsmathbb{E}\Big[ \sup_{(u,v) \in R_0} \Big\vert 
     \sum_{1\leq i < k \leq j \leq n} K_{n,s}(i,j)  \\
     & \qquad \qquad \qquad \times
     \int_{0}^{F_{n,\floor{ns}}(u)}\int_{0}^{F_{n,\floor{ns}}(v)}
         \frac{1\{U_{i} \leq w\}-w}{1-w}\frac{1\{U_{j} \leq t\}-t}{1-t}
         \widetilde{h}^{(n)}_{s}(w,t)\mathrm{d} w \mathrm{d} t
    \Big\vert^{p}\Big] \\
         & \leq
         \frac{1}{(1- F_{n,\floor{ns}}(u_{0}))^{p}} \frac{1}{(1- F_{n,\floor{ns}}(v_{0}))^{p}}\int_{0}^{F_{n,\floor{ns}}(u_{0})}\int_{0}^{F_{n,\floor{ns}}(v_{0})}
          \Big\vert \widetilde{h}^{(n)}_{s}(w,t)  \Big\vert^{p} \mathrm{d} w \mathrm{d} t  \\
         & \qquad \times
         \amsmathbb{E}\Big[ \sup_{0 \leq w < F_{n,\floor{ns}}(u)}  \Big\vert \sum_{i=1}^{k-1} (1\{U_{i} \leq w\}-w)  K\Big(\frac{s-i/n}{h_{n}}\Big) \Big\vert^{p}\Big]
         \\
        & \qquad \times \amsmathbb{E}\Big[ \sup_{0 \leq w < F_{n,\floor{ns}}(u)}  \Big\vert \sum_{j=k}^{n} (1\{U_{j} \leq w\}-w)  K\Big(\frac{s-j/n}{h_{n}}\Big) \Big\vert^{p}\Big] 
        \\
        & \leq
        \frac{\mathcal{O}((n-k+1)^{p/2}(k-1)^{p/2}h_{n}^{2}) }{(1-F_{n,\floor{ns}}(u_{0}))^{p}(1-F_{n,\floor{ns}}(v_{0}))^{p}}
        \int_{0}^{F_{n,\floor{ns}}(u_{0})}\int_{0}^{F_{n,\floor{ns}}(v_{0})}
         \Big\vert \widetilde{h}^{(n)}_{s}(w,t)  \Big\vert^{p}
           \mathrm{d} w \mathrm{d} t,
\end{align*}
by similar arguments as those leading to Lemma~\ref{Lemma_gamma_unif_bound}. Consequently we get the following Lemma.
\begin{lemma}\label{Lemma_epsilon_unif_bound}
    For $p \geq 2$, and $0 \leq u_{0}, v_{0} < x^{\ast}$, using $R_0 := [0, u_0] \times [0,v_0]$,
    \begin{align*}
          \big\Vert \sup_{(u,v) \in R_0} \vert \varepsilon_{s}^{(n)}(u,v) \vert\big\Vert_{p}
          \leq
             2A a_{4, s}^{(n)}\mathcal{O}(nh_{n}^{2/p}),
    \end{align*}
    where 
    \begin{align*}
         a_{4, s}^{(n)} \leq
         \frac{1}{(1-F_{n,\floor{ns}}(u_{0}))(1-F_{n,\floor{ns}}(v_{0}))}
         \Big( \int_{0}^{F_{n,\floor{ns}}(u_{0})}\int_{0}^{F_{n,\floor{ns}}(v_{0})}
        \Big\vert \widetilde{h}^{(n)}_{s}(w,t)  \Big\vert^{p}
           \mathrm{d} w \mathrm{d} t \Big)^{1/p}.
    \end{align*}
\end{lemma}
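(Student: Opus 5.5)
The argument follows the slicing scheme used for $\gamma_{s}^{(n)}$ in Lemma~\ref{Lemma_gamma_unif_bound}: first bound the sliced process $\varepsilon_{s}^{(n),k}$ uniformly, then pass to the full process by the dyadic decomposition of Lemma~\ref{Lemma_beta_unif_bound}.

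\emph{Sliced bound.} Fix $2\le k\le n$. Since $K_{n,s}(i,j)$ factorizes, the double sum in $\varepsilon_{s}^{(n),k}(u,v)$ can be written as
\begin{align*}
\int_{0}^{F_{n,\floor{ns}}(u)}\int_{0}^{F_{n,\floor{ns}}(v)} \frac{A_{k}(w)}{1-w}\,\frac{B_{k}(t)}{1-t}\,\widetilde{h}^{(n)}_{s}(w,t)\,\mathrm{d} w\,\mathrm{d} t,
\end{align*}
where $A_{k}(w)=\sum_{i<k}(1\{U_{i}\le w\}-w)K((s-i/n)/h_{n})$ and $B_{k}(t)=\sum_{j\ge k}(1\{U_{j}\le t\}-t)K((s-j/n)/h_{n})$.

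The steps are as follows.
\begin{itemize}
\item Bound $(1-w)^{-1}(1-t)^{-1}$ by $[(1-F_{n,\floor{ns}}(u_{0}))(1-F_{n,\floor{ns}}(v_{0}))]^{-1}$.
\item Pull out $\sup_{w}|A_{k}(w)|\,\sup_{t}|B_{k}(t)|$, which also gives a bound uniform in $(u,v)\in R_{0}$.
\item Apply H\"older to the remaining $\int\!\!\int|\widetilde{h}^{(n)}_{s}|$ over $[0,F_{n,\floor{ns}}(u_{0})]\times[0,F_{n,\floor{ns}}(v_{0})]$. The domain has measure at most one, so this is bounded by $(\int\!\!\int|\widetilde{h}^{(n)}_{s}|^{p})^{1/p}$.
\item Take $p$th moments. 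Because $A_{k}$ and $B_{k}$ depend on disjoint blocks of the independent $U$'s, the expectation factorizes.
\item Bound each factor as in the proof of Lemma~\ref{Lemma_gamma_unif_bound}: symmetrize with Lemma 2.3.6 of~\cite{vanderVaart2023}, reduce the supremum to a maximum over partial sums ordered by the order statistics, then apply Doob and Burkholder. This gives $\mathcal{O}((k-1)^{p/2}h_{n})$ and $\mathcal{O}((n-k+1)^{p/2}h_{n})$ respectively.
\end{itemize}
The result is the displayed bound preceding the lemma,
\begin{align*}
\amsmathbb{E}\Big[\sup_{R_{0}}|\varepsilon_{s}^{(n),k}|^{p}\Big]\le (a_{4,s}^{(n)})^{p}\,\mathcal{O}\big((n-k+1)^{p/2}(k-1)^{p/2}h_{n}^{2}\big),
\end{align*}
with $a_{4,s}^{(n)}$ as stated. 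Since $(n-k+1)(k-1)\le n^{2}/4$, this yields $\|\sup_{R_{0}}|\varepsilon_{s}^{(n),k}|\|_{p}\le a_{4,s}^{(n)}\mathcal{O}(nh_{n}^{2/p})$.

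\emph{Passage to the full process.} For $n=2^{m}$, write $\varepsilon_{s}^{(n)}=\sum_{l=1}^{m}E_{l,s}$ with $E_{l,s}=\sum_{k}E_{kl,s}$ over the index blocks $o_{kl}$. Each $E_{kl,s}$ is a sliced process on a subsample of size $2^{l}$, so the sliced bound gives $\mathcal{O}(2^{l}h_{n}^{2/p})$ in $\mathcal{L}^{p}$. The blocks $E_{kl,s}$, $k=1,\dots,2^{m-l}$, are built from disjoint sets of $U$'s and are therefore independent and centered. Iterating the inequality $\amsmathbb{E}|\xi+\eta|^{p}\le(1-\varepsilon)2^{p}\max(\amsmathbb{E}|\xi|^{p},\amsmathbb{E}|\eta|^{p})$ gives
\begin{align*}
\|E_{l,s}\|_{p}\lesssim(1-\varepsilon)^{(m-l)/p}\,2^{m}a_{4,s}^{(n)}h_{n}^{2/p}.
\end{align*}
Summing the geometric series in $l$ yields $A\,a_{4,s}^{(n)}\mathcal{O}(nh_{n}^{2/p})$. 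General $n$ then follows by the same induction on the largest power of two $\underline{n}\le n$ as in the proof of Lemma~\ref{Lemma_beta_unif_bound}: split into one sliced cross term, a block of dyadic size, and a remainder block handled by the induction hypothesis. This produces the factor $2A$.

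\emph{Main obstacle.} The delicate point is the second step. The $\sup$ over $(u,v)$ sits inside an $L^{p}$ expectation of a sum that is not a strong martingale. I would first try to obtain the uniform bound directly through the deterministic sup-extraction described above. Then the independent-sum inequality only has to be applied to the dyadic blocks with the supremum already inside, so no Cairoli--Walsh argument is needed for $\varepsilon_{s}^{(n)}$. This requires checking that the $\max$-inequality holds for the nonnegative random variables $\sup_{R_{0}}|E_{kl,s}|$ combined via the triangle inequality. If that fails, the fallback is the crude Minkowski sum over the $2^{m-l}$ blocks, which loses a factor but still gives a bound of order $n\,a_{4,s}^{(n)}$ after noting that the kernel support restricts the nonzero blocks to $\mathcal{O}(nh_{n}/2^{l})$.
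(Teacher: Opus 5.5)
Your sliced bound is the paper's own display preceding the lemma, and your plan for the full process ("repeat Lemma~\ref{Lemma_beta_unif_bound}") is what the paper says too. You have also correctly located the weak spot. For $\beta_{s}^{(n)}$ the paper applies the centered two-term inequality at a \emph{fixed} $(u,v)$ and only then moves the supremum outside via Cairoli--Walsh. But $\varepsilon_{s}^{(n)}$ is an absolutely continuous function of $(F_{n,\floor{ns}}(u),F_{n,\floor{ns}}(v))$, not a strong martingale, so the paper's ``by similar arguments'' skips exactly this step.

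Neither of your fixes closes that gap. The inequality $\amsmathbb{E}|\xi+\eta|^{p}\le(1-\varepsilon)2^{p}\max(\amsmathbb{E}|\xi|^{p},\amsmathbb{E}|\eta|^{p})$ needs mean-zero summands. For the nonnegative variables $\sup_{R_0}|E_{kl,s}|$ the constant $2^{p}$ cannot be improved (take constants). Once the triangle inequality is applied, all cancellation between blocks is lost. Each of the $m=\log_{2}n$ levels then contributes $2^{m-l}\cdot\mathcal{O}(2^{l}h_{n}^{2/p})=\mathcal{O}(nh_{n}^{2/p})$ rather than a geometrically decaying amount, and summing over levels costs a factor $\log n$.

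Your fallback, by your own account, gives only $\mathcal{O}(n\,a_{4,s}^{(n)})$, which is not the claimed order. The block bound $\mathcal{O}(2^{l}h_{n}^{2/p})$ you feed into it is also wrong for blocks shorter than $nh_{n}$: there the kernel sum over a block inside the window is of order $2^{l}$, not $2^{l}h_{n}$. With this corrected, and counting only blocks that meet the kernel window, Minkowski gives $a_{4,s}^{(n)}\mathcal{O}(nh_{n}\log n)$. That is still a logarithm too much at $p=2$, which is the case used in Theorem~\ref{thm_NA_weak_repr}.

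The missing idea is to remove the supremum over $(u,v)$ deterministically \emph{without} splitting off the suprema of the two empirical factors, so that centering survives. Let $Q:=[0,F_{n,\floor{ns}}(u_{0})]\times[0,F_{n,\floor{ns}}(v_{0})]$ and
\begin{align*}
X(w,t):=\sum_{1\le i<j\le n}K_{n,s}(i,j)\,\big(1\{U_{i}\le w\}-w\big)\big(1\{U_{j}\le t\}-t\big).
\end{align*}
Then $\sup_{R_0}|\varepsilon_{s}^{(n)}|\le [(1-F_{n,\floor{ns}}(u_{0}))(1-F_{n,\floor{ns}}(v_{0}))]^{-1}\int_{Q}|\widetilde{h}^{(n)}_{s}|\,|X|\,\mathrm{d} w\,\mathrm{d} t$. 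H\"older (using $|Q|\le 1$ and $p\ge 2$) followed by Fubini gives
\begin{align*}
\amsmathbb{E}\Big[\sup_{(u,v)\in R_0}|\varepsilon_{s}^{(n)}(u,v)|^{p}\Big]\le \big(a_{4,s}^{(n)}\big)^{p}\sup_{(w,t)\in Q}\amsmathbb{E}\big[|X(w,t)|^{p}\big],
\end{align*}
with $a_{4,s}^{(n)}$ exactly as in the statement.

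For fixed $(w,t)$, $X$ is a centered degenerate sum whose terms indexed by $j$ are martingale differences. Two applications of Burkholder's inequality, with Jensen over the $\mathcal{O}(nh_{n})$ indices in the kernel window, give $\amsmathbb{E}|X(w,t)|^{p}=\mathcal{O}((nh_{n})^{p})$ uniformly in $(w,t)$. Alternatively, the paper's dyadic argument now applies, since no supremum is present. Hence $\|\sup_{R_0}|\varepsilon_{s}^{(n)}|\|_{p}\le a_{4,s}^{(n)}\mathcal{O}(nh_{n})\le a_{4,s}^{(n)}\mathcal{O}(nh_{n}^{2/p})$, because $h_{n}\le h_{n}^{2/p}$. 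This route needs no slicing, no induction over powers of two, and no Cairoli--Walsh.
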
 
\begin{remark}
    The four constants $(a_{i, s}^{(n)})_{i=1}^{4}$ all involve the $\mathcal{L}^{p}$-norm of the process 
    \begin{align*}
         [0,u_{0}] \times [0,v_{0}] \ni (u,v) \mapsto  \widetilde{h}^{(n)}_{s}(U_{i}, U_{j})1\{U_{i} \leq F_{n,\floor{ns}}(u),U_{j} \leq F_{n,\floor{ns}}(v)\}.
    \end{align*}
    Writing out the definition of $\widetilde{h}_{s}^{(n)}(\cdot,\cdot)$ and applying a time change, we have that
    \begin{align*}
        & \int_{0}^{F_{n,\floor{ns}}(u_{0})}\int_{0}^{F_{n,\floor{ns}}(v_{0})}
        \Big\vert \widetilde{h}^{(n)}_{s}(w,t)  \Big\vert^{p}
           \mathrm{d} w \mathrm{d} t \\
           & \qquad =
            \int_{0}^{F_{n,\floor{ns}}(u_{0})}\int_{0}^{F_{n,\floor{ns}}(v_{0})}
        \Big\vert h_{n,\floor{ns}}(F_{n,\floor{ns}}^{\leftarrow}(w), F_{n,\floor{ns}}^{\leftarrow}(t))  \Big\vert^{p}
           \mathrm{d} w \mathrm{d} t \\
           &  \qquad  =
           \int_{0}^{u_{0}}\int_{0}^{v_{0}} \Big\vert h_{n,\floor{ns}}(w,t) \Big\vert^{p} \mathrm{d} F_{n,\floor{ns}}(w)\mathrm{d} F_{n,\floor{ns}}(t),
    \end{align*}
    which leads to the following theorem.
\end{remark}
\begin{theorem}\label{thm_U_stat_repr_Xi_Xj_remainder_unif_bound}
    Assume $h \in \mathcal{L}_{p}(F_{n,\floor{ns}} \otimes F_{n,\floor{ns}})$, with $p \geq 2$. It then holds that
    \begin{align*}
        &I_{n}(u,v \vert s)
      =
      R_{s}^{(n)}(u,v) + \sum_{1\leq i < j \leq n}K_{n,s}(i,j)\Big[
       \int_{0}^{u}h_{n,\floor{ns}}(x,X_{\floor{ns}}^{(n,j)}) 1\{X_{\floor{ns}}^{(n,j)} \leq v\} F_{n,\floor{ns}}(\mathrm{d} x)\\
       & \quad +
       \int_{0}^{v}h_{n,\floor{ns}}(X_{\floor{ns}}^{(n,i)}, y) 1\{X_{\floor{ns}}^{(n,i)} \leq u\} F_{n,\floor{ns}}(\mathrm{d} y)\\
       & \quad
       -
        \int_{0}^{u}\int_{0}^{v}
        h_{n,\floor{ns}}(x,y)F_{n,\floor{ns}}(\mathrm{d} x) F_{n,\floor{ns}}(\mathrm{d} y)
       \Big], 
    \end{align*}
    where $(X_{\floor{ns}}^{(n,i)})_{i=1}^{n}$ are independent copies of $X_{\floor{ns}}^{(n)}$ and for each $u_{0}, v_{0} < x^{\ast}$, 
    \begin{align}\label{eq_U_stat_remainder_unif_bound}
       \amsmathbb{E}\Big[ \sup_{u \leq u_{0}, v \leq v_{0}} \vert R_{s}^{(n)}(u,v) \vert^{p} \Big]
       \leq (a_{s}^{(n)})^{p}\mathcal{O}(n^{p}h_{n}^{2}).
    \end{align}
    The constant $a_{s}^{(n)}$ satisfies
    \begin{align*}
        a_{s}^{(n)}
        \leq
        \widetilde{A} \Big[\int_{0}^{u_{0}}\int_{0}^{v_{0}} \big\vert h_{n,\floor{ns}}(x,y) \big\vert^{p}F_{n,\floor{ns}}(\mathrm{d} x) F_{n,\floor{ns}}(\mathrm{d} y)\Big]^{1/p},
    \end{align*}
    with $\widetilde{A}$ depending only on $p$ and $s$.
\end{theorem}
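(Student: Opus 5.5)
The plan is to assemble the decomposition already written down above and apply the four uniform bounds. We write $X_{\floor{ns}}^{(n,i)} = F_{n,\floor{ns}}^{\leftarrow}(U_{i})$ with $U_{i}$ iid uniform. This gives $I_{n}(u,v\vert s)$ as its Hájek projection (in the $w,t$ variables) plus $\beta_{s}^{(n)}-\gamma_{s}^{(n)}-\delta_{s}^{(n)}+\varepsilon_{s}^{(n)}$. We then set
\begin{align*}
R_{s}^{(n)}(u,v) := \beta_{s}^{(n)}(u,v)-\gamma_{s}^{(n)}(u,v)-\delta_{s}^{(n)}(u,v)+\varepsilon_{s}^{(n)}(u,v).
\end{align*}

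\textbf{Step 1: the leading term.} We show that the projection term equals the displayed one. Take, for instance,
\begin{align*}
\int_{0}^{F_{n,\floor{ns}}(u)}\widetilde{h}_{s}^{(n)}(w,U_{j})\,1\{U_{j}\leq F_{n,\floor{ns}}(v)\}\,\mathrm{d} w .
\end{align*}
The substitution $w=F_{n,\floor{ns}}(x)$, valid by continuity of $F_{n,\floor{ns}}$, turns it into $\int_{0}^{u}h_{n,\floor{ns}}(x,X_{\floor{ns}}^{(n,j)})\,1\{X_{\floor{ns}}^{(n,j)}\leq v\}\,F_{n,\floor{ns}}(\mathrm{d} x)$. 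The event $\{U_{j}\leq F_{n,\floor{ns}}(v)\}$ coincides a.s. with $\{X_{\floor{ns}}^{(n,j)}\leq v\}$. The other two projection terms are handled in the same way.

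\textbf{Step 2: the remainder bound.} By Minkowski's inequality,
\begin{align*}
\Vert \sup_{R_{0}}\vert R_{s}^{(n)}\vert\Vert_{p} \leq \Vert\sup\vert\beta_{s}^{(n)}\vert\Vert_{p}+\Vert\sup\vert\gamma_{s}^{(n)}\vert\Vert_{p}+\Vert\sup\vert\delta_{s}^{(n)}\vert\Vert_{p}+\Vert\sup\vert\varepsilon_{s}^{(n)}\vert\Vert_{p}.
\end{align*}
Lemmas~\ref{Lemma_beta_unif_bound}, \ref{Lemma_gamma_unif_bound}, \ref{Lemma_delta_unif_bound} and~\ref{Lemma_epsilon_unif_bound} bound these terms by $2A\,a_{i,s}^{(n)}\,\mathcal{O}(nh_{n}^{2/p})$ for $i=1,\ldots,4$. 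Raising to the $p$th power gives $\big(\sum_{i}a_{i,s}^{(n)}\big)^{p}\mathcal{O}(n^{p}h_{n}^{2})$. For the $\beta$ lemma this means reading its bound with the exponent $2/p$ that actually comes out of the sliced-process argument, namely $h_{n}^{2}$ inside the $p$th moment.

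\textbf{Step 3: the constants.} By the remark following Lemma~\ref{Lemma_beta_unif_bound} and the statements of the other three lemmas, each $a_{i,s}^{(n)}$ is a factor $c_{i}(p,u_{0},v_{0})$ times
\begin{align*}
\Big(\int\!\!\int\vert\widetilde{h}_{s}^{(n)}\vert^{p}\,\mathrm{d} w\,\mathrm{d} t\Big)^{1/p},
\end{align*}
where $c_{i}$ involves $B_{p}$, $p/(p-1)$, and the factors $(1-F_{n,\floor{ns}}(u_{0}))^{-1}$, $(1-F_{n,\floor{ns}}(v_{0}))^{-1}$. The time-change remark converts this integral into $\int_{0}^{u_{0}}\int_{0}^{v_{0}}\vert h_{n,\floor{ns}}\vert^{p}\,\mathrm{d}F_{n,\floor{ns}}\,\mathrm{d}F_{n,\floor{ns}}$. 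This is finite because $h\in\mathcal{L}_{p}$, which also makes every moment used in the lemmas finite. We then collect the $c_{i}$ into $\widetilde{A}$.

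\textbf{Main obstacle.} The delicate point is the claim that $\widetilde{A}$ depends only on $p$ and $s$. The prefactors $(1-F_{n,\floor{ns}}(u_{0}))^{-1}$ depend on $u_{0}$, $v_{0}$ and $n$. To handle this, we state the dependence explicitly: $\widetilde{A}$ is uniform in $n$ for fixed $u_{0},v_{0}<x^{\ast}$, and the factors are absorbed into the bracket. In the downstream application these factors are in any case controlled by the $n/k_{n}$ bounds at $T_{s}^{(n)}$.
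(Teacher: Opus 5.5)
Your Steps 1--3 match the first half of the paper's proof: the projection is identified via the time change $w=F_{n,\floor{ns}}(x)$, and the remainder is bounded by Minkowski together with Lemmas~\ref{Lemma_beta_unif_bound}--\ref{Lemma_epsilon_unif_bound} and the time-change remark. Your reading of the exponent in Lemma~\ref{Lemma_beta_unif_bound} as $h_{n}^{2/p}$ is also right.

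The gap is the point you call the main obstacle. You do not prove that $\widetilde{A}$ depends only on $p$ and $s$. Instead you replace it by a constant carrying $(1-F_{n,\floor{ns}}(u_{0}))^{-1}$ and $(1-F_{n,\floor{ns}}(v_{0}))^{-1}$. That is a strictly weaker theorem, and your claim that it suffices downstream is not correct:
\begin{itemize}
\item In the proof of Theorem~\ref{thm_NA_weak_repr} the bound is applied to $R_{s}^{(n)}(x,\infty)$. The second coordinate $Z_{\floor{ns}}^{(n,j)}$ is unrestricted, i.e.\ effectively $v_{0}=x^{\ast}$; this is the purpose of Remark~\ref{remark_extending_U_stat_result_to_extended_variables}. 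The corresponding factor $(1-H_{n,\floor{ns}}(v_{0}))^{-1}$ is therefore infinite.
\item The $n/k_{n}$ control at $T_{s}^{(n)}$ enters only through $\int\!\!\int\vert h_{n,\floor{ns}}\vert^{2}\leq 3/(2(p_{s}k_{n}/n)^{2})$. The resulting $\mathcal{O}(1/(k_{n}^{2}h_{n}^{2}))$ bound presupposes that $\widetilde{A}$ is free of $u_{0}$, $v_{0}$ and $n$.
\end{itemize}

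The missing idea is the quadrant splitting and reflection device of \cite{U_statistics_Stute_94}, which is how the paper closes the proof.
\begin{itemize}
\item If $F_{n,\floor{ns}}(u_{0})$ and $F_{n,\floor{ns}}(v_{0})$ are at most $1/2$, every factor $(1-F_{n,\floor{ns}}(\cdot))^{-1}$ is at most $2$. The four lemmas then already give a constant depending only on $p$ (and $s$).
\item Otherwise, split $1\{U_{i}\leq F_{n,\floor{ns}}(u),U_{j}\leq F_{n,\floor{ns}}(v)\}$ according to whether $U_{i}$ and $U_{j}$ lie in $[0,1/2]$ or $(1/2,1]$.
\item On a piece where both exceed $1/2$, pass to $\widetilde{U}_{i}=1-U_{i}$, $\widetilde{U}_{j}=1-U_{j}$ (again iid uniform) and to the reflected kernel $\widehat{h}_{s}^{(n)}(x,y)=\widetilde{h}_{s}^{(n)}(1-x,1-y)$. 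The piece becomes a $U$-process in variables confined to $[0,1/2]$, e.g.\ via $1\{1-F_{n,\floor{ns}}(u)\leq\widetilde{U}_{i}\leq 1/2\}=1\{\widetilde{U}_{i}\leq 1/2\}-1\{\widetilde{U}_{i}<1-F_{n,\floor{ns}}(u)\}$. All compensator weights $1/(1-w)$ are then evaluated at $w\leq 1/2$ and bounded by $2$.
\item The mixed pieces are handled by reflecting one coordinate only.
\item The integral $\int_{1-F_{n,\floor{ns}}(u_{0})}^{1/2}\int_{1-F_{n,\floor{ns}}(v_{0})}^{1/2}\vert\widehat{h}_{s}^{(n)}\vert^{p}\,\mathrm{d} w\,\mathrm{d} t$ equals the integral of $\vert\widetilde{h}_{s}^{(n)}\vert^{p}$ over $[1/2,F_{n,\floor{ns}}(u_{0})]\times[1/2,F_{n,\floor{ns}}(v_{0})]$. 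This is dominated by the full integral in the statement.
\end{itemize}
Because the resulting bound no longer depends on $u_{0},v_{0}$, it survives letting $v_{0}\uparrow x^{\ast}$, which is exactly what the application requires.
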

\begin{proof}
The above remark yields that 
    \begin{align*}
       \amsmathbb{E}\Big[ \sup_{u \leq u_{0}, v \leq v_{0}} \vert R_{s}^{(n)}(u,v) \vert^{p} \Big]
       \leq (a_{s}^{(n)})^{p}\mathcal{O}(n^{p}h_{n}^{2}),
    \end{align*}
    where 
    \begin{align*}
        a_{s}^{(n)}
        \leq
        \widetilde{A} \Big[\int_{0}^{u_{0}}\int_{0}^{v_{0}} \big\vert h_{n,\floor{ns}}(x,y) \big\vert^{p}F_{n,\floor{ns}}(\mathrm{d} x) F_{n,\floor{ns}}(\mathrm{d} y)\Big]^{1/p},
    \end{align*}
    and the constant $\widetilde{A}$ is proportional to
    \begin{align*}
         \frac{pB_{p}^{2}}{(p-1)}\frac{1}{(1-F_{n,\floor{ns}}(u_{0}))(1-F_{n,\floor{ns}}(v_{0}))}.
    \end{align*}
    Hence it remains to argue that $\widetilde{A}$ can be bounded by a constant only depending on $p$ and $s$. Recall the decomposition of interest
    \begin{align*}
      &\sum_{1\leq i < j \leq n} 
     \widetilde{h}^{(n)}_{s}(U_{i}, U_{j})1\{U_{i} \leq F_{n,\floor{ns}}(u),U_{j} \leq F_{n,\floor{ns}}(v)\} K_{n,s}(i,j)\\
     &=
     \sum_{1\leq i < j \leq n}K_{n,s}(i,j)\Big[
       \int_{0}^{F_{n,\floor{ns}}(u)}\widetilde{h}^{(n)}_{s}(w, U_{j}) 1\{U_{j} \leq F_{n,\floor{ns}}(v)\} \mathrm{d} w\\
       & \  +
       \int_{0}^{F_{n,\floor{ns}}(v)}\widetilde{h}^{(n)}_{s}(U_{i}, t) 1\{U_{i} \leq F_{n,\floor{ns}}(u)\} \mathrm{d} t
       -
        \int_{0}^{F_{n,\floor{ns}}(u)}\int_{0}^{F_{n,\floor{ns}}(v)}
        \widetilde{h}^{(n)}_{s}(w,t)\mathrm{d} w \mathrm{d} t
       \Big] \\
       & \quad + 
       \beta_{s}^{(n)}(u,v)
       -
       \gamma_{s}^{(n)}(u,v)
       -
       \delta_{s}^{(n)}(u,v)
       +
       \varepsilon_{s}^{(n)}(u,v),
    \end{align*}
    and note that when both $u_{0}$ and $v_{0}$ are such that $F_{n,\floor{ns}}(u_{0})$ and $F_{n,\floor{ns}}(v_{0})$ are less than $1/2$, the assertion of the theorem is an immediate consequence of Lemmas~\ref{Lemma_beta_unif_bound}-\ref{Lemma_epsilon_unif_bound}. If at least one is larger than $1/2$, decompose the above indicator $1\{U_{i} \leq F_{n,\floor{ns}}(u),U_{j} \leq F_{n,\floor{ns}}(v)\}$ on the left-hand side above (when both $F_{n,\floor{ns}}(u_{0}), F_{n,\floor{ns}}(v_{0}) \geq 1/2$) into
    \begin{align*}
        &1\{U_{i} \leq 1/2,U_{j} \leq 1/2\} \  +
         1\{1/2 < U_{i} \leq F_{n,\floor{ns}}(u), 1/2 < U_{j} \leq F_{n,\floor{ns}}(v)\} \\
         & \qquad +
         1\{U_{i} \leq 1/2,  1/2 < U_{j} \leq F_{n,\floor{ns}}(v)\}
         +
          1\{1/2 < U_{i} \leq F_{n,\floor{ns}}(u), U_{j} \leq 1/2 \}.
    \end{align*}
    The assertion of the theorem follows immediately for the first term. We show that it also follows for the second, and note that similar arguments can be used for the third and fourth. It holds that
    \begin{align*}
         &1\{1/2 < U_{i} \leq F_{n,\floor{ns}}(u), 1/2 < U_{j} \leq F_{n,\floor{ns}}(v)\}\\
         &=
          1\{1-F_{n,\floor{ns}}(u) \leq 1- U_{i} < 1/2,1-F_{n,\floor{ns}}(v) \leq 1- U_{j} <1/2\} \\
          & =:
           1\{1-F_{n,\floor{ns}}(u) \leq \widetilde{U}_{i} \leq 1/2, 1-F_{n,\floor{ns}}(v) \leq \widetilde{U}_{j} \leq 1/2\},
    \end{align*}
    where $\widetilde{U}_{1}, \ldots, \widetilde{U}_{n}$ are iid uniform random variables. In particular
    \begin{align*}
         &\widetilde{h}^{(n)}_{s}(U_{i}, U_{j})1\{1/2 < U_{i} \leq F_{n,\floor{ns}}(u), 1/2 < U_{j} \leq F_{n,\floor{ns}}(v)\}\\
         &=
         \widetilde{h}^{(n)}_{s}(U_{i}, U_{j}) 1\{1-F_{n,\floor{ns}}(u) \leq \widetilde{U}_{i} \leq 1/2, 1-F_{n,\floor{ns}}(v) \leq \widetilde{U}_{j} \leq 1/2\}\\
         & =
         \widehat{h}^{(n)}_{s}(\widetilde{U}_{i}, \widetilde{U}_{j}) 1\{1-F_{n,\floor{ns}}(u) \leq \widetilde{U}_{i} \leq 1/2, 1-F_{n,\floor{ns}}(v) \leq \widetilde{U}_{j} \leq 1/2\}, 
    \end{align*}
    for $\widehat{h}^{(n)}_{s}(x,y) := \widetilde{h}^{(n)}_{s}(1-x, 1-y)$. The corresponding $p$th moment of the supremum of the remainder term is then bounded by $(a_{s}^{(n)})^{p}\mathcal{O}(n^{p}h_{n}^{2})$, where 
    \begin{align*}
        a_{s}^{(n)}
        \leq
        \widetilde{A} \Big[ \int_{1-F_{n,\floor{ns}}(u_{0})}^{1/2}\int_{1-F_{n,\floor{ns}}(v_{0})}^{1/2}
        \Big\vert \widehat{h}^{(n)}_{s}(w,t)  \Big\vert^{p}
           \mathrm{d} w \mathrm{d} t \Big]^{1/p},
    \end{align*}
    with $\widetilde{A}$ depending only on $p$ and $s$, and 
    \begin{align*}
        &\int_{1-F_{n,\floor{ns}}(u_{0})}^{1/2}\int_{1-F_{n,\floor{ns}}(v_{0})}^{1/2}
        \Big\vert \widehat{h}^{(n)}_{s}(w,t)  \Big\vert^{p}
         \mathrm{d} w \mathrm{d} t \\
           &=
           \int_{1-F_{n,\floor{ns}}(u_{0})}^{1/2}\int_{1-F_{n,\floor{ns}}(v_{0})}^{1/2}
        \Big\vert \widetilde{h}^{(n)}_{s}(1-w,1-t)  \Big\vert^{p}
           \mathrm{d} w \mathrm{d} t \\
           &=
           \int_{1/2}^{F_{n,\floor{ns}}(u_{0})}\int_{1/2}^{F_{n,\floor{ns}}(v_{0})}
        \Big\vert \widetilde{h}^{(n)}_{s}(w,t)  \Big\vert^{p}
           \mathrm{d} w \mathrm{d} t 
           \leq
           \int_{0}^{F_{n,\floor{ns}}(u_{0})}\int_{0}^{F_{n,\floor{ns}}(v_{0})}
        \Big\vert \widetilde{h}^{(n)}_{s}(w,t)  \Big\vert^{p}
           \mathrm{d} w \mathrm{d} t .
    \end{align*}
    This concludes the proof.    
\end{proof}
Finally we present the more general result where we have pairs $(X_{\floor{ns}}^{(n,i)},Y_{\floor{ns}}^{(n,i)})_{i=1}^{n}$ with distribution functions $(F_{n,\floor{ns}}, G_{n,\floor{ns}})$ , in which case we write
\begin{align*}
    I_{n}(u,v \vert s)
    =
    \sum_{1 \leq i < j \leq n} h_{n,\floor{ns}}(X_{\floor{ns}}^{(n,i)},Y_{\floor{ns}}^{(n,i)}) 1\{X_{\floor{ns}}^{(n,i)} \leq u, Y_{\floor{ns}}^{(n,i)} \leq v\}K_{n,s}(i,j)
\end{align*}
\begin{theorem}\label{thm_U_stat_repr_Xi_Yj_remainder_unif_bound}
    Assume $h \in \mathcal{L}_{p}(F_{n,\floor{ns}} \otimes G_{n,\floor{ns}})$, with $p \geq 2$. Then we have 
    \begin{align*}
        &I_{n}(u,v \vert s)
      =
      R_{s}^{(n)}(u,v) + \sum_{1\leq i < j \leq n}K_{n,s}(i,j)\Big[
       \int_{0}^{u}h_{n,\floor{ns}}(x,Y_{\floor{ns}}^{(n,j)}) 1\{Y_{\floor{ns}}^{(n,j)} \leq v\} F_{n,\floor{ns}}(\mathrm{d} x)\\
       & \quad +
       \int_{0}^{v}h_{n,\floor{ns}}(X_{\floor{ns}}^{(n,i)}, y) 1\{X_{\floor{ns}}^{(n,i)} \leq u\} G_{n,\floor{ns}}(\mathrm{d} y)\\
       & \quad
       -
        \int_{0}^{u}\int_{0}^{v}
        h_{n,\floor{ns}}(x,y)F_{n,\floor{ns}}(\mathrm{d} x) G_{n,\floor{ns}}(\mathrm{d} y)
       \Big], 
    \end{align*}
   where $R_{s}^{(n)}$ satisfies~\eqref{eq_U_stat_remainder_unif_bound} where the integral in the bound is taken with respect to $(F_{n,\floor{ns}} \otimes G_{n,\floor{ns}})$.
\end{theorem}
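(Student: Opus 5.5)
The plan is to reduce Theorem~\ref{thm_U_stat_repr_Xi_Yj_remainder_unif_bound} to the one-sample case, Theorem~\ref{thm_U_stat_repr_Xi_Xj_remainder_unif_bound}, by a quantile transform applied separately in each coordinate. Write $X_{\floor{ns}}^{(n,i)} = F_{n,\floor{ns}}^{\leftarrow}(U_i)$ and $Y_{\floor{ns}}^{(n,j)} = G_{n,\floor{ns}}^{\leftarrow}(V_j)$ with uniforms $U_i, V_j$. The argument only needs $U_i$ and $V_j$ independent for $i\neq j$, which holds in the application because $W^{(n,i)}$ and $Z^{(n,j)}$ are independent off the diagonal. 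Then define
\begin{align*}
\widetilde h^{(n)}_s(w,t) := h_{n,\floor{ns}}\big(F_{n,\floor{ns}}^{\leftarrow}(w), G_{n,\floor{ns}}^{\leftarrow}(t)\big).
\end{align*}
The indicator becomes $1\{U_i\le F_{n,\floor{ns}}(u), V_j\le G_{n,\floor{ns}}(v)\}$. We repeat the decomposition of $I_n(\cdot,\cdot\vert s)$ into its Hájek projection plus $\beta_s^{(n)}-\gamma_s^{(n)}-\delta_s^{(n)}+\varepsilon_s^{(n)}$, with every $F_{n,\floor{ns}}(v)$ replaced by $G_{n,\floor{ns}}(v)$. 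The Hájek part, after the time change back, is exactly the projection displayed in the statement with $F_{n,\floor{ns}}(\mathrm{d} x)$ and $G_{n,\floor{ns}}(\mathrm{d} y)$.

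Next we verify Lemma~\ref{Lemma_beta_n_k_martingale} in this setting. The sliced process $\beta^{(n),k}_s$ uses the first-coordinate variables $U_i$, $i<k$, and the second-coordinate variables $V_j$, $j\ge k$. Each coordinate is a family of iid uniforms, and the two families are independent. Hence the Cairoli--Walsh condition (F4) holds for the product filtration generated by $1\{U_i\le w\}$ and $1\{V_j\le t\}$. The conditional-increment computation and the degeneracy computation go through unchanged, because each uses only one coordinate at a time.

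With this in place, the moment bounds carry over. Lemma~\ref{lemma_sliced_beta_pth_moment_bound} follows from Cairoli--Walsh, and Lemma~\ref{Lemma_sliced_beta_unif_bound} from the two nested Burkholder steps, which only use independence across indices. The dyadic blocking and induction of Lemma~\ref{Lemma_beta_unif_bound} follow next, and Lemmas~\ref{Lemma_gamma_unif_bound}--\ref{Lemma_epsilon_unif_bound} go through with the factor $1-F_{n,\floor{ns}}(v_0)$ replaced by $1-G_{n,\floor{ns}}(v_0)$. All constants then involve
\begin{align*}
\int_0^{F_{n,\floor{ns}}(u_0)}\int_0^{G_{n,\floor{ns}}(v_0)}|\widetilde h^{(n)}_s(w,t)|^p\,\mathrm{d} w\,\mathrm{d} t = \int_0^{u_0}\int_0^{v_0}|h_{n,\floor{ns}}(x,y)|^p F_{n,\floor{ns}}(\mathrm{d} x)G_{n,\floor{ns}}(\mathrm{d} y),
\end{align*}
which is the stated integral against $F_{n,\floor{ns}}\otimes G_{n,\floor{ns}}$.

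The main obstacle is making $\widetilde A$ independent of $u_0$ and $v_0$, since the raw constants blow up like $(1-F_{n,\floor{ns}}(u_0))^{-1}(1-G_{n,\floor{ns}}(v_0))^{-1}$. We handle this as in the proof of Theorem~\ref{thm_U_stat_repr_Xi_Xj_remainder_unif_bound}: split each coordinate at $1/2$ into four quadrants. On the parts above $1/2$ we reflect via $\widetilde U = 1-U$ or $\widetilde V = 1-V$, coordinatewise and independently. Every denominator is then at least $1/2$, and each quadrant's integral is bounded by the full one. The point to check is that the reflected pieces are still martingales in the reflected time. This holds because the reflection acts on one coordinate at a time and leaves independence between the coordinates untouched.
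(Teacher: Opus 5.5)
Your proposal is correct and follows essentially the same route as the paper: a coordinatewise quantile transform with pairs $(U_i,V_i)$ iid across $i$ (possibly dependent within a pair), and a repetition of the one-sample decomposition and moment bounds via the sliced processes. Those sliced processes work because the first-coordinate family $U_i$, $i<k$, is independent of the second-coordinate family $V_j$, $j\ge k$, and the constant is controlled by the same split-at-$1/2$ reflection. One precision: what the argument actually uses is mutual independence (and identical distribution) of the pairs across indices, not merely independence of $U_i$ and $V_j$ for each $i\neq j$. That is exactly what the paper assumes and what your later steps implicitly rely on.
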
 
\begin{proof}
    The proof follows almost directly by repeating arguments similar to those of Theorem~\ref{thm_U_stat_repr_Xi_Xj_remainder_unif_bound}. However, the reduction to uniform random variables now involves a sequence $(U_{i}, V_{i})$ of iid random vectors in the unit square with uniform marginals such that
    \begin{align*}
        X_{\floor{ns}}^{(n,i)} = F_{n,\floor{ns}}^{\leftarrow}(U_{i}), 
        \qquad 
        Y_{\floor{ns}}^{(n,i)} = G_{n,\floor{ns}}^{\leftarrow}(V_{i}).
    \end{align*}
    The processes $\beta_{s, (i,j)}^{(n)}$, $\gamma_{s, (i,j)}^{(n)}$, $\delta_{s, (i,j)}^{(n)}$ and $\varepsilon_{s, (i,j)}^{(n)}$ are defined similarly but with $U_j$ replaced by $V_j$. Due to possible dependence between $U_i$ and $V_i$, we repeat the above arguments again using the sliced processes. This concludes the proof.
\end{proof}
\begin{remark}\label{remark_extending_U_stat_result_to_extended_variables}
    Theorem~\ref{thm_U_stat_repr_Xi_Yj_remainder_unif_bound} is stated for sequences of integrable real-valued random variables. As mentioned in~\cite{U_statistics_Stute_94}, the result may easily be generalized to the setup where one of the sequences consists of extended random variables. This is precisely the setting needed here, because we apply the theorem for the extended random variables
    \begin{align*}
    W_{\floor{ns}}^{(n,i)} = \begin{cases}
       Z_{\floor{ns}}^{(n,i)}, \quad \delta_{\floor{ns}}^{(n,i)} =1, \\
        \infty, \quad \ \ \  \ \delta_{\floor{ns}}^{(n,i)}=0.
    \end{cases}
\end{align*}
\end{remark}
\newpage
\section{Additional empirical process results}\label{appendix_additional_empirical_processs_results}
In this section we prove the additional empirical process results used in Appendix~\ref{appendix_proof_of_main_results}. The first result is to establish that the survival baseline function $1-H$ evaluated in the order statistic $Z_{n:n-k_{n}}$ is asymptotically of order $k_{n}/n$. The second result is a technical result bounding the variance of the difference between the empirical building blocks $\amsmathbb{H}_{n}(\cdot \vert s)$, $\amsmathbb{H}_{n}^{1}(\cdot \vert s)$ and their corresponding counterparts based on the locally transported observations. This is used to establish the third result which provides the asymptotic uniform error rate of these differences. The key insight is that they are based on the same observations and consequently correlated. In particular, the variance of the differences is asymptotically smaller than that of Lemma~\ref{lemma_H_n_and_H_n_1_unif_cons}, making the bias governed by the local transportation the predominant asymptotic contribution for many standard choices of bandwidth sequences. The proof of the third result is based on a concentration inequality argument which adds an extra $\sqrt{\log(n)}$-factor to the rate. The fourth result is a Rényi-type limit theorem extending the result on $A_{n}^{(\gamma)}(p;k_{n})$ in Lemma 1 of~\cite{Csorgo_96}. The proof technique used there which is based on Theorem 4.5.1 in~\cite{csorgo1986weighted} is unavailable here due to the heterogeneity of the data. Instead we combine a symmetrization and martingale argument to prove the result. The fifth and last result is the locally transported version of Lemma~\ref{lemma_H_tail_fraction} which follows from the third result.

\begin{lemma}\label{lemma_order_stat_baseline_equiv}
    It holds that $(n/k_{n})(1-H(Z_{n:n-k_{n}})) \overset{\amsmathbb{P}}{\to} 1/C_{H}(1)$.
\end{lemma}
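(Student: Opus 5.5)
The plan is to work on the scale of the baseline survival function $1-H$ and use a counting argument. Here $C_{H}(1)=\int_{0}^{1}c_{H}(u)\,\mathrm{d}u$.

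For $t>0$ fixed, let $x_{n}(t):=H^{\leftarrow}(1-tk_{n}/n)$. By continuity of $H$ we have $1-H(x_{n}(t))=tk_{n}/n$, and $x_{n}(t)\to x^{*}$ because $k_{n}/n\to 0$. Define the exceedance count
\begin{align*}
    N_{n}(t):=\sum_{i=1}^{n}1\{Z_{i}^{(n)}>x_{n}(t)\}.
\end{align*}
Since $x\mapsto H(x)$ is continuous and nondecreasing, the inclusions
\begin{align*}
    \{(n/k_{n})(1-H(Z_{n:n-k_{n}}))<t\}\subseteq\{N_{n}(t)\le k_{n}\},
    \qquad
    \{(n/k_{n})(1-H(Z_{n:n-k_{n}}))>t\}\subseteq\{N_{n}(t)\ge k_{n}\}
\end{align*}
hold up to null events, using that there are no ties. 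It therefore suffices to show $N_{n}(t)/k_{n}\overset{\amsmathbb{P}}{\to} tC_{H}(1)$ for every fixed $t>0$. Applying this with $t=(1\pm\eta)/C_{H}(1)$ gives the claim.

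\textbf{Mean.} We have
\begin{align*}
    \amsmathbb{E}[N_{n}(t)]/k_{n}=\frac{t}{n}\sum_{i=1}^{n}\frac{1-H_{n,i}(x_{n}(t))}{1-H(x_{n}(t))}.
\end{align*}
By \eqref{eq_H_scedasis_dens}, the ratio inside the sum converges to $c_{H}(i/n)$. The key point is that this convergence is \emph{uniform} in $n$ and $i$: it follows from the uniformity in \eqref{eq_Scedasis_F_and_G}, taking products and using that $c_{F}$ and $c_{G}$ are bounded on $[0,1]$. Hence the average equals $n^{-1}\sum_{i}c_{H}(i/n)+o(1)$. This Riemann sum converges to $C_{H}(1)$ because $c_{H}$ is continuous on $[0,1]$. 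Consequently $\amsmathbb{E}[N_{n}(t)]/k_{n}\to tC_{H}(1)$.

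\textbf{Variance.} The summands of $N_{n}(t)$ are independent Bernoulli variables, so
\begin{align*}
    \mathrm{Var}(N_{n}(t))\le\amsmathbb{E}[N_{n}(t)]=\mathcal{O}(k_{n}).
\end{align*}
Chebyshev's inequality then gives $N_{n}(t)/k_{n}-\amsmathbb{E}[N_{n}(t)]/k_{n}=\mathcal{O}_{\amsmathbb{P}}(k_{n}^{-1/2})=o_{\amsmathbb{P}}(1)$, since $k_{n}\to\infty$ (implied by $k_{n}h_{n}\to\infty$).

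\textbf{Conclusion.} Combining the two steps, $N_{n}(t)/k_{n}\overset{\amsmathbb{P}}{\to} tC_{H}(1)$. Fix $\eta>0$ and take $t_{\pm}=(1\pm\eta)/C_{H}(1)$. Then $\amsmathbb{P}(N_{n}(t_{-})\ge k_{n})\to 0$ and $\amsmathbb{P}(N_{n}(t_{+})\le k_{n})\to 0$, because the limits $1\mp\eta$ are strictly below and above $1$, respectively. By the inclusions above, the probability that $(n/k_{n})(1-H(Z_{n:n-k_{n}}))$ lies outside $[t_{-},t_{+}]$ tends to zero, which proves the lemma.

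\textbf{Main obstacle.} The delicate step is justifying the uniform approximation of the ratios $(1-H_{n,i})/(1-H)$ at the moving level $x_{n}(t)$. Pointwise convergence for each fixed $i/n$ is not enough; the uniformity across the triangular array must be taken from the assumption in \eqref{eq_Scedasis_F_and_G}. If $c_{F}$ or $c_{G}$ vanish at the endpoints, the boundedness of $c_{F}$ and $c_{G}$ on $[0,1]$ still suffices to control the product of the two ratios.
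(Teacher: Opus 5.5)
Your two inclusions in the reduction step are reversed. Write $W_n:=(n/k_n)(1-H(Z_{n:n-k_n}))$.

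On $\{W_n<t\}$ you have $H(Z_{n:n-k_n})>1-tk_n/n=H(x_n(t))$. Hence $Z_{n:n-k_n}>x_n(t)$, so the $k_n+1$ largest observations all exceed $x_n(t)$. Symmetrically, $W_n>t$ forces $Z_{n:n-k_n}<x_n(t)$, which allows at most $k_n$ exceedances. The correct statements are therefore $\{W_n<t\}\subseteq\{N_n(t)\ge k_n+1\}$ and $\{W_n>t\}\subseteq\{N_n(t)\le k_n\}$. Intuitively, $N_n(t)$ is nondecreasing in $t$, since a larger $t$ means a lower threshold.

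With the inclusions as you wrote them, the limits $\amsmathbb{P}(N_n(t_-)\ge k_n)\to 0$ and $\amsmathbb{P}(N_n(t_+)\le k_n)\to 0$ in your conclusion control nothing. With the corrected inclusions they give exactly $\amsmathbb{P}(W_n<t_-)\to 0$ and $\amsmathbb{P}(W_n>t_+)\to 0$. This is a slip in direction rather than a missing idea, but as written the final step does not follow and must be fixed.

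Once corrected, your argument is the paper's. Both count exceedances of deterministic levels defined through $H^{\leftarrow}$, compute the mean via the uniform tail equivalence and a Riemann sum, bound the variance by the mean, and apply Chebyshev and monotonicity of $1-H$.

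The only difference is the choice of levels. The paper uses shrinking levels $(1\pm\delta_n)k_n/(nC_H(1))$ with $\delta_n\sqrt{k_n}\to\infty$; you use fixed $t_\pm=(1\pm\eta)/C_H(1)$ with $\eta$ arbitrary. Your choice is the more robust one. In the paper's Chebyshev step, the denominator $(\amsmathbb{E}[S_n(x_n^{\pm})]-k_n)^2$ is of order $(\delta_n k_n)^2$ only if the $o(1)$ in $\amsmathbb{E}[S_n(x_n^{\pm})]=(1\pm\delta_n)k_n(1+o(1))$ is actually $o(\delta_n)$. Neither the uniform tail equivalence nor the Riemann-sum approximation supplies that rate. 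With fixed $\eta$ you only need that error to vanish, and since the lemma asserts no rate, nothing is lost.
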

\begin{proof}[Proof of Lemma~\ref{lemma_order_stat_baseline_equiv}]
    For any $x \leq x^{*}$, let
    \begin{align*}
        S_{n}(x) = \sum_{i=1}^{n} 1\{Z_{i}^{(n)} > x\},
    \end{align*}
    and note that
    \begin{align*}
        \{Z_{n:n-k_{n}} \leq x \} = \{S_{n}(x) \leq k_{n}\}, \quad    \{Z_{n:n-k_{n}} \geq x \} = \{S_{n}(x) \geq k_{n} +1\}.
    \end{align*}
    Fix any sequence $\delta_{n} \in (0,1)$ such that $\delta_n \to 0$, and $\delta_{n}\sqrt{k_{n}} \to \infty$, for instance $\delta_{n} := k_{n}^{-1/4}$. Let
    \begin{align*}
        x_{n}^{-} := H^{\leftarrow}(1-(1+\delta_{n})k_{n}/(nC_{H}(1)), \qquad 
        x_{n}^{+} := H^{\leftarrow}(1-(1-\delta_{n})k_{n}/(nC_{H}(1))),
    \end{align*}
    and note that both of these sequences are approaching $x^{*}$ as $n \to \infty$. The tail equivalence~\eqref{eq_H_scedasis_dens} then yields 
    \begin{align*}
        \amsmathbb{E}[S_{n}(x_{n}^{+})] &= \sum_{i=1}^{n} (1-H_{n,i}(x_{n}^{+}))
        = (1 + o(1))(1-H(x_{n}^{+}))\sum_{i=1}^{n} c_{H}(i/n) \\
        &= 
        (1 + o(1)) (1-\delta_{n})\frac{k_{n}}{C_{H}(1)}\frac{1}{n} \sum_{i=1}^{n} c_{H}(i/n)
        =
         (1-\delta_{n})k_{n} (1 + o(1)),
    \end{align*}
    where we used that the tail equivalence is uniform in $i \leq n$. Similarly we have that $\amsmathbb{E}[S_{n}(x_{n}^{-})] = (1+\delta_{n})k_{n} (1 + o(1))$. It then suffices to bound the variance crudely using the fact that the sum of interest has indicator summands
    \begin{align*}
        \operatorname{Var}(S_{n}(x_{n}^{+})) = \sum_{i=1}^{n} (1-H_{n,i}(x_{n}^{+})) H_{n,i}(x_{n}^{+}) 
        \leq  \amsmathbb{E}[S_{n}(x_{n}^{+})] = (1-\delta_{n})k_{n} (1 + o(1)),
    \end{align*}
    and likewise $\operatorname{Var}(S_{n}(x_{n}^{-})) \leq (1+\delta_{n})k_{n} (1 + o(1))$. Chebyshev's inequality yields
    \begin{align*}
       &\amsmathbb{P}(S_{n}(x_{n}^{-}) \leq k_{n}) 
       \leq
       \amsmathbb{P}(\vert S_{n}(x_{n}^{-}) - \amsmathbb{E}[S_{n}(x_{n}^{-})]\vert \geq \amsmathbb{E}[S_{n}(x_{n}^{-})] - k_{n}) \\
       & \leq
       \frac{\operatorname{Var}(S_{n}(x_{n}^{-}))}{(\amsmathbb{E}[S_{n}(x_{n}^{-})] - k_{n})^{2}}
       \leq
       \frac{(1+\delta_{n})k_{n} (1 + o(1))}{k_{n}^{2}\delta_{n}^{2}}
       \leq \frac{(1+o(1))}{k_{n}\delta_{n}^{2}} \to 0.
    \end{align*}
    Hence $\amsmathbb{P}(Z_{n:n-k_{n}} \leq x_{n}^{-}) \to 0$. Similarly
    \begin{align*}
        \amsmathbb{P}(S_{n}(x_{n}^{+}) \geq k_{n} + 1) \leq 
         \frac{\operatorname{Var}(S_{n}(x_{n}^{+}))}{(\amsmathbb{E}[S_{n}(x_{n}^{+})] - k_{n} -1)^{2}}
          \leq \frac{(1+o(1))}{k_{n}\delta_{n}^{2}} \to 0,
    \end{align*}
    so that $\amsmathbb{P}(Z_{n:n-k_{n}} \geq x_{n}^{+}) \to 0$. We conclude that
    \begin{align*}
        \amsmathbb{P}(x_{n}^{-} < Z_{n:n-k_{n}} < x_{n}^{+}) \to 1. 
    \end{align*}
    Since $x \mapsto 1-H(x)$ is monotone, we get that
    \begin{align*}
        1-H(x_{n}^{+}) \leq 1-H(Z_{n:n-k_{n}}) \leq 1-H(x_{n}^{-}),
    \end{align*}
    with probability tending to one. Using the tail equivalence, the latter statement can be written as
    \begin{align*}
         (1-\delta_{n})\frac{k_{n}}{n} \frac{1}{C_{H}(1)} (1 + o(1)) \leq
         1-H(Z_{n:n-k_{n}}) \leq 
          (1+\delta_{n})\frac{k_{n}}{n} \frac{1}{C_{H}(1)} (1 + o(1)),
    \end{align*}
    with probability tending to one as $n\to \infty$. Since $\delta_{n} \to 0$ we then have that 
    \begin{align*}
        \frac{n}{k_{n}}(1-H(Z_{n:n-k_{n}})) \overset{\amsmathbb{P}}{\to} 1/C_{H}(1).
    \end{align*}
\end{proof}
In order to relate the variance between the empirical building blocks $\amsmathbb{H}_{n}(\cdot \vert s)$, $\amsmathbb{H}_{n}^{1}(\cdot \vert s)$ and their corresponding locally transported counterparts $\widetilde{\amsmathbb{H}}_{n}(\cdot \vert s)$, $\widetilde{\amsmathbb{H}}_{n}^{1}(\cdot \vert s)$ we use the following uniform Lipschitz condition in the design direction. In the remainder of this Appendix we choose $p_{s}$ such that~\eqref{eq_Z_n_k_n_eventually_contained} holds and define $T_{s}^{(n)}:= T_{s,p_{s}}^{(n)}$. 

\begin{lemma}\label{Lemma_var_H_tilde_minus_H}
     Assume Condition~\ref{cond_H_H1_C1_F_G_Lip} and fix $s \in (0,1)$. For every $x \leq T_{s}^{(n)}$ it then holds that 
      \begin{align*}
       &\operatorname{Var}(\widetilde{\amsmathbb{H}}_{n}(x \vert s)
         -
         \amsmathbb{H}_{n}(x \vert s))
          \leq
          \frac{5(M_{F} + M_{G})}{(nh_{n})^{2}} \sum_{i=1}^{n}  K\Big(\frac{s - i/n}{h_{n}}\Big)^{2}\vert i-\floor{ns}\vert /n.
          \\
        &\operatorname{Var}(\widetilde{\amsmathbb{H}}^{1}_{n}(x \vert s)
         -
         \amsmathbb{H}^{1}_{n}(x \vert s))
          \leq
          \frac{2M_{F} + 3M_{G}}{(nh_{n})^{2}} \sum_{i=1}^{n}  K\Big(\frac{s - i/n}{h_{n}}\Big)^{2}\vert i-\floor{ns}\vert /n.
    \end{align*}
\end{lemma}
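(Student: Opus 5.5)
Since the summands for different $i$ are independent, the variance of the difference splits over $i$:
\begin{align*}
\operatorname{Var}\big(\widetilde{\amsmathbb{H}}_{n}(x \vert s)-\amsmathbb{H}_{n}(x \vert s)\big)
=\frac{1}{(nh_{n})^{2}}\sum_{i=1}^{n}K\Big(\frac{s-i/n}{h_{n}}\Big)^{2}\operatorname{Var}\big(1\{Z_{\floor{ns}}^{(n,i)}\leq x\}-1\{Z_{i}^{(n)}\leq x\}\big).
\end{align*}
Each variance is bounded by the second moment. For indicators, the squared difference equals the indicator of the symmetric difference, so it is bounded by
\begin{align*}
\amsmathbb{P}\big(1\{Z_{\floor{ns}}^{(n,i)}\leq x\}\neq 1\{Z_{i}^{(n)}\leq x\}\big).
\end{align*}
It therefore suffices to show this probability is at most a constant times $(M_{F}+M_{G})\vert i-\floor{ns}\vert/n$. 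Here $M_{F},M_{G}$ are the uniform Lipschitz constants of $u\mapsto F(x,u)$ and $u\mapsto G(x,u)$ on $I_{s}$ from Condition~\ref{cond_H_H1_C1_F_G_Lip}. Kernel indices with $K\neq 0$ satisfy $i/n\in I_{s}$ for large $n$, so the condition applies.

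The key step is a coupling estimate, which is where the transport construction pays off. Write $U_{i}=F_{n,i}(X_{i}^{(n)})$ and $V_{i}=G_{n,i}(C_{i}^{(n)})$; these are independent uniforms by continuity. Then $X_{i}^{(n)}=F_{n,i}^{\leftarrow}(U_{i})$ a.s. and $X_{\floor{ns}}^{(n,i)}=F_{n,\floor{ns}}^{\leftarrow}(U_{i})$, and similarly for $C$. The event $\{Z\leq x\}$ is the union $\{X\leq x\}\cup\{C\leq x\}$. So the symmetric difference of the two $Z$-events is contained in the union of the symmetric differences of the $X$-events and of the $C$-events. For the $X$-part,
\begin{align*}
\{X_{i}^{(n)}\leq x\}\,\triangle\,\{X_{\floor{ns}}^{(n,i)}\leq x\}=\{U_{i}\leq F_{n,i}(x)\}\,\triangle\,\{U_{i}\leq F_{n,\floor{ns}}(x)\},
\end{align*}
whose probability is $\vert F(x,i/n)-F(x,\floor{ns}/n)\vert\leq M_{F}\vert i-\floor{ns}\vert/n$. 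The $C$-part is bounded by $M_{G}\vert i-\floor{ns}\vert/n$ in the same way. This yields the first bound with constant $M_{F}+M_{G}$, comfortably below the stated $5(M_{F}+M_{G})$. The slack in the paper's constant presumably comes from a cruder expansion, such as adding and subtracting mixed terms.

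The second statement works the same way. Its indicator is $1\{X\leq x,\,X\leq C\}$, so the two events differ only if one of the following holds:
\begin{itemize}
\item the $X$-indicators $\{X\leq x\}$ differ;
\item the orderings $\{X\leq C\}$ versus $\{X_{\floor{ns}}^{(n,i)}\leq C_{\floor{ns}}^{(n,i)}\}$ differ at a point below $x$.
\end{itemize}
The main obstacle is the ordering event, which is not a direct one-dimensional comparison. To handle it, I condition on $U_{i}$ and write $X=F_{n,i}^{\leftarrow}(U_{i})=:a$ and $X_{\floor{ns}}^{(n,i)}=:b$, both at most $x$. The discrepancy then requires
\begin{align*}
V_{i}\ \text{to lie between}\ G_{n,i}(a)\ \text{and}\ G_{n,\floor{ns}}(b).
\end{align*}
By the triangle inequality, $\vert G_{n,i}(a)-G_{n,\floor{ns}}(b)\vert\leq\vert G_{n,i}(a)-G_{n,\floor{ns}}(a)\vert+\vert G_{n,\floor{ns}}(a)-G_{n,\floor{ns}}(b)\vert$. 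The first term is at most $M_{G}\vert i-\floor{ns}\vert/n$. The second term is controlled by integrating over $U_{i}$ and exchanging the roles of the variables: the set of $(U_{i},V_{i})$ on which $C$ falls between $a$ and $b$ has measure at most the $\vert F_{n,i}-F_{n,\floor{ns}}\vert$ sup bound, via a Fubini argument on the unit square. Collecting terms gives a bound of the form $(2M_{F}+3M_{G})\vert i-\floor{ns}\vert/n$ or better. I expect making this Fubini step clean, without assuming densities, to be the main technical point.
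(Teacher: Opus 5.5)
Your proof is correct. It actually gives sharper per-summand bounds, $(M_F+M_G)\vert i-\floor{ns}\vert/n$ and $(2M_F+M_G)\vert i-\floor{ns}\vert/n$, so both stated inequalities follow. You are also right that only indices with $K\neq 0$, hence $i/n\in I_s$ for large $n$, matter. Your route differs from the paper's in two places.

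\textbf{The bound for $\amsmathbb{H}^{1}_{n}$.} The paper writes the event as $\{U_i\le F(x\wedge G^{\leftarrow}(V_i))\}$ and integrates out $U_i$, obtaining $\int_0^1\vert F_{n,\floor{ns}}(x\wedge G^{\leftarrow}_{n,\floor{ns}}(v))-F_{n,i}(x\wedge G^{\leftarrow}_{n,i}(v))\vert\,\mathrm{d}v$. It then splits the $v$-range at $G_{n,i}(x)\le G_{n,\floor{ns}}(x)$, an ordering assumed without loss of generality. On the lower piece it applies a Tonelli swap, turning $F$-differences at $G$-quantiles into $G$-differences at an $F$-distributed point. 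Together with a factor-$2$ bound on the middle piece, this yields $2M_F+3M_G$. You instead integrate out $V_i$ first and swap in the opposite direction, which removes the case distinction.

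\textbf{The bound for $\amsmathbb{H}_{n}$.} The paper splits $1\{Z\le x\}$ into its uncensored and censored parts. It reuses the $\amsmathbb{H}^{1}$-type bound twice with $F$ and $G$ interchanged, giving $(2M_F+3M_G)+(3M_F+2M_G)$. Your observation that $\{Z\le x\}=\{X\le x\}\cup\{C\le x\}$ depends only on the two marginal events avoids the ordering event entirely. It is both shorter and sharper.

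\textbf{The Fubini step.} The step you flag as the main technical point is immediate. Let $C'=G^{\leftarrow}_{n,\floor{ns}}(V_i)$, which is independent of $U_i$, and recall $G_{n,\floor{ns}}$ is continuous. Then $\amsmathbb{E}\vert G_{n,\floor{ns}}(a)-G_{n,\floor{ns}}(b)\vert=\amsmathbb{P}(\{a\le C'\}\triangle\{b\le C'\})=\amsmathbb{E}\vert F_{n,i}(C')-F_{n,\floor{ns}}(C')\vert$. This uses only $\{F^{\leftarrow}(u)\le c\}=\{u\le F(c)\}$, so no densities are needed.
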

\begin{proof}[Proof of Lemma~\ref{Lemma_var_H_tilde_minus_H}]
    We first prove the second of the above assertions. Due to independence we have
    \begin{align*}
         &\operatorname{Var}(\widetilde{\amsmathbb{H}}^{1}_{n}(x \vert s)
         -
         \amsmathbb{H}^{1}_{n}(x \vert s))
          \\ &=
          \frac{1}{(nh_{n})^{2}} \sum_{i=1}^{n}  K\Big(\frac{s - i/n}{h_{n}}\Big)^{2}
         \operatorname{Var} (1\{Z_{\floor{ns}}^{(n,i)}\leq x\}\delta_{\floor{ns}}^{(n,i)}
         -
         1\{Z_{i}^{(n)}\leq x\}\delta_{i}^{(n)}).
    \end{align*}
    The variance of each summand can be bounded as follows 
    \begin{align*}
         &\operatorname{Var} (1\{Z_{\floor{ns}}^{(n,i)}\leq x\}\delta_{\floor{ns}}^{(n,i)}
         -
         1\{Z_{i}^{(n)}\leq x\}\delta_{i}^{(n)})
         \leq
         \amsmathbb{E}[ (1\{Z_{\floor{ns}}^{(n,i)}\leq x\}\delta_{\floor{ns}}^{(n,i)}
         -
         1\{Z_{i}^{(n)}\leq x\}\delta_{i}^{(n)})^{2}] \\
         &=
          \amsmathbb{E}[ \vert 1\{Z_{\floor{ns}}^{(n,i)}\leq x\}\delta_{\floor{ns}}^{(n,i)}
         -
         1\{Z_{i}^{(n)}\leq x\}\delta_{i}^{(n)}\vert ] 
         =
        \amsmathbb{P}(1\{Z_{\floor{ns}}^{(n,i)}\leq x\}\delta_{\floor{ns}}^{(n,i)}
        \neq
         1\{Z_{i}^{(n)}\leq x\}\delta_{i}^{(n)}).
    \end{align*}
    Define $U_{i} := F_{n,i}(X_{i}^{(n)})$, and $V_{i} := G_{n,i}(C_{i}^{(n)})$ and note that these are independent standard uniform random variables. The latter probability can then be expressed as follows
    \begin{align*}
         &\amsmathbb{P}(1\{Z_{\floor{ns}}^{(n,i)}\leq x\}\delta_{\floor{ns}}^{(n,i)}
        \neq
         1\{Z_{i}^{(n)}\leq x\}\delta_{i}^{(n)})
         =
          \amsmathbb{P}(1\{X_{\floor{ns}}^{(n,i)}\leq x \wedge C_{\floor{ns}}^{(n,i)} \}
        \neq
         1\{X_{i}^{(n)}\leq x \wedge C_{i}^{(n)}\}) \\
         &=  \amsmathbb{P}(1\{U_{i}\leq F_{n,\floor{ns}}(x \wedge G_{n,\floor{ns}}^{\leftarrow}(V_{i}))\}
        \neq
        1\{U_{i}\leq F_{n,i}(x \wedge G_{n,i}^{\leftarrow}(V_{i}))\}) \\
        &=
        \int_{0}^{1} 
         \amsmathbb{P}(1\{U_{i}\leq F_{n,\floor{ns}}(x \wedge G_{n,\floor{ns}}^{\leftarrow}(v))\}
        \neq
        1\{U_{i}\leq F_{n,i}(x \wedge G_{n,i}^{\leftarrow}(v))\}) \mathrm{d} v \\
        &=
        \int_{0}^{1} 
        \vert  F_{n,\floor{ns}}(x \wedge G_{n,\floor{ns}}^{\leftarrow}(v))
        -
         F_{n,i}(x \wedge G_{n,i}^{\leftarrow}(v))
         \vert
         \mathrm{d} v.
    \end{align*}
    Assume without loss of generality that $G_{n,i}(x)\leq G_{n,\floor{ns}}(x)$. Then 
    \begin{align*}
        &\int_{0}^{1} 
        \vert  F_{n,\floor{ns}}(x \wedge G_{n,\floor{ns}}^{\leftarrow}(v))
        -
         F_{n,i}(x \wedge G_{n,i}^{\leftarrow}(v))
         \vert
         \mathrm{d} v \\
         &=
         \int_{0}^{G_{n,i}(x)} 
        \vert  F_{n,\floor{ns}}( G_{n,\floor{ns}}^{\leftarrow}(v))
        -
         F_{n,i}( G_{n,i}^{\leftarrow}(v))
         \vert
         \mathrm{d} v \\
         &\quad+
         \int_{G_{n,i}(x)}^{G_{n,\floor{ns}}(x)} 
        \vert  F_{n,\floor{ns}}( G_{n,\floor{ns}}^{\leftarrow}(v))
        -
         F_{n,i}(x)
         \vert
         \mathrm{d} v 
         +
         \vert  F_{n,\floor{ns}}(x )
        -
         F_{n,i}(x)
         \vert (1-G_{n,\floor{ns}}(x)).
    \end{align*}
    We use Condition~\ref{cond_H_H1_C1_F_G_Lip} to bound the above three terms. The second of the latter terms is bounded as follows
    \begin{align*}
         &\int_{G_{n,i}(x)}^{G_{n,\floor{ns}}(x)} 
        \vert  F_{n,\floor{ns}}( G_{n,\floor{ns}}^{\leftarrow}(v))
        -
         F_{n,i}(x)
         \vert
         \mathrm{d} v \\
         &\leq
         2\sup_{x\leq T_{s}^{(n)}} \vert G_{n,\floor{ns}}(x) - G_{n,i}(x) \vert 
         \leq  2M_{G}\frac{\vert i-\floor{ns} \vert}{n}.
    \end{align*}
    Likewise for the third term
    \begin{align*}
       \vert  F_{n,\floor{ns}}(x )
        -
         F_{n,i}(x)
         \vert (1-G_{n,\floor{ns}}(x))
         \leq
         \sup_{x\leq T_{s}^{(n)}} \vert F_{n,\floor{ns}}(x) - F_{n,i}(x) \vert \leq M_{F}\frac{\vert i-\floor{ns} \vert}{n}.
    \end{align*}
    We turn to the first term and note that
    \begin{align*}
        &\int_{0}^{G_{n,i}(x)} 
        \vert  F_{n,\floor{ns}}( G_{n,\floor{ns}}^{\leftarrow}(v))
        -
         F_{n,i}( G_{n,i}^{\leftarrow}(v))
         \vert
         \mathrm{d} v\\
         &\leq 
         \int_{0}^{G_{n,i}(x)} 
        \vert  F_{n,\floor{ns}}( G_{n,\floor{ns}}^{\leftarrow}(v))
        -
         F_{n,\floor{ns}}( G_{n,i}^{\leftarrow}(v))
         \vert
         \mathrm{d} v\\
         & \quad +
         \int_{0}^{G_{n,i}(x)} 
        \vert F_{n,\floor{ns}}( G_{n,i}^{\leftarrow}(v))
        -
         F_{n,i}( G_{n,i}^{\leftarrow}(v))
         \vert
         \mathrm{d} v
         \\
         &\leq
          \int_{0}^{G_{n,i}(x)} 
         \amsmathbb{P}(1\{U_{i}\leq F_{n,\floor{ns}}(G_{n,\floor{ns}}^{\leftarrow}(v))\}
        \neq
        1\{U_{i}\leq F_{n,i}( G_{n,i}^{\leftarrow}(v))\}) \mathrm{d} v \\
        & \quad  +\sup_{x\leq T_{s}^{(n)}} \vert F_{n,\floor{ns}}(x) - F_{n,i}(x) \vert \\
        &\leq
        \int_{0}^{G_{n,i}(x)} 
         \amsmathbb{P}(1\{G_{n,i}(X_{\floor{ns}}^{(n,i)})\leq v \}
        \neq
        1\{G_{n,\floor{ns}}(X_{\floor{ns}}^{(n,i)})\leq v \}) \mathrm{d} v
        +
          M_{F}\frac{\vert i-\floor{ns} \vert}{n},
    \end{align*}
    where we first used the triangle inequality followed by Condition~\ref{cond_H_H1_C1_F_G_Lip}. Applying Tonelli's Theorem on the first of the latter terms yields
    \begin{align*}
        &\int_{0}^{G_{n,i}(x)} 
         \amsmathbb{P}(1\{G_{n,i}(X_{\floor{ns}}^{(n,i)})\leq v \}
        \neq
        1\{G_{n,\floor{ns}}(X_{\floor{ns}}^{(n,i)})\leq v \}) \mathrm{d} v \\
        &=
        \int_{0}^{G_{n,i}(x)} 
         \amsmathbb{E}[ \vert1\{G_{n,i}(X_{\floor{ns}}^{(n,i)})\leq v \}
        -
        1\{G_{n,\floor{ns}}(X_{\floor{ns}}^{(n,i)})\leq v \}\vert ] \mathrm{d} v \\
        &=
        \amsmathbb{E}\Big[ \int_{0}^{G_{n,i}(x)} 
         \vert1\{G_{n,i}(X_{\floor{ns}}^{(n,i)})\leq v \}
        -
        1\{G_{n,\floor{ns}}(X_{\floor{ns}}^{(n,i)})\leq v \}\vert  \mathrm{d} v \Big] \\
        &=
        \amsmathbb{E}\big[ \vert (G_{n,i}(X_{\floor{ns}}^{(n,i)}) \wedge G_{n,i}(x))
        -
        (G_{n,\floor{ns}}(X_{\floor{ns}}^{(n,i)}) \wedge G_{n,i}(x))\vert \big].
    \end{align*}
    If $x \leq X_{\floor{ns}}^{(n,i)}$, then we have
    \begin{align*}
        & \amsmathbb{E}\big[\vert (G_{n,i}(X_{\floor{ns}}^{(n,i)}) \wedge G_{n,i}(x))
        -
        (G_{n,\floor{ns}}(X_{\floor{ns}}^{(n,i)}) \wedge G_{n,i}(x))\vert \big]\\
        &=
          \amsmathbb{E}\big[ G_{n,i}(x)
        -
        (G_{n,\floor{ns}}(X_{\floor{ns}}^{(n,i)}) \wedge G_{n,i}(x)) \big]  \leq
        \amsmathbb{E}\big[ G_{n,i}(x)
        -
        (G_{n,\floor{ns}}(x) \wedge G_{n,i}(x)) \big] = 0,
    \end{align*}
    since we assumed that $G_{n,i}(x)\leq G_{n,\floor{ns}}(x)$. If $X_{\floor{ns}}^{(n,i)} < x$ then
    \begin{align*}
        & \amsmathbb{E}\big[\vert (G_{n,i}(X_{\floor{ns}}^{(n,i)}) \wedge G_{n,i}(x))
        -
        (G_{n,\floor{ns}}(X_{\floor{ns}}^{(n,i)}) \wedge G_{n,i}(x))\vert \big]\\
        &\leq
        \amsmathbb{E}\big[\vert G_{n,i}(X_{\floor{ns}}^{(n,i)})
        -
        G_{n,\floor{ns}}(X_{\floor{ns}}^{(n,i)}) \vert \big]
        \leq
        \sup_{x\leq T_{s}^{(n)}} \vert G_{n,\floor{ns}}(x) - G_{n,i}(x) \vert 
         \leq  M_{G}\frac{\vert i-\floor{ns} \vert}{n},
    \end{align*}
    since $X_{\floor{ns}}^{(n,i)} < x \leq T_{s}^{(n)}$. To summarize we have now shown that for every $x\leq  T_{s}^{(n)}$ it holds that 
    \begin{align*}
        \operatorname{Var} (1\{Z_{\floor{ns}}^{(n,i)}\leq x\}\delta_{\floor{ns}}^{(n,i)}
         -
         1\{Z_{i}^{(n)}\leq x\}\delta_{i}^{(n)})
         \leq
         (2M_{F} +3M_{G})\frac{\vert i-\floor{ns} \vert}{n}.
    \end{align*}
    which in turn implies that
    \begin{align*}
        &\operatorname{Var}(\widetilde{\amsmathbb{H}}^{1}_{n}(x \vert s)
         -
         \amsmathbb{H}^{1}_{n}(x \vert s))
          \leq
          \frac{2M_{F} + 3M_{G}}{(nh_{n})^{2}} \sum_{i=1}^{n}  K\Big(\frac{s - i/n}{h_{n}}\Big)^{2}\vert i-\floor{ns}\vert /n.
    \end{align*}
    To establish the first assertion note that as above
    \begin{align*}
        \operatorname{Var} (1\{Z_{\floor{ns}}^{(n,i)}\leq x\}
         -
         1\{Z_{i}^{(n)}\leq x\})
         \leq
         \amsmathbb{E}[\vert 1\{Z_{\floor{ns}}^{(n,i)}\leq x\}
         -
         1\{Z_{i}^{(n)}\leq x\}\vert],
    \end{align*}
    where
    \begin{align*}
        &\vert 1\{Z_{\floor{ns}}^{(n,i)}\leq x\}
         -
         1\{Z_{i}^{(n)}\leq x\}\vert \leq
         \vert 1\{Z_{\floor{ns}}^{(n,i)}\leq x\}\delta_{\floor{ns}}^{(n,i)}
         -
         1\{Z_{i}^{(n)}\leq x\}\delta_{i}^{(n)}\vert \\
         &\quad +
         \vert 1\{Z_{\floor{ns}}^{(n,i)}\leq x\}(1-\delta_{\floor{ns}}^{(n,i)})
         -
         1\{Z_{i}^{(n)}\leq x\}(1-\delta_{i}^{(n)})\vert\\
         &=
         \vert 1\{X_{\floor{ns}}^{(n,i)}\leq x \wedge C_{\floor{ns}}^{(n,i)}\}
         -
         1\{X_{i}^{(n)}\leq x \wedge C_{i}^{(n)}\}\vert \\
         &\quad +
         \vert 1\{C_{\floor{ns}}^{(n,i)}\leq x \wedge X_{\floor{ns}}^{(n,i)}\}
         -
         1\{C_{i}^{(n)}\leq x \wedge X_{i}^{(n)}\}\vert. 
    \end{align*}
    Consequently we have that 
    \begin{align*}
         &\operatorname{Var}(\widetilde{\amsmathbb{H}}_{n}(x \vert s)
         -
         \amsmathbb{H}_{n}(x \vert s)) \\
         &\leq
         \frac{1}{(nh_{n})^{2}} \sum_{i=1}^{n}  K\Big(\frac{s - i/n}{h_{n}}\Big)^{2} \Big[
         \amsmathbb{P}(1\{X_{\floor{ns}}^{(n,i)}\leq x \wedge C_{\floor{ns}}^{(n,i)} \}
        \neq
         1\{X_{i}^{(n)}\leq x \wedge C_{i}^{(n)}\}) \\
         &\qquad \qquad \qquad \qquad+
          \amsmathbb{P}(1\{C_{\floor{ns}}^{(n,i)}\leq x \wedge X_{\floor{ns}}^{(n,i)} \}
        \neq
         1\{C_{i}^{(n)}\leq x \wedge X_{i}^{(n)}\})
         \Big],
    \end{align*}
    where the first sum is bounded exactly as $\operatorname{Var}(\widetilde{\amsmathbb{H}}^{1}_{n}(x \vert s) -\amsmathbb{H}^{1}_{n}(x \vert s))$ and the second sum is bounded similarly but with $F_{n,i}$ (respectively $F_{n,\floor{ns}}$) and $G_{n,i}$ (respectively $G_{n,\floor{ns}}$) playing opposite roles. This concludes the proof.
\end{proof}

\begin{lemma}\label{lemma_H_minus_H_tilde}
      Assume Condition~\ref{cond_H_H1_C1_F_G_Lip}. Then 
      \begin{align*}
      &\sup_{x \leq Z_{n:n-k_{n}}}\big\vert \widetilde{\amsmathbb{H}}_{n}(x \vert s)
         -
         \amsmathbb{H}_{n}(x \vert s) \big\vert 
        =
        \mathcal{O}_{\amsmathbb{P}}\big(\log(n) (h_{n}^{2} \vee 1/\sqrt{n} \vee 1/(nh_{n}))\big),\\
        &\sup_{x \leq Z_{n:n-k_{n}}}\big\vert \widetilde{\amsmathbb{H}}^{1}_{n}(x \vert s)
         -
         \amsmathbb{H}^{1}_{n}(x \vert s) \big\vert 
        =
        \mathcal{O}_{\amsmathbb{P}}\big(\log(n) (h_{n}^{2} \vee 1/\sqrt{n} \vee 1/(nh_{n}))\big),
    \end{align*}
    for every $s \in (0,1)$.
\end{lemma}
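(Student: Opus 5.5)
We reduce to the deterministic interval $[0,T_{s}^{(n)}]$ via Lemma~\ref{lemma_Z_n_k_n_eventually_contained}. The difference $D_{n}(x):=\widetilde{\amsmathbb{H}}_{n}(x\vert s)-\amsmathbb{H}_{n}(x\vert s)$ is then split into its mean and a centered fluctuation, and each is bounded uniformly in $x\le T_{s}^{(n)}$. We treat the $\amsmathbb{H}_{n}$ statement; the $\amsmathbb{H}^{1}_{n}$ statement follows verbatim using the second bound of Lemma~\ref{Lemma_var_H_tilde_minus_H}.

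\emph{Mean.} Since $Z^{(n,i)}_{\floor{ns}}$ has law $H_{n,\floor{ns}}$ exactly, we have
\begin{align*}
\amsmathbb{E}[D_{n}(x)]=\frac{1}{nh_{n}}\sum_{i=1}^{n}K\Big(\frac{s-i/n}{h_{n}}\Big)\big(H_{n,\floor{ns}}(x)-H_{n,i}(x)\big).
\end{align*}
This is exactly the bias term handled in the proof of Lemma~\ref{lemma_H_n_and_H_n_1_unif_cons}. The Riemann approximation, the second order Taylor expansion of $u\mapsto H(x,u)$ from Condition~\ref{cond_H_H1_C1_F_G_Lip}, and the symmetry of $K$ give a bound $\mathcal{O}(h_{n}^{2}+1/n+1/(nh_{n}))$ uniformly in $x$. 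This is within the claimed rate.

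\emph{Fluctuation, pointwise.} Write the centered difference as $\sum_{i}\xi_{i}(x)$, where
\begin{align*}
\xi_{i}(x)=(nh_{n})^{-1}K\Big(\frac{s-i/n}{h_{n}}\Big)\big(1\{Z^{(n,i)}_{\floor{ns}}\le x\}-1\{Z^{(n)}_{i}\le x\}-\text{mean}\big).
\end{align*}
The $\xi_{i}(x)$ are independent and bounded by $2M_{K}/(nh_{n})$. By Lemma~\ref{Lemma_var_H_tilde_minus_H}, and because $K$ is supported on $[-1,1]$ so that $\vert i-\floor{ns}\vert/n\le h_{n}+1/n$ on the support, their total variance is
\begin{align*}
v_{n}=\mathcal{O}\Big(\frac{1}{(nh_{n})^{2}}\,nh_{n}\,h_{n}\Big)=\mathcal{O}(1/n).
\end{align*}
Bernstein's inequality then gives, for each fixed $x$,
\begin{align*}
\amsmathbb{P}\big(\vert D_{n}(x)-\amsmathbb{E}D_{n}(x)\vert>t\big)\le 2\exp\Big(-\frac{t^{2}}{2v_{n}+4M_{K}t/(3nh_{n})}\Big).
\end{align*}
Taking $t=C\log(n)\,(1/\sqrt{n}\vee 1/(nh_{n}))$ makes this at most $2n^{-cC}$. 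The $\log(n)$ factor is deliberately generous to absorb the union bound in the next step.

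\emph{Uniformity, the main obstacle.} Here $D_{n}$ is not monotone in $x$, so we cannot simply chain. We first discretize $[0,T_{s}^{(n)}]$ by a grid $x_{0}<\dots<x_{N}$ with $N\asymp n^{2}$, chosen so that both $H_{n,\floor{ns}}$ and $\sum_{i}K(\cdot)H_{n,i}/(nh_{n})$ increase by at most $n^{-2}$ between consecutive points. This is possible by continuity. A union bound over the grid costs a factor $n^{2}$, which the choice of $C$ absorbs.

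Between grid points we use monotonicity of each of the two pieces $\widetilde{\amsmathbb{H}}_{n}$ and $\amsmathbb{H}_{n}$ separately. The oscillation of $D_{n}$ on a cell $[x_{j},x_{j+1}]$ is bounded by the sum of the increments $\widetilde{\amsmathbb{H}}_{n}(x_{j+1})-\widetilde{\amsmathbb{H}}_{n}(x_{j})$ and $\amsmathbb{H}_{n}(x_{j+1})-\amsmathbb{H}_{n}(x_{j})$. Each increment is a weighted sum of indicators with mean $\mathcal{O}(n^{-2})$. A second Bernstein bound plus a union bound over cells shows that, with probability tending to one, every cell increment is $\mathcal{O}(\log(n)/(nh_{n}))$, since each cell contains at most $\mathcal{O}(\log n)$ points with weight $\le M_{K}/(nh_{n})$.

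\emph{Conclusion.} Combining the mean bound, the grid bound and the cell-oscillation bound, we obtain
\begin{align*}
\sup_{x\le T_{s}^{(n)}}\vert D_{n}(x)\vert=\mathcal{O}_{\amsmathbb{P}}\big(\log(n)(h_{n}^{2}\vee 1/\sqrt{n}\vee 1/(nh_{n}))\big).
\end{align*}
Lemma~\ref{lemma_Z_n_k_n_eventually_contained} transfers this to the random interval $[0,Z_{n:n-k_{n}}]$.
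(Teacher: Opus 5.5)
Your proof is correct, but it reaches the bound by a different route from the paper. The paper does not discretize. It uses that $D_n$ is piecewise constant with jumps only at the $2n$ points $Z_i^{(n)}$ and $Z^{(n,i)}_{\floor{ns}}$, so the supremum over $[0,T_s^{(n)}]$ is a maximum over these truncated random points, and it takes a union bound over them. To handle $\amsmathbb{P}(|D_n(Z_i^{(n)}\wedge T_s^{(n)})|>u)$ it conditions on $Z_i^{(n)}=z$ and removes the $i$th summand, which changes $D_n$ by at most $2M_K/(nh_n)$, so that the remaining sum is independent of the evaluation point. A single pointwise Bernstein bound with the variance $M_V/n$ from Lemma~\ref{Lemma_var_H_tilde_minus_H} then finishes.

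You instead use a deterministic quantile grid of size $\asymp n^2$ and bracket between grid points using monotonicity of $\widetilde{\amsmathbb{H}}_n$ and $\amsmathbb{H}_n$ separately. Each approach has an advantage:
\begin{itemize}
\item Yours costs a second Bernstein bound for the cell increments, but it avoids evaluating at data-dependent points and the leave-one-out decoupling, and it is the more standard and more easily extended argument.
\item The paper's union bound is cheaper ($2n$ events rather than $n^2$) and needs no oscillation control.
\end{itemize}
Your version also makes visible that the $\log(n)$ factor is needed only for the stochastic part, since the mean is $\mathcal{O}(h_n^2+1/(nh_n))$.

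One phrase should be tightened. Your grid controls only the kernel-weighted average $\frac{1}{nh_n}\sum_i K(\cdot)H_{n,i}$, not each $H_{n,i}$. So ``each cell contains at most $\mathcal{O}(\log n)$ points'' is not literally justified for the $\amsmathbb{H}_n$ increment. It is also not needed. Apply Bernstein directly to the weighted increment, whose mean is at most $n^{-2}$, whose summands are at most $M_K/(nh_n)$, and whose variance is at most $M_K n^{-2}/(nh_n)$. With threshold $C\log(n)/(nh_n)$ this gives a tail of order $n^{-cC}$, which survives the union over cells.

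For the $\amsmathbb{H}^{1}_n$ statement the same grid works, because increments of $H^{1}_{n,i}$ are dominated by those of $H_{n,i}$.
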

\begin{proof}[Proof of Lemma~\ref{lemma_H_minus_H_tilde}]
We prove the second statement and note that the first statement follows by analogous arguments. As above let $T_{s}^{(n)} := H_{n,\floor{ns}}^{\leftarrow}(1-p_{s}k_{n}/n)$, where $p_{s}$ is chosen in accordance with Lemma~\ref{lemma_Z_n_k_n_eventually_contained}. 
Lemma~\ref{Lemma_var_H_tilde_minus_H} yields that
    \begin{align*}
        &\operatorname{Var}(\widetilde{\amsmathbb{H}}^{1}_{n}(x \vert s)
         -
         \amsmathbb{H}^{1}_{n}(x \vert s))
          \leq
          \frac{2M_{F} + 3M_{G}}{(nh_{n})^{2}} \sum_{i=1}^{n}  K\Big(\frac{s - i/n}{h_{n}}\Big)^{2}\vert i-\floor{ns}\vert /n =
          \frac{M_{V}}{n}(1+o(1)),
    \end{align*}
    reusing the calculations from the proof of Lemma~\ref{lemma_H_n_and_H_n_1_unif_cons} for some strictly positive constant $M_{V}$. Define
    $D_{n}(x) := \widetilde{\amsmathbb{H}}^{1}_{n}(x \vert s)-\amsmathbb{H}^{1}_{n}(x \vert s)$. Since $x \mapsto D_{n}(x)$ is a piecewise constant function with (possible) jumps at the random times 
    \begin{align*}
        \xi_{s}^{n} := (Z_{1}^{(n)}, \ldots, Z_{n}^{(n)}, Z_{\floor{ns}}^{(n,1)}, \ldots, Z_{\floor{ns}}^{(n,n)}),
    \end{align*}
    we know that it attains its maximum in one of these random points. In particular, we have that 
    \begin{align*}
        &\{\sup_{x \leq T_{s}^{(n)}}\vert D_{n}(x) \vert > u\}
        =
        \big\{\max_{x \leq [0, T_{s}^{(n)}] \cap \xi_{s}^{n}}\vert D_{n}(x) \vert > u\big\}
        =
        \bigcup_{x\leq [0, T_{s}^{(n)}] \cap \xi_{s}^{n}}\Big(\{ \vert D_{n}(x) \vert > u \}\Big) \\
        & \subseteq \bigcup_{i=1}^{n}\Big(\{ \vert D_{n}(Z_{i}^{(n)} \wedge T_{s}^{(n)}) \vert > u \}\Big)
        \cup 
        \bigcup_{i=1}^{n}\Big(\{ \vert D_{n}(Z_{\floor{ns}}^{(n,i)} \wedge T_{s}^{(n)}) \vert > u \}\Big).
    \end{align*}
    A union bound then yields
    \begin{align*}
        \amsmathbb{P}\big(\sup_{x \leq T_{s}^{(n)}}\vert D_{n}(x) \vert > u\big)
        \leq
        \sum_{i=1}^{n} \amsmathbb{P}\big(\vert D_{n}(Z_{i}^{(n)} \wedge T_{s}^{(n)}) \vert > u\big)
        +
        \sum_{i=1}^{n} \amsmathbb{P}\big(\vert D_{n}(Z_{\floor{ns}}^{(n,i)} \wedge T_{s}^{(n)}) \vert > u\big).
    \end{align*}
    We consider now the task of bounding the first of the latter two terms. Each summand can be bounded as
    \begin{align*}
        &\amsmathbb{P}\big(\vert D_{n}(Z_{i}^{(n)} \wedge T_{s}^{(n)}) \vert > u\big)
        =
        \int_{0}^{x^{*}} \amsmathbb{P}\big(\vert D_{n}(z \wedge T_{s}^{(n)}) \vert > u \big\vert Z_{i}^{(n)} = z\big) H_{n,i}(\mathrm{d} z) \\
        &= 
        \int_{0}^{T_{s}^{(n)}} \amsmathbb{P}\big(\vert D_{n}(z ) \vert > u \big\vert Z_{i}^{(n)} = z\big) H_{n,i}(\mathrm{d} z)
        +
        \int_{T_{s}^{(n)}}^{x^{*}} \amsmathbb{P}\big(\vert D_{n}(T_{s}^{(n)}) \vert > u \big\vert Z_{i}^{(n)} = z\big) H_{n,i}(\mathrm{d} z).
    \end{align*}
    We focus on the first of the latter terms. Define 
    \begin{align*}
        D_{n}^{-i}(x) := D_{n}(x) -  \frac{1}{nh_{n}} K\Big(\frac{s - i/n}{h_{n}}\Big) 
         (1\{Z_{\floor{ns}}^{(n,i)}\leq x\}\delta_{\floor{ns}}^{(n,i)}
         -
         1\{Z_{i}^{(n)}\leq x\}\delta_{i}^{(n)}),
    \end{align*} 
    i.e. $D_{n}(x)$ with the $i$th summand removed. Remark that then $D_{n}(x) - D_{n}^{-i}(x)\leq 2M_{K}/(nh_{n})$ uniformly in $x$, and using the independence across $i$'s we get that
    \begin{align*}
        &\amsmathbb{P}\big(\vert D_{n}(z) \vert > u \big\vert Z_{i}^{(n)} = z\big) 
        =
        \amsmathbb{P}\big(\vert D_{n}^{-i}(z) + (D_{n}(z) - D_{n}^{-i}(z)) \vert > u \big\vert Z_{i}^{(n)} = z\big)\\
        &\leq  \amsmathbb{P}\big(\vert D_{n}^{-i}(z) \vert > u/2 \big\vert Z_{i}^{(n)} = z\big)
        +
        \amsmathbb{P}\big(\vert D_{n}(z) - D_{n}^{-i}(z) \vert > u/2 \big\vert Z_{i}^{(n)} = z\big) \\
        & \leq
        \amsmathbb{P}\big(\vert D_{n}^{-i}(z) \vert > u/2\big)
        +
        1\{2M_{K}/(nh_{n}) > u/2 \}.
    \end{align*}
    Suppose we pick $u > 4M_{K}/(nh_{n})$ so that the latter indicator is identically zero.
    If we then add and subtract the $i$th term to $D_{n}^{-i}(z)$, we have that 
    \begin{align*}
        \amsmathbb{P}\big(\vert D_{n}^{-i}(z) \vert > u/2\big)
        \leq
        \amsmathbb{P}\big(\vert D_{n}(z) \vert > u/4\big)
        +
        1\{2M_{K}/(nh_{n}) > u/4\},
    \end{align*}
    where the latter indicator is identically zero for $u > 8M_{K}/(nh_{n})$. For such $u$ it holds that 
    \begin{align*}
       &\int_{0}^{T_{s}^{(n)}} \amsmathbb{P}\big(\vert D_{n}(z ) \vert > u \big\vert Z_{i}^{(n)} = z\big) H_{n,i}(\mathrm{d} z)
        \leq
        \int_{0}^{T_{s}^{(n)}} \amsmathbb{P}\big(\vert D_{n}(z) \vert > u/4\big) H_{n,i}(\mathrm{d} z) \\
        & \leq
        \int_{0}^{T_{s}^{(n)}} 
        \amsmathbb{P}\big(\vert D_{n}(z) - \amsmathbb{E}[D_{n}(z)] \vert > u/8 \big) 
        + 1\{\vert\amsmathbb{E}[D_{n}(z)] \vert > u/8 \}  
        H_{n,i}(\mathrm{d} z).
    \end{align*}
    As above we have that $\amsmathbb{E}[D_{n}(z)] \leq \mathcal{O}(h_{n}^{2})$ uniformly in $z$, and consequently the choice of $u > Mh_{n}^{2} \vee 8M_{K}/(nh_{n})$ for some potentially large but finite $n$ and positive $M>0$, yields that the latter indicator is identically zero from that $n$ and onward. We focus then on the leading term, employing the Bernstein-inequality
    \begin{align*}
        &\int_{0}^{T_{s}^{(n)}} 
        \amsmathbb{P}\big(\vert D_{n}(z) - \amsmathbb{E}[D_{n}(z)] \vert > u/8 \big) 
        H_{n,i}(\mathrm{d} z) \leq
        \sup_{x \leq T_{s}^{(n)}} \amsmathbb{P}\big(\vert D_{n}(x) - \amsmathbb{E}[D_{n}(x)] \vert > u/8 \big) \\
        &\leq \sup_{x \leq T_{s}^{(n)}}\exp{\Big(- \frac{(u/8)^{2}/2}{\operatorname{Var}(D_{n}(x)) + 2uM_{k}/(3nh_{n})}\Big) }.
    \end{align*}
    Since $\operatorname{Var}(D_{n}(x)) = \operatorname{Var}(\widetilde{\amsmathbb{H}}^{1}_{n}(x \vert s)-\amsmathbb{H}^{1}_{n}(x \vert s)) = (M_{V}/n)(1+o(1))$ uniformly in $x\leq T_{s}^{(n)}$, we consider $n\in \amsmathbb{N}$ such that $\operatorname{Var}(D_{n}(x)) \leq 2M_{V}/n$ and so
    \begin{align*}
        \sup_{x \leq T_{s}^{(n)}}\exp{\Big(- \frac{(u/8)^{2}/2}{\operatorname{Var}(D_{n}(x)) + 2uM_{k}/(3nh_{n})}\Big) }
        \leq \exp{\Big(- \frac{(u/8)^{2}/2}{2M_{V}/n + 2uM_{k}/(3nh_{n})}\Big) }.
    \end{align*}
     We now specify 
    \begin{align*}
        u = \sqrt{C}\log(n) (h_{n}^{2} \vee 1/\sqrt{n} \vee 1/(nh_{n})),
    \end{align*}
    for some sufficiently large positive constant $C$ in accordance with the above requirements on $u$. We check that the above bound can be made asymptotically negligible for suitable choices of $C$. If the above maximum is asymptotically attained by $h_{n}^{2}$, i.e.  $nh_{n}^{4} \to \infty$, then for sufficiently large $n\in \amsmathbb{N}$ it holds
    \begin{align*}
        &\exp{\Big(- \frac{(u/8)^{2}/2}{2M_{V}/n + 2uM_{k}/(3nh_{n})}\Big)} 
        =
        \exp{\Big(- \frac{C\log(n)^{2}h_{n}^{4}/128}{2M_{v}/n+ 2\sqrt{C}\log(n)h_{n}M_{k}/(3n)}\Big) } \\
        &\leq
        \exp{\Big(- \frac{\log(n)^{2}h_{n}^{4}C/128}{4M_{v}/n}\Big) }
        =
        n^{-\log(n)nh_{n}^{4}C/512} \to 0,
    \end{align*}
    as $n\to \infty$, using that $nh_{n}^{4} \to \infty$. Likewise if $(1/\sqrt{n})$ is the sequence of slowest decay i.e. $1/(nh_{n}^{4}) +  \sqrt{n}h_{n}\to \infty$, then for $n$ sufficiently large 
    \begin{align*}
        &\exp{\Big(- \frac{(u/8)^{2}/2}{2M_{V}/n + 2uM_{k}/(3nh_{n})}\Big)}  =
        \exp{\Big(- \frac{C\log(n)^{2}/128n}{2M_{v}/n+ 2\sqrt{C}\log(n)M_{k}/(3n^{3/2}h_{n})}\Big) }\\
        &\leq
        \exp{\Big(- \frac{C\log(n)^{2}/128n}{2M_{v}/n+ 2\sqrt{C}\log(n)M_{k}/(3n)}\Big) }
        \leq
        \exp{\Big(- \frac{C\log(n)^{2}/128n}{4\sqrt{C}\log(n)M_{k}/(3n)}\Big) }\\
        &
        =
        n^{- \sqrt{C}3/(M_{k}512)} \to 0,
    \end{align*}
    for $C>(M_{K}512/3)^{2}$.  Finally if $(1/(nh_{n}))$ is the sequence of slowest decay i.e. $1/(nh_{n}^{4}) +  1/(\sqrt{n}h_{n})\to \infty$, then for $n$ sufficiently large 
    \begin{align*}
         &\exp{\Big(- \frac{(u/8)^{2}/2}{2M_{V}/n + 2uM_{k}/(3nh_{n})}\Big)} 
         =
         \exp{\Big(- \frac{C\log(n)^{2}/128(nh_{n})^{2}}{2M_{v}/n+ 2\sqrt{C}\log(n)M_{k}/(3(nh_{n})^{2})}\Big) } \\
         &\leq
          \exp{\Big(- \frac{C\log(n)^{2}/128(nh_{n})^{2}}{4\sqrt{C}\log(n)M_{k}/(3(nh_{n})^{2})}\Big) }
          =
           n^{- \sqrt{C}3/(M_{k}512)} \to 0,
    \end{align*}
    as above.

    We may repeat identical arguments on the second integral; subtracting the $i$th term, applying a union bound, then adding it again and applying the Bernstein inequality where the variance term instead is $\operatorname{Var}(D_{n}(T_{s}^{(n)}))$, which can be bounded as above. As above let $C>(M_{K}512/3)^{2}$ and hence, for $n$ sufficiently large, it holds that
    \begin{align*}
         \sum_{i=1}^{n} \amsmathbb{P}\big(\vert D_{n}(Z_{i}^{(n)} \wedge T_{s}^{(n)}) \vert > u\big)
        &\leq
        n  \exp{\Big(- \frac{(u/8)^{2}/2}{2M_{V}/n + 2uM_{k}/(3nh_{n})}\Big) }
        \\
        &\leq
         n^{1- \sqrt{C}3/(M_{k}512)}(1+o(1)) \to 0,
    \end{align*}
    as $n\to \infty$.
    Identical arguments yield that
    \begin{align*}
         \sum_{i=1}^{n} \amsmathbb{P}\big(\vert D_{n}(Z_{\floor{ns}}^{(n,i)} \wedge T_{s}^{(n)}) \vert > u\big)
         \to 0.
    \end{align*}
    In particular, for any given $\varepsilon > 0$ we can choose $C > 0$ such that
    \begin{align*} 
         \amsmathbb{P}\Big(\sup_{x \leq T_{s}^{(n)}}\vert D_{n}(x) \vert >  \sqrt{C}\log(n) (h_{n}^{2} \vee 1/\sqrt{n} \vee 1/(nh_{n}))\varepsilon \Big) \to 0,
    \end{align*}
    and we may conclude that
    \begin{align*}
         \sup_{x \leq T_{s}^{(n)}}\big\vert \widetilde{\amsmathbb{H}}^{1}_{n}(x \vert s)
         -
         \amsmathbb{H}^{1}_{n}(x \vert s) \big\vert 
         = \mathcal{O}_{\amsmathbb{P}}\big(\log(n) (h_{n}^{2} \vee 1/\sqrt{n} \vee 1/(nh_{n}))\big).
    \end{align*}
    Lemma~\ref{lemma_Z_n_k_n_eventually_contained} then yields that the interval $[0, Z_{n:n-k_{n}}]$ can be covered by $[0, T_{s}^{(n)}]$ with probability going to one.
\end{proof}

\begin{lemma}\label{lemma_H_minus_H_gamma_fraction_rate}
    Assume Condition~\ref{cond_H_H1_C1_F_G_Lip} and that $nh_{n}^{2}/k_{n}$ and $\sqrt{nh_{n}}/k_{n}$ are decreasing. For every $s\in (0,1)$ and $\gamma \in [0,1/2)$ it holds that 
    \begin{align*}
        &\sup_{x \leq T_{s}^{(n)}}\frac{\vert\amsmathbb{H}_{n}(x- \vert s) - H_{n,\floor{ns}}(x-)\vert}{(1-H_{n,\floor{ns}}(x-))^{1-\gamma}} =
        \mathcal{O}\Big(
        \Big(\frac{n}{k_{n}}\Big)^{1-\gamma}(h_{n}^{2} 
        + 
        \frac{\sqrt{h_{n}}}{\sqrt{n}})
        +
        \frac{1}{\sqrt{nh_{n}}}\Big(\frac{n}{k_{n}}\Big)^{(1-2\gamma)/2}
        \Big).
    \end{align*}
\end{lemma}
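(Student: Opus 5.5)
The plan is to split the numerator into a deterministic bias and a centered stochastic part, and to bound each after dividing by the weight $(1-H_{n,\floor{ns}}(x-))^{1-\gamma}$. On $[0,T_s^{(n)}]$ this weight is bounded below by $(p_sk_n/n)^{1-\gamma}$. Write
\begin{align*}
\amsmathbb{H}_{n}(x-\vert s)-H_{n,\floor{ns}}(x-)
=\big(\amsmathbb{H}_{n}(x-\vert s)-\amsmathbb{E}[\amsmathbb{H}_{n}(x-\vert s)]\big)
+\big(\amsmathbb{E}[\amsmathbb{H}_{n}(x-\vert s)]-H_{n,\floor{ns}}(x-)\big).
\end{align*}
The bias term is handled exactly as in the proof of Lemma~\ref{lemma_H_n_and_H_n_1_unif_cons}. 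The Taylor argument under Condition~\ref{cond_H_H1_C1_F_G_Lip} and the symmetry of $K$ give $\mathcal{O}(h_n^2)$. The Riemann error and the $\floor{ns}/n-s$ discrepancy give $\mathcal{O}(1/(nh_n)+1/n)$, which I would absorb into $\sqrt{h_n}/\sqrt{n}$. Dividing by the minimal weight yields the deterministic contribution $(n/k_n)^{1-\gamma}(h_n^2+\sqrt{h_n/n})$, uniformly in $x\le T_s^{(n)}$.

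For the stochastic part I would not use the crude minimal weight, since that would only give $(n/k_n)^{1-\gamma}/\sqrt{nh_n}$. Instead I would exploit that the variance of the centered sum at $x$ is of order $(1-H_{n,\floor{ns}}(x-))/(nh_n)$, up to the local discrepancy between $H_{n,i}$ and $H_{n,\floor{ns}}$. The idea is to run the symmetrization and martingale argument of Lemma~\ref{lemma_H_n_and_H_n_1_unif_cons} on the weighted process. Symmetrize with Rademacher variables $(\varepsilon_i)$, following Lemma 2.3.6 of~\cite{vanderVaart2023}. Then order the observations by decreasing $Z_i^{(n)}$, so that for $x$ ranging downward from $T_s^{(n)}$ the process $\sum_i\varepsilon_i 1\{Z_i^{(n)}\ge x\}K((s-i/n)/h_n)$ is a martingale in the number of exceedances; work with the survival form, which is equivalent up to the total-mass term, itself a Riemann error.

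Next, cover $[0,T_s^{(n)}]$ by geometric blocks $B_l=\{x:1-H_{n,\floor{ns}}(x-)\in(2^{-l-1},2^{-l}]\}$ for $l\le L_n\approx\log_2(n/(p_sk_n))$. On $B_l$ the weight is of order $2^{-l(1-\gamma)}$. Doob's inequality bounds the conditional second moment of the block supremum by a constant times $\sum_i K^2(\cdot)\,\amsmathbb{P}(Z_i^{(n)}\ge x_l)\approx nh_n2^{-l}$. Here I use that $1-H_{n,i}$ and $1-H_{n,\floor{ns}}$ are comparable, uniformly for $|i-\floor{ns}|\le nh_n$, by the Lipschitz part of Condition~\ref{cond_H_H1_C1_F_G_Lip}; that comparison produces the extra $\sqrt{h_n/n}$-type discrepancy term. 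The $L^2$ norm of the weighted block supremum is therefore $\lesssim (nh_n)^{-1/2}2^{-l/2}2^{l(1-\gamma)}=(nh_n)^{-1/2}2^{l(1/2-\gamma)}$.

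Finally, sum over the blocks. Since $\gamma<1/2$ the series is geometric and is dominated by the last block, giving $(nh_n)^{-1/2}(n/k_n)^{(1-2\gamma)/2}$; Chebyshev's inequality turns the moment bound into the $\mathcal{O}_{\amsmathbb{P}}$ statement. The monotonicity assumptions on $nh_n^2/k_n$ and $\sqrt{nh_n}/k_n$ should be needed only to keep the bias ratios bounded or decreasing when $T_s^{(n)}$ moves with $n$, and to make the rate tend to zero for $\gamma=0$ as used in Lemma~\ref{lemma_H_tail_fraction}.

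The main obstacle is the blockwise variance control under heterogeneity. The Rényi/Csörgő weighted-empirical technique is unavailable, so I must verify that replacing $1-H_{n,i}$ by $1-H_{n,\floor{ns}}$ in the variance costs only an additive Lipschitz error of order $h_n$. That error is not multiplicative in the tail, so on the deepest blocks it contributes $\sqrt{h_n}/\sqrt{n}$ times $(n/k_n)^{1-\gamma}$. I would bound it separately by adding it to the bias-type term, which explains its appearance in the stated rate.
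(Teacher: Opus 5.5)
Your proposal is correct and reaches exactly the paper's rate. The bias step is the same as the paper's: Taylor expansion plus symmetry of $K$, divided by the minimal weight $(p_sk_n/n)^{1-\gamma}$. After symmetrization, both proofs also use the conditional martingale structure of $\sum_i\varepsilon_iK((s-i/n)/h_n)1\{Z_i^{(n)}\ge x\}$ in the ordering of the $Z_i^{(n)}$.

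The genuine difference is how the weighted supremum is controlled. You peel $[0,T_s^{(n)}]$ into dyadic tail blocks and apply Doob's inequality blockwise. The paper instead uses summation by parts. It sets $D(x)=(1-H_{n,\floor{ns}}(x-))^{-(1-\gamma)}$, with nondecreasing weights $D_m=D(Z_{n:m}\wedge T_s^{(n)})$ and reverse partial sums $S_m=\sum_{j\ge m}\varepsilon_{i_j}K((s-i_j/n)/h_n)$. It bounds $D_m^2S_m^2\le\sum_l(D_l^2-D_{l-1}^2)\max_{r\ge l}S_r^2$, applies Doob once conditionally on the $Z$'s, and re-sums to obtain the weighted maximal inequality
\[
\amsmathbb{E}\Big[\max_{m} D_m^2S_m^2\Big]\le 4\sum_{i=1}^{n} K^2\Big(\frac{s-i/n}{h_n}\Big)\,\amsmathbb{E}\Big[\sup_{x\le T_s^{(n)}}\frac{1\{Z_i^{(n)}\ge x\}}{(1-H_{n,\floor{ns}}(x-))^{2(1-\gamma)}}\Big].
\]
Each expectation is then computed directly. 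It equals $\mathcal{O}((n/k_n)^{1-2\gamma})$ plus $(n/k_n)^{2(1-\gamma)}\sup_{x\le T_s^{(n)}}|H_{n,i}(x)-H_{n,\floor{ns}}(x)|=\mathcal{O}((n/k_n)^{2(1-\gamma)}h_n)$. The second piece comes from replacing $H_{n,i}(\mathrm{d}z)$ by $H_{n,\floor{ns}}(\mathrm{d}z)$.

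So in both proofs the $(n/k_n)^{1-\gamma}\sqrt{h_n/n}$ term is a variance contribution driven by the additive Lipschitz error, exactly as you diagnose in your last paragraph. It is not a bias term. The paper's route gives a single inequality with explicit constants, valid for any monotone weight and with no discretization; there $\gamma<1/2$ enters only through integrability of $(1-u)^{-2(1-\gamma)}$. Your route is more modular and shows that the deepest block dominates both contributions, at the cost of a geometric sum over $\mathcal{O}(\log(n/k_n))$ blocks.

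One small correction in your bias step concerns the Riemann/total-mass error, which gives $(n/k_n)^{1-\gamma}/(nh_n)$. This cannot in general be absorbed into $(n/k_n)^{1-\gamma}\sqrt{h_n/n}$: that requires $h_n\gtrsim n^{-1/3}$, which the sequences of Lemma~\ref{lemma_kn_hn_sequences} need not satisfy. It does, however, equal $(k_nh_n)^{-1/2}$ times the last term $(nh_n)^{-1/2}(n/k_n)^{(1-2\gamma)/2}$, so it is absorbed there because $k_nh_n\to\infty$. The paper's proof simply records this bias as $h_n^2(n/k_n)^{1-\gamma}$.

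Also, the centred part contains no total-mass term: $\amsmathbb{H}_n(x-\vert s)-\amsmathbb{E}[\amsmathbb{H}_n(x-\vert s)]$ equals exactly $-(nh_n)^{-1}\sum_iK((s-i/n)/h_n)(1\{Z_i^{(n)}\ge x\}-\amsmathbb{P}(Z_i^{(n)}\ge x))$. You can therefore symmetrize the survival form directly.
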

\begin{proof} [Proof of Lemma~\ref{lemma_H_minus_H_gamma_fraction_rate}] 
    Using the triangle inequality we get that 
    \begin{align*}
        &\sup_{x \leq T_{s}^{(n)}}
        \frac{\vert\amsmathbb{H}_{n}(x- \vert s) - H_{n,\floor{ns}}(x-)\vert}{(1-H_{n,\floor{ns}}(x-))^{1-\gamma}}\\
        &\leq
        \sup_{x \leq T_{s}^{(n)}}
        \frac{\vert\amsmathbb{H}_{n}(x- \vert s) - \amsmathbb{E}[\amsmathbb{H}_{n}(x- \vert s)]\vert}{(1-H_{n,\floor{ns}}(x-))^{1-\gamma}}
        +
        \sup_{x \leq T_{s}^{(n)}}
        \frac{\vert H_{n,\floor{ns}}(x-) - \amsmathbb{E}[\amsmathbb{H}_{n}(x- \vert s)]\vert}{(1-H_{n,\floor{ns}}(x-))^{1-\gamma}}.
    \end{align*}
    Crudely bounding the latter of the two terms by computing the supremum in the numerator and infimum in the denominator, yields that the latter term is of order $ h_{n}^{2}(n/k_{n})^{1-\gamma}$.\\
    This concludes the deterministic bias term. For the random centered term we use a symmetrization argument, with $\mu_{i}(x) = K((s-i/n)/h_{n})(H_{n,i}(x-)-1)/(1-H_{n,\floor{ns}}(x-))^{1-\gamma}$ when analyzing the mean squared error 
    \begin{align*}
        &\amsmathbb{E}\Big[\sup_{x \leq T_{s}^{(n)}} \Big( \frac{1}{nh_{n}} \sum_{i=1}^{n}  K\Big(\frac{s - i/n}{h_{n}}\Big) 
        \frac{(H_{n,i}(x-) - 1\{Z_{i}^{(n)}\leq x-\} )}{(1-H_{n,\floor{ns}}(x-))^{1-\gamma}} \Big)^{2}\Big] \\
        &\leq
        4  \amsmathbb{E}\Big[\sup_{x \leq T_{s}^{(n)}}\Big( \frac{1}{nh_{n}}\sum_{i=1}^{n}\varepsilon_{i} \Big(  \frac{(H_{n,i}(x-) - 1\{Z_{i}^{(n)}\leq x-\} )}{(1-H_{n,\floor{ns}}(x-))^{1-\gamma}}  - \mu_{i}(u)\Big)
        K\Big(\frac{s-i/n}{h_{n}}\Big)\Big)^{2}\Big] \\
        &=
        4  \amsmathbb{E}\Big[\sup_{x \leq T_{s}^{(n)}}\Big( \frac{1}{nh_{n}}\sum_{i=1}^{n}\varepsilon_{i} 
        \frac{1\{Z_{i}^{(n)} \geq x\}}{(1-H_{n,\floor{ns}}(x-))^{1-\gamma}}
        K\Big(\frac{s-i/n}{h_{n}}\Big)\Big)^{2}\Big]. 
    \end{align*}
    Let $i_{1},\ldots, i_{n}$ be the indices corresponding to the order statistics $Z_{n:1},\ldots , Z_{n:n}$. Define
     \begin{align*}
        D(x):= \frac{1}{(1-H_{n,\floor{ns}}(x-))^{1-\gamma}},
    \end{align*}
    and let $D_{m}:= D(Z_{n:m} \wedge T_{s}^{(n)})$ for $1\leq m \leq n$ which we note is non-decreasing in $m$. Similarly we define the partial sums
    \begin{align*}
        S_{m} := \sum_{j=m}^{n}\varepsilon_{i_{j}} K\Big(\frac{s-i_{j}/n}{h_{n}}\Big), \quad m=1,\ldots, n.
    \end{align*}
    For any given $x \leq T_{s}^{(n)}$, let $m$ be the first index for which $x \leq T_{s}^{(n)} \wedge Z_{i_{m}}^{(n)}$. For that $x$ we have
    \begin{align*}
        \sum_{i=1}^{n}\varepsilon_{i} 
        1\{Z_{i}^{(n)} \geq x\}
        K\Big(\frac{s-i/n}{h_{n}}\Big)
        =
         \sum_{j=m}^{n}\varepsilon_{i_{j}} 
        K\Big(\frac{s-i_{j}/n}{h_{n}}\Big),
    \end{align*}
    and by monotonicity $D(x) \leq D(Z_{i_{m}}^{(n)} \wedge T_{s}^{(n)})$. Consequently
    \begin{align*}
        \Big(D(x)\sum_{i=1}^{n}\varepsilon_{i} 
        1\{Z_{i}^{(n)} \geq x\}
        K\Big(\frac{s-i/n}{h_{n}}\Big)\Big)^{2}
        &\leq
        \Big(D(Z_{i_{m}}^{(n)} \wedge T_{s}^{(n)})\sum_{j=m}^{n}\varepsilon_{i_{j}} 
        K\Big(\frac{s-i_{j}/n}{h_{n}}\Big)\Big)^{2}\\
        &=
        D_{m}^{2}S_{m}^{2},
    \end{align*}
    and in particular 
    \begin{align*}
        &\sup_{x \leq Z_{n:n-k_{n}}}\Big(  \sum_{i=1}^{n}\varepsilon_{i} 
        \frac{1\{Z_{i}^{(n)} \geq x\}}{(1-H_{n,\floor{ns}}(x-))^{1-\gamma}}
        K\Big(\frac{s-i/n}{h_{n}}\Big) \Big )^{2} \leq
        \max_{1\leq m \leq n} D_{m}^{2}S_{m}^{2}. 
    \end{align*}
    Using that $m \mapsto D_{m}$ is non-decreasing and defining $D_{0} \equiv 0$, we bound the latter argument as follows 
    \begin{align*}
        D_{m}^{2}S_{m}^{2}
        =
         \sum_{l=1}^{m} (D_{l}^{2} - D_{l-1}^{2})S_{m}^{2}
         \leq
        \sum_{l=1}^{n} (D_{l}^{2} - D_{l-1}^{2})\max_{l\leq r \leq n}S_{r}^{2},
    \end{align*}
    where we note that the right-hand side does not depend on $m$. Consequently
    \begin{align*}
        \amsmathbb{E}\big[\max_{1\leq m \leq n} D_{m}^{2}S_{m}^{2}\big]
        \leq
          \amsmathbb{E}\Big[ \sum_{l=1}^{n} (D_{l}^{2} - D_{l-1}^{2})\max_{l\leq r \leq n}S_{r}^{2}\Big]
          =
           \amsmathbb{E}\Big[\sum_{l=1}^{n}  (D_{l}^{2} - D_{l-1}^{2}) \amsmathbb{E}\big[ \max_{l\leq r \leq n}S_{r}^{2}\vert \overline{Z}_{n} \big]\Big],
    \end{align*}
    where $\overline{Z}_{n}:=(Z_{1}^{(n)},\ldots, Z_{n}^{(n)})$. We note that on $\sigma(\overline{Z}_{n})$, the process $(S_{m})$ is a reverse-time martingale with respect to $\mathcal{G}_{m} := \sigma(\overline{Z}_{n}, \varepsilon_{i_{m}},\ldots, \varepsilon_{i_{n}})$; it is the reversed sum of the martingale used in the proof of Lemma~\ref{lemma_H_n_and_H_n_1_unif_cons}.
    In particular, Doob's inequality yields that 
    \begin{align*}
        \amsmathbb{E}\big[ \max_{l\leq r \leq n}S_{r}^{2}\vert \overline{Z}_{n} \big]
        \leq
        4\amsmathbb{E}\big[ S_{l}^{2} \vert \overline{Z}_{n} \big]
        =
          4\amsmathbb{E}\Big[ \sum_{j=l}^{n} K^{2}\Big(\frac{s-i_{j}/n}{h_{n}} \Big) \vert \overline{Z}_{n} \Big],
    \end{align*}
    $\amsmathbb{P}$-a.s., which in turn implies that
    \begin{align*}
        &\amsmathbb{E}\big[\max_{1\leq m \leq n} D_{m}^{2}S_{m}^{2}\big]
        \leq
        4\amsmathbb{E}\Big[\sum_{l=1}^{n}  (D_{l}^{2} - D_{l-1}^{2}) \amsmathbb{E}\Big[ \sum_{j=l}^{n} K^{2}\Big(\frac{s-i_{j}/n}{h_{n}} \Big) \vert \overline{Z}_{n} \Big]\Big]\\
        &=
         4\amsmathbb{E}\Big[\sum_{l=1}^{n}  (D_{l}^{2} - D_{l-1}^{2}) \sum_{j=l}^{n} K^{2}\Big(\frac{s-i_{j}/n}{h_{n}} \Big) \Big]
         =
         4\amsmathbb{E}\Big[\sum_{j=1}^{n} K^{2}\Big(\frac{s-i_{j}/n}{h_{n}} \Big) \sum_{l=1}^{j}  (D_{l}^{2} - D_{l-1}^{2}) \Big] \\
         &
         =
         4\amsmathbb{E}\Big[\sum_{j=1}^{n} K^{2}\Big(\frac{s-i_{j}/n}{h_{n}} \Big) D_{j}^{2}\Big]
         =
         4\amsmathbb{E}\Big[\sum_{j=1}^{n} K^{2}\Big(\frac{s-i_{j}/n}{h_{n}} \Big) \sup_{x\leq T_{s}^{(n)}}1\{Z_{i_{j}} \geq x\}D(x)\Big] \\
         &=
         4\amsmathbb{E}\Big[\sum_{i=1}^{n} K^{2}\Big(\frac{s-i/n}{h_{n}} \Big) \sup_{x\leq T_{s}^{(n)}}1\{Z_{i} \geq x\}D(x)\Big] \\
         &=
         \sum_{i=1}^{n} K^{2}\Big(\frac{s-i/n}{h_{n}} \Big)\amsmathbb{E}\Big[ \sup_{x \leq  T_{s}^{(n)}}
        \frac{1\{Z_{i}^{(n)} \geq x\}}{(1-H_{n,\floor{ns}}(x-))^{2(1-\gamma)}}\Big]. 
    \end{align*}
    We have now shown that
    \begin{align*}
        &4 \amsmathbb{E}\Big[\sup_{x \leq T_{s}^{(n)}}\Big( \frac{1}{nh_{n}}\sum_{i=1}^{n}\varepsilon_{i} 
        \frac{1\{Z_{i}^{(n)} \geq x\}}{(1-H_{n,\floor{ns}}(x-))^{1-\gamma}}
        K\Big(\frac{s-i/n}{h_{n}}\Big)\Big)^{2}\Big]\\
        &\leq
        \frac{16}{(nh_{n})^{2}} \sum_{i=1}^{n}\amsmathbb{E}\Big[ \sup_{x \leq  T_{s}^{(n)}}
        \frac{1\{Z_{i}^{(n)} \geq x\}}{(1-H_{n,\floor{ns}}(x-))^{2(1-\gamma)}}\Big]
        K^{2}\Big(\frac{s-i/n}{h_{n}}\Big).
    \end{align*}
    We proceed with analyzing the expectation in each summand. First note that 
    \begin{align*}
        &\amsmathbb{E}\Big[\sup_{x \leq T_{s}^{(n)}}
        \frac{1\{Z_{i}^{(n)} \geq x\}}{(1-H_{n,\floor{ns}}(x-))^{2(1-\gamma)}}
        \Big]\\
        &=
        \amsmathbb{E}\Big[\sup_{H_{n,\floor{ns}}(x-) \leq 1-p_{s}k_{n}/n}
        \frac{1\{H_{n,\floor{ns}}(Z_{i}^{(n)}) > H_{n,\floor{ns}}(x-)\}}{(1-H_{n,\floor{ns}}(x-))^{2(1-\gamma)}}
        \Big],
    \end{align*}
    using continuity of $x \mapsto H_{n,\floor{ns}}(x)$. On the event $\{H_{n,\floor{ns}}(Z_{i}^{(n)}) > 1-p_{s}k_{n}/n\}$ the numerator equals one and
    \begin{align*}
        &\sup_{H_{n,\floor{ns}}(x-) \leq 1-p_{s}k_{n}/n}\frac{1\{H_{n,\floor{ns}}(Z_{i}^{(n)}) \geq  H_{n,\floor{ns}}(x-)\}}{(1-H_{n,\floor{ns}}(x-))^{2(1-\gamma)}}\\
        &=
        \sup_{H_{n,\floor{ns}}(x-) \leq 1-p_{s}k_{n}/n}\frac{1}{(1-H_{n,\floor{ns}}(x-))^{2(1-\gamma)}}
        =
       \Big(\frac{n}{p_{s}k_{n}}\Big)^{2(1-\gamma)}.
    \end{align*}
    On the event $\{H_{n,\floor{ns}}(Z_{i}^{(n)}) \leq 1-p_{s}k_{n}/n\}$, the supremum is attained as $H_{n,\floor{ns}}(x-)$ approaches $H_{n,\floor{ns}}(Z_{i}^{(n)})$ from below and hence
    \begin{align*}
        \sup_{H_{n,\floor{ns}}(x-) \leq 1-p_{s}k_{n}/n}
        \frac{1\{H_{n,\floor{ns}}(Z_{i}^{(n)}) > H_{n,\floor{ns}}(x-)\}}{(1-H_{n,\floor{ns}}(x-))^{2(1-\gamma)}}
        \leq
         \frac{1}{(1-H_{n,\floor{ns}}(Z_{i}^{(n)}))^{2(1-\gamma)}}.
    \end{align*}
    In particular, it holds that 
    \begin{align*}
        &\amsmathbb{E}\Big[\sup_{H_{n,\floor{ns}}(x-) \leq 1-p_{s}k_{n}/n}
        \frac{1\{H_{n,\floor{ns}}(Z_{i}^{(n)}) > H_{n,\floor{ns}}(x-)\}}{(1-H_{n,\floor{ns}}(x-))^{2(1-\gamma)}}
        \Big]
        \\
        &\leq
        \amsmathbb{E}\Big[
        \Big(\frac{n}{p_{s}k_{n}}\Big)^{2(1-\gamma)}1\{H_{n,\floor{ns}}(Z_{i}^{(n)}) > 1-p_{s}k_{n}/n\} \\
         & \qquad +
          \frac{1}{(1-H_{n,\floor{ns}}(Z_{i}^{(n)}))^{2(1-\gamma)}}1\{H_{n,\floor{ns}}(Z_{i}^{(n)}) \leq 1-p_{s}k_{n}/n\}
        \Big] \\
        &=
       \Big(\frac{n}{p_{s}k_{n}}\Big)^{2(1-\gamma)}\amsmathbb{P}(H_{n,\floor{ns}}(Z_{i}^{(n)}) > 1-p_{s}k_{n}/n) 
        +
        \int_{0}^{T_{s}^{(n)}}(1-H_{n,\floor{ns}}(z))^{-2(1-\gamma)} H_{n,i}(\mathrm{d} z) \\
        &\leq
        \Big(\frac{n}{p_{s}k_{n}}\Big)^{2(1-\gamma)}(1-H_{n,i}(H_{n,\floor{ns}}^{\leftarrow}(1-p_{s}k_{n}/n))) +  \int_{0}^{1-p_{s}k_{n}/n}(1-u)^{-2(1-\gamma)} \mathrm{d} u \\
         &\qquad 
         +    \int_{0}^{T_{s}^{(n)}}(1-H_{n,\floor{ns}}(z))^{-2(1-\gamma)} (H_{n,i}(\mathrm{d} z)-H_{n,\floor{ns}}(\mathrm{d} z))\\
         &\leq
        \Big(\frac{n}{p_{s}k_{n}}\Big)^{2(1-\gamma)}(p_{s}k_{n}/n - (H_{n,i}(H_{n,\floor{ns}}^{\leftarrow}(1-p_{s}k_{n}/n)) - H_{n,\floor{ns}}(H_{n,\floor{ns}}^{\leftarrow}(1-p_{s}k_{n}/n)))) \\
         &\qquad 
         +  \Big(\frac{n}{p_{s}k_{n}}\Big)^{2(1-\gamma)}\sup_{x\leq T_{s}^{(n)}} \vert H_{n,i}(x) - H_{n,\floor{ns}}(x) \vert  + \int_{p_{s}k_{n}/n}^{1}v^{-2(1-\gamma)} \mathrm{d} v\\
         &\leq 
        \Big(\frac{n}{p_{s}k_{n}}\Big)^{1-2\gamma} 
         +
         \frac{(n/(p_{s}k_{n}))^{1-2\gamma} -1}{1-2\gamma} \\
         & \qquad +
         2\Big(\frac{n}{p_{s}k_{n}}\Big)^{2(1-\gamma)} 
         \sup_{x\leq T_{s}^{(n)}} \vert H_{n,i}(x) - H_{n,\floor{ns}}(x) \vert
         \\
         &=
         \Big(\frac{n}{p_{s}k_{n}}\Big)^{1-2\gamma}\Big(1 + \frac{1}{1-2\gamma} +  \Big(\frac{n}{p_{s}k_{n}}\Big)\sup_{x\leq T_{s}^{(n)}} \vert H_{n,i}(x) - H_{n,\floor{ns}}(x) \vert \Big) - \frac{1}{1-2\gamma}\\
         &=
         \mathcal{O}\Big(\frac{n}{p_{s}k_{n}}\Big)^{1-2\gamma}
         +
         \mathcal{O}\Big(\frac{n}{p_{s}k_{n}}\Big)^{2(1-\gamma)}\sup_{x\leq T_{s}^{(n)}} \vert H_{n,i}(x) - H_{n,\floor{ns}}(x) \vert,
    \end{align*}
    since the first of the latter terms is growing as $n$ increases due to the fact that $\gamma \in [0,1/2)$. Returning to the mean squared error of interest, we now have
    \begin{align*}
        &\amsmathbb{E}\Big[\sup_{x \leq T_{s}^{(n)}} \Big( \frac{1}{nh_{n}} \sum_{i=1}^{n}  K\Big(\frac{s - i/n}{h_{n}}\Big) 
        \frac{(H_{n,\floor{ns}}(x-) - 1\{Z_{i}^{(n)}\leq x-\} )}{(1-H_{n,\floor{ns}}(x-))^{1-\gamma}} \Big)^{2}\Big] \\
        &\leq
       \frac{16}{(nh_{n})^{2}}  \sum_{i=1}^{n}\amsmathbb{E}\Big[\sup_{x \leq T_{s}^{(n)}}
        \frac{1\{Z_{i}^{(n)} \geq x\}}{(1-H_{n,\floor{ns}}(x-))^{2(1-\gamma)}}
        \Big] K^{2}\Big(\frac{s-i/n}{h_{n}}\Big) \\
        &=
        \mathcal{O} \Big(\frac{1}{(nh_{n})^{2}}\Big(\frac{n}{k_{n}}\Big)^{1-2\gamma}\Big)
          \sum_{i=1}^{n} K^{2}\Big(\frac{s-i/n}{h_{n}}\Big)\Big( 1 + \Big(\frac{n}{k_{n}}\Big)
          \sup_{x\leq T_{s}^{(n)}} \vert H_{n,i}(x) - H_{n,\floor{ns}}(x) \vert \Big) \\
          &=
          \mathcal{O} \Big(\frac{1}{nh_{n}}\Big(\frac{n}{k_{n}}\Big)^{1-2\gamma}\Big)
          +
          \mathcal{O} \Big(\frac{h_{n}}{n}\Big(\frac{n}{k_{n}}\Big)^{2(1-\gamma)}\Big).
    \end{align*}
    Consequently
    \begin{align*}
         &\sup_{x \leq T_{s}^{(n)}}
        \frac{\vert\amsmathbb{H}_{n}(x- \vert s) - H_{n,\floor{ns}}(x-)\vert}{(1-H_{n,\floor{ns}}(x-))^{1-\gamma}}\\
        &= \mathcal{O}_{\amsmathbb{P}}\Big(
        \Big(\frac{n}{k_{n}}\Big)^{1-\gamma}(h_{n}^{2} 
        + 
        \frac{\sqrt{h_{n}}}{\sqrt{n}})
        +
        \frac{1}{\sqrt{nh_{n}}}\Big(\frac{n}{k_{n}}\Big)^{(1-2\gamma)/2}
        \Big).
    \end{align*}
    Lemma~\ref{lemma_Z_n_k_n_eventually_contained} yields that the interval $[0, Z_{n:n-k_{n}}]$ can be covered by $[0, T_{s}^{(n)}] $ with probability going to one. This finishes the proof.
\end{proof}

\begin{lemma}\label{lemma_H_tilde_tail_fraction}
   Assume Condition~\ref{cond_H_H1_C1_F_G_Lip}. For any $s\in (0,1)$ it then holds that 
   \begin{align*}
    \sup_{x\leq Z_{n:n-k_{n}}}
    \Big|
        \frac{1-\widetilde{\amsmathbb{H}}_{n}(x-\vert s)}
        {1-H_{n,\floor{ns}}(x-)}
        -1
    \Big|
    =
    \mathcal{O}_{\amsmathbb{P}}\Big(\frac{1}{\sqrt{k_{n}h_{n}}} \Big).
\end{align*}
\end{lemma}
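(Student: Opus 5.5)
The key observation is that $\widetilde{\amsmathbb{H}}_{n}(\cdot\vert s)$ is built from the locally transported array $Z_{\floor{ns}}^{(n,1)},\ldots,Z_{\floor{ns}}^{(n,n)}$. These are independent and \emph{identically} distributed with law $H_{n,\floor{ns}}$; only the weights $K((s-i/n)/h_{n})$ are non-constant. So I would rerun the symmetrization/martingale argument of the proof of Lemma~\ref{lemma_H_minus_H_gamma_fraction_rate} with $\gamma=0$ on this iid array. This removes both the heterogeneity bias $\sup_{x}\vert H_{n,i}(x)-H_{n,\floor{ns}}(x)\vert$ and the smoothing bias. As in the earlier proofs, I first work on the deterministic interval $[0,T_{s}^{(n)}]$ and then pass to $[0,Z_{n:n-k_{n}}]$ at the end via Lemma~\ref{lemma_Z_n_k_n_eventually_contained}.

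First, write
\begin{align*}
\frac{1-\widetilde{\amsmathbb{H}}_{n}(x-\vert s)}{1-H_{n,\floor{ns}}(x-)}-1
=\frac{\frac{1}{nh_{n}}\sum_{i}K\big(\frac{s-i/n}{h_{n}}\big)\big(1\{Z_{\floor{ns}}^{(n,i)}\geq x\}-(1-H_{n,\floor{ns}}(x-))\big)}{1-H_{n,\floor{ns}}(x-)}
+\Big(\frac{1}{nh_{n}}\sum_{i}K\Big(\frac{s-i/n}{h_{n}}\Big)-1\Big)\frac{H_{n,\floor{ns}}(x-)}{1-H_{n,\floor{ns}}(x-)}\cdot(-1).
\end{align*}
The second term is a Riemann error of order $\mathcal{O}(1/(nh_{n}))$ times $n/(p_{s}k_{n})$ on $[0,T_{s}^{(n)}]$. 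That is $\mathcal{O}(1/(k_{n}h_{n}))=o(1/\sqrt{k_{n}h_{n}})$. The first term is exactly centered, because each summand has mean zero.

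For the first term, I bound its second moment by the chain of steps in Lemma~\ref{lemma_H_minus_H_gamma_fraction_rate}:
\begin{itemize}
\item symmetrize with Rademacher variables (Lemma 2.3.6 of~\cite{vanderVaart2023});
\item order the points $Z_{\floor{ns}}^{(n,i)}$ and use the nondecreasing weights $D_{m}=(1-H_{n,\floor{ns}}(\cdot-))^{-1}$ evaluated at the order statistics truncated at $T_{s}^{(n)}$;
\item apply the Abel summation trick $D_{m}^{2}S_{m}^{2}\le\sum_{l}(D_{l}^{2}-D_{l-1}^{2})\max_{r\ge l}S_{r}^{2}$;
\item apply Doob's inequality to the reverse martingale $S_{m}$.
\end{itemize}
This gives the bound
\begin{align*}
\frac{16}{(nh_{n})^{2}}\sum_{i=1}^{n}K^{2}\Big(\frac{s-i/n}{h_{n}}\Big)\amsmathbb{E}\Big[\sup_{x\le T_{s}^{(n)}}\frac{1\{Z_{\floor{ns}}^{(n,i)}\ge x\}}{(1-H_{n,\floor{ns}}(x-))^{2}}\Big].
\end{align*}
Because every $Z_{\floor{ns}}^{(n,i)}$ has distribution $H_{n,\floor{ns}}$ exactly, the expectation is computed in closed form. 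It is at most $(n/(p_{s}k_{n}))^{2}\cdot p_{s}k_{n}/n+\int_{p_{s}k_{n}/n}^{1}v^{-2}\,\mathrm{d} v\le 2n/(p_{s}k_{n})$. The cross term involving $H_{n,i}-H_{n,\floor{ns}}$ that appeared in Lemma~\ref{lemma_H_minus_H_gamma_fraction_rate} is now absent. Since $\sum_{i}K^{2}((s-i/n)/h_{n})=\mathcal{O}(nh_{n})$, the mean squared error is $\mathcal{O}(1/(nh_{n})\cdot n/k_{n})=\mathcal{O}(1/(k_{n}h_{n}))$. Chebyshev's inequality then yields the $\mathcal{O}_{\amsmathbb{P}}(1/\sqrt{k_{n}h_{n}})$ rate on $[0,T_{s}^{(n)}]$.

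The main obstacle is the step where $\sup_{x\le T_{s}^{(n)}}$ is reduced to a maximum over the ordered transported sample. One must check that the cut at $T_{s}^{(n)}$ keeps $D_{m}\le n/(p_{s}k_{n})$. One must also check that conditioning on the full sample leaves the Rademacher signs independent with the same kernel weights, so the reverse martingale is valid. Once this is in place, the remaining steps are direct copies of the earlier computations. Finally, Lemma~\ref{lemma_Z_n_k_n_eventually_contained} transfers the bound from $[0,T_{s}^{(n)}]$ to $[0,Z_{n:n-k_{n}}]$.
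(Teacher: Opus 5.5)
Your proposal is correct and follows essentially the same route as the paper. Both split off the deterministic term $\bigl(1-\frac{1}{nh_n}\sum_i K(\frac{s-i/n}{h_n})\bigr)H_{n,\lfloor ns\rfloor}(x-)/(1-H_{n,\lfloor ns\rfloor}(x-))=\mathcal{O}(1/(k_nh_n))$, and both treat the centered term by rerunning the symmetrization/reverse-martingale argument of Lemma~\ref{lemma_H_minus_H_gamma_fraction_rate} with $\gamma=0$, so that the heterogeneity cross term vanishes for the iid transported array; your explicit bound of $2n/(p_sk_n)$ on the expectation just writes out what the paper leaves as ``similar arguments,'' before the transfer via Lemma~\ref{lemma_Z_n_k_n_eventually_contained}.
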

\begin{proof}[Proof of Lemma~\ref{lemma_H_tilde_tail_fraction}]
   Note that
    \begin{align*}
        &\frac{1-\widetilde{\amsmathbb{H}}_{n}(x- \vert s)}{1- H_{n,\floor{ns}}(x-)} -1
        \frac{H_{n,\floor{ns}}(x-)-\amsmathbb{E}[\widetilde{\amsmathbb{H}}_{n}(x- \vert s)]}{1-H_{n,\floor{ns}}(x-)}
        +
        \frac{\amsmathbb{E}[\widetilde{\amsmathbb{H}}_{n}(x- \vert s)]- \widetilde{\amsmathbb{H}}_{n}(x- \vert s)}{1-H_{n,\floor{ns}}(x-)}.
    \end{align*}
    The first of the latter terms can be bounded for any $ x \leq T_{s}^{(n)}$ by
    \begin{align*}
        \sup_{x \leq T_{s}^{(n)}}\frac{H_{n,\floor{ns}}(x)}{1-H_{n,\floor{ns}}(x)} \Big\vert
        1 - \frac{1}{nh_{n}}\sum_{i=1}^{n}K\Big(\frac{s-i/n}{h_{n}}\Big)
        \Big\vert
        \leq
        \Big(\frac{n}{p_{s}k_{n}}\Big)\mathcal{O}\Big(\frac{1}{nh_{n}}\Big)
        =
        \mathcal{O}\Big(\frac{1}{k_{n}h_{n}}\Big).
    \end{align*}
    The latter of the above terms can be bounded by repeating arguments similar to those in the proof of Lemma~\ref{lemma_H_minus_H_gamma_fraction_rate}, but noting that every indicator function in $\widetilde{\amsmathbb{H}}_{n}(x- \vert s)$ has expected value identical to $H_{n,\floor{ns}}(x-)$, which is also present in the denominator. This is in contrast to the mean squared error calculations in Lemma~\ref{lemma_H_minus_H_gamma_fraction_rate}, where the fact that $\amsmathbb{H}_{n}(x- \vert s)$ is constructed from independent but non-identically distributed variables with mean $H_{n,i}(x-)$, induces a bias contribution of order $\sqrt{nh_{n}}/k_{n}$. This contribution is not present here and hence 
    \begin{align*}
         \sup_{x \leq T_{s}^{(n)}}  \Big\vert\frac{\amsmathbb{E}[\widetilde{\amsmathbb{H}}_{n}(x- \vert s)]- \widetilde{\amsmathbb{H}}_{n}(x- \vert s)}{1-H_{n,\floor{ns}}(x-)}  \Big\vert
         =
         \mathcal{O}_{\amsmathbb{P}}\Big(\frac{1}{\sqrt{k_{n}h_{n}}}\Big).
    \end{align*}
    Consequently
    \begin{align*}
    \sup_{x\leq Z_{n:n-k_{n}}}
    \Big|
        \frac{1-\widetilde{\amsmathbb{H}}_{n}(x-\vert s)}
        {1-H_{n,\floor{ns}}(x-)}
        -1
    \Big|
    =
     \mathcal{O}_{\amsmathbb{P}}\Big(\frac{1}{k_{n}h_{n}} + \frac{1}{\sqrt{k_{n}h_{n}}} \Big)
     =
     \mathcal{O}_{\amsmathbb{P}}\Big(\frac{1}{\sqrt{k_{n}h_{n}}} \Big)
    \end{align*}

    Lemma~\ref{lemma_Z_n_k_n_eventually_contained} yields that the interval $[0, Z_{n:n-k_{n}}]$ can be covered by $[0, T_{s}^{(n)}] $ with probability going to one. This finishes the proof.
    \end{proof}
    As noted in Section~\ref{section_main_results}, this result is directly comparable to the rate $\mathcal{O}_{\amsmathbb{P}}(1/\sqrt{k_{n}})$ in Lemma 1 of~\cite{Csorgo_96} for $\gamma = 0$; however, we pay an additional factor of $1/\sqrt{h_{n}}$ in the rate since we use local kernel estimators.

\newpage
\section{Additional simulation results}\label{appendix_simulations}
In this section we provide additional details on the simulation results in Section~\ref{section_simulation_study} and provide some complementary simulation studies.

\subsection{Additional details of Section~\ref{section_simulation_study}}
In Section~\ref{section_simulation_study} we consider the heteroscedastic Lomax, Fr\'echet and Burr models. Below we restate the constructions and argue why they admit heteroscedastic distribution functions in the sense of~\eqref{eq_Scedasis_F_and_G}.
\begin{enumerate}
   \item Heteroscedastic Lomax (Pareto II): for $\alpha_{F}=1$ and $\alpha_{G}=2$ 
    \begin{align*}
        X_{i}^{(n)} := c_{F}(i/n)^{1/\alpha_{F}}((1-U_{i})^{-1/\alpha_{F}} -1),
        \qquad
        C_{i}^{(n)} := c_{G}(i/n)^{1/\alpha_{G}}((1-V_{i})^{-1/\alpha_{G}} -1).
    \end{align*}
    Then $1-F_{n,i}(x) = (1+x/c_{F}(i/n)^{1/\alpha_{F}})^{-\alpha_{F}}$ and $1-G_{n,i}(x) = (1+x/c_{G}(i/n)^{1/\alpha_{G}})^{-\alpha_{G}}$ and so the choices of baseline functions $1-F(x):=x^{-\alpha_{F}}$ and $1-G(x):=x^{-\alpha_{G}}$ ensure that~\eqref{eq_Scedasis_F_and_G} holds.
    \item Heteroscedastic Fr\'echet: for $\alpha_{F}=1$ and $\alpha_{G}=2$,
    \begin{align*}
        X_{i}^{(n)} := (c_{F}(i/n)/\log(1/U_{i}))^{1/\alpha_{F}},
        \qquad
        C_{i}^{(n)} := (c_{G}(i/n)/\log(1/V_{i}))^{1/\alpha_{G}}.
    \end{align*}
    Then $F_{n,i}(x) = \exp(-c_{F}(i/n)x^{-\alpha_{F}})$ and $G_{n,i}(x) = \exp(-c_{G}(i/n)x^{-\alpha_{G}})$, and the usual Taylor expansion of the exponential function yields that~\eqref{eq_Scedasis_F_and_G} holds for the choices $1-F(x):=x^{-\alpha_{F}}$ and $1-G(x):=x^{-\alpha_{G}}$.
    \item Heteroscedastic Burr: for $a_{F}=a_{G}=2$, $b_{F}=1/2$ and $b_{G}=1$,
    \begin{align*}
        &X_{i}^{(n)} := c_{F}(i/n)^{1/(a_{F}b_{F})}((1-U_{i})^{-1/b_{F}} -1)^{1/a_{F}},\\
        &C_{i}^{(n)} := c_{G}(i/n)^{1/(a_{G}b_{G})}((1-V_{i})^{-1/b_{G}} -1)^{1/a_{G}},
    \end{align*}
    for positive constants $a_{F},b_{F},a_{G},b_{G}$. In this case, 
    \begin{align*}
        1-F_{n,i}(x) &= 
        (1+x^{a_{F}}c_{F}(i/n)^{-1/b_{F}})^{-b_{F}}\\
        &=
        c_{F}(i/n)x^{-a_{F}b_{F}}\big(1 + c_{F}(i/n)^{1/b_{F}}x^{-a_{F}} \big)^{-b_{F}}
        =
        c_{F}(i/n)x^{-a_{F}b_{F}}(1 +o(1)),
    \end{align*}
    as $x \to \infty$ using the usual Taylor expansion, and hence~\eqref{eq_Scedasis_F_and_G} holds for the choices $1-F(x):=x^{-a_{F}b_{F}}$ and $1-G(x):=x^{-a_{G}b_{G}}$.
\end{enumerate}

\begin{remark}
\label{remark_heteroscedastic_constructions_satisfy_condition}
For each of the three constructions, define the extended distribution functions $F(x,u)$ and $G(x,u)$ by replacing $c_{F}(i/n)$ and $c_{G}(i/n)$ by $c_{F}(u)$ and $c_{G}(u)$, respectively. Because $c_{F}$ and $c_{G}$ are positive on $(0,1)$ and continuously differentiable with locally Lipschitz first derivatives, these extensions satisfy Condition~\ref{cond_H_H1_C1_F_G_Lip}. Indeed, fix $s\in(0,1)$ and consider a small neighborhood $I_{s}\subset(0,1)$. On $I_{s}$, the functions $c_{F}$ and $c_{G}$ are bounded, non-zero, and their first-order derivatives are bounded and Lipschitz. Differentiation of the three distribution functions and their densities shows that each $\partial G(x,u)/\partial u$ is uniformly bounded in $x$ and Lipschitz in $u$ with a Lipschitz constant independent of $x$. Similarly, each $\partial f(x,u)/\partial u$ admits an integrable envelope in $x$ and is Lipschitz in $u$ with an integrable Lipschitz modulus. Therefore, all the requirements of Remark~\ref{remark_sufficient_condition} are satisfied, and hence so is Condition~\ref{cond_H_H1_C1_F_G_Lip}.
\end{remark}

Denote by $\check{c}_{F}^{k_{n}}(s\vert s_{0})$ a $k_{n}$-dependent scedasis estimator normalized at a fixed $s_{0}\in (0,1)$, based on $n$ observations. Denote by $B$ the number of batches of $n$ realizations, and denote the estimator computed from batch $b$ by $\check{c}_{F}^{k_{n},b}(s\vert s_{0})$. Using this notation, we estimate the pointwise mean squared error (MSE) at a fixed $s$, as well as the pathwise uniform error by
\begin{align*}
    \widehat{\text{MSE}}(\check{c}_{F}^{k_{n}}(s\vert s_{0}))
    &:=
    \frac{1}{B}\sum_{b=1}^{B}\Big(\check{c}_{F}^{k_{n},b}(s\vert s_{0}) - \frac{c_{F}(s)}{c_{F}(s_{0})}\Big)^{2}, 
    \\
    \widehat{E}^{\infty}(\check{c}_{F}^{k_{n}}(\cdot\vert s_{0}))
    &:=
    \frac{1}{B}\sum_{b=1}^{B}\Big\lVert\check{c}_{F}^{k_{n},b}(\cdot\vert s_{0}) - \frac{c_{F}(\cdot)}{c_{F}(s_{0})}\Big\rVert_{\infty},
\end{align*}
where the estimator $\check{c}_{F}^{k_{n}}(s\vert s_{0})$ is either one of the two estimators 
\begin{align*}
    \hat{c}_{F}(s \vert s_{0})
    =
    \frac{1-\amsmathbb{F}^{(n)}(Z_{n:n-k_{n}} \vert s)}{1-\amsmathbb{F}^{(n)}(Z_{n:n-k_{n}} \vert s_{0})},
    \qquad
    \hat{c}_{F}^{\text{EHZ}}(s\vert s_{0})
    =
    \frac{\hat{c}_{F}^{\text{EHZ}}(s)}{\hat{c}_{F}^{\text{EHZ}}(s_{0})}.
\end{align*}
Recall that we fix $s_{0}=1/2$ and $s=2/5$, draw $B=200$ batches of $n=2{,}000$ observations for each of the distributions, and use the uniform kernel.\\
As noted in Section~\ref{section_simulation_study}, the estimated values are only stored when the denominators are non-zero. To quantify the rate at which these estimators are discarded we report in Table~\ref{tab_denominator_failure_rates} the maximum failure rates for each bandwidth and estimator across all batches, distributions and the chosen range of $k_{n}/n$. 
\begin{table}[hbt!]
\centering
\begin{tabular}{lcc}
\toprule
Bandwidth & Einmahl et al. & Proposal \\
\midrule
\(h=0.08\) & 5.0\% & 1.5\% \\
\(h=0.15\) & 0.0\% & 0.0\% \\
\(h=0.22\) & 0.0\% & 0.0\% \\
\bottomrule
\end{tabular}
\caption{Maximum denominator failure rates over the displayed threshold range $k_{n}/n \in [0.005, 0.5]$, based on $B={200}$ batches of $n=2{,}000$ observations. For each bandwidth and method, the maximum is taken across the Lomax, Fr\'echet and Burr heteroscedastic designs.}
\label{tab_denominator_failure_rates}
\end{table}
This table shows that even in the worst cases only a negligible proportion of the estimated values are discarded because their denominators are zero. Hence the displayed simulations are representative of the underlying properties of the estimators.

\subsection{Quantifying bias and integrated squared error}
Using the same distributions and parameter choices, we also quantify the corresponding pointwise bias and pathwise integrated squared error (ISE) for our estimator and the relative Einmahl et al.-benchmark as above
\begin{align*}
    \widehat{\text{Bias}}(\check{c}_{F}^{k_{n}}(s\vert s_{0}))
    &:=
    \frac{1}{B}\sum_{b=1}^{B}\Big(\check{c}_{F}^{k_{n},b}(s\vert s_{0}) - \frac{c_{F}(s)}{c_{F}(s_{0})}\Big),
    \\
    \widehat{\text{ISE}}(\check{c}_{F}^{k_{n}}(\cdot\vert s_{0}))
    &:=
    \frac{1}{B}\sum_{b=1}^{B}\int_{h_{n}}^{1-h_{n}} \Big(\check{c}_{F}^{k_{n},b}(s\vert s_{0}) - \frac{c_{F}(s)}{c_{F}(s_{0})}\Big)^{2} \mathrm{d} s,
\end{align*}

The corresponding errors are depicted in Figure~\ref{fig_simulation_bias_ise}. The ISE is computed over the interior grid $s \in [h_{n},1-h_{n}]$, to retain correct sample sizes.
\begin{figure}[hbt!] 
    \centering
    \includegraphics[width=.75\textwidth]{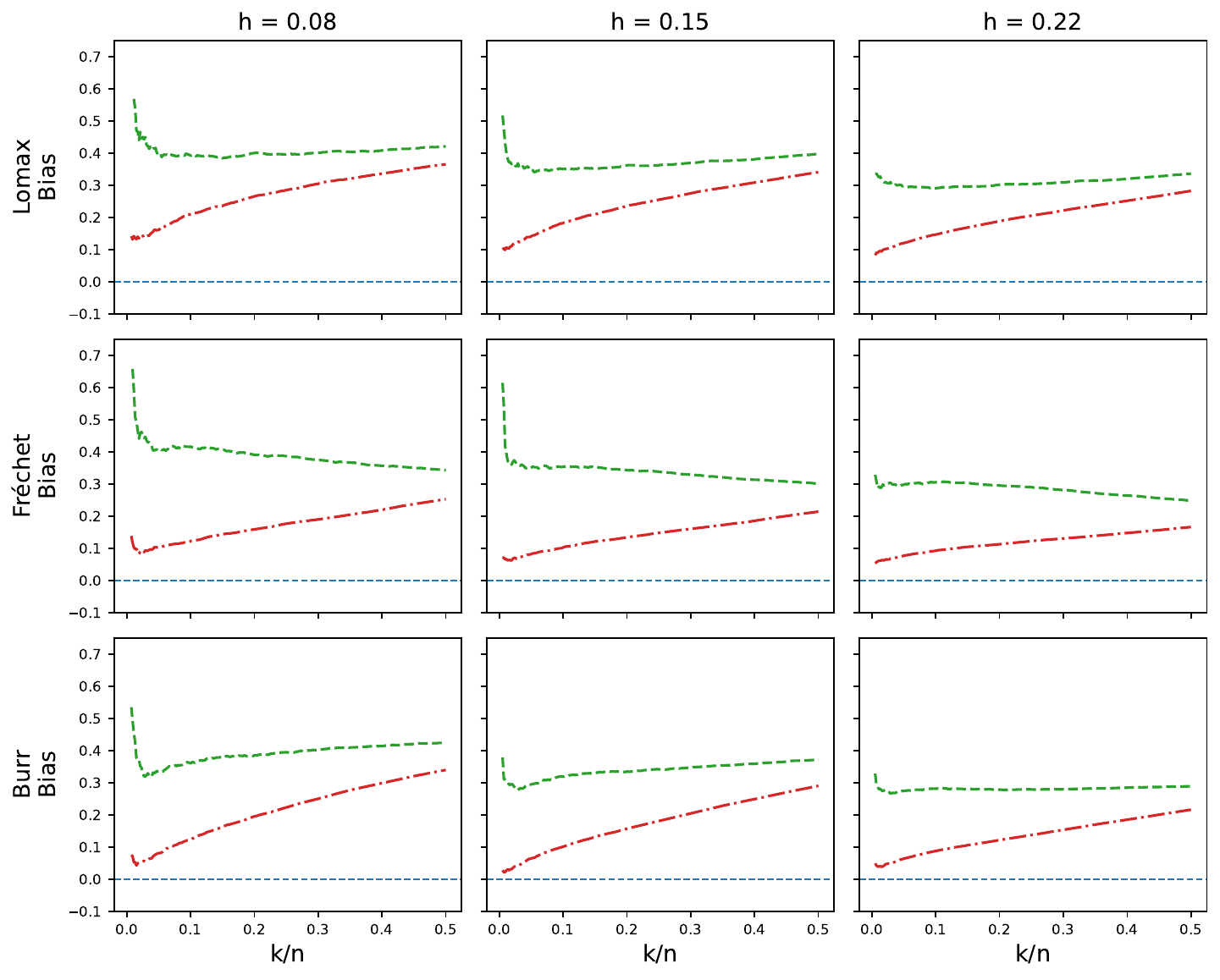} 
    \includegraphics[width=.75\textwidth]{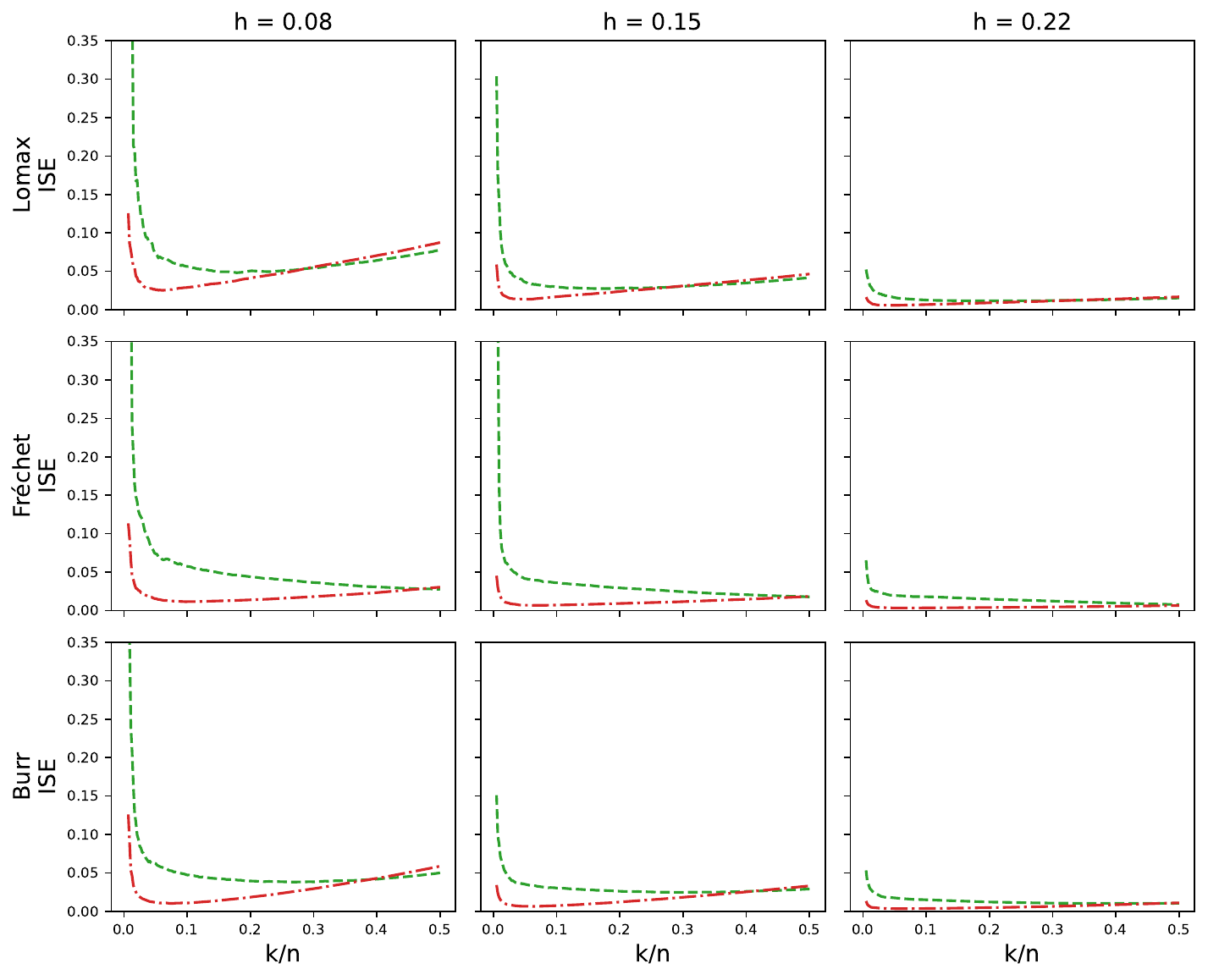} 
    \caption{Simulation study: Estimated pointwise bias (top panel) for $s=2/5$, and pathwise integrated squared error (bottom panel) for $\hat{c}_{F}(\cdot \vert 1/2)$ and $  \hat{c}_{F}^{\mathrm{EHZ}}(\cdot \vert 1/2)$ in red dot-dashed and green dashed lines respectively as functions of $k_{n}/n \in [0.005,1/2]$, where the errors are computed from $B=200$ batches of $n=2{,}000$ simulated observations from heteroscedastic Lomax, Fr\'echet and Burr (in first, second and third row respectively for bias and fourth, fifth and sixth respectively for integrated squared error) for bandwidth values $h_{n}\in \{0.08, 0.15, 0.22\}$ (from first to last column respectively) corresponding to the approximate values of $n^{-1/3}, n^{-1/4}, n^{-1/5}$.}
    \label{fig_simulation_bias_ise}
\end{figure}
Across all three simulation designs, the proposed estimator yields substantially smaller bias than the relative Einmahl et al.-benchmark over the displayed threshold range. The improvement is most pronounced for small and moderate values of $k_{n}/n$, where heterogeneous censoring has the largest effect on the observed tail. For larger bandwidths, the loss curves become smoother and the threshold sensitivity is reduced.
The same conclusions hold for the integrated squared error; however, the proposed estimator only consistently outperforms the relative Einmahl et al.-benchmark in the tail region for $k_{n}/n$ approximately smaller than $0.25$. 
\newpage
{\color{white}.}
\newpage
\section{Additional real data analysis results}\label{appendix_real_data}
In this appendix we provide some additional details on how the proposed estimator can be used for insurance claims. We proceed to explain how the claim sizes have been inflation-adjusted and finally we provide tail diagnostics on the used data.
\subsection{Actuarial considerations}
Insurance companies often cover long-tailed risks subject to random right-censoring, such as workers' compensation, fire insurance and automobile liability insurance; see~\cite{mcneil1997} and~\cite{embrechts1997}. The reporting delay between occurrence and reporting date and the subsequent delay between reporting and settlement induce incomplete observation of the full, or “ultimate,” claim amount. Consequently, actuaries must model claims that are incurred but not yet reported (IBNR) and claims that are reported but not yet settled (RBNS), which are central concerns in claims reserving.\\
As our theoretical results hold under proportional tails without assuming the domain of attraction, our estimator can in principle also be applied to short-tailed claims, but we focus on heavy-tailed claims. \\\\
While classical reserving methods model claim severities on aggregate claims data of homogeneous portfolios, see~\citep{bornhuetter1972, mack1994stochastic}, other papers have proposed modeling individual claims, see~\citep{arjas1989claims, norberg1993prediction} and~\cite{Wuthrich03072018} for a more modern treatment. Our analysis belongs to the latter category.
The present random-censorship framework assumes continuous target and censoring distributions and therefore does not accommodate a point mass at zero in the observed cumulative payments. We consequently retain only claims with strictly positive cumulative payments at the valuation date and restrict attention to RBNS claims for which payment has already begun.\\
In this case, $Z_{i}^{(n)}$ denotes the (strictly positive) cumulative payment of claim $i$ and $\delta_{i}^{(n)}$ indicates whether the claim is open or closed. This allows for the schematic example in Figure~\ref{fig_reserving_triangle}. 

\begin{figure}[hbt!]
\centering
         \resizebox{14cm}{!}{%
     \begin{tabular}{|| c | c || c c c c c c c c c c c c||}
    \hline
    Accident&Accident&   &&&&&Development&year&&&&     &    \\ \cline{8-9}
    year    &number  &$1$&$2$&$3$&           &    &&&&$J-(k+1)$& $\cdots$ &$J-1$& $J$  \\  [-0.5ex] 
    \hline\hline 
     $1$    &$1$     &   &&&           &    &&&&&&     &{\color{cyan}$X_{1}^{(n)}$}\\ 
     $1$    &$2$     &   &&&           &    &&&&&&     &{\color{cyan}$X_{2}^{(n)}$}\\ \cline{1-2} \cline{14-14} 
     $2$    &$3$     &   &&&           &    &&&&&& \multicolumn{1}{c|}{{\color{orange}$C_{3}^{(n)}$}} & \\   \cline{13-13} 
     \vdots &\vdots  &   &&&           &    &&&&& \multicolumn{1}{c|}{\reflectbox{$\ddots$}}        && \\ \cline{12-12} 
     $k$ &$i$      &   &&&           &    &&&& \multicolumn{1}{c|}{{\color{cyan}$X_{i}^{(n)}$}}  &&& \\ \cline{11-11} 
     \vdots &\vdots  &   &&&           &    &$\cdots$&& \multicolumn{1}{c|}{\reflectbox{$\ddots$}}         &&&&\\ \cline{6-10} 
     $I-2$  & $n-8$  &&& \multicolumn{1}{c|}{{\color{cyan}$X_{n-8}^{(n)}$}}  &&&&&&&&&\\
     $I-2$  & $n-7$  &&& \multicolumn{1}{c|}{{\color{orange}$C_{n-7}^{(n)}$}}  &&&&&&&&&\\
     $I-2$  & $n-6$  &&& \multicolumn{1}{c|}{{\color{cyan}$X_{n-6}^{(n)}$}}  &&&&&&&&&\\ \cline{1-2} \cline{5-5} 
     $I-1$  & $n-5$  &&  \multicolumn{1}{c|}{{\color{orange}$C_{n-5}^{(n)}$}}  &&&&&&&&&&   \\
     $I-1$  & $n-4$  &&  \multicolumn{1}{c|}{{\color{orange}$C_{n-4}^{(n)}$}}  &&&&&&&&&&   \\
     $I-1$  & $n-3$  &&  \multicolumn{1}{c|}{{\color{cyan}$X_{n-3}^{(n)}$}}  &&&&&&&&&&   \\
     $I-1$  & $n-2$  &&  \multicolumn{1}{c|}{{\color{orange}$C_{n-2}^{(n)}$}}  &&&&&&&&&&   \\ \cline{1-2} \cline{4-4} 
     $I$    & $n-1$  &  \multicolumn{1}{c|}{{\color{cyan}$X_{n-1}^{(n)}$}}   &&&&&&&&&&&   \\
     $I$    & $n$    &  \multicolumn{1}{c|}{{\color{orange}$C_{n}^{(n)}$}}     &&&&&&&&&&&   \\
    \hline
    \end{tabular}
}
\caption{Visual representation of a claims reserving triangle. At accident/occurrence year $I$, the $n$ reported cumulative payments are available on the slightly curved diagonal, where payments denoted $X_{i}^{(n)}$ are closed claims $(\delta_{i}^{(n)}=1)$, while $C_{j}^{(n)}$ are open claims $(\delta_{j}^{(n)}=0)$.}
\label{fig_reserving_triangle}
\end{figure}

\begin{remark}
    While the representation of Figure~\ref{fig_reserving_triangle} 
    is sensible for any given accident year $I$, we remark that comparing the triangle with the corresponding triangle at year $I+1$ might reorder the data. Suppose for instance that in year $I+1$ a claim having occurred in accident year $I-1$ (or any other former year) is reported and payments have begun. In this case the triangle at year $I+1$ will have an extra row in accident year $I-1$, thus reordering the $i$-indices of all cumulative claims with occurrence date after that of the newly reported claim. Consequently the reporting delay is a potential source of perturbation when comparing estimated scedasis functions across different time horizons.
\end{remark}

\begin{remark}
    Rather than modeling outstanding claims through the cumulative payments, it is sometimes useful to include the so-called case reserve, which is a best guess on the outstanding payments made by a case worker or AI model shortly after the reporting date. Adding the case reserve to the cumulative payment yields the so-called incurred payment and this quantity is often used in standard reserving methods. While the incurred payment is right-censored until a claim is closed, in the sense that the case reserve might be inadequate or excessive, it should reflect the expected ultimate claim size and thus often behaves like the observation of interest $X_{i}^{(n)}$ rather than its properly right-censored version $Z_{i}^{(n)}$. By construction we require $Z_{i}^{(n)} \leq X_{i}^{(n)}$ and consequently modeling the scedasis directly on the incurred claims may substantially overestimate the scedasis in areas with a high degree of censoring.
\end{remark}

\subsection{Data preprocessing}
All claim amounts in our dataset are converted to constant currency units using the French construction cost index \emph{Indice du coût de la construction des immeubles à usage d'habitation} (ICC), published by~\cite{inseeICC}. Let $J_q$ denote the value of the ICC in quarter $q$, let $q_{i}$ be the quarter containing the occurrence date of claim $i$, and let $q_{0}$ denote the chosen base quarter. We use the adjustment
\begin{align*}
    P_i^{(n),\mathrm{adj}}
    =
    P_i^{(n)} \frac{J_{q_0}}{J_{q_i}}.
\end{align*}
Observations with $Z_{i}^{(n)}=0$ are removed, since zero observations arise for reported claims with cumulative payments below the deductible or with no positive cumulative payment at the valuation date.\\
In Section~\ref{section_real_data_analysis} we fit our proposed estimator to sequentially nested periods. The following numbers of retained observations and censored claims are reported in Table~\ref{tab_real_data_valuation_sample_sizes}.
\begin{table}[hbt!]
\centering
\begin{tabular}{cccccc}
\toprule
Dataset & Valuation year & Raw claims & Retained & Censored & Censored fraction \\
\midrule
$Y0$ & 2007 & 109,992 & 107,728 & 2,654 & 2.5\% \\
$Y2$ & 2005 & 97,046 & 93,229 & 2,012 & 2.2\% \\
$Y5$ & 2002 & 78,350 & 75,444 & 1,733 & 2.3\% \\
$Y8$ & 1999 & 56,756 & 53,037 & 1,874 & 3.5\% \\
\bottomrule
\end{tabular}
\caption{Sample sizes for the retrospective valuation datasets.}
\label{tab_real_data_valuation_sample_sizes}
\end{table}

\subsection{Diagnostics}
By~\eqref{eq_Scedasis_F_and_G} and~\eqref{eq_H_scedasis_dens}, we require that the tail index is constant across time. We investigate this by dividing the observations into two and four equally sized temporal periods plotting a censoring-adjusted version of the Hill estimator for each period. Denote by $\delta_{[n:i]}$ the concomitant corresponding to the order statistic $Z_{n:i}$. The censoring-adjusted Hill estimator is then given by
\begin{align*}
    \hat{\gamma}_{k_{n}} = \Big(\sum_{i=1}^{k_{n}}\log(Z_{n:n-i+1}/Z_{n:n-k_{n}})\Big)/
    \Big(\sum_{i=1}^{k_{n}}\delta_{[n:n-i+1]}\Big).
\end{align*}
The corresponding Hill estimators $k_{n} \mapsto \hat{\gamma}_{k_{n}}$ are depicted in Figure~\ref{fig_freclaim_tail_index_unif_periods} for $k_{n}/n \in [0.0025, 0.2]$.
\begin{figure}[hbt!] 
    \centering
    \includegraphics[width=1\textwidth]{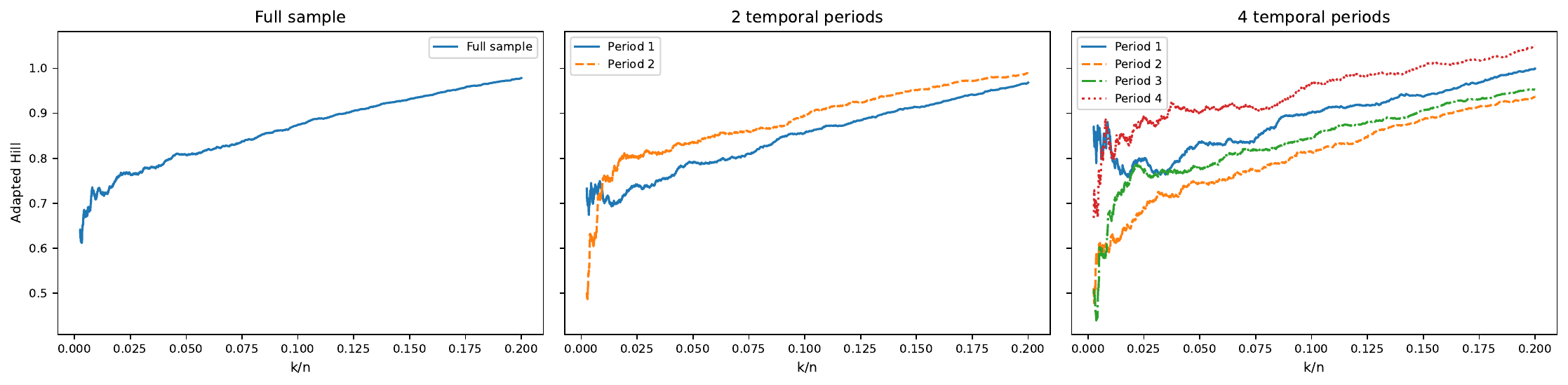}
    \caption{Adapted Hill estimators on sub-divided data sets for $k_{n}/n \in [0.0025, 0.2]$:
    We split the data into one, two and four disjoint and equally sized sub-datasets (left, middle, right respectively) according to the occurrence dates and plot $\hat{\gamma}_{k_{n}}$ against $k_{n}/n$, so that the left plot is based on the full sample, the mid plot is based on the claims with occurrence date in the first and second half of 1992--2007, the right plot based on claims with occurrence date in the first, second, third or fourth quarter of 1992--2007.} 
    \label{fig_freclaim_tail_index_unif_periods}
\end{figure}
While there is not much difference in the estimated tail index between the first and second half of the data we do see some but not severe differentiation when going from two to four periods with highest tail indices in period 1 and 4. The differences are however close in the sense that they vary from 0.6 to 0.9 in the tail region $k_{n}/n \in [0.01, 0.025]$ and thus would give rise to the same number of finite moments if the data were Pareto distributed.\\
Below we conduct an analysis similar to that above, but now we depict the adapted Hill estimates for the nested time horizons used in the valuation of the scedasis density estimator in Section~\ref{section_real_data_analysis}. We note that the tail index of the first period (corresponding to the blue solid line) does appear to be substantially higher than the subsequent nested periods which might challenge our assumption of proportional tails.
\begin{figure}[hbt!] 
    \centering
    \includegraphics[width=.75\textwidth]{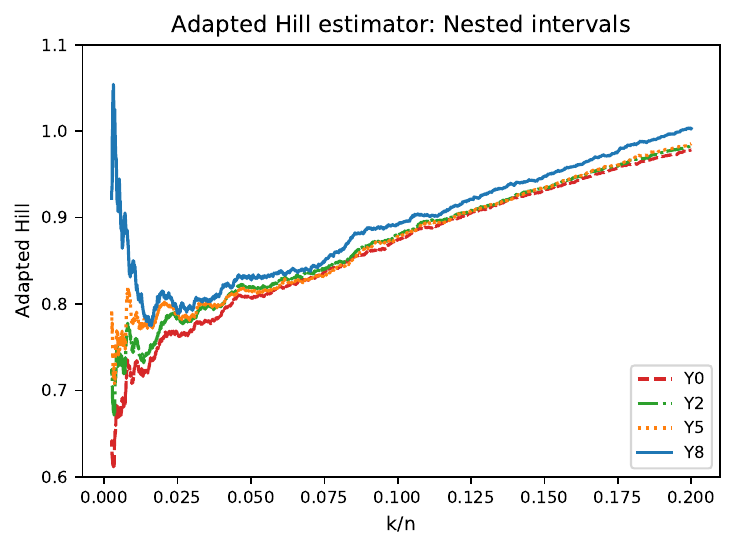}
    \caption{Adapted Hill estimators corresponding to the nested time intervals used in the scedasis valuation for $k_{n}/n \in [0.0025, 0.2]$: Here \texttt{Y0} denotes the full dataset (red, dashed), \texttt{Y2} excludes the two most recent years (green, dot-dashed), \texttt{Y5} excludes the five most recent years  (orange, dotted) and \texttt{Y8} excludes the eight most recent years  (blue, solid).}
    \label{fig_freclaim_tail_valuation_plots}
\end{figure}
\FloatBarrier
\end{appendix}

\end{document}